\theoremstyle{plain}
\newtheorem{lemma}{Lemma}[section]
\newtheorem{theorem}[lemma]{Theorem}
\newtheorem{corollary}[lemma]{Corollary}
\newtheorem{proposition}[lemma]{Proposition}
\theoremstyle{remark}
\newtheorem{remark}{Remark}
\newcommand*{\rom}[1]{\expandafter\@slowromancap\romannumeral #1@}
\def\lc{\left\lceil}   
\def\rc{\right\rceil}
\numberwithin{equation}{section}
\begin{document}

\vskip 0.125in

\title[The Effect of Rotation on the Inviscid Primitive Equations]
{On the effect of rotation on the life-span of analytic solutions to the $3D$ inviscid primitive equations}

\date{\today}

\author[T. E. Ghoul]{Tej Eddine Ghoul}
\address[T. E. Ghoul]
{Department of Mathematics  \\
New York University in Abu Dhabi  \\
Saadyat Island, Abu Dhabi  \\
 UAE.} \email{teg6@nyu.edu}

\author[S. Ibrahim]{Slim Ibrahim}
\address[S. Ibrahim]
{Department of Mathematics and Statistics  \\
University of Victoria  \\
3800 Finnerty Road, Victoria  \\
 B.C., Canada V8P 5C2.} \email{ibrahims@uvic.ca}

\author[Q. Lin]{Quyuan Lin*}\thanks{*Corresponding author. Department of Mathematics, Texas A\&M University,	College Station, TX 77840, USA. E-mail address: lyn0131@tamu.edu}
\address[Q. Lin]
{Department of Mathematics  \\
	Texas A\&M University  \\
	College Station  \\
	Texas, TX 77840, USA.} \email{lyn0131@tamu.edu}

\author[E.S. Titi]{Edriss S. Titi}
\address[E.S. Titi]
{Department of Mathematics  \\
	Texas A\&M University  \\
	College Station  \\
	Texas, TX 77840, USA.  Department of Applied Mathematics and Theoretical Physics\\ University of Cambridge\\
Wilberforce Road, Cambridge CB3 0WA, UK.
 Department of Computer Science and Applied Mathematics \\
Weizmann Institute of Science  \\
Rehovot 76100, Israel.} \email{titi@math.tamu.edu} \email{Edriss.Titi@damtp.cam.ac.uk}
\email{edriss.titi@weizmann.ac.il}

\begin{abstract}
We study the effect of the rotation on the life-span of solutions to the $3D$ hydrostatic Euler equations with rotation and the inviscid Primitive equations (PEs) on the torus. The space of analytic functions appears to be the natural space to study the initial value problem for the inviscid PEs with general initial data, as they have been recently shown to exhibit Kelvin-Helmholtz type instability. First, for a short interval of time that is independent of the rate of rotation $|\Omega|$, we establish the local well-posedness of the inviscid PEs in the space of analytic functions. In addition, thanks to a fine analysis of the barotropic and baroclinic modes decomposition, we establish two results about the long time existence of solutions. (i) Independently of $|\Omega|$, we show that the life-span of the solution tends to infinity as the analytic norm of the initial baroclinic mode goes to zero. Moreover, we show in this case that the solution of the $3D$ inviscid PEs converges to the solution of the limit system, which is governed by the $2D$ Euler equations. (ii) We show that the life-span of the solution can be prolonged unboundedly with $|\Omega|\rightarrow \infty$, which is the main result of this paper. This is established for ``well-prepared" initial data, namely, when only the Sobolev norm  (but not the analytic norm) of the baroclinic mode is small enough, depending on $|\Omega|$. Furthermore, for large $|\Omega|$ and ``well-prepared" initial data, we show that the solution to the $3D$ inviscid PEs is approximated by the solution to a simple limit resonant system with the same initial data. 
\end{abstract}

\maketitle

MSC Subject Classifications: 35Q35, 35Q86, 86A10, 76E07.\\

Keywords: inviscid primitive equations; hydrostatic Euler equations; fast rotation; limit resonant system

\section{Introduction}  
For large-scale oceanic and atmospheric dynamics, the vertical scale (a few kilometers for the ocean, 10-20 kilometers for the atmosphere) is much smaller than the horizontal scales (several thousands of kilometers). The following $3D$ viscous primitive equations (PEs) has been a standard framework for studying geostrophic adjustment of frontal anomalies in a rotating continuously stratified fluid of strictly rectilinear fronts and jets (see, e.g., \cite{BL72,GI76,GI82,HO93,HO04,KP97,PZ05,RO38} and references therein):
\begin{eqnarray}
	&&\hskip-.8in 
	\partial_t v + v\cdot \nabla v + w\partial_z v - \nu_h \Delta v - \nu_z \partial_{zz} v +\Omega v^\perp + \nabla p = 0 , \label{1}  \\
	&&\hskip-.8in  
	\partial_z p + T =0 , \label{2}  \\
	&&\hskip-.8in
	\partial_t T + v\cdot \nabla T + w\partial_z T - \kappa_h \Delta T - \kappa_z \partial_{zz} T = 0, \label{3} \\
	&&\hskip-.8in
	\nabla \cdot v + \partial_z w =0,  \label{4} 
\end{eqnarray}
setting in the horizontal channel $\big\{(x_1,x_2,z): 0\leq z\leq H, (x_1,x_2)\in \mathbb{T}^2\big\}$, subject to the following initial and boundary conditions:
\begin{eqnarray}
&&\hskip-.8in
(v,T)|_{t=0} =(v_0, T_0),  \label{OIC}\\
&&\hskip-.8in
(v_z, w, T_z)|_{z=0,H}=0,  \label{OBC-1}\\
&&\hskip-.8in
v, w, T \;\text{are periodic in} \; (x_1, x_2) \; \text{with period} \; 1. \label{OBC-2}
\end{eqnarray}
Here the horizontal velocity field $v=(v_1, v_2)$, the vertical velocity $w$, the temperature $T$, and the pressure $p$ are the unknown quantities which are functions of the independent variables $(\boldsymbol{x}',z,t) = (x_1, x_2, z,t)$. The $2D$ horizontal gradient and Laplacian are denoted by $\nabla = (\partial_{x_1}, \partial_{x_2})$ and $\Delta = \partial_{x_1x_1} + \partial_{x_2x_2}$, respectively. The non-negative constants $\nu_h, \nu_z, \kappa_h$ and $\kappa_z$ are the horizontal viscosity, the vertical viscosity, the horizontal diffusivity and the vertical diffusivity coefficients, respectively. The parameter $\Omega \in \mathbb{R}$ stands for the speed of rotation in the Coriolis force, and $v^\perp = (-v_2, v_1)$. The $3D$ viscous PEs is derived by performing a formal asymptotic limit of the small aspect ratio (the ratio of the depth or the height to the horizontal length scale) from the Rayleigh-B\'enard (Boussinesq) system, and this limit is justified rigorously first by Az\'erad and Guill\'en \cite{AG01} in a weak sense then later by Li and Titi \cite{LT18} in a strong sense with error estimates. 

The global existence of strong solutions to the $3D$ PEs with full viscosity and full diffusion was first established by Cao and Titi in \cite{CT07}, and later by Kobelkov in \cite{K06}, see also the subsequent articles of Kukavica and Ziane \cite{KZ07,KZ072} for different boundary conditions,  as well as Hieber and Kashiwabara \cite{Hieber-Kashiwabara} for some progress towards relaxing the smoothness on the initial data by using the semigroup method. This result has been improved later by Cao, Li and Titi \cite{CLT16,CLT17,CLT17b}, where the authors proved global well-posedness for $3D$ PEs with only horizontal viscosity, i.e., with $\nu_h >0$ and $\nu_z = 0$. On the other hand, with only vertical viscosity, i.e., $\nu_h =0$ and $\nu_z>0$, Cao, Lin and Titi established recently \cite{CLT19} the local well-posedness of the PEs in Sobolev spaces by considering an additional weak dissipation, which is the linear (Rayleigh-like friction) damping. This linear damping helps the system overcome the ill-posedness in Sobolev spaces established in \cite{RE09}. See also \cite{CT10} for a similar idea on the effect of this linear damping.

When $\nu_h = \nu_z =0$, the inviscid PEs without coupling with the temperature is also called the hydrostatic Euler equations. In the absence of rotation ($\Omega = 0$), the linear ill-posedness of the inviscid PEs, near certain shear-flows, has been established by Renardy in \cite{RE09}. Later on, the nonlinear ill-posedness of the inviscid PEs without rotation was established by Han-Kwan and Nguyen in \cite{HN16}, where they built an abstract framework to show that the inviscid PEs are ill-posed in any Sobolev space. Moreover, it was proven that smooth solutions to the inviscid PEs, in the absence of rotation, can develop singularities in finite time (cf. Cao, Ibrahim, Nakanishi and Titi \cite{CINT15}, and Wong \cite{W12}). It is shown in \cite{ILT20} that these results on the finite-time blowup and the ill-posedness can also be extended to the $3D$ inviscid PEs with rotation, i.e., $\Omega \neq 0$. By virtue of the finite-time blowup results, one can conclude that there is no hope to show the global well-posedness of the $3D$ inviscid PEs, even with fast rotation. The optimal result one can expect is that fast rotation prolongs the life-span of solutions to the $3D$ inviscid PEs. 

The linear ill-posedness results mentioned above show that the linearized $2D$ inviscid PEs (as well as the $3D$ case \cite{ILT20}), around a special steady state background flow, has unstable solutions of the form $u(t,x,z) = e^{2\pi ikx} e^{\sigma_k t} u_k(z)$, where $\Re \sigma_k = \lambda k$ for some $\lambda \in \mathbb{R}$ and $\lambda \neq 0$. Such Kelvin-Helmholtz type instability, which is similar to the one appearing in the context of vortex sheets (see, e.g., \cite{CO89}, the survey paper \cite{BT07} and reference therein), precludes the construction of solutions in Sobolev spaces for general initial data. To overcome this strong instability, one should consider initial data $u_0$ that are strongly localized in Fourier, typically for which $|\hat{u}_0(k,z)|\lesssim e^{-\delta |k|^{1/s}}$ with $\delta >0$ and $s\geq 1$. Such localization condition corresponds to Gevrey class of order $s$ in the $x$ variable. Kelvin-Helmholtz type instability forces us to choose $s=1$ for the well-posedness result, which is the space of analytic functions. This is consistent with positive results reported in \cite{KTVZ11} and in this paper. Notably, for the Prandtl equations, which have some similarities in its structure with the PEs, is shown in \cite{GD10} that its linearization around a special background flow has unstable solutions of similar form, but with $\Re \sigma_k \sim \lambda \sqrt{k}$ for $k\gg 1$ arbitrarily large and some positive $\lambda \in \mathbb{R}_+$. This implies that the optimal Gevrey class order $s$ for Prandtl equation is $s=2$, which is consistent with the positive results reported in \cite{DG19,LMY20}. This shows that the linear instability of the inviscid PEs is ``worse" than that of the Prandtl equations.

Due to the ill-posedness discussed above, in order to show the well-posedness of the inviscid PEs, one needs to assume either some special structures (local Rayleigh condition) on the initial data or real analyticity for general initial data \cite{BR99,BR03,GR99,KMVW14,KTVZ11,MW12}. Indeed, the authors of \cite{KTVZ11} establish the local well-posedness of 
the $3D$ inviscid PEs in the space of analytic functions for various boundary conditions including the periodic boundary condition. Their approach utilizes explicit estimates for the pressure, regardless of the underlying boundary conditions. These estimates depend explicitly on $\Omega$, from which one concludes that the time of existence shrinks to zero as $|\Omega|$ increases toward infinity. As we will describe below, this conclusion is in some sense counter intuitive at least in the absence of boundary for the $3D$ Euler or Navier-Stokes equations, i.e., in the case of periodic boundary condition. Indeed, Babin, Mahalov and Nicolaenko \cite{BMN97, BMN99a, BMN99b, BMN00} have shown that in $\mathbb T^3$, fast rotation displays a strong averaging mechanism that weakens the nonlinear effects. This mechanism gives the global regularity in the $3D$ Navier-Stokes case, and the prolongation of the life-span of the solutions in the case of $3D$ Euler equations (see also \cite{CDGG06,D05,EM96,IY,KLT14} and references therein for the case of $\mathbb R^3$). In addition, we refer to \cite{BIT11,GST15,KTZ18,LT04} for simple examples demonstrating the above averaging/dispersion mechanism. Our purpose here is to show that in $\mathbb T^3$, the fast rotation delays the singularity formation, and thus prolongs the life-span of the solution of the $3D$ inviscid PEs.

For mathematical simplicity, we consider system (\ref{1})--(\ref{4}) with $T_0 = 0$, which implies $T \equiv 0$ for smooth solutions. By considering the inviscid case, i.e., $\nu_h = \nu_z = 0$, in this paper, we are interested in the effect of rotation on the $3D$ inviscid PEs (hydrostatic Euler equations)
\begin{eqnarray}
	&&\hskip-.8in \partial_t v + v\cdot \nabla v + w\partial_z v  +\Omega v^\perp + \nabla p = 0 , \label{EQ1-1}  \\
	&&\hskip-.8in  \partial_z p =0 , \label{EQ1-2}  \\
	&&\hskip-.8in
	\nabla \cdot v + \partial_z w =0,  \label{EQ1-3} 
\end{eqnarray}
in three-dimensional unit torus $\mathbb{T}^3$, 
subject to the following initial and boundary conditions:
\begin{eqnarray}
&&\hskip-.8in
v|_{t=0} =v_0,  \label{EQ-ic} \\
&&\hskip-.8in
v, w\;\text{are periodic in} \; (\boldsymbol{x}',z) \; \text{with period} \; 1,  \label{EQ-bc1}\\
&&\hskip-.8in
v \; \text{is even in} \; z \; \text{and} \; w \; \text{is odd in} \; z.  \label{EQ-bc2}
\end{eqnarray}

Observe that the space of periodic functions with respect to $z$ with the symmetry condition (\ref{EQ-bc2}) is invariant under the dynamics of system (\ref{EQ1-1})--(\ref{EQ1-3}). If $H=\frac{1}{2}$, the solution to system (\ref{EQ1-1})--(\ref{EQ1-3}) in $\mathbb{T}^3$ subject to (\ref{EQ-ic})--(\ref{EQ-bc2}) restricted on the horizontal channel $\big\{(\boldsymbol{x}',z): 0\leq z\leq \frac{1}{2}, \boldsymbol{x}'\in \mathbb{T}^2\big\}$ is the solution to system (\ref{EQ1-1})--(\ref{EQ1-3}) subject to the physical boundary conditions, i.e., $w|_{z= 0,\frac{1}{2}}=0$ and $v, w$ are periodic in $\boldsymbol{x}'$ with period $1$, and initial condition $v_0$ being even extendable in $z$ variable. Working in $\mathbb{T}^3$ allows us to use Fourier analysis, and makes the mathematical presentation simpler and more elegant.

The paper is organized as follows. In section 2, we introduce the notation and collect some preliminary results. In section 3, we establish the local well-posedness of the $3D$ inviscid PEs (\ref{EQ1-1})--(\ref{EQ1-3}) subject to (\ref{EQ-ic})--(\ref{EQ-bc2}) in the space of analytic functions in a short time interval uniform in $\Omega$.  In section 4, independently of $\Omega$, we show that the life-span of the solution tends to infinity as the analytic norm of the initial baroclinic mode goes to zero. Moreover, we show in this case that the solution of the $3D$ inviscid PEs converges to the solution of the limit system, which is governed by the $2D$ Euler equations. The intuition stems from the observation that the $3D$ inviscid PEs is reduced to the $2D$ Euler equations when the baroclinic mode is zero initially. In section 5, we explore further the structure of the inviscid PEs with rotation and derive its formal limit resonant system when $|\Omega|\rightarrow \infty$. Let us emphasize that this limit resonant system is not solely the $2D$ Euler equations when the initial baroclinic mode is not zero. Moreover, we investigate this limit resonant system and establish its global regularity in both Sobolev and the analytic functions spaces. In section 6, we establish the main result of this paper, namely, the life-span of the solution to the $3D$ inviscid PEs goes toward infinity, with $|\Omega|\rightarrow \infty$. This is established for well-prepared initial data, namely, when only the Sobolev norm  (but not the analytic norm) of the baroclinic mode is small enough, depending on $|\Omega|$. Furthermore, for large $|\Omega|$ and ``well-prepared" initial data, we show that the solution to the $3D$ inviscid PEs is indeed approximated by the solution to the limit resonant system that is the main feature of section 5. We also discuss in this section the rational behind the need for the smallness condition in the well-prepared initial data. The last section is an appendix, which is devoted to stating and proving technical lemmas concerning key nonlinear estimates.

\section{Preliminaries}
In this section, we introduce the notation and collect some preliminary results that will be used in this paper. The universal constant $C$ appears in this paper may change from step to step. When we use subscript for $C$, e.g., $C_r$, it means that the constant depends only on $r$. 
\subsection{Functional Settings}
We use the notation $\boldsymbol{x}:= (\boldsymbol{x}',z) = (x_1, x_2, z)\in \mathbb{T}^3$, where $\boldsymbol{x}'$ and $z$ represent the horizontal and vertical variables, respectively. $\mathbb{T}^3$ is the three-dimensional torus with unit length. Denote by
$
    \|f \|:=\|f\|_{L^2(\mathbb{T}^3)} = (\int_{\mathbb{T}^3} |f(\boldsymbol{x})|^2 d\boldsymbol{x})^{\frac{1}{2}},
$
associated with the inner product
$  \langle f,g\rangle = \int_{\mathbb{T}^3} f(\boldsymbol{x})g(\boldsymbol{x}) d\boldsymbol{x}
$
for $f,g \in L^2(\mathbb{T}^3)$. For a function $f \in L^2(\mathbb{T}^3)$, $\hat{f}_{\boldsymbol{k}}$ denotes its Fourier coefficient, so that
$
f(\boldsymbol{x}) = \sum\limits_{\boldsymbol{k}\in \mathbb{Z}^3} \hat{f}_{\boldsymbol{k}} e^{2\pi i\boldsymbol{k}\cdot \boldsymbol{x}}$ and $ \hat{f}_{\boldsymbol{k}} = \int_{\mathbb{T}^3} e^{-2\pi i\boldsymbol{k}\cdot \boldsymbol{x}} f(\boldsymbol{x}) d\boldsymbol{x}.$
For $r\geq 0$, define the following Sobolev $H^r$ norm and $\dot{H}^r$ semi-norm
\begin{eqnarray*}
&&\hskip-.1in
\|f \|_{H^r}:= \Big(\sum\limits_{\boldsymbol{k}\in \mathbb{Z}^3} (1+|\boldsymbol{k}|^{2r}) |\hat{f}_{\boldsymbol{k}}|^2  \Big)^{\frac{1}{2}}, \quad \|f \|_{\dot{H}^r}:= \Big(\sum\limits_{\boldsymbol{k}\in  \mathbb{Z}^3} |\boldsymbol{k}|^{2r} |\hat{f}_{\boldsymbol{k}}|^2  \Big)^{\frac{1}{2}}.
\end{eqnarray*}
For more details about Sobolev spaces, see \cite{AR75}. 

For $s>0$, a function $f\in C^\infty(\mathbb{T}^3)$ is said to be in Gevrey class of order $s$, denoted by $f\in G^s(\mathbb{T}^3)$, if there exist constants $\rho>0$ and $M>0$ such that for every $\boldsymbol{x}\in \mathbb{T}^3$ and $\alpha \in \mathbb{N}^3$, one has 
$
|\partial^\alpha f(\boldsymbol{x})| \leq M\Big( \frac{\alpha!}{\rho^{|\alpha|}} \Big)^s.
$
Denote by $A = \sqrt{-(\Delta + \partial_{zz})}$, subject to periodic boundary condition. For each $s>0$ and $r\geq 0$, we define a family, parameterized by $\tau\geq 0$, of normed spaces
\begin{eqnarray*}
\mathcal{D}(e^{\tau A^{1/s}}: H^r(\mathbb{T}^3)) := \{f\in H^r(\mathbb{T}^3): \|e^{\tau A^{1/s}} f\|_{H^r} <\infty  \},
\end{eqnarray*}
where the norm is defined by
\begin{eqnarray*}
\|e^{\tau A^{1/s}} f\|_{H^r} := \Big(\sum\limits_{\boldsymbol{k}\in  \mathbb{Z}^3} (1+|\boldsymbol{k}|^{2r} e^{2\tau |\boldsymbol{k}|^{1/s}} )|\hat{f}_{\boldsymbol{k}}|^2 \Big)^{\frac{1}{2}}. \label{analytic-norm}
\end{eqnarray*}
Let us denote the semi-norm by 
\begin{eqnarray*}
\|A^r e^{\tau A^{1/s}} f\| := \Big(\sum\limits_{\boldsymbol{k}\in  \mathbb{Z}^3} |\boldsymbol{k}|^{2r} e^{2\tau |\boldsymbol{k}|^{1/s}} |\hat{f}_{\boldsymbol{k}}|^2\Big)^{\frac{1}{2}}, \label{anlytic-semi-norm}
\end{eqnarray*}
then it is easy to see that
\begin{eqnarray*}
\|e^{\tau A^{1/s}} f\|_{H^r}^2 = \|A^r e^{\tau A^{1/s}} f\|^2 + \|f\|^2.
\end{eqnarray*}
For more details about Gevrey class, we refer the readers to \cite{FT98, FT89, LO97}. Observe that
\begin{eqnarray}\label{gevrey-relation}
G^s(\mathbb{T}^3) = \bigcup\limits_{\tau >0} \mathcal{D}(e^{\tau A^{1/s}} : H^r(\mathbb{T}^3)).
\end{eqnarray}
For the proof of \eqref{gevrey-relation}, see \cite{LO97}. The next lemma comes from \cite{LO97} (see also \cite{FT98}), addressing an important property of the space $\mathcal{D}(e^{\tau A^{1/s}} : H^r(\mathbb{T}^3))$.
\begin{lemma} \label{lemma-banach-algebra}
If $s\geq 1$, $\tau \geq 0$, and $r>\frac{3}{2}$, then $\mathcal{D}(e^{\tau A^{1/s}} : H^r(\mathbb{T}^3))$ is a Banach algebra, and for any $f, g\in \mathcal{D}(e^{\tau A^{1/s}} : H^r(\mathbb{T}^3))$, we have 
\begin{eqnarray*}
\| e^{\tau A^{1/s}} (fg)\|_{H^r} \leq C_{r,s}\| e^{\tau A^{1/s}} f\|_{H^r} \| e^{\tau A^{1/s}} g\|_{H^r}.
\end{eqnarray*}
For the semi-norm, we also have a similar estimate
\begin{eqnarray*}
\| A^r e^{\tau A^{1/s}} (fg)\| \leq C_{r,s}  \Big(|\hat{f}_0|+\| A^r e^{\tau A^{1/s}} f\|\Big) \Big(|\hat{g}_0|+ \|  A^r e^{\tau A^{1/s}} g\|\Big).
\end{eqnarray*}
\end{lemma}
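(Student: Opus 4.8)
The plan is to pass to Fourier coefficients and reduce everything to discrete convolution estimates. Writing $\widehat{(fg)}_{\boldsymbol{k}}=\sum_{\boldsymbol{j}\in\mathbb{Z}^3}\hat{f}_{\boldsymbol{j}}\,\hat{g}_{\boldsymbol{k}-\boldsymbol{j}}$, the argument rests on two elementary triangle-type inequalities for the weights. Since $s\ge1$ the exponent $\theta=1/s$ lies in $(0,1]$, so subadditivity of $t\mapsto t^{\theta}$ together with $|\boldsymbol{k}|\le|\boldsymbol{j}|+|\boldsymbol{k}-\boldsymbol{j}|$ gives
\[
e^{\tau|\boldsymbol{k}|^{1/s}}\le e^{\tau|\boldsymbol{j}|^{1/s}}\,e^{\tau|\boldsymbol{k}-\boldsymbol{j}|^{1/s}}\qquad\text{for all }\boldsymbol{j}\in\mathbb{Z}^3,
\]
while convexity (for $r\ge1$) or subadditivity (for $0\le r<1$) gives $|\boldsymbol{k}|^{r}\le C_{r}\big(|\boldsymbol{j}|^{r}+|\boldsymbol{k}-\boldsymbol{j}|^{r}\big)$. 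I would prove the semi-norm estimate first, since the Banach-algebra estimate then follows almost immediately from the identity (\ref{analytic-as-sum-of-l2-and-seminorm}).

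\emph{The semi-norm estimate.} Fix $\boldsymbol{k}\ne0$. Inserting the two weight inequalities into $|\boldsymbol{k}|^{r}e^{\tau|\boldsymbol{k}|^{1/s}}|\widehat{(fg)}_{\boldsymbol{k}}|$ and distributing $|\boldsymbol{j}|^{r}+|\boldsymbol{k}-\boldsymbol{j}|^{r}$ splits it into two symmetric pieces. The first is a discrete convolution $(p*q)_{\boldsymbol{k}}$ with $p_{\boldsymbol{j}}=|\boldsymbol{j}|^{r}e^{\tau|\boldsymbol{j}|^{1/s}}|\hat{f}_{\boldsymbol{j}}|$ and $q_{\boldsymbol{m}}=e^{\tau|\boldsymbol{m}|^{1/s}}|\hat{g}_{\boldsymbol{m}}|$; Young's inequality $\|p*q\|_{\ell^{2}}\le\|p\|_{\ell^{2}}\|q\|_{\ell^{1}}$ produces exactly $\|A^{r}e^{\tau A^{1/s}}f\|$ times $\|q\|_{\ell^{1}}$, and
\[
\|q\|_{\ell^{1}}=|\hat{g}_{0}|+\sum_{\boldsymbol{m}\ne0}e^{\tau|\boldsymbol{m}|^{1/s}}|\hat{g}_{\boldsymbol{m}}|\le|\hat{g}_{0}|+\Big(\sum_{\boldsymbol{m}\ne0}|\boldsymbol{m}|^{-2r}\Big)^{1/2}\|A^{r}e^{\tau A^{1/s}}g\|
\]
by Cauchy--Schwarz. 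The series $\sum_{\boldsymbol{m}\ne0}|\boldsymbol{m}|^{-2r}$ converges precisely because $2r>3$ in three dimensions, and this is the only place the hypothesis $r>3/2$ is used. The second piece is treated identically with the roles of $f$ and $g$ interchanged; summing the two and absorbing constants yields the stated bound.

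\emph{The Banach-algebra estimate.} By (\ref{analytic-as-sum-of-l2-and-seminorm}) we have $\|e^{\tau A^{1/s}}(fg)\|_{H^{r}}^{2}=\|A^{r}e^{\tau A^{1/s}}(fg)\|^{2}+\|fg\|^{2}$. The first term is controlled by the semi-norm estimate just proved once one notes that $|\hat{f}_{0}|\le\|f\|\le\|e^{\tau A^{1/s}}f\|_{H^{r}}$ and $\|A^{r}e^{\tau A^{1/s}}f\|\le\|e^{\tau A^{1/s}}f\|_{H^{r}}$ (again by (\ref{analytic-as-sum-of-l2-and-seminorm})), and likewise for $g$. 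For the second term I use the Sobolev embedding $H^{r}(\mathbb{T}^{3})\hookrightarrow L^{\infty}(\mathbb{T}^{3})$, valid since $r>3/2$, together with $1+|\boldsymbol{k}|^{2r}\le1+|\boldsymbol{k}|^{2r}e^{2\tau|\boldsymbol{k}|^{1/s}}$, to obtain $\|fg\|\le\|f\|_{L^{\infty}}\|g\|\le C_{r}\|f\|_{H^{r}}\|g\|\le C_{r}\|e^{\tau A^{1/s}}f\|_{H^{r}}\|e^{\tau A^{1/s}}g\|_{H^{r}}$.

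I do not expect a real obstacle here: this is a standard Gevrey-algebra statement and every step is elementary. The only point requiring some care is the bookkeeping of the zero Fourier mode: the homogeneous weight $|\boldsymbol{k}|^{r}$ kills $\boldsymbol{k}=0$, which is exactly why the bare semi-norm $\|A^{r}e^{\tau A^{1/s}}(\cdot)\|$ cannot control the $\ell^{1}$ factor coming from Young's inequality, and this is what forces the additional $|\hat{f}_{0}|$ and $|\hat{g}_{0}|$ terms into the statement. One should also verify that the constants produced by the two weight inequalities and by the $\ell^{1}$ bound depend only on $r$, so that the asserted $C_{r,s}$ may in fact be taken independent of $s$.
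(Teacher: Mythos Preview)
Your argument is correct and is precisely the standard Fourier-side proof of the Gevrey algebra property. The paper itself does not give a proof of this lemma; it simply refers the reader to \cite{FT98} for $s=1$ and to \cite{OL96} for $s>1$, and the argument you have outlined---subadditivity of $t\mapsto t^{1/s}$ to split the exponential weight, the elementary inequality $|\boldsymbol{k}|^{r}\le C_{r}(|\boldsymbol{j}|^{r}+|\boldsymbol{k}-\boldsymbol{j}|^{r})$, Young's convolution inequality, and Cauchy--Schwarz to close the $\ell^{1}$ factor---is exactly what those references contain. Your remark that the constant can be taken independent of $s$ is also correct, since the only $s$-dependence enters through the subadditivity step, which holds with constant $1$ for every $s\ge1$.
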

For the proof, we refer the readers to \cite{FT98} for the case when $s=1$, and to \cite{OL96} for the case when $s>1$.
\begin{remark}
Since the inviscid PEs is linearly ill-posed in Sobolev spaces and Gevrey class of order $s>1$ \cite{ILT20,RE09}, we focus on Gevrey class of order $s=1$, which is equivalent to the space of analytic function.
\end{remark}
\subsection{Projections and reformulation of the problem}
In this paper, we assume that $\int_{\mathbb{T}^3} v_0(\boldsymbol{x}) d\boldsymbol{x} = 0$. This assumption is made to simplify the mathematical presentation. See Remark \ref{remark-zero-average} for detailed explanation. Integrating (\ref{EQ1-1}) in $\mathbb{T}^3$, by integration by parts, thanks to (\ref{EQ1-3}) and (\ref{EQ-bc1}), we obtain 
\begin{eqnarray*}\label{mean-zero}
\partial_t \int_{\mathbb{T}^3} v d\boldsymbol{x}  + \Omega \int_{\mathbb{T}^3} v^\perp d\boldsymbol{x} =0.
\end{eqnarray*}
Therefore, for any time $t\geq 0$, $v$ has zero mean in $\mathbb{T}^3$ :
\begin{equation}\label{mean-zero-2}
    \int_{\mathbb{T}^3} v d\boldsymbol{x} = \hat{v}_0 = 0.
\end{equation}

Denote by 
\begin{equation*}
     \dot{L}^2 : = \Big\{\varphi\in L^2(\mathbb{T}^3,\mathbb{R}^2)  : \int_{\mathbb{T}^3} \varphi(\boldsymbol{x})d\boldsymbol{x} = 0  \Big\}.
\end{equation*}
The barotropic mode $\overline{v}$ and baroclinic mode $\widetilde{v}$ are defined by
\begin{equation*}\label{barotropic-and-baroclinic}
    \overline{v}(\boldsymbol{x}'):= \int_0^1 v(\boldsymbol{x}',z)dz = \sum\limits_{\boldsymbol{k}\in \mathbb{Z}^3, k_3 = 0} \hat{v}_{\boldsymbol{k}} e^{2\pi i\boldsymbol{k}\cdot \boldsymbol{x}}, \;\;\;\;\;\; \widetilde{v}(\boldsymbol{x}):= v-\overline{v} = \sum\limits_{\boldsymbol{k}\in \mathbb{Z}^3, k_3 \neq 0} \hat{v}_{\boldsymbol{k}} e^{2\pi i\boldsymbol{k}\cdot \boldsymbol{x}}.
    \end{equation*}
From boundary condition (\ref{EQ-bc2}) and incompressible condition (\ref{EQ1-3}), one observes that 
\begin{equation}\label{incompressible-2d}
    \nabla\cdot \overline{v} = \int_0^1 \nabla\cdot v(\boldsymbol{x}',z)dz = -\int_0^1 \partial_z w(\boldsymbol{x}',z)dz =0.
\end{equation}
Since $\nabla\cdot \overline{v} =0$ and $\overline{v}$ has zero mean over $\mathbb{T}^2$ due to (\ref{mean-zero-2}), there exists a stream function $\psi(\boldsymbol{x}')\in H^1 (\mathbb T^2)$, defined uniquely up to a constant, such that $\overline{v} = \nabla^{\perp}\psi = (-\partial_{x_2} \psi, \partial_{x_1}\psi).$ Therefore, one has $v\in \mathcal{S}$
where
\begin{equation*}
    \mathcal{S}:= \Big\{\varphi\in \dot{L}^2: \nabla\cdot \overline{\varphi} = 0  \Big\} = \Big\{\varphi\in \dot{L}^2: \varphi = \nabla^\perp \psi(\boldsymbol{x}') + \widetilde{\varphi}(\boldsymbol{x})  \text{ with } \psi \in H^1(\mathbb T^2)  \Big\}.
\end{equation*}
For $\varphi\in\dot{L}^2$, the rotating matrix is
\begin{equation*}
    \mathcal{J}\varphi := \begin{pmatrix}
0 & -1 \\
1 & 0
\end{pmatrix} 
\begin{pmatrix}
\varphi_1 \\
\varphi_2
\end{pmatrix} = (-\varphi_2 , \varphi_1) = \varphi^\perp  .
\end{equation*}
Denote the $2D$ Leray projection by
$
    \mathbb{P}_h \overline{\varphi} := \overline{\varphi} - \nabla \Delta^{-1} \nabla\cdot \overline{\varphi},
$
where $ \Delta^{-1} $ represents the inverse of Laplacian operator in $ \mathbb T^2 $ with zero mean value.  Inspired by the $2D$ Leray projection, we define the projection $P_S : \dot{L}^2\rightarrow\mathcal{S}$ as
$
    P_S \varphi := \widetilde{\varphi} + \mathbb{P}_h \overline{\varphi}.
$
Moreover, define an operator $P: \mathcal{S}\rightarrow\mathcal{S}$ as
$
    P \varphi:= P_S (\mathcal{J}\varphi) .
$
A direct computation using $\nabla\cdot \overline{\varphi} = 0$ yields
$
    P \varphi = \widetilde{\varphi}^\perp.
$
It is easy to see that the kernel of $P$ is 
\begin{equation*}
    \ker P =  \Big\{\varphi\in \mathcal{S}:  \widetilde{\varphi}^\perp = 0  \Big\} =  \Big\{\varphi\in \mathcal{S}:  \varphi = \overline{\varphi} \Big\}.
\end{equation*}
Therefore, we define the projection $P_0: \mathcal{S} \rightarrow \ker P$ as
\begin{equation*}\label{P0}
    P_0 \varphi := \overline{\varphi} = \int_0^1 \varphi(\boldsymbol{x}',z) dz,
\end{equation*}
which actually projects any vector $\varphi\in \mathcal{S}$ to its barotropic mode. Now applying $P_S$ to equation (\ref{EQ1-1}), thanks to (\ref{EQ1-2}), and since $v\in \mathcal{S}$, we get
\begin{equation}\label{equation-P_S-projection}
    \partial_t v + P_S(v\cdot \nabla v + w\partial_z v) + \Omega \widetilde{v}^\perp = 0.
\end{equation}
Next, applying $P_0$ and $I-P_0$ to equation (\ref{equation-P_S-projection}), by integration by parts, thanks to (\ref{EQ-bc2}) and (\ref{incompressible-2d}), we derive the evolution equations for the barotropic mode $\overline{v}$ and the baroclinic mode $\widetilde{v}$:
\begin{eqnarray}
&&\hskip-.8in  \partial_t \overline{v} + \mathbb{P}_h \Big(\overline{v}\cdot \nabla \overline{v}\Big) + \mathbb{P}_h P_0 \Big((\nabla\cdot \widetilde{v}) \widetilde{v} + \widetilde{v}\cdot \nabla \widetilde{v} \Big)  = 0, \label{local-system-1} \\
&&\hskip-.8in \partial_t \widetilde{v} + \widetilde{v} \cdot \nabla \widetilde{v} + \widetilde{v} \cdot \nabla \overline{v} + \overline{v} \cdot \nabla \widetilde{v} - P_0\Big(\widetilde{v} \cdot \nabla \widetilde{v} + (\nabla \cdot \widetilde{v}) \widetilde{v} \Big) - \Big(\int_0^z \nabla\cdot \widetilde{v}(\boldsymbol{x}',s)ds \Big) \partial_z \widetilde{v}  + \Omega \widetilde{v}^{\perp} = 0 . \label{local-system-2} 
\end{eqnarray}
In summary, we have the following lemma.
\begin{lemma}\label{1st-equivalence}
 For $v\in\mathcal{S}$, system (\ref{EQ1-1})--(\ref{EQ1-3}) is equivalent to system (\ref{local-system-1})--(\ref{local-system-2}).
\end{lemma}
Notice that if we consider $v_0\in\ker P$, i.e., consider $\widetilde{v}_0 = 0$, then from (\ref{local-system-2}) we can see $\widetilde{v}$ remains zero. Therefore, system (\ref{local-system-1})--(\ref{local-system-2}) reduces to the $2D$ Euler equations, which is globally well-posed.
Based on this observation, we establish the first long time existence result in section 4 by assuming the analytic norm of $\widetilde{v}_0$ is small. In order to investigate the effect of rotation, we further study the evolution of the baroclinic mode. This can be done by further decomposing the baroclinic mode in order to identify the resonant and non-resonant parts due to the rotation. Since the rotation matrix $\mathcal{J}$ has eigenvalues $\pm i$, with corresponding eigenvectors $\frac{1}{\sqrt{2}}
\begin{pmatrix}
1 \\
\mp i
\end{pmatrix} $,
we can define
\begin{equation*}
    P_\pm \varphi := \Big\langle(I-P_0)\varphi,\overline{\frac{1}{\sqrt{2}}
\begin{pmatrix}
1 \\
\pm i
\end{pmatrix}} \Big\rangle_E \frac{1}{\sqrt{2}}
\begin{pmatrix}
1 \\
\pm i
\end{pmatrix} = \frac{1}{2}\Big\langle\widetilde{\varphi}, \overline{\begin{pmatrix}
1 \\
\pm i
\end{pmatrix}} \Big\rangle_E\begin{pmatrix}
1 \\
\pm i
\end{pmatrix} 
= \frac{1}{2}(\widetilde{\varphi} \pm i\widetilde{\varphi}^\perp).
\end{equation*}
Here $\langle \cdot, \cdot \rangle_E$ denotes the usual Euclidean inner product. Similar ideas and projections for $3D$ rotating Euler equations can be found in \cite{D05, KLT14}. Observe that the operator $P$ has three eigenvalues, $0$ and $\pm i$. Therefore, the projections $P_0$ and $P_\pm$ project $v$ into the eigenspaces corresponding to $0$ and $\mp i$, respectively. Consequently, we have the following:
\begin{lemma}\label{lemma-orthogonal-decomposition}
For any $\varphi \in L^2(\mathbb{T}^3)$, we have 
\begin{equation*}
    \varphi = P_0 \varphi + P_+ \varphi + P_- \varphi \quad\text{ and }
\end{equation*}
\begin{equation*}
 P_\pm P_\pm \varphi = P_\pm \varphi, \;\;\;\;\;\; P_0 P_0 \varphi = P_0 \varphi, \;\;\;\;\;\; P_\pm P_\mp \varphi = P_0 P_\pm \varphi = P_\pm P_0 \varphi= 0.
\end{equation*}
\end{lemma}
\begin{proof}
The proof is straightforward from the definition of $P_0$ and $P_\pm$, and the fact that $\overline{\widetilde{\varphi}}=\widetilde{\overline{\varphi}} = 0$. 
\end{proof}

For projections $P_0, P_\pm$, we have the following properties. The proof is straightforward and we omit it.
\begin{lemma} \label{lemma-projection}
	For $f,g \in L^2(\mathbb{T}^3)$, we have
	$
	\langle P_0 f, g \rangle = \langle f, P_0 g\rangle = \langle P_0 f, P_0 g\rangle$ and $\langle P_\pm f, g\rangle = \langle f, P_\mp g\rangle.$
	If $f \in H^r(\mathbb{T}^3)$ with $r\geq 0$, then for $|\alpha|\leq r$, we have
	$
	\partial^\alpha P_0 f = P_0 \partial^\alpha f$  and $\partial^\alpha P_\pm f = P_\pm \partial^\alpha f. 
	$
Furthermore, if $f\in \mathcal{D}(e^{\tau A^{1/s}} : H^r(\mathbb{T}^3))$ with $s>0$ and $r\geq 0$, one has
$
    A^{r}e^{\tau A^{1/s}} P_0 f = P_0 A^{r}e^{\tau A^{1/s}} f.
$
\end{lemma}

The Leray projection $\mathbb{P}_h$ enjoys the following properties. For the proof, see, for example, \cite{CF88}.
\begin{lemma}\label{lemma-leray}
For $f, g\in L^2(\mathbb{T}^3)$, we have
$
	\langle \mathbb{P}_h f, g \rangle = \langle f, \mathbb{P}_h g\rangle 
$
and
$
\mathbb{P}_h P_0 f = P_0 \mathbb{P}_h f. $
If $f\in H^r(\mathbb{T}^3)$ with $r\geq 0$, then for $|\alpha|\leq r$, one has
$
\partial^\alpha \mathbb{P}_h f = \mathbb{P}_h \partial^\alpha f. 
$
Moreover, if $f\in \mathcal{D}(e^{\tau A^{1/s}} : H^r(\mathbb{T}^3))$ with $s>0$ and $r\geq 0$, one gets $
    A^{r}e^{\tau A^{1/s}} \mathbb{P}_h f = \mathbb{P}_h A^{r}e^{\tau A^{1/s}} f.
$
\end{lemma}

For the relation between the norm of $v$ and the norms of $\overline{v}, \widetilde{v}$ in $L^2(\mathbb{T}^3)$ and $\mathcal{D}(e^{\tau A^{1/s}}: H^r(\mathbb{T}^3))$, we have the following Lemma. The proof is straightforward and we omit it.
\begin{lemma}\label{lemma-decomposition}
Let $v = P_0 v + (I-P_0)v = \overline{v} + \widetilde{v}$. Suppose that $r\geq 0$, $s>0$, and $\tau \geq 0$, we have 
\begin{equation*}
    \|v\|^2 = \|\overline{v}\|^2 + \|\widetilde{v}\|^2 \text{ and }
  \|e^{\tau A^{1/s}} v\|_{H^r}^2 = \|e^{\tau A^{1/s}} \overline{v}\|_{H^r}^2 + \|e^{\tau A^{1/s}} \widetilde{v}\|_{H^r} ^2  .
\end{equation*}
\end{lemma}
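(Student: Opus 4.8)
The plan is to reduce both identities to Plancherel's theorem, exploiting the fact that the barotropic and baroclinic modes live on complementary sets of Fourier frequencies, and that the weights defining all the norms in question depend only on $|\boldsymbol{k}|$, hence respect that partition.

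First I would record, from the definition (\ref{barotropic-and-baroclinic}), that $\widehat{\overline{v}}_{\boldsymbol{k}} = \hat{v}_{\boldsymbol{k}}$ when $k_3 = 0$ and $\widehat{\overline{v}}_{\boldsymbol{k}} = 0$ otherwise, while $\widehat{\widetilde{v}}_{\boldsymbol{k}} = \hat{v}_{\boldsymbol{k}}$ when $k_3 \neq 0$ and $\widehat{\widetilde{v}}_{\boldsymbol{k}} = 0$ otherwise. This is immediate since $\overline{v}(\boldsymbol{x}') = \int_0^1 v(\boldsymbol{x}',z)\,dz$ annihilates every Fourier mode with $k_3\neq 0$ and leaves the modes with $k_3 = 0$ unchanged; in particular, for each $\boldsymbol{k}$ at most one of $\widehat{\overline{v}}_{\boldsymbol{k}}$ and $\widehat{\widetilde{v}}_{\boldsymbol{k}}$ is nonzero, and the two frequency sets $\{k_3 = 0\}$ and $\{k_3\neq 0\}$ partition $\mathbb{Z}^3$.

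Then I would apply Plancherel's theorem. For the $L^2$ identity,
\begin{equation*}
	\|v\|^2 = \sum_{\boldsymbol{k}\in\mathbb{Z}^3}|\hat{v}_{\boldsymbol{k}}|^2 = \sum_{\boldsymbol{k}\in\mathbb{Z}^3,\, k_3=0}|\hat{v}_{\boldsymbol{k}}|^2 + \sum_{\boldsymbol{k}\in\mathbb{Z}^3,\, k_3\neq 0}|\hat{v}_{\boldsymbol{k}}|^2 = \|\overline{v}\|^2 + \|\widetilde{v}\|^2,
\end{equation*}
where $|\cdot|$ is the Euclidean norm on $\mathbb{C}^2$ and Plancherel is applied componentwise to the vector field $v$. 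For the analytic norm I would invoke the definition (\ref{analytic-norm}) and split the same sum, noting that the multiplier $1+|\boldsymbol{k}|^{2r}e^{2\tau|\boldsymbol{k}|^{1/s}}$ is a nonnegative function of $\boldsymbol{k}$ alone:
\begin{align*}
	\|e^{\tau A^{1/s}}v\|_{H^r}^2 &= \sum_{\boldsymbol{k}\in\mathbb{Z}^3}\bigl(1+|\boldsymbol{k}|^{2r}e^{2\tau|\boldsymbol{k}|^{1/s}}\bigr)|\hat{v}_{\boldsymbol{k}}|^2 \\
	&= \sum_{\boldsymbol{k}\in\mathbb{Z}^3,\, k_3=0}\bigl(1+|\boldsymbol{k}|^{2r}e^{2\tau|\boldsymbol{k}|^{1/s}}\bigr)|\hat{v}_{\boldsymbol{k}}|^2 + \sum_{\boldsymbol{k}\in\mathbb{Z}^3,\, k_3\neq 0}\bigl(1+|\boldsymbol{k}|^{2r}e^{2\tau|\boldsymbol{k}|^{1/s}}\bigr)|\hat{v}_{\boldsymbol{k}}|^2 \\
	&= \|e^{\tau A^{1/s}}\overline{v}\|_{H^r}^2 + \|e^{\tau A^{1/s}}\widetilde{v}\|_{H^r}^2.
\end{align*}

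I do not expect any genuine obstacle here: the proof is a bookkeeping exercise in Fourier space. The only points requiring a modicum of care are that $v$ is $\mathbb{R}^2$-valued, so each identity is first proved componentwise and then summed, and that one really does have the frequency splitting above, which is exactly the content of (\ref{barotropic-and-baroclinic}). The same computation carried out with the weight $|\boldsymbol{k}|^{2r}e^{2\tau|\boldsymbol{k}|^{1/s}}$ in place of $1+|\boldsymbol{k}|^{2r}e^{2\tau|\boldsymbol{k}|^{1/s}}$ would, if desired, also yield the analogous semi-norm identity $\|A^r e^{\tau A^{1/s}} v\|^2 = \|A^r e^{\tau A^{1/s}}\overline{v}\|^2 + \|A^r e^{\tau A^{1/s}}\widetilde{v}\|^2$, consistently with (\ref{analytic-as-sum-of-l2-and-seminorm}).
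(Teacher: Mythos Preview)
Your proposal is correct and follows essentially the same approach as the paper: both arguments use the Fourier representation of $v$, $\overline{v}$, and $\widetilde{v}$ from (\ref{barotropic-and-baroclinic}), split the Plancherel sum over the disjoint frequency sets $\{k_3=0\}$ and $\{k_3\neq 0\}$, and observe that the weight $1+|\boldsymbol{k}|^{2r}e^{2\tau|\boldsymbol{k}|^{1/s}}$ respects this decomposition. Your write-up is slightly more explicit about why the Fourier coefficients split as claimed and about the vector-valued nature of $v$, but the substance is identical.
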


Observe that $\widetilde{v}^\perp$ can be written as $\widetilde{v}^{\perp} = -i(P_+ v - P_- v).$ Hence applying $P_\pm$ to (\ref{local-system-2}), one has
\begin{equation}\label{baroclinic-evolution-2} 
\begin{split}
    \partial_t P_\pm v + P_\pm\Big(\widetilde{v} \cdot \nabla \widetilde{v} + \widetilde{v} \cdot \nabla \overline{v} + \overline{v} \cdot \nabla \widetilde{v} &- P_0(\widetilde{v} \cdot \nabla \widetilde{v} + (\nabla \cdot \widetilde{v}) \widetilde{v} )\\
    &- (\int_0^z \nabla\cdot \widetilde{v}(\boldsymbol{x}',s)ds ) \partial_z \widetilde{v} \Big) \mp i\Omega P_\pm v = 0 . 
\end{split}
\end{equation}
By setting $u_\pm = e^{\mp i\Omega t}P_\pm v $, (\ref{baroclinic-evolution-2}) can be rewritten as
\begin{equation}\label{baroclinic-evolution-3}
\begin{split}
    \partial_t u_\pm + e^{\mp i\Omega t}P_\pm\Big(\widetilde{v} \cdot \nabla \widetilde{v} + \widetilde{v} \cdot \nabla \overline{v} + \overline{v} \cdot \nabla \widetilde{v} &- P_0(\widetilde{v} \cdot \nabla \widetilde{v} + (\nabla \cdot \widetilde{v}) \widetilde{v} )\\
    &- (\int_0^z \nabla\cdot \widetilde{v}(\boldsymbol{x}',s)ds ) \partial_z \widetilde{v} \Big) = 0.
\end{split}
\end{equation}
For the $u_+$ part, thanks to Lemma \ref{lemma-orthogonal-decomposition}, we have
\begin{equation*}
\begin{split}
    P_+(\widetilde{v} \cdot \nabla \widetilde{v})
    &=\frac{1}{2} (\widetilde{v} \cdot \nabla \widetilde{v} + i \widetilde{v} \cdot \nabla \widetilde{v}^\perp) - \frac{1}{2} P_0\Big( \widetilde{v} \cdot \nabla \widetilde{v} + i \widetilde{v} \cdot \nabla \widetilde{v}^\perp \Big)\\
    &= \frac{1}{2} \widetilde{v} \cdot \nabla (\widetilde{v} + i\widetilde{v}^\perp) - \frac{1}{2} P_0\Big( \widetilde{v} \cdot \nabla (\widetilde{v} + i\widetilde{v}^\perp) \Big) = e^{i\Omega t}\Big(\widetilde{v} \cdot \nabla u_+  - P_0(\widetilde{v} \cdot \nabla u_+) \Big)  ,
\end{split}
\end{equation*}
\begin{equation*}
    P_+(\widetilde{v} \cdot \nabla \overline{v}) = \frac{1}{2} (\widetilde{v} \cdot \nabla \overline{v} + i \widetilde{v} \cdot \nabla \overline{v}^\perp) = \frac{1}{2} \widetilde{v} \cdot \nabla (\overline{v} + i\overline{v}^\perp) ,
\end{equation*}
\begin{equation*}
    P_+(\overline{v} \cdot \nabla \widetilde{v}) = \frac{1}{2} (\overline{v} \cdot \nabla \widetilde{v} + i \overline{v} \cdot \nabla \widetilde{v}^\perp)  = e^{i\Omega t}(\overline{v} \cdot \nabla u_+ ),
\end{equation*}
\begin{equation*}
    P_+ P_0\Big(\widetilde{v} \cdot \nabla \widetilde{v} +(\nabla \cdot \widetilde{v}) \widetilde{v}\Big) = 0.
\end{equation*}
Observe that by integration by parts one has
\begin{equation*}
\begin{split}
    P_+\Big((\int_0^z \nabla\cdot \widetilde{v}(\boldsymbol{x}',s)ds) \partial_z \widetilde{v}\Big)  = &\frac{1}{2} \Big((\int_0^z \nabla\cdot \widetilde{v}(\boldsymbol{x}',s)ds ) \partial_z \widetilde{v} + i (\int_0^z \nabla\cdot \widetilde{v}(\boldsymbol{x}',s)ds ) \partial_z \widetilde{v}^\perp \Big) \\
    &- \frac{1}{2} P_0\Big((\int_0^z \nabla\cdot \widetilde{v}(\boldsymbol{x}',s)ds ) \partial_z \widetilde{v} + i (\int_0^z \nabla\cdot \widetilde{v}(\boldsymbol{x}',s)ds ) \partial_z \widetilde{v}^\perp \Big)\\
     = &e^{i\Omega t} (\int_0^z \nabla\cdot \widetilde{v}(\boldsymbol{x}',s)ds )  \partial_z u_+ + e^{i\Omega t} P_0 \Big( (\nabla\cdot\widetilde{v}) u_+  \Big) .
\end{split}
\end{equation*}
Therefore, $u_+$ part in (\ref{baroclinic-evolution-3}) becomes
\begin{eqnarray}
&&\hskip-.8in \partial_t u_+ = - \Big(\widetilde{v} \cdot \nabla u_+ + \overline{v} \cdot \nabla u_+ - P_0(\widetilde{v} \cdot \nabla u_+ + (\nabla \cdot \widetilde{v}) u_+) -(\int_0^z \nabla\cdot \widetilde{v}(\boldsymbol{x}',s)ds ) \partial_z u_+  \Big) \nonumber \\
&&\hskip-.28in - \frac{1}{2} e^{-i\Omega t} (\widetilde{v}\cdot \nabla)(\overline{v}+i\overline{v}^\perp).
\label{up2} 
\end{eqnarray}
Using $\widetilde{v} = u_+ e^{i\Omega t} + u_-e^{-i\Omega t}$, we can furthermore rewrite (\ref{up2}) as
\begin{eqnarray}
&&\hskip-.8in \partial_t u_+ = -e^{i\Omega t} \Big(u_+ \cdot \nabla u_+ - P_0( u_+ \cdot \nabla u_+ + (\nabla \cdot u_+) u_+) - (\int_0^z \nabla\cdot u_+(\boldsymbol{x}',s)ds ) \partial_z u_+ \Big) \nonumber \\
&&\hskip-.28in -  \Big(\overline{v} \cdot \nabla u_+  + \frac{1}{2}(u_+ \cdot \nabla)(\overline{v}+i\overline{v}^\perp) \Big) - e^{-2i\Omega t} \frac{1}{2} (u_- \cdot \nabla)(\overline{v}+i\overline{v}^\perp) \nonumber \\
&&\hskip-.28in - e^{-i\Omega t} \Big(u_- \cdot \nabla u_+ - P_0( u_- \cdot \nabla u_+ + (\nabla \cdot u_-) u_+) - (\int_0^z \nabla\cdot u_-(\boldsymbol{x}',s)ds ) \partial_z u_+ \Big). 
\label{up3} 
\end{eqnarray}
From \eqref{up3}, one can identify the resonant and non-resonant parts due to the rotation. Notice that $u_-$ is the complex conjugate of $u_+$, therefore,
\begin{eqnarray}
&&\hskip-.8in \partial_t u_- = -e^{-i\Omega t} \Big(u_- \cdot \nabla u_- - P_0( u_- \cdot \nabla u_- + (\nabla \cdot u_-) u_-) - (\int_0^z \nabla\cdot u_-(\boldsymbol{x}',s)ds ) \partial_z u_- \Big) \nonumber \\
&&\hskip-.28in -  \Big(\overline{v} \cdot \nabla u_-  + \frac{1}{2}(u_- \cdot \nabla)(\overline{v}-i\overline{v}^\perp) \Big)  - e^{2i\Omega t} \frac{1}{2} (u_+ \cdot \nabla)(\overline{v}-i\overline{v}^\perp) \nonumber \\
&&\hskip-.28in - e^{i\Omega t} \Big(u_+ \cdot \nabla u_- - P_0( u_+ \cdot \nabla u_- + (\nabla \cdot u_+) u_-) - (\int_0^z \nabla\cdot u_+(\boldsymbol{x}',s)ds ) \partial_z u_- \Big).
\label{um1} 
\end{eqnarray}
For $\overline{v}$, using $\widetilde{v} = u_+ e^{i\Omega t} + u_-e^{-i\Omega t}$, we can rewrite (\ref{local-system-1}) as:
\begin{eqnarray*}
&&\hskip-.8in \partial_t \overline{v} + \mathbb{P}_h(\overline{v}\cdot \nabla \overline{v}) + e^{2i\Omega t}\mathbb{P}_h P_0\Big(u_+ \cdot \nabla u_+  + (\nabla \cdot u_+) u_+\Big)  + e^{-2i\Omega t}\mathbb{P}_h P_0\Big(u_- \cdot \nabla u_-  + (\nabla \cdot u_-) u_-\Big) \nonumber \\
&&\hskip-.58in  + \mathbb{P}_h P_0 \Big(u_+ \cdot \nabla u_-  + u_- \cdot \nabla u_+ + (\nabla \cdot u_+) u_- + (\nabla \cdot u_-) u_+  \Big) = 0.
\label{barotropic-evolution-3} 
\end{eqnarray*}
Since $u_\pm = e^{\mp i\Omega t} P_\pm v = \frac{1}{2} e^{\mp i\Omega t} (\widetilde{v} \pm i \widetilde{v}^\perp)$, thanks to Lemma \ref{lemma-leray}, the last term becomes
\begin{eqnarray*}
&&\hskip-.8in
\mathbb{P}_h P_0 \Big(u_+ \cdot \nabla u_-  + u_- \cdot \nabla u_+ + (\nabla \cdot u_+) u_- + (\nabla \cdot u_-) u_+ \Big)  \\
&&\hskip-.9in 
= P_0 \mathbb{P}_h \Big(u_+ \cdot \nabla u_-  + u_- \cdot \nabla u_+ + (\nabla \cdot u_+) u_- + (\nabla \cdot u_-) u_+ \Big) \\
&&\hskip-.9in 
= \frac{1}{2} P_0 \mathbb{P}_h \Big(\widetilde{v} \cdot \nabla \widetilde{v} + \widetilde{v}^\perp \cdot \nabla \widetilde{v}^\perp + (\nabla \cdot \widetilde{v}) \widetilde{v} + (\nabla \cdot \widetilde{v}^\perp) \widetilde{v}^\perp\Big) = \frac{1}{2}  P_0 \mathbb{P}_h (\nabla |\widetilde{v}|^2) = 0.
\end{eqnarray*}
Therefore, one obtains
\begin{eqnarray}
&&\hskip-.8in \partial_t \overline{v} + \mathbb{P}_h(\overline{v}\cdot \nabla \overline{v}) + e^{2i\Omega t}\mathbb{P}_h P_0 \Big(u_+ \cdot \nabla u_+  + (\nabla \cdot u_+) u_+ \Big)  \nonumber\\
&&\hskip.22in
 +e^{-2i\Omega t}\mathbb{P}_h P_0\Big(u_- \cdot \nabla u_-  + (\nabla \cdot u_-) u_-\Big) = 0.
\label{barotropic-evolution-4} 
\end{eqnarray}

In summary, we hae the following lemma.
\begin{lemma}\label{2nd-equivalence}
For $v\in\mathcal{S}$, system (\ref{EQ1-1})--(\ref{EQ1-3}) is equivalent to system (\ref{up3})--(\ref{barotropic-evolution-4}).
\end{lemma}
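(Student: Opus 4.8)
The plan is to assemble the computations already carried out above and to observe that each manipulation is reversible, so that the two systems encode exactly the same information. By Lemma \ref{1st-equivalence}, for $v\in\mathcal{S}$ the original system (\ref{EQ1-1})--(\ref{EQ1-3}) is equivalent to the barotropic--baroclinic formulation (\ref{barotropic-evolution-1})--(\ref{baroclinic-evolution-1}); so it suffices to prove that (\ref{barotropic-evolution-1})--(\ref{baroclinic-evolution-1}) is equivalent to (\ref{up3})--(\ref{barotropic-evolution-4}), under the dictionary $\overline v=P_0 v$, $u_\pm=e^{\mp i\Omega t}P_\pm v$, equivalently $\widetilde v=u_+e^{i\Omega t}+u_-e^{-i\Omega t}$ and $v=\overline v+\widetilde v$, which is a bijection by Lemma \ref{lemma-orthogonal-decomposition}.

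For the baroclinic part I would follow the displayed computation verbatim: write $\widetilde v^\perp=-i(P_+v-P_-v)$, apply $P_\pm$ to (\ref{baroclinic-evolution-1}) to reach (\ref{baroclinic-evolution-2}), multiply through by $e^{\mp i\Omega t}$ to obtain (\ref{baroclinic-evolution-3}), and then substitute the term-by-term identities for $P_+(\widetilde v\cdot\nabla\widetilde v)$, $P_+(\widetilde v\cdot\nabla\overline v)$, $P_+(\overline v\cdot\nabla\widetilde v)$, $P_+P_0(\cdots)$ and $P_+\bigl((\int_0^z\nabla\cdot\widetilde v\,ds)\partial_z\widetilde v\bigr)$ — each an immediate consequence of (\ref{Pplus}), (\ref{P0}) and integration by parts in $z$ — to get (\ref{up2}); inserting $\widetilde v=u_+e^{i\Omega t}+u_-e^{-i\Omega t}$ and grouping terms by their oscillatory factor gives (\ref{up3}). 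Since $v$ is real-valued, (\ref{Pplus})--(\ref{Pminus}) give $\overline{P_+v}=P_-v$, hence $\overline{u_+}=u_-$, and conjugating (\ref{up3}) yields (\ref{um1}); because $\widetilde v=P_+v+P_-v$, knowing the $P_+$- and $P_-$-projections of (\ref{baroclinic-evolution-1}) is the same as knowing (\ref{baroclinic-evolution-1}) itself, so (\ref{up3})--(\ref{um1}) is equivalent to (\ref{baroclinic-evolution-1}). For the barotropic part I would substitute $\widetilde v=u_+e^{i\Omega t}+u_-e^{-i\Omega t}$ into (\ref{barotropic-evolution-1}), expand the quadratic term, and use Lemma \ref{lemma-leray} ($\mathbb{P}_h$ commutes with $P_0$) to rewrite the mixed $u_+$--$u_-$ contribution as $\frac12 P_0\mathbb{P}_h\bigl(\widetilde v\cdot\nabla\widetilde v+\widetilde v^\perp\cdot\nabla\widetilde v^\perp+(\nabla\cdot\widetilde v)\widetilde v+(\nabla\cdot\widetilde v^\perp)\widetilde v^\perp\bigr)=\frac12 P_0\mathbb{P}_h(\nabla|\widetilde v|^2)=0$, leaving precisely (\ref{barotropic-evolution-4}).

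Every step above is an identity that reads equally well backwards: from (\ref{up3})--(\ref{um1}) one recovers $P_\pm v=e^{\pm i\Omega t}u_\pm$ and then the right-hand side of (\ref{baroclinic-evolution-1}) by undoing the term-by-term identities, and from (\ref{barotropic-evolution-4}) one recovers (\ref{barotropic-evolution-1}) after restoring the mixed term that is identically zero; combining this with Lemma \ref{1st-equivalence} closes the equivalence with (\ref{EQ1-1})--(\ref{EQ1-3}). There is no genuine obstacle — the statement is essentially a repackaging of the preceding pages — but the one point requiring care is the bookkeeping in the substitution $\widetilde v=u_+e^{i\Omega t}+u_-e^{-i\Omega t}$: one must check that the resonant (oscillation-free) and non-resonant ($e^{\pm i\Omega t}$, $e^{\pm 2i\Omega t}$) pieces of (\ref{up3}) are grouped correctly, and that the cancellation $\frac12 P_0\mathbb{P}_h(\nabla|\widetilde v|^2)=0$ really does kill all the cross terms in the barotropic equation.
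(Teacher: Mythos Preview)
Your proposal is correct and matches the paper's approach exactly: the paper states this lemma as a summary (``In summary, we have the following lemma'') of the preceding displayed computations, without giving a separate proof, and your write-up simply recapitulates those computations and notes their reversibility. The only minor addition you make explicit---that the bijection $v\leftrightarrow(\overline v,u_+,u_-)$ via Lemma~\ref{lemma-orthogonal-decomposition} and the reversibility of each algebraic step justify the word ``equivalent''---is implicit in the paper's presentation.
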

For the relation between the norm of $\widetilde{v}$ and the norms of $u_\pm$ in $L^2(\mathbb{T}^3)$ and $\mathcal{D}(e^{\tau A^{1/s}}: H^r(\mathbb{T}^3))$, we have the following Lemma. The proof is straightforward and we omit it.
\begin{lemma}\label{lemma-vtilde-upm}
Let $u_\pm = \frac{1}{2} e^{\mp i\Omega t}(\widetilde{v}\pm i\widetilde{v}^\perp)$. Suppose that $r\geq 0$, $s>0$, and $\tau \geq 0$, we have
\begin{equation*}
    \|u_+\|^2 = \|u_-\|^2 = \frac{1}{2}\|\widetilde{v}\|^2 \text{ and } \|e^{\tau A^{1/s}} u_+\|_{H^r}^2 = \|e^{\tau A^{1/s}} u_-\|_{H^r}^2 = \frac{1}{2}\|e^{\tau A^{1/s}} \widetilde{v}\|_{H^r} ^2  .
\end{equation*}
\end{lemma}

In sections 3 and 4, we work with system (\ref{local-system-1})--(\ref{local-system-2}) since the results are independent of the rate of rotation. On the other hand, in section 5 and 6, we work with system (\ref{up3})--(\ref{barotropic-evolution-4}) since our focus is on the effect of rotation.

\section{Local in time Well-posedness}
In this section, we study the local in time well-posedness in the space of analytic functions system \eqref{local-system-1}--\eqref{local-system-2}
in $\mathbb{T}^3$, subject to the following symmetry boundary conditions and initial conditions:
\begin{eqnarray}
&&\hskip-.8in
\overline{v}, \widetilde{v}\text{ are periodic in } \mathbb{T}^3 \text{ and are even in } z; \label{local-bc2}\\
&&\hskip-.8in
\overline{v}|_{t=0} = \overline{v}_0 = P_0 v_0,  \hspace{0.1in} \widetilde{v}|_{t=0} = \widetilde{v}_0 = (I-P_0)v_0, \hspace{0.1in} \nabla\cdot \overline{v}_0 =0. \label{local-ic}
\end{eqnarray}
Observe that whenever $v\in\mathcal{S}$
then $\overline{v},\widetilde{v}\in\mathcal{S}$. We have the following result:

\begin{theorem}\label{theorem-local}
Assume $\overline{v}_0, \widetilde{v}_0 \in \mathcal{S}\cap\mathcal{D}(e^{\tau_0 A}: H^r(\mathbb{T}^3))$ with $r>\frac{5}{2}$ and $\tau_0 >0$. Let $\Omega \in \mathbb{R}$ be arbitrary and fixed. Then there exist a time 
\begin{equation}\label{T-1}
    \mathcal{T} = \frac{\tau_0}{1+2C_r(1+\|e^{\tau_0 A} \overline{v}_0 \|_{H^r}^2 + \|e^{\tau_0 A} \widetilde{v}_0 \|_{H^r}^2)} > 0,
\end{equation}
and a function
\begin{equation}\label{tau-1}
    \tau(t) = \tau_0 - 2t C_r(1+\|e^{\tau_0 A} \overline{v}_0 \|_{H^r}^2 + \|e^{\tau_0 A} \widetilde{v}_0 \|_{H^r}^2),  
\end{equation}
both independent of $\Omega$, such that there exists a unique solution
\begin{equation}\label{regularity-solution-local}
    (\overline{v}, \widetilde{v}) \in   L^\infty\big(0,\mathcal{T}; \mathcal{S}\cap\mathcal{D}(e^{\tau(t) A} : H^r(\mathbb{T}^3))\big) \cap L^2\big(0,\mathcal{T}; \mathcal{D}(e^{\tau(t) A} : H^{r+\frac{1}{2}}(\mathbb{T}^3))\big)
\end{equation}
to system (\ref{local-system-1})--(\ref{local-system-2}) on $[0,\mathcal{T}]$. Moreover, the unique solution $(\overline{v}, \widetilde{v})$ depends continuously on the initial data, in the sense of (\ref{continuous-dependence}).
\end{theorem}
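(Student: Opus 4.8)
The plan is to prove local well-posedness by the standard energy method in Gevrey/analytic spaces, using a Galerkin (or mollified/iteration) scheme combined with a Gevrey energy estimate with a time-dependent radius of analyticity $\tau(t)$ that shrinks linearly in time. The key point, following the philosophy of Levermore–Oliver and Kukavica–Vicol, is that the analytic norm $\|e^{\tau(t)A}\cdot\|_{H^r}$ satisfies, along solutions, a differential inequality in which the loss of one derivative coming from the transport and divergence nonlinearities is absorbed by the negative term $\dot\tau(t)\|A^{r+1/2}e^{\tau(t)A}\cdot\|^2$ (note the exponent $1/2$, i.e. the radius is measured in $A^{1/s}$ with $s=1$), provided $\dot\tau(t)$ is chosen sufficiently negative. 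Because the pressure has been eliminated via the Leray projection $\mathbb{P}_h$ in \eqref{local-system-1} and $\Omega\widetilde v^\perp$ has the same analytic norm as $\widetilde v$ (an orthogonal rotation, preserving every Fourier mode's modulus), the energy estimate — and hence $\mathcal T$ and $\tau(t)$ — will be manifestly independent of $\Omega$; the rotation term will simply drop out of the energy identity by antisymmetry (this uses that in the combined energy $\|e^{\tau A}\overline v\|_{H^r}^2+\|e^{\tau A}\widetilde v\|_{H^r}^2$, the pairing $\langle A^r e^{\tau A}\widetilde v, A^r e^{\tau A}\widetilde v^\perp\rangle=0$).

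The steps, in order, are the following. (1) Set $E(t):=\|e^{\tau(t)A}\overline v(t)\|_{H^r}^2+\|e^{\tau(t)A}\widetilde v(t)\|_{H^r}^2$, with $\tau(t)$ as in \eqref{tau-1}, and derive the a priori estimate $\frac{d}{dt}E + (-\dot\tau)\big(\|A^{r+1/2}e^{\tau A}\overline v\|^2+\|A^{r+1/2}e^{\tau A}\widetilde v\|^2\big) \le 2C_r(1+E)\,\big(\|A^{r+1/2}e^{\tau A}\overline v\|^2+\|A^{r+1/2}e^{\tau A}\widetilde v\|^2\big)$, so that with the choice $-\dot\tau = 2C_r(1+\|e^{\tau_0A}\overline v_0\|_{H^r}^2+\|e^{\tau_0A}\widetilde v_0\|_{H^r}^2)$ a continuity/bootstrap argument gives $E(t)\le E(0)$ on $[0,\mathcal T]$ with $\mathcal T$ as in \eqref{T-1}, plus the $L^2$-in-time gain of half a derivative. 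The nonlinear terms $\overline v\cdot\nabla\overline v$, $(\nabla\cdot\widetilde v)\widetilde v$, $\widetilde v\cdot\nabla\widetilde v$, $\widetilde v\cdot\nabla\overline v$, $\overline v\cdot\nabla\widetilde v$, and the nonlocal term $\big(\int_0^z\nabla\cdot\widetilde v\,ds\big)\partial_z\widetilde v$ are handled by the Gevrey commutator/product estimates; the Banach-algebra property (Lemma \ref{lemma-banach-algebra}) controls the products, while the single derivative appearing in $\nabla$ or $\partial_z$ is exactly the half-derivative split symmetrically between the two factors in the $H^{r+1/2}$ seminorm. The $P_0$ and $\mathbb{P}_h$ projections are harmless since they commute with $A^r e^{\tau A}$ (Lemmas \ref{lemma-projection}, \ref{lemma-leray}) and are bounded on every space involved. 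These detailed nonlinear estimates are the content of the appendix referred to in the introduction; I would invoke them as black boxes here.

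(2) Construct solutions via a Galerkin truncation $P_{\le N}$ of \eqref{local-system-1}--\eqref{local-system-2} (projecting onto Fourier modes $|\boldsymbol k|\le N$), which is a locally Lipschitz ODE system on a finite-dimensional space preserving the symmetry class $\mathcal S$ and the evenness/oddness constraints; the a priori bound from step (1) applies uniformly in $N$, giving global-in-$[0,\mathcal T]$ existence of the truncated solutions with $N$-uniform bounds in $L^\infty(0,\mathcal T;\mathcal D(e^{\tau(t)A}:H^r))\cap L^2(0,\mathcal T;\mathcal D(e^{\tau(t)A}:H^{r+1/2}))$. (3) Pass to the limit: from the equations, $\partial_t(\overline v^N,\widetilde v^N)$ is bounded in $L^2(0,\mathcal T;\mathcal D(e^{\tau(t)A}:H^{r-1/2}))$, so by the Aubin–Lions Lemma \ref{Aubin-Lions} (with $X=\mathcal D(e^{\tau(\mathcal T)A}:H^r)$, $B=\mathcal D(e^{\tau(\mathcal T)A}:H^{r-\epsilon})$, $Y=\mathcal D(e^{\tau(\mathcal T)A}:H^{r-1/2})$, using the frozen smallest radius $\tau(\mathcal T)>0$) one extracts a strongly convergent subsequence, enough to pass to the limit in all nonlinear terms; weak lower semicontinuity retains the bounds \eqref{regularity-solution-local}. (4) Uniqueness and continuous dependence: for two solutions, estimate the difference $\delta\overline v=\overline v^{(1)}-\overline v^{(2)}$, $\delta\widetilde v$ in a \emph{lower} analytic norm $\mathcal D(e^{\tilde\tau(t)A}:H^{r-1})$ with an even faster-shrinking $\tilde\tau$, obtaining a Gronwall inequality $\frac{d}{dt}\|\delta\|^2\le C\|\delta\|^2$ on a (possibly shorter) common interval; this gives uniqueness, and tracking the dependence on initial data yields \eqref{continuous-dependence}.

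The main obstacle is step (1): closing the Gevrey energy estimate so that every appearance of a derivative on the nonlinearity (including the vertical derivative hitting $\partial_z\widetilde v$ in the nonlocal term, where one must integrate by parts in $z$ and use the evenness of $v$/oddness of $w$-type boundary conditions \eqref{local-bc2} to discard boundary contributions, cf. the derivation of \eqref{baroclinic-evolution-1}) is balanced by exactly the half-derivative dissipation-like term $-\dot\tau\|A^{r+1/2}e^{\tau A}\cdot\|^2$ rather than a full derivative — this is the reason the analytic ($s=1$) scale is sharp and why the bilinear estimates must be of the ``tame'' type $\|A^{r+1/2}e^{\tau A}(fg)\|\lesssim \|A^{r+1/2}e^{\tau A}f\|\,(\|g\|+\|A^re^{\tau A}g\|)+(\text{sym})$. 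Once this differential inequality is in hand with $\Omega$-independent constants, the rest is routine.
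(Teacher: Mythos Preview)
Your proposal is correct and follows essentially the same approach as the paper: Galerkin approximation, a Gevrey energy estimate with linearly shrinking $\tau(t)$ closed via the trilinear analytic product estimates (Lemmas \ref{lemma-type1}--\ref{lemma-type3}), a bootstrap/continuity argument to propagate $E(t)\le E(0)$ on $[0,\mathcal T]$, Aubin--Lions compactness with the frozen radius $\tau(\mathcal T)$, and uniqueness via a difference estimate in a lower analytic norm. The only minor discrepancies are cosmetic: the paper does uniqueness in $\mathcal D(e^{\widetilde\tau A}:H^{r-1/2})$ rather than $H^{r-1}$, and $\widetilde\tau$ shrinks at the \emph{same} rate as $\tau$ (just built from the maximum of the two initial data sizes) rather than faster.
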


Observe that in the theorem above, the local time of existence is independent of $\Omega$, unlike the situation in \cite{KTVZ11} under the periodic boundary condition. Thanks to Lemma \ref{1st-equivalence} and \ref{lemma-decomposition} we have the following corollary for the original system (\ref{EQ1-1})--(\ref{EQ-bc2}).
\begin{corollary}\label{corollary-local}
Assume $v_0 \in \mathcal{S}\cap\mathcal{D}(e^{\tau_0 A}: H^r(\mathbb{T}^3))$ with $r>\frac{5}{2}$ and $\tau_0 >0$. Let $\Omega \in \mathbb{R}$ be arbitrary and fixed. Then there exist a time $\mathcal{T}$ defined in (\ref{T-1}) and a function $\tau(t)$ defined in (\ref{tau-1}), both independent of $\Omega$, such that there exists a unique solution
\begin{equation*}
    v\in  L^\infty\big(0,\mathcal{T}; \mathcal{S}\cap\mathcal{D}(e^{\tau(t) A} : H^r(\mathbb{T}^3))\big) \cap L^2\big(0,\mathcal{T}; \mathcal{D}(e^{\tau(t) A} : H^{r+\frac{1}{2}}(\mathbb{T}^3))\big)
\end{equation*}
to system (\ref{EQ1-1})--(\ref{EQ-bc2}) on $[0,T]$. Moreover, the unique solution $v$ depends continuously on the initial data.
\end{corollary}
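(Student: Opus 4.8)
The plan is to read off Corollary~\ref{corollary-local} from Theorem~\ref{theorem-local} by means of the equivalence Lemma~\ref{1st-equivalence} and the additive splitting Lemma~\ref{lemma-decomposition}; no new estimates are needed. First I would split the datum as $v_0 = \overline{v}_0 + \widetilde{v}_0$ with $\overline{v}_0 = P_0 v_0$ and $\widetilde{v}_0 = (I-P_0) v_0$. Since $v_0\in\mathcal{S}$, it was already observed that $\overline{v}_0,\widetilde{v}_0\in\mathcal{S}$ and $\nabla\cdot\overline{v}_0 = 0$, while Lemma~\ref{lemma-decomposition} gives
\begin{equation*}
\|e^{\tau_0 A}\overline{v}_0\|_{H^r}^2 + \|e^{\tau_0 A}\widetilde{v}_0\|_{H^r}^2 = \|e^{\tau_0 A} v_0\|_{H^r}^2 < \infty ,
\end{equation*}
so $\overline{v}_0,\widetilde{v}_0\in\mathcal{S}\cap\mathcal{D}(e^{\tau_0 A}:H^r(\mathbb{T}^3))$; in particular the time $\mathcal{T}$ in \eqref{T-1} and the profile $\tau(t)$ in \eqref{tau-1} may be written with $\|e^{\tau_0 A}\overline{v}_0\|_{H^r}^2 + \|e^{\tau_0 A}\widetilde{v}_0\|_{H^r}^2$ replaced by $\|e^{\tau_0 A} v_0\|_{H^r}^2$, and remain independent of $\Omega$.

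Next I would apply Theorem~\ref{theorem-local} to produce the unique $(\overline{v},\widetilde{v})$ solving \eqref{local-system-1}--\eqref{local-ic} on $[0,\mathcal{T}]$ in the class \eqref{regularity-solution-local}, and set $v := \overline{v}+\widetilde{v}$. The barotropic/baroclinic structure is propagated by the flow \eqref{local-system-1}--\eqref{local-system-2}: applying $\nabla\cdot$ to \eqref{local-system-1} and using $\nabla\cdot\mathbb{P}_h = 0$ gives $\partial_t(\nabla\cdot\overline{v}) = 0$, hence $\nabla\cdot\overline{v}\equiv 0$; $P_0$ annihilates the full right-hand side of \eqref{local-system-2}, so $\partial_t P_0\widetilde{v} = 0$ and $P_0\widetilde{v}\equiv 0$; and $\overline{v}$ stays independent of $z$ since every term on the right-hand side of \eqref{local-system-1} is. Therefore $v\in\mathcal{S}$, with $P_0 v = \overline{v}$ and $(I-P_0)v = \widetilde{v}$, the symmetry conditions \eqref{EQ-bc2} being inherited from \eqref{local-bc2}, and Lemma~\ref{1st-equivalence} then furnishes a pressure $p$ (with $\partial_z p = 0$) and the vertical velocity $w = -\int_0^z \nabla\cdot\widetilde{v}(\boldsymbol{x}',s)\,ds$ so that $(v,w,p)$ solves \eqref{EQ1-1}--\eqref{EQ1-3} subject to \eqref{EQ-ic}--\eqref{EQ-bc2} on $[0,\mathcal{T}]$. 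The regularity assertion is immediate from Lemma~\ref{lemma-decomposition}: since $\overline{v},\widetilde{v}\in L^\infty(0,\mathcal{T};\mathcal{D}(e^{\tau(t)A}:H^r))\cap L^2(0,\mathcal{T};\mathcal{D}(e^{\tau(t)A}:H^{r+1/2}))$ and the corresponding squared norms add, the same holds for $v$.

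For uniqueness, suppose $u$ is any solution of \eqref{EQ1-1}--\eqref{EQ-bc2} lying in the stated class with $u|_{t=0} = v_0$. As $u\in\mathcal{S}$, Lemma~\ref{1st-equivalence} shows $(P_0 u,(I-P_0)u)$ solves \eqref{local-system-1}--\eqref{local-ic} with initial data $(\overline{v}_0,\widetilde{v}_0)$, so by the uniqueness in Theorem~\ref{theorem-local} it coincides with $(\overline{v},\widetilde{v})$, whence $u = v$. Continuous dependence transfers likewise: the maps $v_0\mapsto(\overline{v}_0,\widetilde{v}_0)$ and $(\overline{v},\widetilde{v})\mapsto\overline{v}+\widetilde{v}$ are linear and, by Lemma~\ref{lemma-decomposition}, isometric for the analytic norms at each time slice, so the estimate \eqref{continuous-dependence} for $(\overline{v},\widetilde{v})$ given by Theorem~\ref{theorem-local} yields the corresponding estimate for $v$.

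The argument is essentially bookkeeping, and the only point that genuinely deserves care is the verification that solving the decomposed system \eqref{local-system-1}--\eqref{local-system-2} truly returns a function $v = \overline{v}+\widetilde{v}$ whose own barotropic and baroclinic modes are exactly $\overline{v}$ and $\widetilde{v}$ (i.e.\ invariance of $\mathcal{S}$ and of the constraints $\nabla\cdot\overline{v}=0$, $\partial_z\overline{v}=0$, $P_0\widetilde{v}=0$ under the flow), together with the attendant recovery of $(w,p)$ through Lemma~\ref{1st-equivalence}; everything else — life-span, radius of analyticity, regularity class, uniqueness, continuous dependence — passes through the additive splitting of Lemma~\ref{lemma-decomposition} unchanged.
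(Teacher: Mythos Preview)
Your proposal is correct and follows exactly the approach the paper intends: the paper simply states ``Thanks to Lemma~\ref{1st-equivalence} and~\ref{lemma-decomposition}'' before the corollary, and you have spelled out precisely how those two lemmas combine with Theorem~\ref{theorem-local} to yield the result. Your added care in verifying that the constraints $\nabla\cdot\overline{v}=0$, $\partial_z\overline{v}=0$, and $P_0\widetilde{v}=0$ are propagated by the flow is a welcome elaboration of what the paper leaves implicit.
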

To show the existence of solutions, one can work on the Galerkin approximation of system (\ref{local-system-1})--(\ref{local-system-2}) to establish an uniform energy estimate, then by using, in a nontraditional way (cf. \cite{LT10}), the Aubin-Lions compactness theorem to pass to the limit and show the existence of solutions. For simplicity, we only do the formal energy estimates (for details, see \cite{GILT20}). Finally, we establish the uniqueness of solutions and its continuous dependence on the initial data.

\subsection{Energy Estimates}
In this section, we establish the formal energy estimates for system (\ref{local-system-1})--(\ref{local-system-2}). By virtue of Lemma \ref{lemma-projection} and Lemma \ref{lemma-leray}, and since $\nabla \cdot \overline{v} = 0$, we have the conservation of the $L^2$ energy
\begin{eqnarray}
&&\hskip-.78in
\|\overline{v} (t) \|^2 + \|\widetilde{v} (t) \|^2 
= \|\overline{v}_0\|^2 + \|\widetilde{v}_0 \|^2. \label{energy-estimate-L2-Galerkin}
\end{eqnarray}
Next, employing Lemma \ref{lemma-projection} and Lemma \ref{lemma-leray}, we derive the following estimate for the analytic norm
\begin{eqnarray*}\label{galerkin-barotropic-1}
&&\hskip-.58in
\frac{1}{2} \frac{d}{dt} \|A^r e^{\tau A} \overline{v}  \|^2 = \dot{\tau} \|A^{r+\frac{1}{2}} e^{\tau A} \overline{v}  \|^2 - \Big\langle A^r e^{\tau A}\Big(\overline{v}  \cdot \nabla \overline{v} \Big) , A^r e^{\tau A} \overline{v}  \Big\rangle \nonumber \\
&&\hskip.58in - \Big\langle A^r e^{\tau A}\Big((\nabla\cdot \widetilde{v} )\widetilde{v} \Big) , A^r e^{\tau A} \overline{v}  \Big\rangle - \Big\langle A^r e^{\tau A}\Big(\widetilde{v}  \cdot \nabla \widetilde{v} \Big) , A^r e^{\tau A} \overline{v}  \Big\rangle, \quad \text{and}
\end{eqnarray*}
\begin{eqnarray*}\label{galerkin-baroclinic-1}
&&\hskip-.58in
\frac{1}{2} \frac{d}{dt} \|A^r e^{\tau A} \widetilde{v}  \|^2 = \dot{\tau} \|A^{r+\frac{1}{2}} e^{\tau A} \widetilde{v}  \|^2 - \Big\langle A^r e^{\tau A}\Big(\widetilde{v}  \cdot \nabla \widetilde{v} \Big) , A^r e^{\tau A} \widetilde{v}  \Big\rangle - \Big\langle A^r e^{\tau A}\Big( \widetilde{v}  \cdot \nabla \overline{v}  \Big) , A^r e^{\tau A} \widetilde{v}  \Big\rangle  \nonumber \\
&&\hskip.58in - \Big\langle A^r e^{\tau A}\Big(\overline{v}  \cdot \nabla \widetilde{v} \Big) , A^r e^{\tau A} \widetilde{v}  \Big\rangle + \Big\langle A^r e^{\tau A}\Big( (\int_0^z \nabla\cdot \widetilde{v} (\boldsymbol{x}', s)ds) \partial_z \widetilde{v}  \Big) , A^r e^{\tau A} \widetilde{v}  \Big\rangle. 
\end{eqnarray*}
By Lemma \ref{lemma-type1}--\ref{lemma-type3}, since $\overline{v} $ and $\widetilde{v} $ having zero mean and thanks to Young's inequality, we obtain
\begin{eqnarray}
&&\hskip-.58in 
\frac{1}{2} \frac{d}{dt} \Big( \|A^r e^{\tau A} \overline{v}  \|^2 + \|A^r e^{\tau A} \widetilde{v}  \|^2  \Big) +  \Big( \|A^{r+\frac{1}{2}} e^{\tau A} \overline{v}  \|^2 +  \|A^{r+\frac{1}{2}} e^{\tau A} \widetilde{v}  \|^2 \Big)\nonumber \\
&&\hskip-.28in 
\leq  \Big( \dot{\tau} + C_r( \|A^r e^{\tau A} \overline{v}  \| + \|A^r e^{\tau A} \widetilde{v}  \|) + 1\Big) \Big( \|A^{r+\frac{1}{2}} e^{\tau A} \overline{v}  \|^2 +  \|A^{r+\frac{1}{2}} e^{\tau A} \widetilde{v}  \|^2 \Big)\nonumber\\
&&\hskip-.28in 
\leq\Big( \dot{\tau} + C_r(1+ \|e^{\tau A} \overline{v}  \|_{H^r}^2 + \|e^{\tau A} \widetilde{v}  \|_{H^r}^2) \Big) \Big( \|A^{r+\frac{1}{2}} e^{\tau A} \overline{v}  \|^2 +  \|A^{r+\frac{1}{2}} e^{\tau A} \widetilde{v}  \|^2 \Big)
. \label{energy-estimate-Galerkin-1}
\end{eqnarray}
\begin{remark}
Here we add the term $\|A^{r+\frac{1}{2}} e^{\tau A} \overline{v}  \|^2 +  \|A^{r+\frac{1}{2}} e^{\tau A} \widetilde{v}  \|^2 $ to both sides so that one can obtain the regularity in $L^2\big(0,\mathcal{T}; \mathcal{D}(e^{\tau(t) A} : H^{r+\frac{1}{2}}(\mathbb{T}^3))\big)$.
\end{remark}
Let $\tau$ satisfy 
\begin{eqnarray}\label{taudot}
\dot{\tau} + 2C_r(1+ \|e^{\tau_0 A} \overline{v}_0 \|_{H^r}^2 + \|e^{\tau_0 A} \widetilde{v}_0 \|_{H^r}^2)  = 0,
\end{eqnarray}
hence
\begin{equation}
  \tau(t) = \tau_0 - 2t C_r(1+\|e^{\tau_0 A} \overline{v}_0 \|_{H^r}^2 + \|e^{\tau_0 A} \widetilde{v}_0 \|_{H^r}^2).   \label{tau}
\end{equation}
Denote by 
\begin{eqnarray}
\mathcal{T} = \frac{\tau_0}{1+2C_r(1+\|e^{\tau_0 A} \overline{v}_0 \|_{H^r}^2 + \|e^{\tau_0 A} \widetilde{v}_0 \|_{H^r}^2)} > 0, \label{T}
\end{eqnarray}
therefore, 
$
    \tau(t)\geq \tau(\mathcal{T})=\frac{\tau_0}{1+2C_r(1+\|e^{\tau_0 A} \overline{v}_0 \|_{H^r}^2 + \|e^{\tau_0 A} \widetilde{v}_0 \|_{H^r}^2)}>0
$
for $t\in[0,\mathcal{T}]$. Here we require $C_r$ to be large enough such that 
\begin{equation}\label{Cr}
    C_r \geq 2 (\widetilde{C}_r+ C_{r-\frac{1}{2}}),
\end{equation}
where $\widetilde{C}_r$ appears in (\ref{uniqueness-L2}) and $C_{r-\frac{1}{2}}$ appears in (\ref{uniqueness-analytic}). 
Thanks to \eqref{energy-estimate-L2-Galerkin}, \eqref{energy-estimate-Galerkin-1}, and \eqref{taudot}, one obtains that for $t\in [0,\mathcal{T}]$,
\begin{equation}\label{energy-estimate-local}
\begin{split}
    \|e^{\tau(t) A} \overline{v} (t) \|_{H^r}^2 + \|e^{\tau(t) A} \widetilde{v} (t) \|_{H^r}^2 &+ 2 \int_0^t \|A^{r+\frac{1}{2}} e^{\tau(s) A} \overline{v}(s)  \|^2 +  \|A^{r+\frac{1}{2}} e^{\tau(s) A} \widetilde{v}(s)  \|^2 ds
    \\
    & \leq  \|e^{\tau_0 A} \overline{v}_0\|_{H^r}^2 + \|e^{\tau_0 A} \widetilde{v}_0\|_{H^r}^2. 
\end{split}
\end{equation}
Moreover, it is easy to see that $(\overline{v}, \widetilde{v}) \in \mathcal{S}$. Therefore, the solution $(\overline{v}, \widetilde{v})$ satisfies \eqref{regularity-solution-local}.

For the estimates on $\partial_t \overline{v} $ and $\partial_t \widetilde{v} $, by directly applying $L^2$ estimate on (\ref{local-system-1}) and (\ref{local-system-2}), thanks to Lemma \ref{lemma-banach-algebra}, \ref{lemma-projection} and \ref{lemma-leray}, by the H\"older inequality and the Sobolev inequality, since $r>\frac{5}{2}$, one has
\begin{eqnarray}
&&\hskip-.8in 
\| \partial_t \overline{v} \| \leq C_r (\|\overline{v} \|_{H^r}^2 + \|\widetilde{v} \|_{H^r}^2) , \quad \| \partial_t \widetilde{v} \| \leq C_r (\|\overline{v} \|_{H^r}^2 + \|\widetilde{v} \|_{H^r}^2 + |\Omega| \|\widetilde{v} \| ), \label{L2-time-derivative}
\end{eqnarray}
\begin{equation}\label{analytic-time-derivative-bar}
    \begin{split}
        \| A^{r-\frac{1}{2}}&e^{\tau A} \partial_t \overline{v} \| \leq C_r \Big( \|e^{\tau A} \overline{v}  \|_{H^{r}} \|e^{\tau A} \overline{v}  \|_{H^{r+\frac{1}{2}}} +  \|e^{\tau A} \widetilde{v}  \|_{H^{r}} \|e^{\tau A} \widetilde{v}  \|_{H^{r+\frac{1}{2}}} \Big),
    \end{split}
\end{equation}
\begin{equation}\label{analytic-time-derivative-tilde}
\begin{split}
    \| A^{r-\frac{1}{2}}&e^{\tau  A} \partial_t \widetilde{v}  \| \leq C_r \Big(\|e^{\tau  A} \overline{v}  \|_{H^{r+\frac{1}{2}}}^2 + \|e^{\tau  A} \widetilde{v}  \|_{H^{r+\frac{1}{2}}}^2 + |\Omega|\|A^{r}e^{\tau  A} \widetilde{v}  \| \Big) .
\end{split}
\end{equation}
By virtue of the bound \eqref{energy-estimate-local}, from (\ref{L2-time-derivative})--(\ref{analytic-time-derivative-tilde}), we have
\begin{eqnarray*}
&&\hskip-.8in 
\partial_t\overline{v} \in L^2\big(0,\mathcal{T}; \mathcal{D}(e^{\tau(t) A} : H^{r-\frac{1}{2}})\big) \cap L^\infty\big(0,\mathcal{T}; L^2), \nonumber \\
&&\hskip-.8in 
\partial_t\widetilde{v} \in  L^1\big(0,\mathcal{T}; \mathcal{D}(e^{\tau(t) A} : H^{r-\frac{1}{2}})\big) \cap L^\infty\big(0,\mathcal{T}; L^2).\label{uniform-bound-Galerkin-time-derivative} 
\end{eqnarray*}

\subsection{Uniqueness of Solutions and Continuous Dependence on the Initial Data} In this section, we show the uniqueness of solutions and the continuous dependence on the initial data. Let $(\overline{v}_1, \widetilde{v}_1)$ and
$(\overline{v}_2, \widetilde{v}_2)$ be two strong solutions to system
(\ref{local-system-1})--(\ref{local-system-2}) with initial data $((\overline{v}_0)_1, (\widetilde{v}_0)_1)$ and
$((\overline{v}_0)_2, (\widetilde{v}_0)_2)$, respectively. Assume the radius of analyticity for initial data $((\overline{v}_0)_1, (\widetilde{v}_0)_1)$ is $\tau_{10}$, and for $((\overline{v}_0)_2, (\widetilde{v}_0)_2)$ is $\tau_{20}$. Let $\tau_0 = \min\{\tau_{10}, \tau_{20}\}$, and 
\begin{equation}\label{M}
  M = \max\Big\{\|e^{\tau_{10} A} (\overline{v}_0)_1\|_{H^r}^2 + \|e^{\tau_{10} A} (\widetilde{v}_0)_1\|_{H^r}^2, \|e^{\tau_{20} A} (\overline{v}_0)_2\|_{H^r}^2 + \|e^{\tau_{20} A} (\widetilde{v}_0)_2\|_{H^r}^2\Big\}.  
\end{equation}
Denote by $\overline{v}=\overline{v}_1-\overline{v}_2$ and $\widetilde{v}=\widetilde{v}_1-\widetilde{v}_2$. By virtue of (\ref{tau}) and (\ref{T}), we define
\begin{equation}\label{Ttilde}
    \widetilde{\tau}(t) = \tau_0 - 2tC_r(1+M), \;\;\; \widetilde{\mathcal{T}} = \frac{\tau_0}{1+2C_r(1+M)}.
\end{equation}
Here $C_r$ satisfies (\ref{Cr}). 
From previous sections, and by the definition of $\tau_0$ and $M$, we know
$   \|e^{\widetilde{\tau}(t) A} \overline{v}_i(t)\|_{H^r}^2 + \| e^{\widetilde{\tau}(t) A} \widetilde{v}_i(t)\|_{H^r}^2 \leq M
$
for $i=1,2$ and $t\in [0,\widetilde{\mathcal T}]$. From (\ref{local-system-1})--(\ref{local-system-2}), it is clear that
\begin{eqnarray}
&&\hskip-.8in
\partial_t \overline{v} + \mathbb{P}_h\Big( \overline{v}\cdot \nabla \overline{v}_1 + \overline{v}_2 \cdot \nabla \overline{v} \Big) + \mathbb{P}_h P_0\Big( (\nabla\cdot \widetilde{v})\widetilde{v}_1 + (\nabla\cdot \widetilde{v}_2)\widetilde{v} + \widetilde{v}\cdot \nabla \widetilde{v}_1 + \widetilde{v}_2 \cdot \nabla \widetilde{v}  \Big) = 0,\label{uniqueness-diffenrece-bar} \\
&&\hskip-.8in
\partial_t \widetilde{v} + \widetilde{v}\cdot \nabla \widetilde{v}_1 + \widetilde{v}_2\cdot \nabla \widetilde{v} + \widetilde{v}\cdot \nabla \overline{v}_1 + \widetilde{v}_2\cdot \nabla \overline{v} + \overline{v}\cdot \nabla \widetilde{v}_1 + \overline{v}_2\cdot \nabla \widetilde{v} - P_0\Big( (\nabla\cdot \widetilde{v})\widetilde{v}_1 + (\nabla\cdot \widetilde{v}_2)\widetilde{v}\nonumber\\
&&\hskip-.38in  + \widetilde{v}\cdot \nabla \widetilde{v}_1 + \widetilde{v}_2 \cdot \nabla \widetilde{v}  \Big) - \Big( \int_0^z\nabla\cdot \widetilde{v}(\boldsymbol{x}',s)ds \Big)\partial_z \widetilde{v}_1 - \Big( \int_0^z\nabla\cdot \widetilde{v}_2(\boldsymbol{x}',s)ds \Big)\partial_z \widetilde{v} + \Omega \widetilde{v}^\perp = 0. \label{uniqueness-diffenrece-tilde}
\end{eqnarray}
By the energy estimate, thanks to Lemma \ref{lemma-projection} and Lemma \ref{lemma-leray}, we have
\begin{eqnarray}
&&\hskip-.8in 
\frac{1}{2} \frac{d}{dt} \Big(\|e^{\widetilde{\tau}(t) A} \overline{v}(t)\|_{H^{r-\frac{1}{2}}}^2 + \|e^{\widetilde{\tau}(t) A} \widetilde{v}(t)\|_{H^{r-\frac{1}{2}}}^2 \Big) - \dot{\widetilde{\tau}} \Big( \|A^{r} e^{\widetilde{\tau} A}\overline{v}\|^2 +  \|A^{r} e^{\widetilde{\tau} A}\widetilde{v}\|^2 \Big)\nonumber\\
&&\hskip-.8in 
+ \Big\langle  \overline{v}\cdot \nabla \overline{v}_1 + \overline{v}_2 \cdot \nabla \overline{v} + (\nabla\cdot \widetilde{v})\widetilde{v}_1 + (\nabla\cdot \widetilde{v}_2)\widetilde{v} + \widetilde{v}\cdot \nabla \widetilde{v}_1 + \widetilde{v}_2 \cdot \nabla \widetilde{v}, \overline{v} \Big\rangle \nonumber\\
&&\hskip-.8in
+ \Big\langle A^{r-\frac{1}{2}} e^{\widetilde{\tau} A}\Big( \overline{v}\cdot \nabla \overline{v}_1 + \overline{v}_2 \cdot \nabla \overline{v} + (\nabla\cdot \widetilde{v})\widetilde{v}_1 + (\nabla\cdot \widetilde{v}_2)\widetilde{v} + \widetilde{v}\cdot \nabla \widetilde{v}_1 + \widetilde{v}_2 \cdot \nabla \widetilde{v}\Big), A^{r-\frac{1}{2}} e^{\widetilde{\tau} A} \overline{v} \Big\rangle \nonumber\\
&&\hskip-.8in
+ \Big\langle \widetilde{v}\cdot \nabla \widetilde{v}_1 + \widetilde{v}_2\cdot \nabla \widetilde{v} + \widetilde{v}\cdot \nabla \overline{v}_1 + \widetilde{v}_2\cdot \nabla \overline{v} + \overline{v}\cdot \nabla \widetilde{v}_1 + \overline{v}_2\cdot \nabla \widetilde{v}  \nonumber\\
&&\hskip-.58in
- \Big( \int_0^z\nabla\cdot \widetilde{v}(\boldsymbol{x}',s)ds \Big)\partial_z \widetilde{v}_1 - \Big( \int_0^z\nabla\cdot \widetilde{v}_2(\boldsymbol{x}',s)ds \Big)\partial_z \widetilde{v}  , \widetilde{v} \Big\rangle \nonumber\\
&&\hskip-.8in
+ \Big\langle A^{r-\frac{1}{2}} e^{\widetilde{\tau} A}\Big[ \widetilde{v}\cdot \nabla \widetilde{v}_1 + \widetilde{v}_2\cdot \nabla \widetilde{v} + \widetilde{v}\cdot \nabla \overline{v}_1 + \widetilde{v}_2\cdot \nabla \overline{v} + \overline{v}\cdot \nabla \widetilde{v}_1 + \overline{v}_2\cdot \nabla \widetilde{v}  \nonumber\\
&&\hskip-.58in
- \Big( \int_0^z\nabla\cdot \widetilde{v}(\boldsymbol{x}',s)ds \Big)\partial_z \widetilde{v}_1 - \Big( \int_0^z\nabla\cdot \widetilde{v}_2(\boldsymbol{x}',s)ds \Big)\partial_z \widetilde{v}\Big]  , A^{r-\frac{1}{2}} e^{\widetilde{\tau} A}\widetilde{v} \Big\rangle = 0. \label{uniqueness-1}
\end{eqnarray}
Thanks to the H\"older inequality, Young's inequality and the Sobolev inequality, since $r>\frac{5}{2}$, and noticing that $\overline{v}$ and $\widetilde{v}$ have zero mean over $\mathbb{T}^3$, one has
\begin{eqnarray}
&&\hskip-.8in
\Big|\Big\langle \overline{v}\cdot \nabla \overline{v}_1 + \overline{v}_2 \cdot \nabla \overline{v} + (\nabla\cdot \widetilde{v})\widetilde{v}_1 + (\nabla\cdot \widetilde{v}_2)\widetilde{v} + \widetilde{v}\cdot \nabla \widetilde{v}_1 + \widetilde{v}_2 \cdot \nabla \widetilde{v} , \overline{v}\Big\rangle \nonumber\\
&&\hskip-.68in
+ \Big\langle \widetilde{v}\cdot \nabla \widetilde{v}_1 + \widetilde{v}_2\cdot \nabla \widetilde{v} + \widetilde{v}\cdot \nabla \overline{v}_1 + \widetilde{v}_2\cdot \nabla \overline{v} + \overline{v}\cdot \nabla \widetilde{v}_1 + \overline{v}_2\cdot \nabla \widetilde{v}  \nonumber\\
&&\hskip-.58in
- \Big( \int_0^z\nabla\cdot \widetilde{v}(\boldsymbol{x}',s)ds \Big)\partial_z \widetilde{v}_1 - \Big( \int_0^z\nabla\cdot \widetilde{v}_2(\boldsymbol{x}',s)ds \Big)\partial_z \widetilde{v}  , \widetilde{v} \Big\rangle\Big| \nonumber\\
&&\hskip-.8in
\leq \widetilde{C}_r\Big( \|\overline{v}_1\|_{H^{r}} + \|\overline{v}_2\|_{H^{r}} + \|\widetilde{v}_1\|_{H^{r}} + \|\widetilde{v}_2\|_{H^{r}}\Big)\Big(\|\overline{v}\|_{H^{r-\frac{1}{2}}}^2 + \|\widetilde{v}\|_{H^{r-\frac{1}{2}}}^2 \Big)\nonumber\\
&&\hskip-.8in
\leq \widetilde{C}_r\Big( \|\overline{v}_1\|_{H^{r}} + \|\overline{v}_2\|_{H^{r}} + \|\widetilde{v}_1\|_{H^{r}} + \|\widetilde{v}_2\|_{H^{r}}\Big)\Big(\|A^{r} e^{\widetilde{\tau} A}\overline{v}\|^2 +  \|A^{r} e^{\widetilde{\tau} A}\widetilde{v}\|^2\Big), \label{uniqueness-L2}
\end{eqnarray}
where in the last step we apply the Poincar\'e inequality. Next, thanks to Lemma \ref{lemma-type1}--\ref{lemma-type3} and by Young's inequality, we have
\begin{eqnarray}
&&\hskip-.8in
\Big|\Big\langle A^{r-\frac{1}{2}} e^{\widetilde{\tau} A}\Big( \overline{v}\cdot \nabla \overline{v}_1 + \overline{v}_2 \cdot \nabla \overline{v} + (\nabla\cdot \widetilde{v})\widetilde{v}_1 + (\nabla\cdot \widetilde{v}_2)\widetilde{v} + \widetilde{v}\cdot \nabla \widetilde{v}_1 + \widetilde{v}_2 \cdot \nabla \widetilde{v}\Big), A^{r-\frac{1}{2}} e^{\widetilde{\tau} A} \overline{v}\Big\rangle \nonumber\\
&&\hskip-.8in
+ \Big\langle A^{r-\frac{1}{2}} e^{\widetilde{\tau} A}\Big[ \widetilde{v}\cdot \nabla \widetilde{v}_1 + \widetilde{v}_2\cdot \nabla \widetilde{v} + \widetilde{v}\cdot \nabla \overline{v}_1 + \widetilde{v}_2\cdot \nabla \overline{v} + \overline{v}\cdot \nabla \widetilde{v}_1 + \overline{v}_2\cdot \nabla \widetilde{v}  \nonumber\\
&&\hskip-.58in
- \Big( \int_0^z\nabla\cdot \widetilde{v}(\boldsymbol{x}',s)ds \Big)\partial_z \widetilde{v}_1 - \Big( \int_0^z\nabla\cdot \widetilde{v}_2(\boldsymbol{x}',s)ds \Big)\partial_z \widetilde{v}\Big]  , A^{r-\frac{1}{2}} e^{\widetilde{\tau} A}\widetilde{v} \Big\rangle\Big|\nonumber\\
&&\hskip-.8in
\leq C_{r-\frac{1}{2}}\Big( \|e^{\widetilde{\tau} A}\overline{v}_1\|_{H^r} +  \|e^{\widetilde{\tau} A}\widetilde{v}_1\|_{H^r} + \|e^{\widetilde{\tau} A}\overline{v}_2\|_{H^r} +  \| e^{\widetilde{\tau} A}\widetilde{v}_2\|_{H^r}\Big)  \Big(\|A^{r} e^{\widetilde{\tau} A}\overline{v}\|^2 +  \|A^{r} e^{\widetilde{\tau} A}\widetilde{v}\|^2\Big).
\label{uniqueness-analytic}
\end{eqnarray}
Combining (\ref{uniqueness-1})--(\ref{uniqueness-analytic}) and thanks to (\ref{Cr}), we have
\begin{eqnarray*}
&&\hskip-.8in 
\frac{1}{2} \frac{d}{dt} \Big(\|e^{\widetilde{\tau}(t) A} \overline{v}(t)\|_{H^{r-\frac{1}{2}}}^2 + \|e^{\widetilde{\tau}(t) A} \widetilde{v}(t)\|_{H^{r-\frac{1}{2}}}^2 \Big)  \nonumber\\
&&\hskip-.9in 
\leq \Big[\dot{\widetilde{\tau}} + \frac{1}{2}C_r\Big( \|e^{\widetilde{\tau} A}\overline{v}_1\|_{H^r} +  \|e^{\widetilde{\tau} A}\widetilde{v}_1\|_{H^r} + \|e^{\widetilde{\tau} A}\overline{v}_2\|_{H^r} +  \| e^{\widetilde{\tau} A}\widetilde{v}_2\|_{H^r}\Big)  \Big]\nonumber\\
&&\hskip-.58in
\times \Big( \|A^{r} e^{\widetilde{\tau} A}\overline{v}\|^2 +  \|A^{r} e^{\widetilde{\tau} A}\widetilde{v}\|^2 \Big). \label{estimate-uniqueness}
\end{eqnarray*}
Since $\|e^{\widetilde{\tau} A} \overline{v}_i\|_{H^r}^2 + \| e^{\widetilde{\tau} A} \widetilde{v}_i\|_{H^r}^2 \leq M$ for $i=1,2$, by the Cauchy–Schwarz inequality, we know that
\begin{eqnarray*}
&&\hskip-.58in
\dot{\widetilde{\tau}} + \frac{1}{2}C_r\Big( \|e^{\widetilde{\tau} A}\overline{v}_1\|_{H^r} +  \|e^{\widetilde{\tau} A}\widetilde{v}_1\|_{H^r} + \|e^{\widetilde{\tau} A}\overline{v}_2\|_{H^r} +  \| e^{\widetilde{\tau} A}\widetilde{v}_2\|_{H^r}\Big) \nonumber\\
&&\hskip-.68in
\leq -2C_r(1+M) + \sqrt{2} C_r\sqrt{M} \leq (\frac{\sqrt{2}}{2}-2)C_r(1+M) < 0
\end{eqnarray*}
for $t\in[0,\widetilde{\mathcal{T}}]$. Therefore, for $t\in[0,\widetilde{\mathcal{T}}]$, we have
\begin{eqnarray}
&&\hskip-.58in
\|e^{\widetilde{\tau}(t) A} \overline{v}(t)\|_{H^{r-\frac{1}{2}}}^2 + \|e^{\widetilde{\tau}(t) A} \widetilde{v}(t)\|_{H^{r-\frac{1}{2}}}^2 \leq \|e^{\widetilde{\tau}_0 A} \overline{v}_0\|_{H^{r-\frac{1}{2}}}^2 +   \|e^{\widetilde{\tau}_0 A} \widetilde{v}_0\|_{H^{r-\frac{1}{2}}}^2. \label{continuous-dependence}
\end{eqnarray}

The above inequality proves the continuous dependence of the
solutions on the initial data, and in particular, when
$\overline{v}_0=\widetilde{v}_0=0$ and $\tau_{10}=\tau_{20}$, we have $\overline{v}=\widetilde{v}=0$ for all $t\in [0,\widetilde{\mathcal{T}}]$. Moreover, from (\ref{T}), (\ref{Ttilde}), and the definition of $M$ in (\ref{M}), we know $\widetilde{\mathcal{T}} = \mathcal{T}$. Therefore, the solution is unique, and this completes the proof of Theorem \ref{theorem-local}.
\begin{remark}\label{remark-zero-average}
  In case that $\int_{\mathbb{T}^3} v(\boldsymbol{x}) d\boldsymbol{x}= \int_{\mathbb{T}^2} \overline{v}(\boldsymbol{x}') d\boldsymbol{x}' \neq 0$, the only change in system (\ref{local-system-1})--(\ref{local-system-2}) is in (\ref{local-system-1}) which will become
  \begin{equation}\label{local-system-1-nonzero-average}
      \partial_t \overline{v} + \mathbb{P}_h \Big(\overline{v}\cdot \nabla \overline{v}\Big) + \mathbb{P}_h P_0 \Big((\nabla\cdot \widetilde{v}) \widetilde{v} + \widetilde{v}\cdot \nabla \widetilde{v} \Big) + \Omega \int_{\mathbb{T}^2} \overline{v}^\perp (\boldsymbol{x}') d\boldsymbol{x}'  = 0. 
  \end{equation}
  The additional term $\Omega \int_{\mathbb{T}^2} \overline{v}^\perp (\boldsymbol{x}') d\boldsymbol{x}' $ appearing in (\ref{local-system-1-nonzero-average}) does not change the energy estimates. Since 
 $
      \Big(\int_{\mathbb{T}^2} \overline{v}^\perp (\boldsymbol{x}') d\boldsymbol{x}'  \Big) \cdot  \Big( \int_{\mathbb{T}^2} \overline{v} (\boldsymbol{x}') d\boldsymbol{x}' \Big) =0,
  $
  the conservation of $L^2$ norm does not change. Since $\Omega \int_{\mathbb{T}^2} \overline{v}^\perp (\boldsymbol{x}') d\boldsymbol{x}' $ is independent of the spatial variables, it will disappear when we apply the operator $A^r e^{\tau A}$. Therefore, this additional term does not affect the higher order energy estimates. Thus, when $\int_{\mathbb{T}^2} \overline{v}(\boldsymbol{x}') d\boldsymbol{x}' \neq 0$, we still have the same results.
\end{remark}

\section{Long time existence of solutions}
In this section, we establish the long time existence of solutions to system (\ref{local-system-1})--(\ref{local-system-2}) provided that the analytic norm of $\widetilde{v}_0$ is small. Notice that we do not assume any smallness in $\overline{v}_0$, and therefore, we do not have smallness in $v_0$. The motivation is that, when $\widetilde{v} = 0$, system (\ref{local-system-1})--(\ref{local-system-2}) reduces to $2D$ Euler equations, for which we have global solution in the space of analytic functions (see \cite{LO97}). Therefore, if $\widetilde{v}_0$ is small in the analytic norm, one can expect that the solution to system (\ref{local-system-1})--(\ref{local-system-2}) exists for a long time. In section 6, however, we will demonstrate that system (\ref{local-system-1})--(\ref{local-system-2}) exhibits a different behavior for large value of $|\Omega|$ when we assume $\widetilde{v}_0$ is small only in Sobolev norm, but not in the analytic norm.

\subsection{$2D$ Euler equations}
Consider the $2D$ Euler equations in $\mathbb{T}^3$:
\begin{eqnarray}
&&\hskip-.68in \partial_t \overline{V} + \overline{V}\cdot \nabla \overline{V} + \nabla P =0, \label{Euler-1}\\
&&\hskip-.68in \nabla\cdot \overline{V}=0, \label{Euler-2}\\
&&\hskip-.68in  \overline{V}(0) = \overline{V}_0. \label{Euler-3}
\end{eqnarray}
Here $\overline{V}$ depends only on the horizontal variables $\boldsymbol{x}'$. The global existence of solutions to system (\ref{Euler-1})--(\ref{Euler-3}) in Sobolev spaces $H^r$ with $r\geq 3$ is a classical result, see, e.g., \cite{BM02}. Moreover, from equation (3.84) in \cite{BM02}, for $r\geq 3$, we have
\begin{equation}\label{Euler-Sobolev-estimate-1}
    \frac{d}{dt}\|\overline{V}\|_{H^r} \leq C_r \|\overline{V}\|_{H^r}(1+\ln^+\|\overline{V}\|_{H^r}).
\end{equation}
Let $\|\overline{V}_0\|_{H^r} \leq  M$ for some $M\geq 0$. Since $\ln^+ x + 1 \leq 2\ln(x+e)$, by setting $W(t)=\|\overline{V}(t)\|_{H^r} + e$, from (\ref{Euler-Sobolev-estimate-1}), we have
$
    \frac{d}{dt} W \leq C_r W \ln W.
$
Therefore, we get the following bound:
\begin{equation}\label{Euler-Sobolev-estimate-2}
    \|\overline{V}(t)\|_{H^r} \leq W(t) \leq W(0)^{e^{C_r t}} = (\|\overline{V}_0\|_{H^r} +e)^{e^{C_r t}} \leq (M+e)^{e^{C_r t}} =: \theta_{M,r}(t).
\end{equation}

We need the following lemma from \cite{LO97}. 
\begin{lemma} \label{lemma-non-small}
For $f, g\in \mathcal{D}(e^{\tau A}: H^{r+\frac{1}{2}})$ where $r>\frac{5}{2}$ and $\tau\geq0$, one has
\begin{eqnarray*}
&&\hskip-.78in
\Big|\langle A^r e^{\tau A} (f\cdot \nabla g), A^r e^{\tau A} g  \rangle\Big| \leq C_r (\|A^r f\| \|A^{r}  g\|^2 + \|\nabla\cdot f\|_{L^\infty} \|A^{r} e^{\tau A} g\|^2 ) \nonumber\\
&&\hskip.98in
+ C_r \tau \|A^{r+\frac{1}{2}} e^{\tau A} f\| \|A^{r+\frac{1}{2}} e^{\tau A} g\|^2.
\end{eqnarray*}
Moreover, if $r>3$, then $\|A^{r+\frac{1}{2}} e^{\tau A} f\|$ can be replaced by $\|A^{r} e^{\tau A} f\|$.
\end{lemma}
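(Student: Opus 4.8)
\emph{Plan.} This is a commutator estimate of Levermore--Oliver type (it underlies the analyticity of $2D$ Euler in \cite{LO97}), and I would prove it by working on the Fourier side. Expanding $f=\sum_{\boldsymbol j}\hat f_{\boldsymbol j}e^{2\pi i\boldsymbol j\cdot\boldsymbol x}$ and $g=\sum_{\boldsymbol j}\hat g_{\boldsymbol j}e^{2\pi i\boldsymbol j\cdot\boldsymbol x}$, Parseval's identity turns the quantity to be estimated into the trilinear sum
\begin{equation*}
	\langle A^r e^{\tau A}(f\cdot\nabla g),\,A^r e^{\tau A}g\rangle=\sum_{\boldsymbol k,\boldsymbol j\in\Z^3}|\boldsymbol k|^{2r}e^{2\tau|\boldsymbol k|}\big(\hat f_{\boldsymbol k-\boldsymbol j}\cdot 2\pi i\boldsymbol j\big)\,\hat g_{\boldsymbol j}\,\overline{\hat g_{\boldsymbol k}},
\end{equation*}
where only $\boldsymbol k\neq\boldsymbol 0$ and $\boldsymbol j\neq\boldsymbol 0$ contribute (the weight $|\boldsymbol k|^{2r}e^{2\tau|\boldsymbol k|}$ and the factor $\boldsymbol j$ kill the zero modes; in particular the $\boldsymbol k-\boldsymbol j=\boldsymbol 0$ transport term integrates to zero, consistent with the absence of an $|\hat f_{\boldsymbol 0}|$ on the right-hand side). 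The idea is to peel off the ``main'' part, in which the outer weight is carried entirely by the two $g$-factors, and treat the remainder as a commutator.

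For the main term I would write the sum as $\langle f\cdot\nabla(A^r e^{\tau A}g),\,A^r e^{\tau A}g\rangle+\langle[A^r e^{\tau A},f\cdot\nabla]g,\,A^r e^{\tau A}g\rangle$. Since $f$ and $A^r e^{\tau A}g$ are real and $\langle f\cdot\nabla h,h\rangle=-\tfrac12\langle(\nabla\cdot f)h,h\rangle$ on $\T^3$, the first piece equals $-\tfrac12\langle(\nabla\cdot f)\,A^r e^{\tau A}g,\,A^r e^{\tau A}g\rangle$, which is bounded in absolute value by $\tfrac12\|\nabla\cdot f\|_{L^\infty}\|A^r e^{\tau A}g\|^2$; this is the second term on the right-hand side of the lemma.

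The commutator has symbol $|\boldsymbol k|^r e^{\tau|\boldsymbol k|}-|\boldsymbol j|^r e^{\tau|\boldsymbol j|}$, and I would split it as $\big(|\boldsymbol k|^r-|\boldsymbol j|^r\big)e^{\tau|\boldsymbol k|}+|\boldsymbol j|^r\big(e^{\tau|\boldsymbol k|}-e^{\tau|\boldsymbol j|}\big)$, using the two elementary bounds
\begin{equation*}
	\big||\boldsymbol k|^r-|\boldsymbol j|^r\big|\le C_r|\boldsymbol k-\boldsymbol j|\big(|\boldsymbol k-\boldsymbol j|^{r-1}+|\boldsymbol j|^{r-1}\big),\qquad 0\le e^{\tau|\boldsymbol k|}-e^{\tau|\boldsymbol j|}\le\tau|\boldsymbol k-\boldsymbol j|\,e^{\tau|\boldsymbol k-\boldsymbol j|}e^{\tau|\boldsymbol j|},
\end{equation*}
the latter from $\big||\boldsymbol k|-|\boldsymbol j|\big|\le|\boldsymbol k-\boldsymbol j|$ and $|e^{x}-1|\le|x|e^{|x|}$, together with $e^{\tau|\boldsymbol k|}\le e^{\tau|\boldsymbol k-\boldsymbol j|}e^{\tau|\boldsymbol j|}$. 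For the first piece the extra $|\boldsymbol k-\boldsymbol j|$ or $|\boldsymbol k-\boldsymbol j|^r$ is placed on $\hat f_{\boldsymbol k-\boldsymbol j}$; applying the Cauchy--Schwarz inequality in $\boldsymbol k$ (keeping $|\boldsymbol k|^r e^{\tau|\boldsymbol k|}\hat g_{\boldsymbol k}$ as one factor), then Young's inequality for convolutions, and finally $\sum_{\boldsymbol j\neq\boldsymbol 0}|\boldsymbol j|^{2(1-r)}<\infty$ (valid since $r>5/2$) to sum the $\nabla g$ factor, yields the first term $C_r\|A^r f\|\|A^r g\|^2$. For the second piece the commutator has produced a factor $\tau|\boldsymbol k-\boldsymbol j|\,e^{\tau|\boldsymbol k-\boldsymbol j|}$; after distributing the spare frequency powers symmetrically as $|\cdot|^{1/2}\cdot|\cdot|^{1/2}$ among the three factors (using $|\boldsymbol j|^{1/2}\le|\boldsymbol k-\boldsymbol j|^{1/2}+|\boldsymbol k|^{1/2}$) and rerunning the same Cauchy--Schwarz/Young argument, one obtains $C_r\tau\|A^{r+1/2}e^{\tau A}f\|\|A^{r+1/2}e^{\tau A}g\|^2$. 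Summing the three contributions gives the claimed bound; when $r>3$ there is enough margin in the convergence exponent $\sum_{\boldsymbol j\neq\boldsymbol 0}|\boldsymbol j|^{-2s}<\infty$ $(s>3/2)$ to shift a half-derivative off $f$ onto $g$ in the last term, replacing $\|A^{r+1/2}e^{\tau A}f\|$ by $\|A^{r}e^{\tau A}f\|$.

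\emph{Main obstacle.} The ingredients are elementary, but the bookkeeping is the hard part: the naive count gives $2r+2$ derivatives spread over the three factors, which would force the forbidden factor $A^{r+1}g$ to appear. It is precisely the integration by parts in the main term and the bound $e^{\tau m}-e^{\tau m'}\le\tau|m-m'|e^{\tau|m-m'|}e^{\tau m'}$ in the commutator that convert this excess into the $\|\nabla\cdot f\|_{L^\infty}$ term and the $\tau$-prefactored term respectively. Tracking which factors keep the analytic weight $e^{\tau A}$ — so that the first term on the right-hand side is genuinely free of it — and checking that every residual power of frequency that is shed lands in a convergent lattice sum (which is where $r>5/2$, resp.\ $r>3$, enters) is the point that requires the most care.
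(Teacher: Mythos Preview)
The paper does not prove this lemma; it cites \cite{LO97}. Your plan is the Levermore--Oliver argument, and the overall structure --- split off the transport piece $\langle f\cdot\nabla A^r e^{\tau A}g,A^r e^{\tau A}g\rangle$, integrate by parts to produce the $\|\nabla\cdot f\|_{L^\infty}\|A^r e^{\tau A}g\|^2$ term, then bound the commutator on the Fourier side --- is correct and is exactly what \cite{LO97} does.

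There is, however, a genuine gap in your treatment of the commutator. Your splitting $|\boldsymbol k|^r e^{\tau|\boldsymbol k|}-|\boldsymbol j|^r e^{\tau|\boldsymbol j|}=\big(|\boldsymbol k|^r-|\boldsymbol j|^r\big)e^{\tau|\boldsymbol k|}+|\boldsymbol j|^r\big(e^{\tau|\boldsymbol k|}-e^{\tau|\boldsymbol j|}\big)$ leaves the weight $e^{\tau|\boldsymbol k|}$ on the first piece. Paired with the outer factor $|\boldsymbol k|^r e^{\tau|\boldsymbol k|}\hat g_{\boldsymbol k}$ you are carrying $e^{2\tau|\boldsymbol k|}$; to run Young's inequality on the resulting convolution you must first distribute via $e^{\tau|\boldsymbol k|}\le e^{\tau|\boldsymbol k-\boldsymbol j|}e^{\tau|\boldsymbol j|}$, and then all three factors end up analytic-weighted. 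What your argument actually yields from this piece is $C_r\|A^r e^{\tau A}f\|\|A^r e^{\tau A}g\|^2$, not the pure-Sobolev $C_r\|A^r f\|\|A^r g\|^2$ the lemma claims --- and it is precisely the absence of the analytic weight on this first term that allows one to feed the double-exponential $H^r$ bound for $2D$ Euler into the analyticity-radius ODE in Proposition~\ref{global-Euler}. Your ``Main obstacle'' paragraph acknowledges the issue but does not resolve it. The missing ingredient is the elementary bound $e^x\le 1+xe^x$: applied to the stray exponential (or, equivalently, packaged once and for all into the symbol estimate
\[
\big||\boldsymbol k|^r e^{\tau|\boldsymbol k|}-|\boldsymbol j|^r e^{\tau|\boldsymbol j|}\big|\le C_r|\boldsymbol k-\boldsymbol j|\Big(|\boldsymbol k-\boldsymbol j|^{r-1}+|\boldsymbol j|^{r-1}+\tau\big(|\boldsymbol k-\boldsymbol j|^{r}+|\boldsymbol j|^{r}\big)e^{\tau|\boldsymbol k-\boldsymbol j|}e^{\tau|\boldsymbol j|}\Big),
\]
which is \eqref{lemma-inequality} in the appendix, together with the companion splitting \eqref{Hl} of the outer factor), it cleanly separates a weight-free Sobolev part from a $\tau$-prefactored analytic part, after which your Cauchy--Schwarz and Young steps go through as written.
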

Based on Lemma \ref{lemma-non-small}, the authors in \cite{LO97} proved the global existence of solutions to system (\ref{Euler-1})--(\ref{Euler-3}) for initial data in the space of analytic functions. For completion, we state it here, with slight difference from the original statement in \cite{LO97}.
\begin{proposition}\label{global-Euler}
    Assume $\overline{V}_0 \in \mathcal{S}\cap \mathcal{D}(e^{\tau_0 A}: H^r(\mathbb{T}^3))$ with $r>3$ and $\tau_0 >0$, and suppose that $\|e^{\tau_0 A} \overline{V}_0\|_{H^r} \leq M$ for some $M\geq 0$. Then there exists a non-increasing function 
    \begin{eqnarray}
    \tau(t) = \tau_0 \exp\Big( -C_r \int_0^t    h(s)ds\Big), \label{tau-Euler}
    \end{eqnarray}
    where 
   $
      h^2(t) := \|e^{\tau_0 A} \overline{V}_0\|_{H^r}^2 + C_r\int_0^t \theta_{M,r}^3(s)ds,  
    $
    and $\theta_{M,r}(t)$ defined in (\ref{Euler-Sobolev-estimate-2}),
    such that for any given time $\mathcal{T}>0$, there exists a unique solution 
   $
        \overline{V}\in L^\infty(0, \mathcal{T}; \mathcal{D}(e^{\tau(t) A}: H^r(\mathbb{T}^3)))
    $
    to system (\ref{Euler-1})--(\ref{Euler-3}). Moreover, there exist constants $C >1$ and $C_r>1$ such that 
\begin{eqnarray}
 \|e^{\tau(t) A} \overline{V}(t)\|_{H^r}^2 \leq h^2(t) \leq C ^{\exp(C_r t)}.  \label{estimate-Euler}
\end{eqnarray}
\end{proposition}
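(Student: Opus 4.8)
The plan is to deduce Proposition \ref{global-Euler} from the classical $H^r$ well\nobreakdash-posedness theory by propagating analyticity through a Gevrey\nobreakdash-class energy estimate in which the loss of the radius $\tau(t)$ is driven by the \emph{a priori} Sobolev bound $\theta_{M,r}$ rather than by the analytic norm itself, exactly as in \cite{LO97}. First I would record the facts used as a black box: for $r>3$ and $\overline{V}_0\in\mathcal{S}\cap H^r(\mathbb{T}^3)$, system (\ref{Euler-1})--(\ref{Euler-3}) has a unique global solution $\overline{V}\in C([0,\infty);H^r)$, which is moreover smooth in space by persistence of regularity when the data is; it satisfies $\|\overline{V}(t)\|=\|\overline{V}_0\|$ by incompressibility, and the double\nobreakdash-exponential bound $\|\overline{V}(t)\|_{H^r}\leq\theta_{M,r}(t)$ of (\ref{Euler-Sobolev-estimate-2}) (here $\|e^{\tau_0 A}\overline{V}_0\|_{H^r}\leq M$ gives $\|\overline{V}_0\|_{H^r}\leq M$). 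Uniqueness in $\mathcal{D}(e^{\tau(t)A}:H^r)$ is then free, since such a solution is in particular an $H^r$ solution.

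For the analytic estimate I would apply $A^{r}e^{\tau A}$ to (\ref{Euler-1}) and pair in $L^2$ with $A^{r}e^{\tau A}\overline{V}$. Since $\nabla\cdot\overline{V}=0$ and $A^{r}e^{\tau A}$ commutes with $\nabla$, the pressure term drops and so does the $L^2$\nobreakdash-level contribution; accounting for the term $\dot\tau\|A^{r+1/2}e^{\tau A}\overline{V}\|^2$ produced by $\partial_t e^{\tau A}=\dot\tau\,Ae^{\tau A}$ one obtains
\begin{equation*}
\tfrac12\tfrac{d}{dt}\|A^{r}e^{\tau A}\overline{V}\|^2=\dot\tau\,\|A^{r+1/2}e^{\tau A}\overline{V}\|^2-\big\langle A^{r}e^{\tau A}(\overline{V}\cdot\nabla\overline{V}),\,A^{r}e^{\tau A}\overline{V}\big\rangle .
\end{equation*}
Because $r>3$ and $\nabla\cdot\overline{V}=0$, Lemma \ref{lemma-non-small} bounds the nonlinear term by $C_r\|A^{r}\overline{V}\|^{3}+C_r\tau\,\|A^{r}e^{\tau A}\overline{V}\|\,\|A^{r+1/2}e^{\tau A}\overline{V}\|^{2}$, whence
\begin{equation*}
\tfrac12\tfrac{d}{dt}\|A^{r}e^{\tau A}\overline{V}\|^2\leq\big(\dot\tau+C_r\tau\,\|A^{r}e^{\tau A}\overline{V}\|\big)\,\|A^{r+1/2}e^{\tau A}\overline{V}\|^{2}+C_r\|A^{r}\overline{V}\|^{3}.
\end{equation*}
Rigorously this is carried out on a Galerkin truncation as in Section 3, or on the smooth $H^r$ solution, and I would not belabour that point.

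The heart of the matter is a continuity argument that makes the bracket nonpositive. Note that $h$ and $\tau(t)=\tau_0\exp(-C_r\int_0^th(s)\,ds)$ are fully explicit in the data, since $h$ involves only $\|e^{\tau_0 A}\overline{V}_0\|_{H^r}$ and $\theta_{M,r}$; take $C_r$ here to be the constant of Lemma \ref{lemma-non-small}. Let $T^{*}$ be the supremum of times $T$ on which $\|A^{r}e^{\tau(t)A}\overline{V}(t)\|\leq h(t)$. On $[0,T^{*}]$ one has $\dot\tau+C_r\tau\,\|A^{r}e^{\tau A}\overline{V}\|\leq\dot\tau+C_r\tau h=0$, so the higher\nobreakdash-order term drops; integrating, using $\|A^{r}\overline{V}(s)\|\leq\theta_{M,r}(s)$, and adding $\|\overline{V}(t)\|^{2}=\|\overline{V}_0\|^{2}$,
\begin{equation*}
\|e^{\tau(t)A}\overline{V}(t)\|_{H^{r}}^{2}\leq\|e^{\tau_0 A}\overline{V}_0\|_{H^{r}}^{2}+C_r\int_0^{t}\theta_{M,r}^{3}(s)\,ds=h^{2}(t),
\end{equation*}
so in particular $\|A^{r}e^{\tau(t)A}\overline{V}(t)\|^{2}\leq h^{2}(t)-\|\overline{V}_0\|^{2}<h^{2}(t)$ whenever $\overline{V}_0\neq0$; by continuity $T^{*}$ cannot be finite, and since the construction is valid on any bounded interval this gives $\overline{V}\in L^{\infty}(0,\mathcal{T};\mathcal{D}(e^{\tau(t)A}:H^{r}))$ together with the first inequality in (\ref{estimate-Euler}) for every $\mathcal{T}$ (the case $\overline{V}_0=0$ being trivial). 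The bound $h^{2}(t)\leq C_M^{\exp(C_rt)}$ is elementary: $\theta_{M,r}^{3}(s)=(M+e)^{3e^{C_rs}}$ is increasing, so $C_r\int_0^{t}\theta_{M,r}^{3}\leq C_r\,t\,(M+e)^{3e^{C_rt}}$, and this plus $\|e^{\tau_0 A}\overline{V}_0\|_{H^{r}}^{2}\leq M^{2}$ is absorbed into $C_M^{\exp(C_rt)}$ after enlarging $C_M>1$ and $C_r>1$.

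I expect the main obstacle to be the self\nobreakdash-consistent choice of the decaying radius: one must \emph{decouple} the loss of analyticity from the analytic norm by feeding in only the Sobolev bound $\theta_{M,r}$, and then verify that the resulting $\tau(t)$ keeps the damping factor $\dot\tau+C_r\tau\|A^{r}e^{\tau A}\overline{V}\|$ nonpositive throughout the bootstrap window --- this is precisely what prevents the analytic norm from escaping to infinity while $\tau(t)$ stays strictly positive on each finite time interval. Everything else (rigour of the energy identity via Galerkin, passage to the limit, and uniqueness) parallels Section 3 and \cite{LO97} and is routine.
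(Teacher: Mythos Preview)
Your proposal is correct and follows essentially the same route that the paper intends: the paper does not prove Proposition~\ref{global-Euler} itself but defers to \cite{LO97}, and your argument is precisely the Levermore--Oliver scheme built on Lemma~\ref{lemma-non-small}---propagate the Gevrey energy, let the Sobolev bound $\theta_{M,r}$ drive the source term, and choose $\tau$ so that $\dot\tau+C_r\tau h=0$ via a bootstrap. The only cosmetic point is that the constant $C_r$ appearing in $h^2$ and in $\tau$ may need to absorb a harmless factor of $2$ coming from the $\tfrac12\tfrac{d}{dt}$; otherwise the bootstrap closure and the final double-exponential bound are exactly as in \cite{LO97}.
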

\subsection{Long time existence of the $3D$ inviscid PEs}
The following is the main theorem of this section, which concerns the long time existence of solutions to system (\ref{local-system-1})--(\ref{local-system-2}) in the case when the analytic norm of $\widetilde{v}_0$ is small.
\begin{theorem} \label{theorem-longtime} 
Assume $\overline{v}_0 \in \mathcal{S}\cap\mathcal{D}(e^{\tau_0 A}: H^{r+1}(\mathbb{T}^3))$, $\widetilde{v}_0 \in \mathcal{S}\cap \mathcal{D}(e^{\tau_0 A}: H^{r}(\mathbb{T}^3))$ with $r>\frac{5}{2}$ and $\tau_0>0$. Let $\Omega \in \mathbb{R}$ be arbitrary and fixed. Let $M\geq0$ and $\epsilon\geq 0$, and suppose that $\|e^{\tau_0 A}\overline{v}_0\|_{H^{r+1}} \leq M$ and $\|e^{\tau_0 A}\widetilde{v}_0\|_{H^{r}} \leq \epsilon$. Then there are constants $C  > 1$ and $C_r > 1$, and a function $K(t) = C ^{\exp(C_r t)}$, such that if $\mathcal{T}=\mathcal{T}(\tau_0, \epsilon, M, r)$ satisfies \begin{eqnarray}\label{T-longtime-type1}
 \int_0^\mathcal{T} e^{K(s)}ds = \frac{\tau_0}{2\epsilon},
\end{eqnarray}
then the unique solution obtained in Theorem \ref{theorem-local} satisfies $(\overline{v},\widetilde{v}) \in  L^\infty(0,\mathcal{T};\mathcal{S}\cap\mathcal{D}(e^{\tau(t) A}: H^r(\mathbb{T}^3)))$, with 
\begin{eqnarray}
\tau(t) = e^{ - \int_0^t K(s)ds} (\tau_0 - \epsilon   \int_0^t e^{K(s)}ds ). \label{longtime-tau-1}
\end{eqnarray}
In particular, from (\ref{T-longtime-type1}), $\mathcal{T}\gtrsim \ln(\ln(\ln(\frac{1}{\epsilon})))\rightarrow \infty$, as $\epsilon \rightarrow 0^+$.
\end{theorem}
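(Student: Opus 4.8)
The plan is to run a continuation (bootstrap) argument on the local solution $(\overline v,\widetilde v)$ already furnished by Theorem~\ref{theorem-local}: existence, uniqueness and continuous dependence on a short $\Omega$-independent interval are in hand, so what must be shown is that the $\mathcal D(e^{\tau(t)A}:H^r)$-norm of $(\overline v,\widetilde v)$ cannot become unbounded before the time $\mathcal T$ defined in \eqref{T-longtime-type1}, while the radius $\tau(t)$ of \eqref{longtime-tau-1} stays positive. The heuristic driving the proof is that system \eqref{local-system-1}--\eqref{local-system-2} collapses to the $2D$ Euler equations \eqref{Euler-1}--\eqref{Euler-3} once $\widetilde v\equiv0$: by \eqref{Euler-Sobolev-estimate-2} and Proposition~\ref{global-Euler}, the genuine $2D$ Euler flow $\overline V$ issued from $\overline v_0\in\mathcal D(e^{\tau_0A}:H^{r+1})$ (legitimate since $r+1>3$) obeys a double-exponential bound $\|e^{\tau_E(t)A}\overline V(t)\|_{H^{r+1}}^2\le C_M^{\exp(C_rt)}=:K(t)$, so the barotropic mode $\overline v$, which by \eqref{barotropic-evolution-1} solves the $2D$ Euler equations forced by the quadratic-in-$\widetilde v$ term $\mathbb P_hP_0\nabla\!\cdot(\widetilde v\otimes\widetilde v)$, should remain within a fixed multiple of $K(t)$ for as long as $\widetilde v$ stays small, while the baroclinic mode $\widetilde v$, whose equation \eqref{baroclinic-evolution-1} contains no term of order zero in $\widetilde v$, should stay of size $O(\epsilon)$ for a time tending to infinity as $\epsilon\to0^+$.

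Concretely, I would fix $\mathcal T$ by \eqref{T-longtime-type1} and let $\mathcal T_\ast\in(0,\mathcal T]$ be the largest time up to which the bootstrap bounds
\[
\|e^{\tau(t)A}\overline v(t)\|_{H^{r+1}}^2\le K(t),\qquad \|A^re^{\tau(t)A}\widetilde v(t)\|^2\le 4\epsilon^2
\]
both hold, the aim being to improve both strictly on $[0,\mathcal T_\ast]$, which by continuity forces $\mathcal T_\ast=\mathcal T$. For the barotropic estimate I would apply $A^{r+1}e^{\tau(t)A}$ to \eqref{barotropic-evolution-1}, pair with $A^{r+1}e^{\tau(t)A}\overline v$, handle the Euler nonlinearity exactly as in the proof of Proposition~\ref{global-Euler} (Lemma~\ref{lemma-non-small} together with the logarithmic bound behind \eqref{Euler-Sobolev-estimate-2}), and estimate the forcing by moving a derivative onto $\overline v$, distributing the remaining derivatives with Lemma~\ref{lemma-banach-algebra}, using the vanishing mean $\widehat{\widetilde v}_0=0$, the $L^2\big(0,\mathcal T;\mathcal D(e^{\tau(t)A}:H^{r+1/2})\big)$-bound from Theorem~\ref{theorem-local}, and Young's inequality; since the forcing is quadratic in the small quantity $\widetilde v$ it perturbs the Euler growth by only $O(\epsilon^2)$, so $K(t)$ is recovered with room to spare. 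For the baroclinic estimate I would apply $A^re^{\tau(t)A}$ to \eqref{baroclinic-evolution-1} and pair with $A^re^{\tau(t)A}\widetilde v$; the key structural points are: (i) the Coriolis term disappears because $\langle\widetilde v^{\perp},\widetilde v\rangle=0$ pointwise, which is precisely why the statement is independent of $\Omega$; (ii) every remaining nonlinear term carries at least one factor of $\widetilde v$, so Lemmas~\ref{lemma-type1}--\ref{lemma-type3} and \ref{lemma-non-small} bound the right-hand side by a quantity of the form $\big[\dot\tau+1+C_r\big(\|e^{\tau A}\overline v\|_{H^{r+1}}+\|A^re^{\tau A}\widetilde v\|\big)\big]\big(\|A^{r+1/2}e^{\tau A}\overline v\|^2+\|A^{r+1/2}e^{\tau A}\widetilde v\|^2\big)$ plus a remainder $C_rK(t)^{1/2}\|A^re^{\tau A}\widetilde v\|^2$; (iii) the transport term $\overline v\cdot\nabla\widetilde v$ produces no $\|\nabla\!\cdot\overline v\|_{L^\infty}$ contribution because $\nabla\!\cdot\overline v=0$; and (iv) every Fourier mode of $\widetilde v$ satisfies $|\boldsymbol k|\ge1$, hence $\|A^re^{\tau A}\widetilde v\|\le\|A^{r+1/2}e^{\tau A}\widetilde v\|$, so the remainder can also be absorbed into the bracketed coefficient.

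It then remains to choose $\tau(t)$ so that the bracket is nonpositive; using the bootstrap bounds $\|e^{\tau A}\overline v\|_{H^{r+1}}^2\le K$ and $\|A^re^{\tau A}\widetilde v\|^2\le4\epsilon^2$ this is guaranteed once (after harmlessly enlarging the constants $C_M,C_r$ hidden in $K$) one has $\dot\tau+K(t)\,\tau+\epsilon\,e^{K(t)-\int_0^tK(s)\,ds}\le 0$, whose maximal solution with $\tau(0)=\tau_0$ is exactly \eqref{longtime-tau-1}, and which stays positive precisely while $\int_0^te^{K(s)}\,ds<\tau_0/(2\epsilon)$ — this is what defines $\mathcal T$ in \eqref{T-longtime-type1}. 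With this choice the energy inequalities give $\tfrac{d}{dt}\|A^re^{\tau(t)A}\widetilde v(t)\|^2\le0$ on $[0,\mathcal T_\ast]$, so $\|A^re^{\tau(t)A}\widetilde v(t)\|^2\le\|A^re^{\tau_0A}\widetilde v_0\|^2\le\epsilon^2<4\epsilon^2$, while the barotropic bound was strictly improved in the previous step; this closes the bootstrap, gives $\mathcal T_\ast=\mathcal T$, and together with the conservation $\|\overline v(t)\|^2+\|\widetilde v(t)\|^2=\|\overline v_0\|^2+\|\widetilde v_0\|^2$ of the $L^2$ parts yields $(\overline v,\widetilde v)\in\mathcal S\cap L^\infty\big(0,\mathcal T;\mathcal D(e^{\tau(t)A}:H^r)\big)$. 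The asymptotics $\mathcal T\gtrsim\ln\ln\ln(1/\epsilon)$ then follows by inverting $\int_0^{\mathcal T} e^{K(s)}\,ds=\tau_0/(2\epsilon)$ with $K(s)=C_M^{\exp(C_rs)}$, a triple exponential in $s$.

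I expect the genuine difficulty to lie in closing the coupled top-order estimate in the presence of the regularity asymmetry between the two modes — the barotropic mode must be controlled at one derivative more than the baroclinic mode in order to feed $\nabla\overline v$ into \eqref{baroclinic-evolution-1}, while the forcing of \eqref{barotropic-evolution-1} and the coupling terms of \eqref{baroclinic-evolution-1} are only marginally regular. Making this work forces one to exploit the divergence form of the coupling, the vanishing mean of $\widetilde v$, the integration-by-parts cancellations encoded in Lemmas~\ref{lemma-type1}--\ref{lemma-type3}, and the $L^2$-in-time half-derivative gain from Theorem~\ref{theorem-local}, all balanced carefully against the decreasing analyticity radius so that the worst ``gain'' terms are absorbed into $\dot\tau$; it is also in this bookkeeping that the precise $\Omega$-independent form of $K$, and the fact that $K$ is dictated by the $2D$ Euler growth alone, must be tracked.
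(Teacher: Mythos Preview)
Your bootstrap scheme has a genuine gap precisely at the point you flag as ``the genuine difficulty.'' Estimating $\|e^{\tau A}\overline v\|_{H^{r+1}}$ directly means that the forcing contribution
\[
\Big\langle A^{r+1}e^{\tau A}\,\mathbb P_hP_0\big((\nabla\!\cdot\widetilde v)\widetilde v+\widetilde v\cdot\nabla\widetilde v\big),\,A^{r+1}e^{\tau A}\overline v\Big\rangle
\]
must be controlled. Lemmas~\ref{lemma-type1}--\ref{lemma-type2} applied at level $r+1$ produce factors $\|A^{r+3/2}e^{\tau A}\widetilde v\|$, and moving derivatives onto $\overline v$ instead asks for $\|A^{r+3/2}e^{\tau A}\overline v\|$ paired with $\|A^{r+1}e^{\tau A}\widetilde v\|$ --- in every arrangement at least one factor of $\widetilde v$ sits at level $\ge r+1$, which your bootstrap at level $r$ (and the $L^2_tH^{r+1/2}$ gain from Theorem~\ref{theorem-local}) simply does not supply. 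The divergence form and the mean-zero of $\widetilde v$ do not buy the missing half-derivative. A related slip is your step~(iv): absorbing the remainder $C_rK^{1/2}\|A^re^{\tau A}\widetilde v\|^2$ into the bracket via Poincar\'e adds a bare $C_rK^{1/2}$ (no factor of $\tau$) to the coefficient, so the requirement is $\dot\tau\lesssim -K^{1/2}$, which drives $\tau$ to zero in a time independent of $\epsilon$; your reduction to $\dot\tau+K\tau+\epsilon\,e^{K-\int K}\le0$ is therefore not justified.

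The paper circumvents both issues by \emph{not} estimating $\overline v$ at level $r+1$ at all. It introduces the genuine $2D$ Euler flow $\overline V$ with $\overline V(0)=\overline v_0$, lets Proposition~\ref{global-Euler} furnish the global bound $\|e^{\tau A}\overline V\|_{H^{r+1}}\le K(t)$, and works with the \emph{difference} $\overline\phi=\overline v-\overline V$ at level $r$ only. Then $F=\|A^re^{\tau A}\overline\phi\|^2+\|A^re^{\tau A}\widetilde v\|^2$ satisfies $F(0)\le\epsilon^2$ (since $\overline\phi(0)=0$), and every nonlinear term is either trilinear in the small quantities $(\overline\phi,\widetilde v)$ (handled by Lemmas~\ref{lemma-type1}--\ref{lemma-type3}, contributing $C_rF^{1/2}$ to the bracket), of the form $\overline V\cdot\nabla(\overline\phi\text{ or }\widetilde v)$ (handled by Lemma~\ref{lemma-non-small}, contributing only $C_r\tau\|e^{\tau A}\overline V\|_{H^{r+1}}$ to the bracket), or of the form $(\overline\phi\text{ or }\widetilde v)\cdot\nabla\overline V$ (handled by Lemma~\ref{lemma-banach-algebra}, contributing $C_r\|e^{\tau A}\overline V\|_{H^{r+1}}F$ to a Gr\"onwall term, \emph{not} to the bracket). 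One then obtains $\dot F\le K F$, hence $F\le\epsilon^2e^{K}$, and feeds $C_rF^{1/2}\le\epsilon\,e^{K}$ back into the condition on $\dot\tau$, which now reads $\dot\tau+\epsilon\,e^{K}+\tau K\le0$ --- exactly \eqref{longtime-tau-1}. The subtraction of $\overline V$ is the missing idea: it simultaneously resolves the regularity mismatch (both unknowns live at level $r$) and keeps the $\overline V$-contribution to the bracket proportional to $\tau$.
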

Thanks to Lemma \ref{lemma-decomposition}, we immediately have the following corollary.
\begin{corollary}
Assume $v_0 \in \mathcal{S}\cap \mathcal{D}(e^{\tau_0 A}: H^{r+1}(\mathbb{T}^3))$, and the conditions of Theorem \ref{theorem-longtime} hold. Then the unique solution obtained in Corollary \ref{corollary-local} satisfies $v \in  L^\infty(0,\mathcal{T};\mathcal{S}\cap\mathcal{D}(e^{\tau(t) A}: H^r(\mathbb{T}^3)))$, with $\mathcal{T}$ defined in (\ref{T-longtime-type1}) and $\tau$ defined in (\ref{longtime-tau-1}). 
\end{corollary}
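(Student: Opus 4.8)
The plan is to extend the local solution of Theorem~\ref{theorem-local} by a priori bounds on the radius of analyticity, exploiting the structure of the barotropic–baroclinic system: when $\widetilde v$ is small, the barotropic mode $\overline v$ is essentially a $2D$ Euler solution, while $\widetilde v$ is driven only by quadratic self-interactions and by the \emph{linear} stretching term $\widetilde v\cdot\nabla\overline v$, and the Coriolis term $\Omega\widetilde v^\perp$ contributes nothing to the energy.

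\emph{The $2D$ Euler comparison.} Let $\overline V$ be the global solution of (\ref{Euler-1})--(\ref{Euler-3}) with $\overline V_0=\overline v_0$. Since $r+1>3$, Proposition~\ref{global-Euler} applied at regularity $r+1$ gives, for every $\mathcal T$, $\overline V\in L^\infty\big(0,\mathcal T;\mathcal D(e^{\tau_V(t)A}:H^{r+1})\big)$ with $\tau_V$ non-increasing and $\|e^{\tau_V(t)A}\overline V(t)\|_{H^{r+1}}^2\le C_M^{\exp(C_rt)}$; this double exponential is the source of $K(t)=C_M^{\exp(C_rt)}$ in the statement. Writing $\overline v=\overline V+\overline\omega$, the correction solves an equation with $\overline\omega(0)=0$ whose only forcing, $\mathbb P_hP_0\big((\nabla\cdot\widetilde v)\widetilde v+\widetilde v\cdot\nabla\widetilde v\big)$, is quadratic in $\widetilde v$, hence small while $\widetilde v$ is small.

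\emph{The coupled analytic estimate.} On a common radius $\tau(t)\le\tau_V(t)$ with $\dot\tau<0$ I would apply $A^re^{\tau A}$ to the equations for $\overline\omega$ and $\widetilde v$, pair in $L^2$, and estimate the nonlinearities via Lemmas~\ref{lemma-type1}--\ref{lemma-type3}, \ref{lemma-banach-algebra}, \ref{lemma-non-small}, sorting by powers of $\widetilde v$. Three points drive the argument: (i) $\langle A^re^{\tau A}\widetilde v^\perp,A^re^{\tau A}\widetilde v\rangle$ is purely imaginary mode by mode, so the $\Omega$ term drops out of the estimate entirely, which is why the life-span is independent of $\Omega$; (ii) all terms quadratic in $\widetilde v$ (the forcing of the $\overline v$-equation, the $P_0$-terms, the $\big(\int_0^z\nabla\cdot\widetilde v\big)\partial_z\widetilde v$ term, and $\widetilde v\cdot\nabla\widetilde v$) contribute only a factor $\|A^re^{\tau A}\widetilde v\|$ to the energy identity, hence are absorbable for $\widetilde v$ small; (iii) transport by $\overline V$ is treated by Lemma~\ref{lemma-non-small} with $\nabla\cdot\overline V=0$, which leaves $\lesssim C_r\|\overline V\|_{H^r}\|A^re^{\tau A}\widetilde v\|^2+C_r\tau\|A^{r+1/2}e^{\tau A}\overline V\|\,\|A^{r+1/2}e^{\tau A}\widetilde v\|^2$, the half-derivative loss onto $\overline V$ being exactly why $\overline V$ must be carried in $H^{r+1}$ (so that $\|A^{r+1/2}e^{\tau A}\overline V\|\le K(t)$), and the stretching term $\widetilde v\cdot\nabla\overline v=\widetilde v\cdot\nabla\overline V+\widetilde v\cdot\nabla\overline\omega$ is handled similarly (its $\overline V$-part a bounded algebra term with coefficient $\le K(t)$, its $\overline\omega$-part small). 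One arrives at a differential inequality of the schematic form
\begin{align*}
\tfrac12\tfrac{d}{dt}\Big(\|A^re^{\tau A}\overline\omega\|^2+\|A^re^{\tau A}\widetilde v\|^2\Big)
&\le\Big(\dot\tau+C_r\tau K(t)+C_r\epsilon\Big)\Big(\|A^{r+1/2}e^{\tau A}\overline\omega\|^2+\|A^{r+1/2}e^{\tau A}\widetilde v\|^2\Big)\\
&\qquad+C_rK(t)\Big(\|A^re^{\tau A}\overline\omega\|^2+\|A^re^{\tau A}\widetilde v\|^2\Big)
\end{align*}
on the bootstrap region $\|A^re^{\tau A}\widetilde v\|+\|A^re^{\tau A}\overline\omega\|\le1$.

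\emph{Choosing $\tau$ and closing.} Pick $\tau$ to kill the top-order coefficient, $\dot\tau=-C_r\tau K(t)-C_r\epsilon$; up to the (harmless) constants this is $\tau(t)=e^{-\int_0^tK}\big(\tau_0-\epsilon\int_0^te^{K(s)}ds\big)$, which stays positive precisely for $t\le\mathcal T$ where $\int_0^{\mathcal T}e^{K(s)}ds=\tau_0/(2\epsilon)$. The surviving inequality, $\tfrac{d}{dt}\big(\|A^re^{\tau A}\overline\omega\|^2+\|A^re^{\tau A}\widetilde v\|^2\big)\le C_rK(t)\big(\cdots\big)$, yields by Grönwall $\|A^re^{\tau(t)A}\widetilde v(t)\|^2+\|A^re^{\tau(t)A}\overline\omega(t)\|^2\le C\epsilon^2\exp\!\big(C_r\int_0^tK\big)$, which on $[0,\mathcal T]$ is $\le C\epsilon^{2-o(1)}\to0$ once $C_M$ is taken large enough; thus the bootstrap closes with room to spare and $\widetilde v$ stays small throughout. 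A standard continuation argument then gives $(\overline v,\widetilde v)\in\mathcal S\cap L^\infty(0,\mathcal T;\mathcal D(e^{\tau(t)A}:H^r))$, and $\mathcal T\gtrsim\ln\ln\ln(1/\epsilon)$ follows from $\int_0^{\mathcal T}e^{K}=\tau_0/(2\epsilon)$ together with $K=C_M^{\exp(C_rt)}$. The main obstacle is precisely the simultaneous bookkeeping in this one estimate — balancing the $\dot\tau$ smoothing, the double-exponential growth inherited from the $2D$ Euler barotropic dynamics, and the $\epsilon$-smallness of $\widetilde v$ — so that every half-order-higher term is absorbed while the ODE for $\tau$ still keeps the radius of analyticity positive for a triple-logarithmically long time; a secondary subtlety is that the half-derivative loss in the Gevrey commutators forces the comparison field $\overline V$ to be carried at regularity $H^{r+1}$ (whence the hypothesis $\overline v_0\in H^{r+1}$), with the correction $\overline\omega$ controlled only at regularity $H^r$.
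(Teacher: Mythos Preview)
Your proposal is a sketch of Theorem~\ref{theorem-longtime}, not of the Corollary itself; the Corollary is a one-line consequence of Theorem~\ref{theorem-longtime} and Lemma~\ref{lemma-decomposition} (which gives $\|e^{\tau A}v\|_{H^r}^2=\|e^{\tau A}\overline v\|_{H^r}^2+\|e^{\tau A}\widetilde v\|_{H^r}^2$), and that is exactly the paper's proof. That said, your sketch of the underlying theorem matches the paper's argument almost step for step: the same $2D$ Euler comparison $\overline\phi=\overline v-\overline V$ (your $\overline\omega$), the same lemmas (\ref{lemma-type1}--\ref{lemma-type3}, \ref{lemma-banach-algebra}, \ref{lemma-non-small}) for the nonlinear terms, the same observation that the Coriolis term drops, and the same ODE for $\tau$ leading to (\ref{longtime-tau-1}) and the time condition (\ref{T-longtime-type1}).

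One small imprecision: in your schematic inequality you write the top-order coefficient as $\dot\tau+C_r\tau K(t)+C_r\epsilon$, but on a bootstrap region $\|A^re^{\tau A}\widetilde v\|+\|A^re^{\tau A}\overline\omega\|\le1$ the trilinear terms only give $C_r\cdot 1$, not $C_r\epsilon$. The paper avoids this by keeping the actual quantity $C_rF^{1/2}$ in the condition $\dot\tau+C_rF^{1/2}+\tau K\le0$, first deducing from Gr\"onwall that $F(t)\le F(0)e^{K(t)}\le\epsilon^2 e^{K(t)}$, and only then substituting $F^{1/2}\le\epsilon e^{K(t)}$ back into the ODE for $\tau$; this is what produces the $\epsilon$ in (\ref{longtime-tau-1}) cleanly. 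Your bootstrap can be made to work the same way, but as written it conflates the a priori smallness of $F(0)$ with the bootstrap bound.
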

\begin{remark}
For the proof of Theorem \ref{theorem-longtime}, we only establish formal energy estimates. However, these formal estimates can be justified rigorously by establishing them first for the Galerkin approximation system and then passing to the limit using the Aubin-Lions compactness theorem.
\end{remark}
\begin{remark}\label{remark-kt}
The constants $C $ and $C_r$ in $K(t)$, in the proof below, may change from step to step, and are always taken to be larger than $1$. When necessary, we use $K_1(t), K_2(t), ...$ to emphasize the changes. At the end, we choose some suitable and large enough $C $ and $C_r$ for the $K(t)$ in Theorem \ref{theorem-longtime}. 
\end{remark}
\begin{proof}[Proof of Theorem \ref{theorem-longtime}]
 Let $\overline{V}$ be the unique global solution to the $2D$ Euler equations (\ref{Euler-1})--(\ref{Euler-3}) in the space $\mathcal{D}(e^{\tau_1(t) A}: H^{r+1}(\mathbb{T}^3))$,
with initial condition $\overline{V}_0 = \overline{v}_0$ and $\tau_1(t)$ satisfying (\ref{tau-Euler}). Let $\overline{\phi} = \overline{v}-\overline{V}$. Then from \eqref{local-system-1}--\eqref{local-system-2} one has
\begin{eqnarray*}
&&\hskip-.8in 
\partial_t \overline{\phi} + \mathbb{P}_h\Big(\overline{\phi}\cdot \nabla \overline{\phi} + \overline{\phi}\cdot \nabla \overline{V} + \overline{V}\cdot \nabla \overline{\phi}\Big) + \mathbb{P}_h P_0\Big( (\nabla\cdot \widetilde{v}) \widetilde{v} 
+ \widetilde{v}\cdot \nabla \widetilde{v} \Big)  = 0, \label{difference-original-bar} \\
\nonumber \\
&&\hskip-.8in 
\partial_t \widetilde{v} + \widetilde{v} \cdot \nabla \widetilde{v} + \overline{\phi}\cdot \nabla \widetilde{v} + \overline{V}\cdot \nabla \widetilde{v} 
+ \widetilde{v} \cdot\nabla \overline{\phi} + \widetilde{v} \cdot\nabla \overline{V} 
 - P_0\Big( (\nabla\cdot \widetilde{v}) \widetilde{v} 
+ \widetilde{v}\cdot \nabla \widetilde{v} \Big) \nonumber \\
&&\hskip1.4in 
- \big(\int_0^z \nabla\cdot \widetilde{v}(s) ds\big)\partial_z \widetilde{v} + \Omega \widetilde{v}^\perp  = 0,
\label{difference-original-tilde}
\end{eqnarray*}
with initial condition
\begin{eqnarray*}
\overline{\phi}(0) = \overline{v}_0 - \overline{V}_0 = 0, \hspace{0.2in} \widetilde{v}(0) = \widetilde{v}_0. \label{difference-initial}
\end{eqnarray*}
Thanks to Lemma \ref{lemma-projection} and Lemma \ref{lemma-leray}, we have
\begin{eqnarray*}
&&\hskip-.58in \frac{1}{2} \frac{d}{dt} \|A^r e^{\tau A} \overline{\phi} \|^2 = \dot{\tau} \|A^{r+\frac{1}{2}} e^{\tau A} \overline{\phi} \|^2 - \Big\langle A^r e^{\tau A} (\overline{\phi}\cdot \nabla \overline{\phi}), A^r e^{\tau A} \overline{\phi} \Big\rangle - \Big\langle A^r e^{\tau A} (\overline{\phi}\cdot \nabla \overline{V}), A^r e^{\tau A} \overline{\phi} \Big\rangle \nonumber \\
&&\hskip-.38in
- \Big\langle A^r e^{\tau A} (\overline{V}\cdot \nabla \overline{\phi}), A^r e^{\tau A} \overline{\phi} \Big\rangle -  \Big\langle A^r e^{\tau A} (\widetilde{v}\cdot \nabla \widetilde{v}), A^r e^{\tau A} \overline{\phi} \Big\rangle  - \Big\langle A^r e^{\tau A} ((\nabla\cdot \widetilde{v})\widetilde{v}), A^r e^{\tau A} \overline{\phi} \Big\rangle, \label{energy-phibar}
\end{eqnarray*}
\begin{eqnarray*}
&&\hskip-.58in \frac{1}{2} \frac{d}{dt} \|A^r e^{\tau A} \widetilde{v} \|^2 = \dot{\tau} \|A^{r+\frac{1}{2}} e^{\tau A} \widetilde{v} \|^2 - \Big\langle A^r e^{\tau A} (\widetilde{v}\cdot \nabla \widetilde{v}), A^r e^{\tau A} \widetilde{v} \Big\rangle- \Big\langle A^r e^{\tau A} (\overline{\phi}\cdot \nabla \widetilde{v}), A^r e^{\tau A} \widetilde{v} \Big\rangle \nonumber \\
&&\hskip-.38in
- \Big\langle A^r e^{\tau A} (\overline{V}\cdot \nabla \widetilde{v}), A^r e^{\tau A} \widetilde{v} \Big\rangle
-  \Big\langle A^r e^{\tau A} (\widetilde{v}\cdot \nabla \overline{\phi}), A^r e^{\tau A} \widetilde{v} \Big\rangle  - \Big\langle A^r e^{\tau A} (\widetilde{v}\cdot\nabla\overline{V}), A^r e^{\tau A} \widetilde{v} \Big\rangle \nonumber\\
&&\hskip.38in + \Big\langle A^r e^{\tau A} \big((\int_0^z \nabla\cdot \widetilde{v}(\boldsymbol{x}',s)ds)\partial_z \widetilde{v} \big), A^r e^{\tau A} \widetilde{v} \Big\rangle 
. \label{energy-vtilde}
\end{eqnarray*}
By using Lemma \ref{lemma-type1}--\ref{lemma-type3}, we have
\begin{eqnarray*}
&&\hskip-.58in
\Big|\Big\langle A^r e^{\tau A} (\overline{\phi}\cdot \nabla \overline{\phi}), A^r e^{\tau A} \overline{\phi} \Big\rangle\Big| + \Big|  \Big\langle A^r e^{\tau A} (\widetilde{v}\cdot \nabla \widetilde{v}), A^r e^{\tau A} \overline{\phi} \Big\rangle\Big| + \Big|\Big\langle A^r e^{\tau A} ((\nabla\cdot \widetilde{v})\widetilde{v}), A^r e^{\tau A} \overline{\phi} \Big\rangle \Big| \nonumber\\
&&\hskip-.38in 
+  \Big|\Big\langle A^r e^{\tau A} (\widetilde{v}\cdot \nabla \widetilde{v}), A^r e^{\tau A} \widetilde{v} \Big\rangle \Big| 
+ \Big|\Big\langle A^r e^{\tau A} (\overline{\phi}\cdot \nabla \widetilde{v}), A^r e^{\tau A} \widetilde{v} \Big\rangle \Big| + \Big| \Big\langle A^r e^{\tau A} (\widetilde{v}\cdot \nabla \overline{\phi}), A^r e^{\tau A} \widetilde{v} \Big\rangle \Big| \nonumber\\
&&\hskip-.38in 
+ \Big| \Big\langle A^r e^{\tau A} \big((\int_0^z \nabla\cdot \widetilde{v}(\boldsymbol{x}',s)ds)\partial_z \widetilde{v} \big), A^r e^{\tau A} \widetilde{v} \Big\rangle \Big| \nonumber\\
&&\hskip-.68in 
\leq C_r (\|A^{r} e^{\tau A} \overline{\phi} \| + \|A^{r} e^{\tau A} \widetilde{v} \|) (\|A^{r+\frac{1}{2}} e^{\tau A} \overline{\phi} \|^2 + \|A^{r+\frac{1}{2}} e^{\tau A} \widetilde{v} \|^2).
\end{eqnarray*}
Here we use the fact that $\overline{\phi}$ has zero mean value since $\overline{V}$ and $\overline{v}$ both have zero mean value, and $\widetilde{v}$ has zero mean value since $\overline{\widetilde{v}} = 0$. By virtue of Lemma \ref{lemma-non-small}, since $\nabla\cdot \overline{V}=0$, one obtains
\begin{eqnarray*}
&&\hskip-.58in
\Big|\Big\langle A^r e^{\tau A} (\overline{V}\cdot \nabla \overline{\phi}), A^r e^{\tau A} \overline{\phi} \Big\rangle \Big| + \Big|\Big\langle A^r e^{\tau A} (\overline{V}\cdot \nabla \widetilde{v}), A^r e^{\tau A} \widetilde{v}  \Big\rangle \Big| \nonumber\\
&&\hskip-.73in
\leq C_r \|A^r \overline{V}\| (\|A^r \overline{\phi}\|^2 + \|A^r \widetilde{v}\|^2 ) + C_r\tau \|A^{r+\frac{1}{2}} e^{\tau A} \overline{V}\| (\|A^{r+\frac{1}{2}} e^{\tau A} \overline{\phi}\|^2 + \|A^{r+\frac{1}{2}} e^{\tau A} \widetilde{v}\|^2).
\end{eqnarray*}
From Lemma \ref{lemma-banach-algebra}, thanks to the Cauchy–Schwarz inequality and since $\widetilde{v}$ and $\overline{\phi}$ have zero mean, we have
\begin{eqnarray*}
&&\hskip-.58in
\Big| \Big\langle A^r e^{\tau A} (\widetilde{v}\cdot\nabla\overline{V}), A^r e^{\tau A} \widetilde{v} \Big\rangle \Big| + \Big| \Big\langle A^r e^{\tau A} (\overline{\phi}\cdot\nabla\overline{V}), A^r e^{\tau A} \overline{\phi} \Big\rangle \Big| \nonumber\\
&&\hskip-.68in
\leq C_r \| e^{\tau A} \overline{V}\|_{H^{r+1}} (\|A^{r} e^{\tau A} \overline{\phi}\|^2 + \|A^{r} e^{\tau A} \widetilde{v}\|^2 ).
\end{eqnarray*}
Combining all the estimates above, we have
\begin{eqnarray}
&&\hskip-.58in \frac{1}{2} \frac{d}{dt} (\|A^r e^{\tau A} \overline{\phi} \|^2 + \|A^r e^{\tau A} \widetilde{v} \|^2) \nonumber \\
&&\hskip-.68in
\leq \Big(\dot{\tau}+ C_r(\|A^r e^{\tau A} \overline{\phi} \| + \|A^r e^{\tau A} \widetilde{v} \|) + C_r\tau \| e^{\tau A} \overline{V}\|_{H^{r+1}} \Big) \Big(\|A^{r+\frac{1}{2}} e^{\tau A} \overline{\phi} \|^2 + \|A^{r+\frac{1}{2}} e^{\tau A} \widetilde{v} \|^2 \Big) \nonumber \\
&&\hskip-.58in 
+  C_r\| e^{\tau A} \overline{V}\|_{H^{r+1}}\Big(\|A^{r} e^{\tau A} \overline{\phi}\|^2 + \|A^{r} e^{\tau A} \widetilde{v}\|^2 \Big).
\label{energy-together1}
\end{eqnarray}
As indicated in Remark \ref{remark-kt}, we will use $K_0, K_1, K_2,...$ to indicate the change in $K(t)$ from step to step, and all of them are increasing double exponentially in $t$.
Recall that $\tau_1$ is defined by (\ref{tau-Euler}). Indeed, there exists a function $K_0(t)$ such that $\tau_1(t) \geq \tau_0 e^{-\int_0^t K_0(s)ds}$. Let $\tau \leq \tau_1.$ Recall from (\ref{estimate-Euler}), we have
\begin{equation*}
    \| e^{\tau(t) A} \overline{V}(t)\|_{H^{r+1}} \leq  \| e^{\tau_1(t) A} \overline{V}(t)\|_{H^{r+1}} \leq C ^{\exp(\widetilde{C_r} t)}=: K_1(t).
\end{equation*}
Denote by 
\begin{equation*}
    F = \|A^r e^{\tau A} \overline{\phi} \|^2 + \|A^r e^{\tau A} \widetilde{v} \|^2, \;\;\;\;
    G = \|A^{r+\frac{1}{2}} e^{\tau A} \overline{\phi} \|^2 + \|A^{r+\frac{1}{2}} e^{\tau A} \widetilde{v} \|^2 . 
\end{equation*}
We can rewrite (\ref{energy-together1}) as
\begin{eqnarray*}
&&\hskip-.58in \frac{d}{dt} F \leq 2(\dot{\tau} + C_r F^{\frac{1}{2}} + \tau K_2)G + K_2 F.
\label{energy-together2}
\end{eqnarray*}
Notice that when $\tau$ satisfies
\begin{equation}\label{condition-tau}
    \dot{\tau} + C_r F^{\frac{1}{2}} +  \tau  K_2 \leq 0, 
\end{equation}
we have
\begin{eqnarray}
F(t) \leq F(0)e^{\int_0^t K_2(s)ds} \leq F(0)e^{K_3(t)}, \label{longtime-analytic}
\end{eqnarray}
and therefore 
$
C_r F(t)^{\frac{1}{2}} \leq F(0)^{\frac{1}{2}} e^{K_4 (t)}.
$
Notice that $F(0) = \|A^r e^{\tau_0 A} \widetilde{v}_0 \|^2 \leq \| e^{\tau_0 A} \widetilde{v}_0\|^2_{H^{r}} \leq  \epsilon^2 $. From (\ref{condition-tau}), we require that
\begin{eqnarray}\label{dtau-1}
\frac{d}{dt}(\tau e^{ \int_0^t K_2(s)ds}) + \epsilon e^{ \int_0^t K_2(s) ds} e^{ K_4(t)}\leq 0  .
\end{eqnarray}
It is clear that
$e^{ \int_0^t K_2(s) ds} e^{ K_4(t)}  \leq  e^{K_5(t)}$
for some new $K_5(t)$. Therefore, instead of (\ref{dtau-1}), we require that
\begin{eqnarray}\label{dtau-2}
\frac{d}{dt}(\tau e^{\int_0^t K_2(s)ds}) + \epsilon e^{ K_5(t)} \leq 0.
\end{eqnarray}
Integrating (\ref{dtau-2}) from $0$ to $t$ in time, we have
\begin{eqnarray}\label{dtau-3}
\tau(t) e^{ \int_0^t K_2(s) ds} \leq \tau_0 - \epsilon \int_0^t  e^{ K_5(s)} ds.
\end{eqnarray}
Recall that we also need $\tau(t)\leq \tau_1(t)$ and we know that $\tau_1(t) \geq \tau_0 e^{-\int_0^t K_0(s)ds}$. Therefore, for a new and suitable function $K(t)$, we can set 
\begin{eqnarray}
\tau(t) = e^{ - \int_0^t K(s)ds} (\tau_0 - \epsilon   \int_0^t e^{K(s)}ds ) \label{longtime-tau}
\end{eqnarray}
such that $\tau(t)$ satisfies the condition in (\ref{condition-tau}) and also $\tau(t)\leq \tau_1(t)$. One can see $\tau(t)>0$ on $t\in[0,\mathcal{T}]$ when $\mathcal{T}$ satisfies
$
 \int_0^\mathcal{T} e^{K(s)}ds = \frac{\tau_0}{2\epsilon}.
$
Since $K(t)$ is double exponential in time, and $\int_0^\mathcal{T} e^{K(s)}ds \leq \mathcal{T} e^{K(\mathcal{T})} \leq e^{2K(\mathcal{T})}$, we have $\mathcal{T}\gtrsim \ln(\ln(\ln(\frac{1}{\epsilon})))\rightarrow\infty $ as $\epsilon \rightarrow 0^+$. 

From (\ref{longtime-analytic}), since $\overline{\phi}$ and $\widetilde{v}$ have zero mean, we can apply the Poincar\'e inequality to obtain 
\begin{eqnarray}\label{longtime-convergence-inequality}
\|e^{\tau(t) A} \overline{\phi}(t)\|_{H^r}^2 + \|e^{\tau(t) A} \widetilde{v}(t)\|_{H^r}^2 \leq \epsilon^2 e^{K(t)}
\end{eqnarray}
when $K(t)$ is chosen suitably, on $t\in[0,\mathcal{T}],$ with $\tau(t)$ defined by (\ref{longtime-tau}). From (\ref{estimate-Euler}), and since $\tau \leq \tau_1$, we know $\|e^{\tau(t) A} \overline{V}(t)\|_{H^r}$ is also bounded on $t\in[0,\mathcal{T}]$. By triangle inequality, we have
\begin{eqnarray*}
\|e^{\tau(t) A} \overline{v}(t)\|_{H^r} + \|e^{\tau(t) A} \widetilde{v}(t)\|_{H^r}\leq \|e^{\tau(t) A} \overline{\phi}(t)\|_{H^r} + \|e^{\tau(t) A} \overline{V}(t)\|_{H^r} +  \|e^{\tau(t) A} \widetilde{v}(t)\|_{H^r} < \infty
\end{eqnarray*}
for $t\in[0,\mathcal{T}]$.
Therefore, the time of existence of the solution to system (\ref{local-system-1})--(\ref{local-system-2}) satisfies (\ref{T-longtime-type1}).
\end{proof}

\subsection{Convergence to the $2D$ Euler equations}
Based on Theorem \ref{theorem-longtime}, we have the following result concerning the convergence of solutions of the $3D$ inviscid PEs (\ref{local-system-1})--(\ref{local-system-2}) to solutions of the $2D$ Euler equations (\ref{Euler-1})--(\ref{Euler-3}) in the space of analytic functions.
\begin{theorem} \label{convergence}
Assume a sequence of initial data $\{\overline{v}^{n}_0 = \overline{v}_0 \}_{n\in\mathbb{N}} \subset \mathcal{S}\cap \mathcal{D}(e^{\tau_0 A}: H^{r+1}(\mathbb{T}^3))$ and $\{ \widetilde{v}^{n}_0 \}_{n\in\mathbb{N}} \subset \mathcal{S}\cap \mathcal{D}(e^{\tau_0 A}: H^{r}(\mathbb{T}^3))$ with $r>\frac{5}{2}$ and $\tau_0 >0$. Let $\Omega \in \mathbb{R}$ be arbitrary and fixed. Suppose $\|e^{\tau_0 A}\overline{v}_0\|_{H^{r+1}} \leq M$ for some $M \geq 0$, and $\|e^{\tau_0 A} \widetilde{v}^{n}_0\|_{H^{r}} \leq\epsilon_n$ with $\epsilon_n \rightarrow 0$, as $n\rightarrow \infty$. Then there are constants $C  > 1$, $C_r > 1$, and a function $K(t) = C ^{\exp(C_r t)}$, such that for each $n\in\mathbb{N}$, if the function $\tau^n(t)$ and the time $\mathcal{T}_n$ satisfy
\begin{eqnarray*}
\tau^n(t) = e^{-\int_0^t K(s)ds}(\tau_0 - \epsilon_n \int_0^t e^{K(s)}ds), \;\;\;\;\;\; \int_0^{\mathcal{T}_n} e^{K(s)}ds = \frac{\tau_0}{2\epsilon_n},
\end{eqnarray*}
the solution to system (\ref{local-system-1})--(\ref{local-system-2}) with initial data $(\overline{v}^{n}_0, \widetilde{v}^{n}_0)$ satisfies  $(\overline{v}^n, \widetilde{v}^n) \in  L^\infty(0,\mathcal{T}_n;\mathcal{S} \cap\mathcal{D}(e^{\tau^n A}: H^{r}(\mathbb{T}^3)))$. Let $\overline{V}\in L^\infty(0,\infty;  \mathcal{S}\cap \mathcal{D}(e^{\tau^0(t) A}: H^{r}(\mathbb{T}^3)))$ be the unique global solution to the $2D$ Euler equations (\ref{Euler-1})--(\ref{Euler-3}) with initial data $\overline{V}(0) = \overline{v}_0$. Then, $(\overline{v}^n, \widetilde{v}^n)$ converges to $\overline{V}$ for $t\in[0,\mathcal{T}_0]$, as $n\rightarrow \infty$, in the following sense:
\begin{eqnarray}
\|e^{\tau^{0}(t) A} (\overline{v}^n + \widetilde{v}^n - \overline{V})(t) \|_{H^r}  \leq \epsilon_n e^{K(t)} \rightarrow 0, \;\; \text{as}\;\; n\rightarrow \infty.\label{convergence-estimate}
\end{eqnarray}
\end{theorem}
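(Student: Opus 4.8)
The plan is to read off Theorem~\ref{convergence} from Theorem~\ref{theorem-longtime}, and more precisely from the difference estimate (\ref{longtime-convergence-inequality}) established within its proof. Since $\epsilon_n\to 0$, after reordering the sequence (harmless, as only the limit $n\to\infty$ matters) we may assume $\epsilon_0=\max_n\epsilon_n$, so $\epsilon_0\ge\epsilon_n$ for all $n$. For each $n$, applying Theorem~\ref{theorem-longtime} to the initial data $(\overline{v}_0,\widetilde{v}_0^n)$ with smallness parameter $\epsilon_n$ produces the unique solution $(\overline{v}^n,\widetilde{v}^n)\in\mathcal{S}\cap L^\infty(0,\mathcal{T}_n;\mathcal{D}(e^{\tau^n(t)A}:H^r))$, with $\tau^n$ and $\mathcal{T}_n$ as in the statement. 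Since $t\mapsto\int_0^t e^{K(s)}\,ds$ is increasing and $\tfrac{\tau_0}{2\epsilon_0}\le\tfrac{\tau_0}{2\epsilon_n}$, we have $\mathcal{T}_0\le\mathcal{T}_n$, so all the solutions are defined on the common interval $[0,\mathcal{T}_0]$; moreover the explicit formula for $\tau^n$ combined with $\epsilon_0\ge\epsilon_n$ yields $0<\tau^0(t)\le\tau^n(t)$ there.

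Let $\overline{V}$ be the unique global solution of the $2D$ Euler equations (\ref{Euler-1})--(\ref{Euler-3}) with $\overline{V}(0)=\overline{v}_0$ from Proposition~\ref{global-Euler}; it does not depend on $n$ because $\overline{v}_0^n=\overline{v}_0$. Put $\overline{\phi}^n:=\overline{v}^n-\overline{V}$, so that $(\overline{\phi}^n,\widetilde{v}^n)$ solves (\ref{difference-original-bar})--(\ref{difference-original-tilde}) with $\overline{\phi}^n(0)=0$ and $\widetilde{v}^n(0)=\widetilde{v}_0^n$. The energy estimate leading to (\ref{longtime-convergence-inequality}) in the proof of Theorem~\ref{theorem-longtime} then gives, with a single double-exponential $K$ (enlarged once and for all as in Remark~\ref{remark-kt} to serve the whole family),
\begin{equation*}
\|e^{\tau^n(t)A}\overline{\phi}^n(t)\|_{H^r}^2+\|e^{\tau^n(t)A}\widetilde{v}^n(t)\|_{H^r}^2\le\epsilon_n^2\,e^{K(t)},\qquad t\in[0,\mathcal{T}_0].
\end{equation*}
Because $\tau^0(t)\le\tau^n(t)$, the norm $\|e^{\tau^0(t)A}\cdot\|_{H^r}$ is dominated by $\|e^{\tau^n(t)A}\cdot\|_{H^r}$, so the same bound holds with $\tau^0$ in place of $\tau^n$. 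Writing $\overline{v}^n+\widetilde{v}^n-\overline{V}=\overline{\phi}^n+\widetilde{v}^n$ and using $a+b\le\sqrt2\sqrt{a^2+b^2}$,
\begin{equation*}
\|e^{\tau^0(t)A}(\overline{v}^n+\widetilde{v}^n-\overline{V})(t)\|_{H^r}\le\sqrt2\,\epsilon_n\,e^{K(t)/2}\le\epsilon_n\,e^{K(t)}\qquad\text{on }[0,\mathcal{T}_0],
\end{equation*}
the last inequality being automatic since $K(t)\ge C_M>1$. As $e^{K(t)}\le e^{K(\mathcal{T}_0)}$ on $[0,\mathcal{T}_0]$ and $\epsilon_n\to0$, the right-hand side tends to $0$ as $n\to\infty$, which is exactly (\ref{convergence-estimate}).

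The only delicate point is the bookkeeping: one must ensure that a single $K(t)=C_M^{\exp(C_r t)}$ simultaneously dominates the bound (\ref{estimate-Euler}) on $\overline{V}$, forces $\tau^0\le\tau_1$ so that $\overline{V}$ lies in the stated space, governs the difference estimate (\ref{longtime-convergence-inequality}), and works uniformly over the sequence $\{(\overline{v}^n,\widetilde{v}^n)\}$. This is legitimate because every constant appearing in those estimates depends only on $M$, $r$ and $\tau_0$, the sole $n$-dependence entering through the linear prefactor $\epsilon_n$; hence the successive enlargements of $C_M$ and $C_r$ permitted by Remark~\ref{remark-kt} can be carried out once, before the index $n$ is introduced. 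Everything else is a direct transcription of the argument already given for Theorem~\ref{theorem-longtime}.
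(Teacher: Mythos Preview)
Your proof is correct and follows essentially the same approach as the paper: invoke the difference estimate (\ref{longtime-convergence-inequality}) from the proof of Theorem~\ref{theorem-longtime}, use $\tau^0(t)\le\tau^n(t)$ to pass from $\tau^n$ to $\tau^0$, and finish with the triangle inequality. Your discussion of the reordering so that $\epsilon_0=\max_n\epsilon_n$ and of the uniform choice of $K$ is in fact more careful than the paper, which simply asserts $\tau^0(t)\le\tau^n(t)$ for all $n$ without commenting on the monotonicity of $(\epsilon_n)$.
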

\begin{proof}
Denote by $\overline{\phi}^n = \overline{v}^n- \overline{V}$. By virtue of the proof of Theorem \ref{theorem-longtime}, we just need to prove the estimate (\ref{convergence-estimate}). Since $\tau^0(t) \leq \tau^n(t)$ for any $n\in \mathbb{N}$, from \eqref{longtime-convergence-inequality}, one has
\begin{eqnarray*}
\|e^{\tau^0(t) A} \widetilde{v}^{n}(t)\|_{H^{r}} + \|e^{\tau^0(t) A} \overline{\phi}^{n}(t)\|_{H^{r}} \leq 
\|e^{\tau^n(t) A} \widetilde{v}^{n}(t)\|_{H^{r}} + \|e^{\tau^n(t) A} \overline{\phi}^{n}(t)\|_{H^{r}}  \leq \epsilon_n e^{K(t)}
\end{eqnarray*}
when the function $K(t)$ is chosen suitably. Therefore, we have
\begin{equation*}
    \|e^{\tau^{0}(t) A} (\overline{v}^n + \widetilde{v}^n - \overline{V})(t) \|_{H^r} \leq \|e^{\tau^{0}(t) A} \widetilde{v}^{n}(t)\|_{H^{r}} + \|e^{\tau^{0}(t) A} \overline{\phi}^{n}(t)\|_{H^{r}}  \leq \epsilon_n e^{K(t)} \rightarrow 0, \;\; \text{as}\;\; n\rightarrow \infty.
\end{equation*}
\end{proof}

\section{Limit resonant system}
In this section, we derive the formal resonant limit resonant system of the original system (\ref{local-system-1})--(\ref{local-system-2}) as $|\Omega|\rightarrow \infty$, and establish some properties of the limit resonant system. Recall from (\ref{up3}), we have 
\begin{eqnarray}\label{upl}
&&\hskip-.8in \partial_t u_+ = -e^{i\Omega t} \Big(\underbrace{u_+ \cdot \nabla u_+ - P_0( u_+ \cdot \nabla u_+ + (\nabla \cdot u_+) u_+) - (\int_0^z \nabla\cdot u_+(\boldsymbol{x}',s)ds ) \partial_z u_+ }_{:=I_1}\Big) \nonumber \\
&&\hskip-.28in -  \Big(\underbrace{\overline{v} \cdot \nabla u_+  + \frac{1}{2}(u_+ \cdot \nabla)(\overline{v}+i\overline{v}^\perp)}_{:=I_0} \Big)  \nonumber \\
&&\hskip-.28in - e^{-i\Omega t} \Big(\underbrace{u_- \cdot \nabla u_+ - P_0( u_- \cdot \nabla u_+ + (\nabla \cdot u_-) u_+) - (\int_0^z \nabla\cdot u_-(\boldsymbol{x}',s)ds ) \partial_z u_+}_{:=I_3} \Big) \nonumber\\
&&\hskip-.28in
- e^{-2i\Omega t} \underbrace{\frac{1}{2} (u_- \cdot \nabla)(\overline{v}+i\overline{v}^\perp)}_{:=I_4} = -e^{i\Omega t}I_1 - I_0 - e^{-i\Omega t}I_{-1} - e^{-2i\Omega t}I_{-2}.
\end{eqnarray}
Observe that $I_0$ is a typical resonant term. Unlike the case of the $3D$ Euler equations where there are frequency selection resonances, all frequencies resonate in $I_0$. We can rewrite (\ref{upl}) as
\begin{eqnarray*}
&&\hskip-.58in \partial_t \Big[u_+ - \frac{i}{\Omega} \Big(e^{i\Omega t}I_1 - e^{-i\Omega t} I_{-1} - \frac{1}{2} e^{-2i\Omega t} I_{-2} \Big) \Big] = - \frac{i}{\Omega} \Big(e^{i\Omega t}\partial_t I_1 - e^{-i\Omega t}\partial_t I_{-1} - \frac{1}{2}e^{-2i\Omega t}\partial_t I_{-2}\Big) - I_0.
\label{up4} 
\end{eqnarray*}
Denote by the formal limits of $u_+, u_-, \overline{v}$ to be $U_+, U_-, \overline{V}$. By taking limit $\Omega \rightarrow \infty $ formally, since $u_-$ is the complex conjugate of $u_+$, we obtain the limit resonant equations of $u_\pm$:
\begin{eqnarray}
&&\hskip-.8in \partial_t U_\pm = - (\overline{V} \cdot \nabla)U_\pm - \frac{1}{2} (U_\pm \cdot \nabla)(\overline{V} \pm i\overline{V}^\perp) .
\label{upmlimit} 
\end{eqnarray}
For the limit equation of $\overline{v}$, recall from (\ref{barotropic-evolution-4}) that
\begin{eqnarray*}
&&\hskip-.8in \partial_t \overline{v} + \mathbb{P}_h(\overline{v}\cdot \nabla \overline{v}) + e^{2i\Omega t}\mathbb{P}_h P_0 \Big(u_+ \cdot \nabla u_+  + (\nabla \cdot u_+) u_+ \Big)  \nonumber\\
&&\hskip.22in
 +e^{-2i\Omega t}\mathbb{P}_h P_0\Big(u_- \cdot \nabla u_-  + (\nabla \cdot u_-) u_-\Big) = 0.
\end{eqnarray*}
Observe that $\mathbb{P}_h(\overline{v}\cdot \nabla \overline{v})$ is a typical resonant term. Using the similar method in the derivation of $U_+$, we can derive the limit resonant equation for $\overline{v}$ as
\begin{eqnarray}
&&\hskip-.8in \partial_t \overline{V} + \mathbb{P}_h(\overline{V}\cdot \nabla \overline{V}) =0 .
\label{vbarlimit} 
\end{eqnarray}
Observe that \eqref{vbarlimit} is the $2D$ Euler system. Consider the initial conditions 
\begin{equation*}
    (\overline{V}_0, (U_+)_0, (U_-)_0) = (\overline{v}_0, \frac{1}{2}(\widetilde{v}_0 + i\widetilde{v}_0^\perp), \frac{1}{2}(\widetilde{v}_0 - i\widetilde{v}_0^\perp) )
\end{equation*}
for system (\ref{upmlimit})--(\ref{vbarlimit}). Since $v_0 \in \mathcal{S}$, we have $\nabla\cdot \overline{V}=0$, $P_0 \overline{V} = \overline{V}$, and $P_0 U_\pm =0$. 

Besides the equations for $U_\pm$ and $\overline{V}$, we also want a baroclinic mode $\widetilde{V}$ similar as in the original system. Since initially $U_\pm(0) = \frac{1}{2}(\widetilde{v}_0 \pm i\widetilde{v}_0^\perp)$, we 
define $\widetilde{V} := U_+ + U_-$ so that $U_\pm = \frac{1}{2}(\widetilde{V} \pm i \widetilde{V}^\perp)$. From (\ref{upmlimit}), we have
\begin{eqnarray}\label{vtildelimit}
\partial_t \widetilde{V} + (\overline{V}\cdot \nabla)\widetilde{V} + \frac{1}{2}(\widetilde{V}\cdot \nabla \overline{V} -  \widetilde{V}^\perp\cdot \nabla \overline{V}^\perp) =0.
\end{eqnarray}
Since $\nabla \cdot \overline{V} = 0$, (\ref{vtildelimit}) is equivalent to
\begin{eqnarray*}
&&\hskip-.8in \partial_t \widetilde{V} + \overline{V} \cdot \nabla \widetilde{V} + \frac{1}{2} \widetilde{V}^\perp (\nabla^\perp \cdot \overline{V}) = 0.
\end{eqnarray*}
Since $P_0 U_\pm=0$, we see $P_0 \widetilde{V} = 0$. 
Therefore, we consider the following limit resonant system
\begin{eqnarray}
&&\hskip-.8in \partial_t \overline{V} + \mathbb{P}_h(\overline{V}\cdot \nabla \overline{V} ) = 0,
\label{systemVbar-1} \\
&&\hskip-.8in \partial_t \widetilde{V} + \overline{V} \cdot \nabla \widetilde{V} + \frac{1}{2} \widetilde{V}^\perp (\nabla^\perp \cdot \overline{V}) = 0 , \label{systemVtilde} \\
&&\hskip-.8in 
\overline{V}(0) = \overline{V}_0 ,  \;\; \widetilde{V}(0) = \widetilde{V}_0, \label{systemlimit-initial}
\end{eqnarray}
with $P_0 \overline{V} = \overline{V}$ and $P_0 \widetilde{V} = 0$. Observe that (\ref{systemVbar-1}) is the $2D$ Euler system, and (\ref{systemVtilde}) is a linear transport equation with an additional stretching term. 

Next, we establish the global well-posedness of limit resonant system (\ref{systemVbar-1})--(\ref{systemlimit-initial}) in both Sobolev spaces and the space of analytic functions. Recall that the global well-posedness of (\ref{systemVbar-1}) has been established in Proposition \ref{global-Euler}.
\begin{proposition}\label{global-limit}
    Assume $\overline{V}_0 \in  \mathcal{S}\cap H^{r+1}(\mathbb{T}^3)$ and $\widetilde{V}_0 \in \mathcal{S}\cap H^{r}(\mathbb{T}^3)$ with $r>\frac{5}{2}$. Let $M\geq 0$, and suppose that $\|\overline{V}_0\|_{H^{r+1}} \leq M$. Then there exist constants $C >1$ and $C_r>1$, and a function $K(t) := C ^{\exp(C_r t)}$, such that for any give time $\mathcal{T}>0$,  there exists a unique solution $\overline{V} \in  L^\infty(0,\mathcal{T};  \mathcal{S}\cap H^{r+1}(\mathbb{T}^3))$ and $\widetilde{V} \in  L^\infty(0,\mathcal{T};\mathcal{S}\cap  H^{r}(\mathbb{T}^3))$ of system  (\ref{systemVbar-1})--(\ref{systemlimit-initial}) on $[0,\mathcal{T}]$, and satisfies 
    \begin{eqnarray}\label{sobolev-growth-limit}
    \|\overline{V}(t)\|_{H^{r+1}} \leq  K(t), \;\;\;\; \|\widetilde{V}(t)\|_{H^{r}} \leq  \|\widetilde{V}_0\|_{H^{r}} e^{K(t)}.
    \end{eqnarray}
    Moreover, assume $\overline{V}_0 \in \mathcal{D}(e^{\tau_0 A}: H^{r+1}(\mathbb{T}^3))$ and $\widetilde{V}_0 \in \mathcal{D}(e^{\tau_0 A}: H^{r}(\mathbb{T}^3))$ with $r>\frac{5}{2}$ and $\tau_0 >0$, and suppose that $\|e^{\tau_0 A} \overline{V}_0\|_{H^{r+1}}\leq M$. Then there exists a function
    \begin{equation}\label{tau-limit-system}
            \tau(t) = \tau_0 \exp(-\int_0^t K(s)ds),
    \end{equation}
    such that for any given time $\mathcal{T}>0$, there exists a unique solution $\overline{V}\in  L^\infty(0,\mathcal{T};\mathcal{S}\cap \mathcal{D}(e^{\tau(t) A}: H^{r+1}(\mathbb{T}^3)))$ and $\widetilde{V}\in  L^\infty(0,\mathcal{T};\mathcal{S}\cap \mathcal{D}(e^{\tau(t) A}: H^{r}(\mathbb{T}^3)))$ of system (\ref{systemVbar-1})--(\ref{systemlimit-initial}) on $[0,\mathcal{T}]$ such that
    \begin{equation*}\label{analytic-growth-limit}
            \|e^{\tau(t) A} \overline{V}(t)\|_{H^{r+1}} \leq K(t), \;\;\;\;
            \|e^{\tau(t) A}\widetilde{V}(t)\|_{H^r} \leq \|e^{\tau_0 A}\widetilde{V}_0\|_{H^r} e^{K(t)}.
    \end{equation*}
\end{proposition}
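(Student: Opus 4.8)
The plan is to exploit the triangular structure of system~(\ref{systemVbar-1})--(\ref{systemlimit-initial}): the barotropic mode $\overline{V}$ satisfies the $2D$ Euler equations~(\ref{systemVbar-1}) and is completely decoupled from $\widetilde{V}$, while~(\ref{systemVtilde}) is a \emph{linear} transport equation for $\widetilde{V}$ whose coefficients depend only on $\overline{V}$. Thus one first invokes the known global theory for $\overline{V}$: since $r>5/2$ forces $r+1>3$, the logarithmic Sobolev estimate~(\ref{Euler-Sobolev-estimate-2}) gives a unique global $\overline{V}\in L^\infty(0,\mathcal{T};H^{r+1})$ with $\|\overline{V}(t)\|_{H^{r+1}}\le(M+e)^{\exp(C_r t)}=:K(t)$, and Proposition~\ref{global-Euler} gives, in the analytic case, a non-increasing radius $\tau_1(t)\ge\tau_0\exp(-\int_0^t K_0(s)\,ds)$ for a suitable double-exponential $K_0$, together with the bound $\|e^{\tau_1(t)A}\overline{V}(t)\|_{H^{r+1}}\le K(t)$ coming from~(\ref{estimate-Euler}). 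With $\overline{V}$ in hand, the linearity of~(\ref{systemVtilde}) makes existence routine (e.g.\ via the Galerkin scheme of Section~3) and uniqueness immediate, since the difference of two solutions solves the same linear equation with zero data; moreover applying $P_0$ to~(\ref{systemVtilde}) and using that $\overline{V}$ is $z$-independent shows that $P_0\widetilde{V}$ solves that same equation, hence remains $0$, so that $(\overline{V},\widetilde{V})\in\mathcal{S}$. It therefore remains to establish the a priori bounds, which, as in the remark following Theorem~\ref{theorem-longtime}, are carried out first on the Galerkin system and passed to the limit.

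For the Sobolev bound on $\widetilde{V}$: taking the $L^2$ inner product of~(\ref{systemVtilde}) with $\widetilde{V}$, the transport term vanishes because $\nabla\cdot\overline{V}=0$ and the stretching term vanishes pointwise since $\widetilde{V}^\perp\cdot\widetilde{V}=0$, so $\|\widetilde{V}(t)\|=\|\widetilde{V}_0\|$. Applying $A^r$ and pairing with $A^r\widetilde{V}$, one estimates the transport term by Lemma~\ref{lemma-non-small} with $\tau=0$ (the divergence term drops, leaving $\le C_r\|A^r\overline{V}\|\,\|A^r\widetilde{V}\|^2\le C_rK(t)\|A^r\widetilde{V}\|^2$), and the stretching term by the Banach algebra property (Lemma~\ref{lemma-banach-algebra}), $\|A^r(\widetilde{V}^\perp(\nabla^\perp\cdot\overline{V}))\|\le C_r\|A^r\widetilde{V}\|\,\|\overline{V}\|_{H^{r+1}}\le C_rK(t)\|A^r\widetilde{V}\|$, using that the zero Fourier modes of $\widetilde{V}^\perp$ (since $P_0\widetilde{V}=0$) and of $\nabla^\perp\cdot\overline{V}$ vanish. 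Hence $\frac{d}{dt}\|A^r\widetilde{V}\|^2\le C_rK(t)\|A^r\widetilde{V}\|^2$, and Gr\"onwall together with $\|\widetilde{V}(t)\|=\|\widetilde{V}_0\|$ and~(\ref{kt-inequality}) gives $\|\widetilde{V}(t)\|_{H^r}\le\|\widetilde{V}_0\|_{H^r}e^{K(t)}$ after relabeling $K$, which is~(\ref{sobolev-growth-limit}).

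For the analytic bound, set $\tau(t)=\tau_0\exp(-\int_0^t K(s)\,ds)$, with $K$ enlarged so that also $\tau(t)\le\tau_1(t)$ and hence $\|e^{\tau(t)A}\overline{V}(t)\|_{H^{r+1}}\le\|e^{\tau_1(t)A}\overline{V}(t)\|_{H^{r+1}}\le K(t)$. Applying $A^re^{\tau A}$ to~(\ref{systemVtilde}) and pairing with $A^re^{\tau A}\widetilde{V}$ yields $\frac12\frac{d}{dt}\|A^re^{\tau A}\widetilde{V}\|^2=\dot\tau\|A^{r+1/2}e^{\tau A}\widetilde{V}\|^2$ minus the transport and stretching brackets. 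By Lemma~\ref{lemma-non-small} (with $\nabla\cdot\overline{V}=0$), the transport bracket is bounded by $C_r\|A^r\overline{V}\|\,\|A^re^{\tau A}\widetilde{V}\|^2+C_r\tau\|A^{r+1/2}e^{\tau A}\overline{V}\|\,\|A^{r+1/2}e^{\tau A}\widetilde{V}\|^2\le C_rK(t)\|A^re^{\tau A}\widetilde{V}\|^2+C_r\tau K(t)\|A^{r+1/2}e^{\tau A}\widetilde{V}\|^2$, while Lemma~\ref{lemma-banach-algebra}, together with the vanishing of the relevant zero Fourier modes, bounds the stretching bracket by $C_r\|A^re^{\tau A}\widetilde{V}\|\,\|A^re^{\tau A}(\nabla^\perp\cdot\overline{V})\|\le C_rK(t)\|A^re^{\tau A}\widetilde{V}\|^2$. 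Thus $\frac12\frac{d}{dt}\|A^re^{\tau A}\widetilde{V}\|^2\le(\dot\tau+C_r\tau K(t))\|A^{r+1/2}e^{\tau A}\widetilde{V}\|^2+C_rK(t)\|A^re^{\tau A}\widetilde{V}\|^2$; the choice of $\tau$ kills the first bracket, and Gr\"onwall plus the conserved $L^2$ norm give $\|e^{\tau(t)A}\widetilde{V}(t)\|_{H^r}^2\le\|e^{\tau_0A}\widetilde{V}_0\|_{H^r}^2 e^{K(t)}$, i.e.~(\ref{analytic-growth-limit}) after relabeling.

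The main obstacle is neither existence — the $\widetilde{V}$-equation is linear and $\overline{V}$ is globally given — nor the double-exponential bookkeeping, which proceeds exactly as in the proof of Theorem~\ref{theorem-longtime}. It is rather closing the energy estimates at the borderline regularity $r>5/2$: the transport term in Lemma~\ref{lemma-non-small} contributes a top-order piece $C_r\tau\|A^{r+1/2}e^{\tau A}\overline{V}\|\,\|A^{r+1/2}e^{\tau A}\widetilde{V}\|^2$ that cannot be discarded for $r\le 3$, and the only way to absorb it is into the loss-of-analyticity term $\dot\tau\|A^{r+1/2}e^{\tau A}\widetilde{V}\|^2$; this forces the coefficient multiplying the highest norm of $\widetilde{V}$ to be $\|e^{\tau A}\overline{V}\|_{H^{r+1}}$, which is precisely why $\overline{V}$ must be taken one derivative smoother than $\widetilde{V}$ (the $H^{r+1}$ hypothesis on $\overline{V}_0$). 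A secondary point is ensuring that the radius $\tau(t)$ chosen for $\widetilde{V}$ stays below the Euler radius $\tau_1(t)$, so that the $\overline{V}$-factors keep their analytic bounds; this is arranged, as above, by enlarging the double-exponential $K$.
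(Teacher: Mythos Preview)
Your proposal is correct and follows essentially the same approach as the paper: invoke the global $2D$ Euler theory (Proposition~\ref{global-Euler} and estimate~(\ref{Euler-Sobolev-estimate-2})) for $\overline{V}$, then perform energy estimates on the linear transport equation~(\ref{systemVtilde}) using Lemma~\ref{lemma-non-small} for the transport term and Lemma~\ref{lemma-banach-algebra} for the stretching term, choosing $\tau$ to cancel the $\|A^{r+1/2}e^{\tau A}\widetilde{V}\|^2$ contribution and closing with Gr\"onwall. Your additional remarks on $L^2$ conservation, the invariance $P_0\widetilde{V}=0$, and the role of the $H^{r+1}$ hypothesis on $\overline{V}_0$ are correct elaborations that the paper's proof leaves implicit.
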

\begin{proof}
We will use the notation $K_1, K_2, ...$ as indicated in Remark \ref{remark-kt}. The global well-posedness of the $2D$ Euler equations in Sobolev spaces and corresponding growth estimate is classical, see \cite{BM02}. From $(\ref{Euler-Sobolev-estimate-2})$, we obtain that $\|\overline{V}\|_{H^{r+1}} \leq K_1(t)$ for some function $K_1(t)$. For the growth of $\|\widetilde{V}\|_{H^{r}}$, by standard energy estimate, since $\nabla\cdot \overline{V}=0$ and $r>\frac{5}{2}$, we have
\begin{equation*}
    \frac{d}{dt}\|\widetilde{V}\|^2_{H^{r}} \leq C_r\|\overline{V}\|_{H^{r+1}} \|\widetilde{V}\|^2_{H^{r}}.
\end{equation*}
By the Gr\"onwall inequality, and by virtue of the growth of $\|\overline{V}\|_{H^{r+1}}$, we obtain that
\begin{equation*}
    \|\widetilde{V}(t)\|_{H^{r}}\leq \|\widetilde{V}_0\|_{H^{r}}\exp(\frac{1}{2}C_r \int_0^t K_1(s)ds) \leq \|\widetilde{V}_0\|_{H^{r}} e^{K(t)}
\end{equation*}
for some suitable function $K(t)$, such that $\|\overline{V}(t)\|_{H^{r+1}} \leq K(t)$ also holds. By virtue of these formal energy estimates, the global well-posedness of system (\ref{systemVbar-1})--(\ref{systemlimit-initial}) in Sobolev spaces follows.

The global well-posedness of the $2D$ Euler equations in the space of analytic functions and the corresponding growth estimate are established in Proposition \ref{global-Euler}. From Proposition \ref{global-Euler}, we can first choose some suitable functions $K_1(t)$ and $K_2(t)$ such that $\tau(t) \leq \tau_0 \exp(-\int_0^t K_1(s)ds)$ and $\|e^{\tau(t) A} \overline{V}(t)\|_{H^{r+1}} \leq K_2(t)$. 
For the baroclinic mode $\widetilde{V}$, first, it is easy to see the $L^2$ energy is conserved. Next, using Lemma \ref{lemma-banach-algebra} and Lemma \ref{lemma-non-small}, since $r>\frac{5}{2}$ and $\int_{\mathbb{T}^3} \widetilde{V}(\boldsymbol{x}) d\boldsymbol{x}=0$, we have
\begin{equation*}
    \begin{split}
        &\frac{1}{2} \frac{d}{dt} \|A^r e^{\tau A} \widetilde{V} \|^2 = \dot{\tau} \|A^{r+\frac{1}{2}} e^{\tau A} \widetilde{V} \|^2 - \Big\langle A^r e^{\tau A} (\overline{V}\cdot \nabla \widetilde{V}), A^r e^{\tau A} \widetilde{V} \Big\rangle - \frac{1}{2} \Big\langle A^r e^{\tau A} (\nabla^\perp \cdot \overline{V}) \widetilde{V}^\perp, A^r e^{\tau A} \widetilde{V} \Big\rangle
        \\
        & \qquad \qquad\leq (\dot{\tau} + C_r\tau \|A^{r+1} e^{\tau A} \overline{V} \|) \|A^{r+\frac{1}{2}} e^{\tau A} \widetilde{V} \|^2 +  C_r\|e^{\tau A} \overline{V} \|_{H^{r+1}} \|A^{r} e^{\tau A} \widetilde{V} \|^2  .
    \end{split}
\end{equation*}
For suitable $K_1(t)$ and $K_2(t)$, we have
$
    \dot{\tau} + C_r\tau \|A^{r+1} e^{\tau A} \overline{V} \| \leq \tau (-K_1 + C_r K_2) \leq 0.
$
Therefore, by the Gr\"onwall inequality, for some suitable function $K(t)$, we have
\begin{eqnarray*}
&&\hskip-.68in
\|A^r e^{\tau(t) A} \widetilde{V}(t) \|^2 \leq \|A^r e^{\tau_0 A} \widetilde{V}_0 \|^2 \exp( \int_0^t C_r\| e^{\tau(s) A} \overline{V} (s)\|_{H^{r+1}}ds ) 
\leq \|e^{\tau_0 A}\widetilde{V}_0\|_{H^r}^2 e^{K(t)}.
\end{eqnarray*}
Since $L^2$ energy is conserved, we have
\begin{eqnarray*}
&&\hskip-.68in
\|e^{\tau(t) A}\widetilde{V}(t)\|_{H^r}
\leq \|e^{\tau_0 A}\widetilde{V}_0\|_{H^r} e^{K(t)}. 
\end{eqnarray*}
We can choose $K(t)$ large enough such that $\tau(t) = \tau_0 \exp(-\int_0^t K(s)ds)$ and $\|e^{\tau(t) A} \overline{V}\|_{H^{r+1}} \leq K(t)$. Notice that $\tau(\mathcal{T})>0$ for any finite time $\mathcal{T}<\infty$. Therefore, the solution $(\overline{V},\widetilde{V})$ exists in the space of analytic functions globally in time.
\end{proof}
\begin{remark}
The use of $K(t)$ above still follows Remark \ref{remark-kt}. The conclusion is that the growth of $\|\overline{V}(t)\|_{H^{r+1}}$ and  $\|e^{\tau(t) A}\overline{V}(t)\|_{H^{r+1}}$ are double exponential in time, while the growth of $\|\widetilde{V}(t)\|_{H^{r}}$ and  $\|e^{\tau(t) A}\widetilde{V}(t)\|_{H^{r}}$ are triple exponential in time.
\end{remark}
\begin{remark}\label{remark-steadyEuler}
Suppose $(\overline{V}, \widetilde{V})$ is a solution of system \eqref{systemVbar-1}--\eqref{systemlimit-initial}. In the special case when $\overline{V}$ is uniformly bounded in time, i.e., $\|\overline{V}(t)\|_{H^{r+1}} \leq C_{M,r}$ and $\|e^{\tau(t) A} \overline{V}(t)\|_{H^{r+1}} \leq C_{M,r}$ for $t\in[0,\infty)$, it is easy to see that the growths of $\|\widetilde{V}(t)\|_{H^{r}}$ and  $\|e^{\tau(t) A}\widetilde{V}(t)\|_{H^{r}}$ become only exponential in time. Moreover, $(0,\widetilde{V}_0)$ is always a steady state.
\end{remark}
\begin{remark}\label{remark-equivalence-upm-vtilde}
Since $U_\pm = \frac{1}{2}(\widetilde{V}+i\widetilde{V}^\perp)$, similar as Lemma \ref{lemma-vtilde-upm}, for $r\geq 0$ and $\tau \geq 0$, we have 
$
    \|U_+\|^2 = \|U_-\|^2 = \frac{1}{2}\|\widetilde{V}\|^2 $ and $
   \|e^{\tau A} U_+\|_{H^r}^2 = \|e^{\tau A} U_-\|_{H^r}^2 = \frac{1}{2}\|e^{\tau A} \widetilde{V}\|_{H^r} ^2 .
$
Therefore, the growing bounds of $\|\widetilde{V}\|_{H^{r}}$ and  $\|e^{\tau(t) A}\widetilde{V}(t)\|_{H^{r}}$ also apply to $\|U_\pm(t)\|_{H^{r}}$ and  $\|e^{\tau(t) A}U_\pm(t)\|_{H^{r}}$.
\end{remark}

\section{effect of rotation}
In section 4, we see that by requiring $\|e^{\tau_0 A}\widetilde{v}_0\|_{H^{r}} \leq \epsilon$, the life-span of the solution to system (\ref{local-system-1})--(\ref{local-system-2}) has a lower bound $\mathcal{T} \gtrsim \ln(\ln(\ln(\frac{1}{\epsilon})))$, as $\epsilon \rightarrow 0^+$, and this result is uniform in $\Omega\in\mathbb{R}$. In this section, we establish the effect of the rate of rotation $|\Omega|$ on the life-span $\mathcal{T}$. With the help of fast rotation, i.e., when $|\Omega|$ is large, we show that the time of existence of the solution in the space of analytic functions can be prolonged as long as the Sobolev norm $\|\widetilde{v}_0\|_{H^{r}}$ is small depending on $\Omega$, while the analytic norm $\|e^{\tau_0 A}\widetilde{v}_0\|_{H^{r}}$ can be large (of order $1$). We call such initial data as ``well-prepared" initial data. The following theorem is the main result of this paper.

\begin{theorem}\label{theorem-main}
Assume $\overline{v}_0 \in \mathcal{S}\cap \mathcal{D}(e^{\tau_0 A}: H^{r+3}(\mathbb{T}^3))$, $\widetilde{v}_0 \in \mathcal{S}\cap\mathcal{D}(e^{\tau_0 A}: H^{r+2}(\mathbb{T}^3))$ with $r>\frac{5}{2}$ and $\tau_0>0$. Let $M\geq 0$ and $\delta>0$, then there exist constants $C_{\tau_0} >1$, $C_{M,\tau_0}>1$, $C_r>1$, $\widetilde{C}_{M,\tau_0}>1$, $\widetilde{C}_r>1$, and functions $\widetilde{K}(t):=e^{C_{M,\tau_0}^{\exp(C_r t)}}$, $\widetilde{K}_0(t):=e^{\widetilde{C}_{M,\tau_0}^{\exp(\widetilde{C}_r t)}}$, with $\widetilde{K}(t)>\widetilde{K}_0(t)$. Suppose that
$|\Omega_0| \geq C_{\tau_0} e^{\widetilde{K}(1)}$, and that $\|e^{\tau_0 A}\overline{v}_0\|_{H^{r+3}} + \|e^{\tau_0 A}\widetilde{v}_0\|_{H^{r+2}} \leq M$ with $\|\widetilde{v}_0\|_{H^{3+\delta}} \leq  \frac{1}{|\Omega_0|}$. Then there exists a time $\mathcal{T} = \mathcal{T}(\tau_0, |\Omega_0|, M, r)\geq 1$ satisfying 
\begin{eqnarray}\label{T-longtime-type2}
 C_{\tau_0}e^{ \widetilde{K}(\mathcal{T})}   = |\Omega_0|,
\end{eqnarray}
such that when $|\Omega|\geq |\Omega_0|$, the unique solution $(\overline{v}, \widetilde{v})$ to system (\ref{local-system-1})--(\ref{local-system-2}) obtained in Theorem \ref{theorem-local} satisfies  $(\overline{v}, \widetilde{v}) \in  L^\infty(0,\mathcal{T};\mathcal{S} \cap \mathcal{D}(e^{\tau(t) A}: H^r(\mathbb{T}^3)))$, with 
\begin{equation}\label{main-tau}
    \tau(t) = \Big(\tau_0 - \int_0^t\frac{ e^{\widetilde{K}_0(s)} }{\sqrt{|\Omega_0| -  e^{\widetilde{K}_0(s)}  } } ds- \int_0^t \frac{e^{\widetilde{K}_0(s)}}{|\Omega_0|} ds\Big) e^{-\int_0^t \widetilde{K}_0(s)ds} >0.
\end{equation}
In particular, from (\ref{T-longtime-type2}), $\mathcal{T}\gtrsim \ln(\ln(\ln(\ln|\Omega_0|)))\rightarrow\infty$, as $|\Omega_0| \rightarrow \infty$.
\end{theorem}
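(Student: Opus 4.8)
The plan is to run a continuation argument for the solution furnished by Theorem~\ref{theorem-local}, comparing it — unlike in Theorem~\ref{theorem-longtime} — with the limit resonant system \eqref{systemVbar-1}--\eqref{systemlimit-initial} of Section~5, and extracting the gain from fast rotation through the normal-form (Poincar\'e--Dulac) identity \eqref{up4}. Throughout we use the equivalent formulation \eqref{up3}--\eqref{barotropic-evolution-4} of Lemma~\ref{2nd-equivalence} and the bookkeeping conventions of Remark~\ref{remark-kt} for the iterated-exponential functions. Let $(\overline V,\widetilde V)$ — equivalently $(\overline V,U_\pm)$ with $U_\pm=\tfrac12(\widetilde V\pm i\widetilde V^\perp)$ — be the global solution of \eqref{systemVbar-1}--\eqref{systemlimit-initial} with $\overline V_0=\overline v_0$, $\widetilde V_0=\widetilde v_0$, provided by Proposition~\ref{global-limit}. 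Running the energy estimates of that proposition two (resp.\ one) derivatives higher — precisely where the hypotheses $\overline v_0\in H^{r+3}$, $\widetilde v_0\in H^{r+2}$ enter — yields $\|e^{\tau_{\mathrm{lim}}(t)A}\overline V(t)\|_{H^{r+3}}\le K(t)$, $\|e^{\tau_{\mathrm{lim}}(t)A}\widetilde V(t)\|_{H^{r+2}}\le Me^{K(t)}$, and, crucially, the low-norm bound $\|\widetilde V(t)\|_{H^{3+\delta}}\le\|\widetilde v_0\|_{H^{3+\delta}}e^{K(t)}\le e^{K(t)}/|\Omega_0|$, which holds because \eqref{systemVtilde} is \emph{linear} in $\widetilde V$.

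Following \eqref{up4}, introduce $\mathfrak u_+:=u_+-\tfrac i\Omega\big(e^{i\Omega t}I_1-e^{-i\Omega t}I_{-1}-\tfrac12e^{-2i\Omega t}I_{-2}\big)$, $\mathfrak u_-:=\overline{\mathfrak u_+}$, and, analogously, $\mathfrak v:=\overline v+\tfrac1\Omega(\cdots)$ absorbing the $e^{\pm2i\Omega t}$ terms of \eqref{barotropic-evolution-4}; then $\partial_t\mathfrak u_+=-I_0[u_+,\overline v]+\tfrac1\Omega\mathcal R_+$ and $\partial_t\mathfrak v=-\mathbb P_h(\overline v\cdot\nabla\overline v)+\tfrac1\Omega\mathcal R_0$, where $\mathcal R_+,\mathcal R_0$ are assembled from $\partial_tI_{\pm1},\partial_tI_{\pm2}$ and, after the equations are substituted, are polynomials of degree $\le3$ in $(u_\pm,\overline v)$ costing two or three extra derivatives. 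Set $\Psi_+:=\mathfrak u_+-U_+$, $\Phi:=\mathfrak v-\overline V$; then $\Psi_+(0),\Phi(0)=O(1/\Omega)$, $u_\pm=U_\pm+\Psi_\pm+O(1/\Omega)$, $\overline v=\overline V+\Phi+O(1/\Omega)$, and, by \eqref{uplimit} and \eqref{vbarlimit},
\[
\partial_t\Psi_+=-\bigl(I_0[u_+,\overline v]-I_0[U_+,\overline V]\bigr)+\tfrac1\Omega\mathcal R_+,\qquad
\partial_t\Phi=-\bigl(\mathbb P_h(\overline v\cdot\nabla\overline v)-\mathbb P_h(\overline V\cdot\nabla\overline V)\bigr)+\tfrac1\Omega\mathcal R_0,
\]
in which the first term on each side is \emph{linear} in $(\Psi_\pm,\Phi)$ with coefficients controlled by the $H^{r+1}$-analytic norms of $\overline v,\overline V,u_\pm,U_\pm$.

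On a maximal $[0,T^\ast)\subset[0,\mathcal T]$ on which the solution of Theorem~\ref{theorem-local} persists with $\tau(t)>0$, impose the bootstrap hypotheses that the $H^{r+j}$-analytic norms of $\overline v,u_\pm$ ($j=0,1,2,3$) are at most twice the limit-system bounds above and that $\|\widetilde v(t)\|_{H^{3+\delta}}\le2e^{K(t)}/|\Omega_0|$. The Gevrey energy estimates for $(\Psi_+,\Phi)$ in $\mathcal D(e^{\tau A}:H^r)$ — using Lemmas~\ref{lemma-banach-algebra}, \ref{lemma-non-small} (for the transport-structured pieces, where $\nabla\cdot\overline V=\nabla\cdot\overline v=0$) and \ref{lemma-type1}--\ref{lemma-type3}, Poincar\'e's inequality ($\Psi_\pm,\Phi,\widetilde v$ all have zero mean), $|\Omega|\ge|\Omega_0|$, and the smallness of $\|\widetilde v\|_{H^{3+\delta}}$ to interpolate the few high-order factors not otherwise absorbed — give, after Gr\"onwall, $\|e^{\tau(t)A}\Phi(t)\|_{H^r}^2+\|e^{\tau(t)A}\Psi_+(t)\|_{H^r}^2\lesssim e^{\widetilde K(t)}/|\Omega_0|$ as soon as $\tau$ obeys
\[
\dot\tau+\widetilde K_0(t)\,\tau+\Big(\frac{e^{\widetilde K_0(t)}}{\sqrt{|\Omega_0|-e^{\widetilde K_0(t)}}}+\frac{e^{\widetilde K_0(t)}}{|\Omega_0|}\Big)e^{-\int_0^t\widetilde K_0(s)\,ds}\le0,
\]
whose integration is precisely \eqref{main-tau}. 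Then Lemmas~\ref{lemma-vtilde-upm} and \ref{lemma-decomposition} with the triangle inequality turn this difference bound, together with the first paragraph, into a strict improvement of the analytic-norm hypotheses, while $\widetilde v=\widetilde V+(\widetilde v-\widetilde V)$ with $\|\widetilde v-\widetilde V\|_{H^{3+\delta}}\lesssim e^{\widetilde K(t)}/|\Omega_0|$ improves the low-norm hypothesis for as long as $e^{\widetilde K(t)}\lesssim\sqrt{|\Omega_0|}$. Hence $T^\ast=\mathcal T$, with $\mathcal T$ the first zero of $\tau$ in \eqref{main-tau}, i.e.\ $\tau_0=\int_0^{\mathcal T}\big(e^{\widetilde K_0}/\sqrt{|\Omega_0|-e^{\widetilde K_0}}+e^{\widetilde K_0}/|\Omega_0|\big)ds$; bounding this integral crudely and using $e^{\widetilde K(\mathcal T)}\asymp|\Omega_0|$ gives \eqref{T-longtime-type2} and $\mathcal T\gtrsim\ln\ln\ln\ln|\Omega_0|$. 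The hypothesis $|\Omega_0|\ge C_{\tau_0}e^{\widetilde K(1)}$ secures $\mathcal T\ge1$ and the consistency of all constants on $[0,1]$, and — as noted in the remark following Theorem~\ref{theorem-longtime} — these formal estimates are legitimized on the Galerkin system via Aubin--Lions.

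The main obstacle is the third step. The $1/\Omega$ of the normal form is paid for with a loss of two or three derivatives and the appearance of cubic terms whose analytic norms grow in an iterated-exponential way, so the effective gain is only $O(e^{\widetilde K(t)}/|\Omega|)$ and the scheme closes solely while $e^{\widetilde K(t)}\ll|\Omega_0|$. Turning this into the two sharp depletion rates $(|\Omega_0|-e^{\widetilde K_0})^{-1/2}$ and $|\Omega_0|^{-1}$ for the radius of analyticity in \eqref{main-tau} — which forces one to marry the normal-form gain with the propagated low-Sobolev smallness $\|\widetilde v(t)\|_{H^{3+\delta}}\lesssim e^{K(t)}/|\Omega_0|$, itself available only because the limit baroclinic equation \eqref{systemVtilde} is linear — while simultaneously closing the bootstrap on the four analytic-norm levels, is where essentially all of the difficulty lies; the comparison with the $2D$ Euler part $\overline V$ is, by contrast, governed by the already-established Proposition~\ref{global-Euler}.
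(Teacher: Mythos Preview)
Your overall strategy is the right one---compare with the limit resonant system, exploit the oscillation via a normal form, and feed in the low-Sobolev smallness of the baroclinic mode through the linearity of \eqref{systemVtilde}---but the implementation has a closure problem. You perform the normal form at the \emph{equation} level, replacing $u_+$ by $\mathfrak u_+=u_+-\tfrac{i}{\Omega}\bigl(e^{i\Omega t}I_1[u_+]-\cdots\bigr)$, so the remainder $\mathcal R_+$ is cubic in the \emph{full} solution $(u_\pm,\overline v)$ with two extra derivatives. Estimating $\langle A^r e^{\tau A}\mathcal R_+, A^r e^{\tau A}\Psi_-\rangle$ therefore requires the full $u_\pm$ in $H^{r+2}$ (analytic). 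You bootstrap this, but then cannot close: to recover $\|u_+\|_{H^{r+2}}$ from $\Psi_+$ you would need either $\|\Psi_+\|_{H^{r+2}}$ (whose energy estimate in turn needs $u_+\in H^{r+4}$) or $\|\tfrac{1}{\Omega}I_1[u_+]\|_{H^{r+2}}$ (which needs $u_+\in H^{r+3}$, beyond the data $\widetilde v_0\in H^{r+2}$). The triangle-inequality step you invoke to ``strictly improve the analytic-norm hypotheses'' at levels $j\ge1$ does not go through.

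The paper avoids this by subtracting the limit system \emph{first}, writing $\phi_\pm=u_\pm-U_\pm$, $\overline\phi=\overline v-\overline V$, and expanding the $Q$'s so that the oscillatory pieces split into four types. The normal form (differentiation by parts in $t$) is then applied \emph{only at the energy level and only to the ``Type~3'' terms}, which are quadratic in the globally controlled limit fields $U_\pm,\overline V$ and linear in $\phi$; the extra derivatives from $\partial_t$ therefore land on $U_\pm\in H^{r+2}$, $\overline V\in H^{r+3}$ and on $\partial_t\overline\phi$ read off its equation at level $H^{r-1}$. No bootstrap of high analytic norms of the full solution is needed---a single $H^r$ energy estimate on $(\overline\phi,\phi_\pm)$ suffices. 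A second point you gloss over: the ``Type~4'' terms (bilinear in $\phi$, with a derivative on $\phi$, coefficients $U_\pm$) are where the real work is. The paper handles them via the commutator Lemmas~\ref{lemma-difference-type1}--\ref{lemma-difference-type4} and several explicit cancellations by integration by parts; what survives are precisely the factors $\|U_\pm\|_{L^\infty}$, $\|A^{1/2}U_\pm\|_{L^\infty}$, $\|\partial_z U_\pm\|_{L^\infty}$, $\|A^{1/2}\partial_z U_\pm\|_{L^\infty}$, which are controlled by $\|\widetilde V\|_{H^{3+\delta}}\le e^{K(t)}/|\Omega_0|$. Your appeal to Lemmas~\ref{lemma-type1}--\ref{lemma-type3} and \ref{lemma-non-small} alone will not produce this structure.
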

Thanks to Lemma \ref{1st-equivalence} and Lemma \ref{lemma-decomposition}, we immediately have the following corollary.
\begin{corollary}
Suppose $v_0 \in \mathcal{S}\cap \mathcal{D}(e^{\tau_0 A}: H^{r+3}(\mathbb{T}^3))$, and the conditions of Theorem \ref{theorem-main} hold. Then the unique solution $v$ obtained in Corollary \ref{corollary-local} satisfies  $v \in  L^\infty(0,\mathcal{T};\mathcal{S} \cap \mathcal{D}(e^{\tau(t) A}: H^r(\mathbb{T}^3)))$, when $|\Omega|\geq |\Omega_0|$, with $\mathcal{T}$ defined in (\ref{T-longtime-type2}) and $\tau$ defined in (\ref{main-tau}).  
\end{corollary}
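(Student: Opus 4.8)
The statement is a direct corollary of Theorem \ref{theorem-main} together with Corollary \ref{corollary-local} and the two structural lemmas it cites (Lemma \ref{1st-equivalence} and Lemma \ref{lemma-decomposition}); the plan is therefore a bookkeeping argument, not a new estimate. First I would decompose the datum: set $\overline{v}_0 := P_0 v_0$ and $\widetilde{v}_0 := (I-P_0) v_0$, so that $v_0 = \overline{v}_0 + \widetilde{v}_0$. Using the observation recorded right after (\ref{local-ic}) that $v_0 \in \mathcal{S}$ forces $\overline{v}_0, \widetilde{v}_0 \in \mathcal{S}$ (in particular $\nabla\cdot\overline{v}_0 = 0$), and Lemma \ref{lemma-decomposition} applied at regularity $r+3$,
\begin{equation}
\|e^{\tau_0 A}\overline{v}_0\|_{H^{r+3}}^2 + \|e^{\tau_0 A}\widetilde{v}_0\|_{H^{r+3}}^2 = \|e^{\tau_0 A} v_0\|_{H^{r+3}}^2 < \infty,
\end{equation}
one concludes $\overline{v}_0 \in \mathcal{S}\cap\mathcal{D}(e^{\tau_0 A}: H^{r+3}(\mathbb{T}^3))$ and, via the embedding $H^{r+3}\hookrightarrow H^{r+2}$, $\widetilde{v}_0 \in \mathcal{S}\cap\mathcal{D}(e^{\tau_0 A}: H^{r+2}(\mathbb{T}^3))$. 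The remaining hypotheses of Theorem \ref{theorem-main} — namely $\|e^{\tau_0 A}\overline{v}_0\|_{H^{r+3}} + \|e^{\tau_0 A}\widetilde{v}_0\|_{H^{r+2}} \leq M$, $\|\widetilde{v}_0\|_{H^{3+\delta}} \leq 1/|\Omega_0|$, and $|\Omega_0| \geq C_{\tau_0}e^{\widetilde{K}(1)}$ — are precisely the ``conditions of Theorem \ref{theorem-main}'' assumed in the present statement, now read off for this particular pair $(\overline{v}_0, \widetilde{v}_0)$.

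Next I would invoke Theorem \ref{theorem-main} itself: for every $|\Omega|\geq|\Omega_0|$ it supplies the time $\mathcal{T}$ determined by (\ref{T-longtime-type2}), the radius-of-analyticity function $\tau(t)$ given by (\ref{main-tau}), and the unique solution $(\overline{v},\widetilde{v})$ of system (\ref{local-system-1})--(\ref{local-ic}) lying in $\mathcal{S}\cap L^\infty(0,\mathcal{T};\mathcal{D}(e^{\tau(t) A}: H^r(\mathbb{T}^3)))$. By Lemma \ref{1st-equivalence}, $v := \overline{v} + \widetilde{v} \in \mathcal{S}$ then solves the original system (\ref{EQ1-1})--(\ref{EQ1-3}) subject to (\ref{EQ-ic})--(\ref{EQ-bc2}) with $v|_{t=0} = \overline{v}_0 + \widetilde{v}_0 = v_0$; by the uniqueness part of Corollary \ref{corollary-local}, this $v$ coincides on the short interval there with the solution produced by Corollary \ref{corollary-local}, so the construction above is precisely its continuation up to time $\mathcal{T}$. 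Finally, applying Lemma \ref{lemma-decomposition} once more, this time at regularity $r$ and with $\tau = \tau(t)$,
\begin{equation}
\|e^{\tau(t) A} v(t)\|_{H^r}^2 = \|e^{\tau(t) A} \overline{v}(t)\|_{H^r}^2 + \|e^{\tau(t) A} \widetilde{v}(t)\|_{H^r}^2,
\end{equation}
so that $(\overline{v},\widetilde{v}) \in L^\infty(0,\mathcal{T};\mathcal{D}(e^{\tau(t) A}: H^r))$ holds if and only if $v \in L^\infty(0,\mathcal{T};\mathcal{D}(e^{\tau(t) A}: H^r))$, with the very same $\mathcal{T}$ from (\ref{T-longtime-type2}) and the same $\tau$ from (\ref{main-tau}). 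Together with $v\in\mathcal{S}$ this is exactly the asserted membership $v \in \mathcal{S}\cap L^\infty(0,\mathcal{T};\mathcal{D}(e^{\tau(t) A}: H^r(\mathbb{T}^3)))$.

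There is no genuine obstacle in this deduction; the only point meriting a moment's attention is the index bookkeeping — $v_0$ is assumed one derivative smoother ($H^{r+3}$) than the baroclinic datum is required to be in Theorem \ref{theorem-main} ($H^{r+2}$) — but this discrepancy is absorbed harmlessly by the Sobolev embedding $H^{r+3}(\mathbb{T}^3)\hookrightarrow H^{r+2}(\mathbb{T}^3)$, and the norm splitting of Lemma \ref{lemma-decomposition} makes every statement about $v$ interchangeable with the corresponding statement about the pair $(\overline{v},\widetilde{v})$. Thus the corollary follows immediately from the two results already established.
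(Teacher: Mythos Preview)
Your argument is correct and follows exactly the route the paper indicates: the corollary is stated as an immediate consequence of Lemma \ref{1st-equivalence} and Lemma \ref{lemma-decomposition}, and you have simply written out the bookkeeping (decompose $v_0$, apply Theorem \ref{theorem-main} to $(\overline{v}_0,\widetilde{v}_0)$, reconstruct $v$ via Lemma \ref{1st-equivalence}, and transfer the norm bound via Lemma \ref{lemma-decomposition}) that the paper leaves implicit.
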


In this section, we focus on system (\ref{up3})--(\ref{barotropic-evolution-4}), which is equivalent to system (\ref{local-system-1})--(\ref{local-system-2}) due to Lemma \ref{1st-equivalence} and Lemma \ref{2nd-equivalence}. To prove Theorem \ref{theorem-main}, in section 6.2, we consider the difference between the original system (\ref{up3})--(\ref{barotropic-evolution-4}) and the limit resonant system (\ref{upmlimit})--(\ref{vbarlimit}). We call such difference system as perturbed system. In section \ref{section-proof-main}, by the formal energy estimate, we show that the solution to the perturbed system exists for a long time. This together with the global existence of the solution to system (\ref{upmlimit})--(\ref{vbarlimit}) give us the long time existence of the solution to system (\ref{up3})--(\ref{barotropic-evolution-4}), and therefore the long time existence of the solution to system (\ref{local-system-1})--(\ref{local-system-2}). 

In section 6.1, we first give a rational behind the smallness of the initial baroclinic mode.

\subsection{A rational behind the smallness of the initial baroclinic mode}
The result of Theorem \ref{theorem-main} is for ``well-prepared" initial data, namely, for a given fixed $\delta>0$, $\|\widetilde{v}_0\|_{H^{3+\delta}} \leq \frac{1}{|\Omega_0|}$. Before we go into the proof of Theorem \ref{theorem-main}, we briefly rationalize, below, the reason behind this smallness condition on the baroclinic mode. 

Consider the linear inviscid PEs:
\begin{eqnarray*}\label{inviscidPE-linear}
&\partial_t \mathcal{V} +\Omega \mathcal{V}^\perp + \nabla p = 0, \\
&\partial_z p  =0,\\
&\nabla \cdot \mathcal{V} + \partial_z w =0,
\end{eqnarray*}
whose explicit solution is
\begin{equation*}
    \mathcal{V}(\boldsymbol{x},t)=\overline{\mathcal V}_0(\boldsymbol{x}')+\mathcal R(t)\widetilde{\mathcal V}_0(\boldsymbol{x}),
\end{equation*}
where 
\begin{equation*}
    \mathcal R(t):=\left(
		\begin{array}{cccc}
		 \cos(\Omega t) & \sin(\Omega t)  \\
		-\sin(\Omega t) &  \cos(\Omega t)  
		\end{array}
		\right).
\end{equation*}
We see there is no ``decay" due to rotation in the linear level. This is different from the linearized $3D$ Euler equations with rotation, for which one can obtain certain decay due to dispersion/averaging mechanism, see, e.g., \cite{D05,KLT14}. 

Now let us look back to our nonlinear inviscid PEs (\ref{local-system-1})--(\ref{local-system-2}). The first equation (\ref{local-system-1}) is the evolution of the barotropic mode, which is the $2D$ Euler with source terms coming from the baroclinic mode. The second equation (\ref{local-system-2}) is the evolution of the baroclinic mode, which is the Burger's equations with rotation and other nonlinear coupling terms. For the Burger's equations with rotation, it is shown in \cite{BIT11,LT04} that when the rotation rate $|\Omega|$ is large enough depending on the initial data, the solution exists globally in time because of the absence of resonance between the rotation and nonlinearity, which allows a very strong averaging mechanism that weakens the nonlinearity. In our case, however, the additional coupling nonlinear terms in (\ref{local-system-2}) resonate with the rotation term, which does not allow for this simple scenario to take place. However, thanks to the smallness assumption on the initial baroclinic mode, the additional coupling nonlinear terms are initially small, which allows us to push this argument further.

Another reason behind this smallness assumption is indicated in \cite{ILT20}, where a finite-time blowup of solutions to the inviscid PEs with rotation is established. Indeed, for the initial data
\begin{equation*}\label{initial-data-different-scale}
    v_0(\boldsymbol{x}) = v_0 (x,z) = \Big( \lambda(-z^2+ \frac{1}{3}) \sin x,  -\Omega \sin x\Big)
\end{equation*}
with $\lambda > 0$, it is shown that $\frac{9}{2\lambda}$ is an upper bound for the blowup time. Notably here $\overline{v}_0 = (0, -\Omega \sin x)$ and $\widetilde{v}_0 = ( \lambda(-z^2+ \frac{1}{3}) \sin x,0)$. Therefore, when $|\Omega|\gg 1$, we have:
\begin{itemize}
    \item when $\lambda = |\Omega|$, the baroclinic mode satisfies $\widetilde{v}_0 \sim |\Omega|$, and the whole initial data satisfies $v_0 \sim |\Omega|$. An upper bound of blowup time in this case satisfies $\mathcal{T}\sim \frac{1}{|\Omega|}$;
    \item when $\lambda = 1$, the baroclinic mode satisfies $\widetilde{v}_0 \sim 1$, while the whole initial data satisfies $v_0 \sim |\Omega|$. An upper bound of blowup time in this case satisfies $\mathcal{T}\sim 1$;
    \item when $\lambda = \frac{1}{|\Omega|}$, this implies a smallness condition on the baroclinic $\widetilde{v}_0 \sim \frac{1}{|\Omega|}$, while the whole initial data satisfies $v_0 \sim |\Omega|$. An upper bound of blowup time in this case satisfies $\mathcal{T}\sim |\Omega|$.
\end{itemize}
The above, in particular, the last item suggest that the smallness condition on the baroclinic mode is required to guarantee the long time existence of solutions to the $3D$ inviscid PEs with fast rotation.

Further reasoning for the smallness condition on the initial baroclinic mode will be provided in Remark \ref{remark-term-type1} and Remark \ref{remark-term-type2}, below.

\subsection{The perturbed system around $|\Omega|=\infty$}\label{section-difference-system}
Since the limit resonant system (\ref{upmlimit})--(\ref{vbarlimit}) is globally well-posed, the idea to show long time existence of the solution is to consider the difference between the original system (\ref{up3})--(\ref{barotropic-evolution-4}) and the limit resonant system (\ref{upmlimit})--(\ref{vbarlimit}).
Denote by 
$ \overline{\phi} = \overline{v} - \overline{V}$, and $\phi_\pm = u_\pm - U_\pm.$
Taking the difference between system (\ref{up3})--(\ref{barotropic-evolution-4}) and system (\ref{upmlimit})--(\ref{vbarlimit}), we obtain
\begin{eqnarray}
&&\hskip-.8in \partial_t \overline{\phi} + \mathbb{P}_h\Big[\overline{\phi}\cdot \nabla \overline{V} + \overline{\phi}\cdot \nabla \overline{\phi} + \overline{V}\cdot \nabla \overline{\phi}  +   e^{2i\Omega t} P_0\Big( Q_{1,+,+} + Q_{2,+,+} \Big)\nonumber\\
&&\hskip-.1in
 + e^{-2i\Omega t} P_0\Big( Q_{1,-,-} + Q_{2,-,-} \Big) \Big] = 0, \quad \text{and}\label{difference-phibar}  \\
\nonumber \\
&&\hskip-.8in 
\partial_t \phi_\pm + \overline{\phi} \cdot \nabla U_\pm + \overline{\phi} \cdot \nabla \phi_\pm + \overline{V} \cdot \nabla \phi_\pm  + \frac{1}{2}(\phi_\pm \cdot \nabla)(\overline{V} \pm i\overline{V}^\perp) + \frac{1}{2}(\phi_\pm \cdot \nabla)(\overline{\phi} \pm i\overline{\phi}^\perp) \nonumber\\
&&\hskip-.5in 
+ \frac{1}{2}(U_\pm \cdot \nabla)(\overline{\phi} \pm i\overline{\phi}^\perp) + e^{\pm i\Omega t}\Big(Q_{1,\pm, \pm} - P_0 Q_{1,\pm,\pm} - P_0 Q_{2,\pm,\pm} - Q_{3,\pm,\pm}\Big) \nonumber \\
&&\hskip-.5in
+ e^{\mp i\Omega t} \Big(Q_{1,\mp, \pm} - P_0 Q_{1,\mp,\pm} - P_0 Q_{2,\mp,\pm} - Q_{3,\mp,\pm}\Big) + e^{\mp 2i\Omega t}Q_{4,\mp,\pm}  = 0 , \label{difference-phipm} 
\end{eqnarray}
where
\begin{eqnarray*}
&&\hskip-.9in
Q_{1,\pm,\mp} = \phi_\pm \cdot \nabla U_\mp + \phi_\pm \cdot \nabla \phi_\mp + U_\pm \cdot \nabla \phi_\mp + U_\pm \cdot \nabla U_\mp, \\
&&\hskip-.9in
Q_{2,\pm,\mp} = (\nabla\cdot \phi_\pm) U_\mp + (\nabla\cdot \phi_\pm) \phi_\mp + (\nabla\cdot U_\pm) \phi_\mp + (\nabla\cdot U_\pm) U_\mp,
\\
&&\hskip-.9in
Q_{3,\pm,\mp} = (\int_0^z \nabla\cdot \phi_\pm (\boldsymbol{x}',s)ds) \partial_z U_\mp +(\int_0^z \nabla\cdot \phi_\pm (\boldsymbol{x}',s)ds) \partial_z \phi_\mp \nonumber
\\
&&\hskip-.3in
+(\int_0^z \nabla\cdot U_\pm (\boldsymbol{x}',s)ds) \partial_z \phi_\mp+(\int_0^z \nabla\cdot U_\pm (\boldsymbol{x}',s)ds) \partial_z U_\mp,\\
&&\hskip-.9in
Q_{4,\pm,\mp} = \frac{1}{2}\Big[(\phi_\pm \cdot \nabla)(\overline{V} \mp i\overline{V}^\perp) + (\phi_\pm \cdot \nabla)(\overline{\phi} \mp i\overline{\phi}^\perp)
\nonumber\\
&&\hskip-.3in
+ (U_\pm \cdot \nabla)(\overline{\phi} \mp i\overline{\phi}^\perp) + (U_\pm \cdot \nabla)(\overline{V} \mp i\overline{V}^\perp) \Big].
\end{eqnarray*}
We supplement the initial conditions for the limit resonant system (\ref{upmlimit})--(\ref{vbarlimit}) as 
\begin{equation}\label{limit-ic}
    \overline{V}_0 = \overline{v}_0, \;\;\; (U_\pm)_0 = (u_\pm)_0 = \frac{1}{2}(\widetilde{v}_0\pm i\widetilde{v}_0^\perp).
\end{equation}
Therefore, the initial conditions for the perturbed system is 
\begin{eqnarray}\label{difference-ic}
  \overline{\phi}_0=0, \;\; (\phi_\pm)_0 = 0.
\end{eqnarray}


\subsection{Proof of Theorem \ref{theorem-main}}\label{section-proof-main}
In this subsection, we prove Theorem \ref{theorem-main}. From Proposition \ref{global-limit}, let $\overline{V}$ and $U_\pm$ be the global solution in $\mathcal{S}\cap \mathcal{D}(e^{\tau(t) A}: H^{r+3}(\mathbb{T}^3))$ and $\mathcal{S}\cap \mathcal{D}(e^{\tau(t) A}: H^{r+2}(\mathbb{T}^3))$, respectively, to system (\ref{upmlimit})-(\ref{vbarlimit}), with initial data (\ref{limit-ic}) and $\tau(t)$ defined by (\ref{tau-limit-system}). Applying $A^r e^{\tau A}$ to (\ref{difference-phibar})--(\ref{difference-phipm}), and taking the $L^2$ inner product of (\ref{difference-phibar}) with $A^r e^{\tau A} \overline{\phi}$, (\ref{difference-phipm}) with $2A^r e^{\tau A} \phi_\mp$, thanks to Lemma \ref{lemma-projection} and Lemma \ref{lemma-leray}, we obtain
\begin{eqnarray}
&&\hskip-.58in \frac{1}{2} \frac{d}{dt} \|A^r e^{\tau A} \overline{\phi} \|^2 = \dot{\tau} \|A^{r+\frac{1}{2}} e^{\tau A} \overline{\phi} \|^2 - \Big\langle A^r e^{\tau A} (\overline{\phi}\cdot \nabla \overline{V}), A^r e^{\tau A} \overline{\phi} \Big\rangle - \Big\langle A^r e^{\tau A} (\overline{\phi}\cdot \nabla \overline{\phi}), A^r e^{\tau A} \overline{\phi} \Big\rangle \nonumber \\
&&\hskip.38in
- \Big\langle A^r e^{\tau A} (\overline{V}\cdot \nabla \overline{\phi}), A^r e^{\tau A} \overline{\phi} \Big\rangle  - e^{2i\Omega t}\Big\langle A^r e^{\tau A}(Q_{1,+,+} + Q_{2,+,+}), A^r e^{\tau A} \overline{\phi}\Big\rangle\nonumber \\
&&\hskip.38in
- e^{-2i\Omega t}\Big\langle A^r e^{\tau A}(Q_{1,-,-} + Q_{2,-,-}), A^r e^{\tau A} \overline{\phi}\Big\rangle, \label{energy-difference-phibar}
\end{eqnarray}
and
\begin{eqnarray}
&&\hskip-.48in \frac{d}{dt} (\|A^r e^{\tau A} \phi_+ \|^2 + \|A^r e^{\tau A} \phi_- \|^2) = 2\dot{\tau} (\|A^{r+\frac{1}{2}} e^{\tau A} \phi_+ \|^2 + \|A^{r+\frac{1}{2}} e^{\tau A} \phi_- \|^2)  \nonumber \\
&&\hskip-.18in
- 2\Big\langle A^r e^{\tau A} (\overline{\phi}\cdot \nabla U_+), A^r e^{\tau A} \phi_- \Big\rangle - 2\Big\langle A^r e^{\tau A} (\overline{\phi}\cdot \nabla U_-), A^r e^{\tau A} \phi_+ \Big\rangle \nonumber\\
&&\hskip-.18in
- 2\Big\langle A^r e^{\tau A} (\overline{\phi}\cdot \nabla \phi_+), A^r e^{\tau A} \phi_- \Big\rangle - 2\Big\langle A^r e^{\tau A} (\overline{\phi}\cdot \nabla \phi_-), A^r e^{\tau A} \phi_+ \Big\rangle \nonumber\\
&&\hskip-.18in
- 2\Big\langle A^r e^{\tau A} (\overline{V}\cdot \nabla \phi_+), A^r e^{\tau A} \phi_- \Big\rangle - 2\Big\langle A^r e^{\tau A} (\overline{V}\cdot \nabla \phi_-), A^r e^{\tau A} \phi_+ \Big\rangle \nonumber\\
&&\hskip-.18in
- \Big\langle A^r e^{\tau A} (\phi_+\cdot \nabla (\overline{V}+i\overline{V}^\perp)), A^r e^{\tau A} \phi_- \Big\rangle - \Big\langle A^r e^{\tau A} (\phi_- \cdot \nabla (\overline{V}-i\overline{V}^\perp)), A^r e^{\tau A} \phi_+ \Big\rangle \nonumber\\
&&\hskip-.18in
- \Big\langle A^r e^{\tau A} (\phi_+\cdot \nabla (\overline{\phi}+i\overline{\phi}^\perp)), A^r e^{\tau A} \phi_- \Big\rangle - \Big\langle A^r e^{\tau A} (\phi_- \cdot \nabla (\overline{\phi}-i\overline{\phi}^\perp)), A^r e^{\tau A} \phi_+ \Big\rangle \nonumber\\
&&\hskip-.18in
- \Big\langle A^r e^{\tau A} (U_+\cdot \nabla (\overline{\phi}+i\overline{\phi}^\perp)), A^r e^{\tau A} \phi_- \Big\rangle - \Big\langle A^r e^{\tau A} (U_- \cdot \nabla (\overline{\phi}-i\overline{\phi}^\perp)), A^r e^{\tau A} \phi_+ \Big\rangle \nonumber\\
&&\hskip-.18in
- 2e^{i\Omega t}\Big(\Big\langle A^r e^{\tau A}(Q_{1,+,+} - Q_{3,+,+}), A^r e^{\tau A} \phi_-\Big\rangle + \Big\langle A^r e^{\tau A}(Q_{1,+,-} - Q_{3,+,-}), A^r e^{\tau A} \phi_+\Big\rangle\Big) \nonumber\\
&&\hskip-.18in
- 2e^{-i\Omega t}\Big(\Big\langle A^r e^{\tau A}(Q_{1,-,+} - Q_{3,-,+}), A^r e^{\tau A} \phi_-\Big\rangle + \Big\langle A^r e^{\tau A}(Q_{1,-,-} - Q_{3,-,-}), A^r e^{\tau A} \phi_+\Big\rangle\Big) \nonumber\\
&&\hskip-.18in
-2e^{2i\Omega t}\Big\langle A^r e^{\tau A}Q_{4,+,-}, A^r e^{\tau A} \phi_+\Big\rangle - 2e^{-2i\Omega t}\Big\langle A^r e^{\tau A}Q_{4,-,+}, A^r e^{\tau A} \phi_-\Big\rangle .
\label{energy-difference-phipm}
\end{eqnarray}

There are totally $71$ different nonlinear terms in (\ref{energy-difference-phibar}) and (\ref{energy-difference-phipm}). We separate them into the following four different types. We use $V$ to denote the velocity field of the limit resonant system, i.e., $\overline{V}$ and $U_\pm$, and use $\phi$ to denote the velocity filed of the perturbed system, i.e., $\overline{\phi}$ and $\phi_\pm$.
\begin{itemize}
    \item Type 1: terms that are trilinear in $\phi$, e.g., $\Big\langle A^r e^{\tau A} (\overline{\phi}\cdot \nabla \overline{\phi}), A^r e^{\tau A} \overline{\phi} \Big\rangle$.
    \item Type 2: terms that are bilinear in $\phi$ with no derivative of $\phi$, e.g.,  $\Big\langle A^r e^{\tau A} (\overline{\phi}\cdot \nabla \overline{V}), A^r e^{\tau A} \overline{\phi} \Big\rangle$.
    \item Type 3: terms that are linear in $\phi$, e.g., $e^{2i\Omega t}\Big\langle A^r e^{\tau A}(U_+\cdot\nabla U_+), A^r e^{\tau A} \overline{\phi}\Big\rangle$.
    \item Type 4: terms that are bilinear in $\phi$ and a derivative of $\phi$, e.g., $\Big\langle A^r e^{\tau A} (\overline{V}\cdot \nabla \overline{\phi}), A^r e^{\tau A} \overline{\phi} \Big\rangle$.
\end{itemize}
\subsubsection{Estimates of Type 1 and Type 2 terms}
For type 1 nonlinear terms (19 terms), using Lemma \ref{lemma-type1}--\ref{lemma-type3}, and for type 2 nonlinear terms (15 terms), using Lemma \ref{lemma-banach-algebra}, since $\overline{\phi}$, $\phi_\pm$, $\overline{V}$ and $U_\pm$ all have zero mean value in $\mathbb{T}^3$, we have
\begin{align}\label{estimate-general-terms}
        &\Big|\Big\langle A^r e^{\tau A} (\overline{\phi}\cdot \nabla \overline{V}), A^r e^{\tau A} \overline{\phi} \Big\rangle \Big|  +\Big|\Big\langle A^r e^{\tau A} (\overline{\phi}\cdot \nabla \overline{\phi}), A^r e^{\tau A} \overline{\phi} \Big\rangle\Big| \nonumber\\
        + &\Big|e^{2i\Omega t} \Big\langle A^r e^{\tau A} \Big(\phi_+\cdot \nabla U_+ + \phi_+\cdot \nabla \phi_+ + (\nabla\cdot U_+)\phi_+ + (\nabla\cdot \phi_+)\phi_+\Big), A^r e^{\tau A} \overline{\phi} \Big\rangle \Big|\nonumber\\
        + &\Big|e^{-2i\Omega t} \Big\langle A^r e^{\tau A} \Big(\phi_-\cdot \nabla U_- + \phi_-\cdot \nabla \phi_- + (\nabla\cdot U_-)\phi_- + (\nabla\cdot \phi_-)\phi_-\Big), A^r e^{\tau A} \overline{\phi} \Big\rangle \Big|\nonumber\\
        + &2\Big|\Big\langle A^r e^{\tau A} (\overline{\phi}\cdot \nabla U_+), A^r e^{\tau A} \phi_- \Big\rangle \Big| + 2 \Big|\Big\langle A^r e^{\tau A} (\overline{\phi}\cdot \nabla U_-), A^r e^{\tau A} \phi_+ \Big\rangle \Big|\nonumber\\
        + &2\Big|\Big\langle A^r e^{\tau A} (\overline{\phi}\cdot \nabla \phi_+), A^r e^{\tau A} \phi_- \Big\rangle \Big| +  2\Big|\Big\langle A^r e^{\tau A} (\overline{\phi}\cdot \nabla \phi_-), A^r e^{\tau A} \phi_+ \Big\rangle \Big|\nonumber\\
        + &\Big| \Big\langle A^r e^{\tau A} \Big(\phi_+\cdot \nabla (\overline{V}+i\overline{V}^\perp)\Big), A^r e^{\tau A} \phi_- \Big\rangle\Big| + \Big| \Big\langle A^r e^{\tau A} \Big(\phi_-\cdot \nabla (\overline{V}-i\overline{V}^\perp)\Big), A^r e^{\tau A} \phi_+ \Big\rangle\Big|\nonumber\\
        + &\Big| \Big\langle A^r e^{\tau A} \Big(\phi_+\cdot \nabla (\overline{\phi}+i\overline{\phi}^\perp)\Big), A^r e^{\tau A} \phi_- \Big\rangle\Big| + \Big| \Big\langle A^r e^{\tau A} \Big(\phi_-\cdot \nabla (\overline{\phi}-i\overline{\phi}^\perp)\Big), A^r e^{\tau A} \phi_+ \Big\rangle\Big|\nonumber\\
        + &2\Big|e^{i\Omega t}  \Big\langle A^r e^{\tau A} \Big(\phi_+\cdot \nabla U_+ + \phi_+\cdot \nabla \phi_+ -(\int_0^z \nabla\cdot \phi_+(\boldsymbol{x}',s)ds) \partial_z \phi_+ \Big), A^r e^{\tau A} \phi_- \Big\rangle\Big|\nonumber\\
        + &2\Big|e^{i\Omega t}  \Big\langle A^r e^{\tau A} \Big(\phi_+\cdot \nabla U_- + \phi_+\cdot \nabla \phi_- -(\int_0^z \nabla\cdot \phi_+(\boldsymbol{x}',s)ds) \partial_z \phi_- \Big), A^r e^{\tau A} \phi_+ \Big\rangle\Big|\nonumber\\
        + &2\Big|e^{-i\Omega t}  \Big\langle A^r e^{\tau A} \Big(\phi_-\cdot \nabla U_+ + \phi_-\cdot \nabla \phi_+ -(\int_0^z \nabla\cdot \phi_-(\boldsymbol{x}',s)ds) \partial_z \phi_+ \Big), A^r e^{\tau A} \phi_- \Big\rangle\Big|\nonumber\\
        + &2\Big|e^{-i\Omega t}  \Big\langle A^r e^{\tau A} \Big(\phi_-\cdot \nabla U_- + \phi_-\cdot \nabla \phi_- -(\int_0^z \nabla\cdot \phi_-(\boldsymbol{x}',s)ds) \partial_z \phi_- \Big), A^r e^{\tau A} \phi_+ \Big\rangle\Big|\nonumber\\
        + &\Big|e^{2i\Omega t} \Big\langle A^r e^{\tau A} \Big(\phi_+\cdot \nabla (\overline{V}-i\overline{V}^\perp) + \phi_+\cdot \nabla (\overline{\phi}-i\overline{\phi}^\perp)\Big), A^r e^{\tau A} \phi_+ \Big\rangle \Big| \nonumber\\
        + &\Big|e^{-2i\Omega t} \Big\langle A^r e^{\tau A} \Big(\phi_-\cdot \nabla (\overline{V}+i\overline{V}^\perp) + \phi_-\cdot \nabla (\overline{\phi}+i\overline{\phi}^\perp)\Big), A^r e^{\tau A} \phi_- \Big\rangle \Big|\nonumber\\
        \leq C_r&\Big(\|A^{r+1} e^{\tau A} \overline{V}\| + \|A^{r+1} e^{\tau A} U_+\| + \|A^{r+1} e^{\tau A} U_-\|\Big)\Big(\frac{1}{2}\|A^{r} e^{\tau A} \overline{\phi} \|^2 + \|A^{r} e^{\tau A} \phi_+ \|^2 + \|A^{r} e^{\tau A} \phi_- \|^2\Big)\nonumber\\
        +C_r&\Big(\|A^{r} e^{\tau A} \overline{\phi}\| + \|A^{r} e^{\tau A} \phi_+\| + \|A^{r} e^{\tau A} \phi_-\|\Big)\Big(\|A^{r+\frac{1}{2}} e^{\tau A} \overline{\phi} \|^2 + \|A^{r+\frac{1}{2}} e^{\tau A} \phi_+ \|^2 + \|A^{r+\frac{1}{2}} e^{\tau A} \phi_- \|^2\Big).
    \end{align}
\subsubsection{Estimates of Type 3 terms}    
For type 3 nonlinear terms (14 terms), when $\Omega\neq 0$, we first explain the idea on the sample term $e^{2i\Omega t} \Big\langle A^{r} e^{\tau A} (U_+ \cdot \nabla U_+), A^{r} e^{\tau A} \overline{\phi}\Big\rangle$. Indeed, by differentiation by parts, we have
\begin{eqnarray*}
&&\hskip-.58in
 e^{2i\Omega t} \Big\langle A^{r} e^{\tau A} (U_+ \cdot \nabla U_+), A^{r} e^{\tau A} \overline{\phi}\Big\rangle \nonumber \\ 
&&\hskip-.78in
= \frac{1}{2i\Omega} \partial_t\Big(e^{2i\Omega t} \Big\langle A^{r} e^{\tau A} (U_+ \cdot \nabla U_+), A^{r} e^{\tau A} \overline{\phi}\Big\rangle \Big) - \frac{1}{2i\Omega} e^{2i\Omega t}\partial_t \Big(\Big\langle A^{r} e^{\tau A} (U_+ \cdot \nabla U_+), A^{r} e^{\tau A} \overline{\phi}\Big\rangle \Big).
\end{eqnarray*}
We leave the first term until integrating in time. For the second term, we have
\begin{eqnarray}
&&\hskip-.58in
- \frac{1}{2i\Omega} e^{2i\Omega t} \partial_t \Big(\Big\langle A^{r} e^{\tau A} (U_+ \cdot \nabla U_+), A^{r} e^{\tau A} \overline{\phi}\Big\rangle \Big) \nonumber \\
&&\hskip-.68in
\leq \frac{1}{|\Omega|} |\dot{\tau}| \Big|\Big\langle A^{r+1} e^{\tau A} (U_+ \cdot \nabla U_+), A^{r} e^{\tau A} \overline{\phi}\Big\rangle\Big| 
+\frac{1}{2|\Omega|}\Big| \Big\langle A^{r} e^{\tau A} \partial_t (U_+ \cdot \nabla U_+), A^{r} e^{\tau A} \overline{\phi}\Big\rangle\Big|\nonumber\\
&&\hskip-.58in
+\frac{1}{2|\Omega|}\Big| \Big\langle A^{r} e^{\tau A} (U_+ \cdot \nabla U_+), A^{r} e^{\tau A} \partial_t \overline{\phi}\Big\rangle\Big| 
: = I_1 + I_2 + I_3. \label{large-term-nonlinear}
\end{eqnarray}
Thanks to the Cauchy–Schwarz inequality, Lemma \ref{lemma-banach-algebra}, and Lemma \ref{lemma-leray}, since $\overline{\phi}$, $\phi_\pm$, $\overline{V}$ and $U_\pm$ all have zero mean value in $\mathbb{T}^3$, and since $r>\frac{5}{2}$, from (\ref{upmlimit}) and (\ref{difference-phibar}), we have
\begin{eqnarray*}
&&\hskip-.58in
I_1 \leq  \frac{C_r}{|\Omega|} |\dot{\tau}| \|A^{r+1} e^{\tau A} U_+\| \|A^{r+2} e^{\tau A} U_+\| \|A^{r} e^{\tau A} \overline{\phi}\| \nonumber\\
&&\hskip-.38in
\leq  \frac{C_r}{|\Omega|^2} |\dot{\tau}|^2 + C_r \|A^{r+2} e^{\tau A} U_+\|^4 \|A^{r} e^{\tau A} \overline{\phi}\|^2, 
\end{eqnarray*}
\begin{eqnarray*}
&&\hskip-.58in
I_2 \leq \frac{C}{|\Omega|}\Big( \Big| \Big\langle A^{r} e^{\tau A} \Big\{\Big(\overline{V}\cdot \nabla U_+ + \frac{1}{2}(U_+ \cdot \nabla)(\overline{V}+ i\overline{V}^\perp)\Big)\cdot \nabla U_+ \Big\}, A^{r} e^{\tau A} \overline{\phi}\Big\rangle   \Big| \nonumber \\
&&\hskip.15in
+ \Big| \Big\langle A^{r} e^{\tau A} \Big\{U_+ \cdot \nabla \Big(\overline{V}\cdot \nabla U_+ + \frac{1}{2}(U_+ \cdot \nabla)(\overline{V}+ i\overline{V}^\perp)\Big)\Big\}, A^{r} e^{\tau A} \overline{\phi}\Big\rangle   \Big| \Big) \nonumber \\
&&\hskip-.45in 
\leq \frac{C_r}{|\Omega|} \|A^{r+2} e^{\tau A} U_+\|^2 \|A^{r+2} e^{\tau A} \overline{V}\| \|A^{r} e^{\tau A} \overline{\phi}\| \nonumber \\
&&\hskip-.45in 
\leq C_r \|A^{r+2} e^{\tau A} U_+\|^2 \|A^{r+2} e^{\tau A} \overline{V}\|^2 \|A^{r} e^{\tau A} \overline{\phi}\|^2 + \frac{C_r}{|\Omega|^2} \|A^{r+2} e^{\tau A} U_+\|^2, \quad \text{and}
\end{eqnarray*}
\begin{eqnarray*}
&&\hskip-.58in
I_3 \leq \frac{C}{|\Omega|} \Big| \Big\langle A^{r} e^{\tau A} \mathbb{P}_h (U_+ \cdot \nabla U_+), A^{r} e^{\tau A} \Big\{\overline{\phi}\cdot \nabla \overline{V} + \overline{\phi}\cdot \nabla \overline{\phi} + \overline{V}\cdot \nabla \overline{\phi}  \nonumber \\
&&\hskip.15in
+   e^{2i\Omega t} P_0\Big( Q_{1,+,+} + Q_{2,+,+} \Big) + e^{-2i\Omega t} P_0\Big( Q_{1,-,-} + Q_{2,-,-} \Big)  \Big\} \Big\rangle \Big|\nonumber \\
&&\hskip-.35in 
\leq \frac{C}{|\Omega|} \Big| \Big\langle A^{r+1} e^{\tau A} \mathbb{P}_h (U_+ \cdot \nabla U_+), A^{r-1} e^{\tau A} \Big\{\overline{\phi}\cdot \nabla \overline{V} + \overline{\phi}\cdot \nabla \overline{\phi} + \overline{V}\cdot \nabla \overline{\phi}  \nonumber \\
&&\hskip.15in
+   e^{2i\Omega t} P_0\Big( Q_{1,+,+} + Q_{2,+,+} \Big) + e^{-2i\Omega t} P_0\Big( Q_{1,-,-} + Q_{2,-,-} \Big)  \Big\} \Big\rangle \Big|\nonumber \\
&&\hskip-.35in
\leq \frac{C_r}{|\Omega|} \|A^{r+2} e^{\tau A} U_+\|^2 \Big[ \|A^{r} e^{\tau A} \overline{V}\|^2 + \|A^{r} e^{\tau A} U_+\|^2 + \|A^{r} e^{\tau A} U_-\|^2 \nonumber \\
&&\hskip.85in
+ \|A^{r} e^{\tau A} \overline{\phi}\|^2 + \|A^{r} e^{\tau A} \phi_+\|^2 + \|A^{r} e^{\tau A} \phi_-\|^2\Big].
\end{eqnarray*}
Applying differentiation by parts to all the type 3 nonlinear terms (14 terms), one obtains
\begin{eqnarray*}
&&\hskip-.58in
-e^{2i\Omega t} \Big[\Big\langle A^{r} e^{\tau A} (U_+ \cdot \nabla U_+), A^{r} e^{\tau A} \overline{\phi}\Big\rangle + \Big\langle A^{r} e^{\tau A} \Big((\nabla\cdot U_+) U_+\Big), A^{r} e^{\tau A} \overline{\phi}\Big\rangle \nonumber \\ 
&&\hskip-.05in  + \Big\langle A^{r} e^{\tau A} \Big((U_+\cdot \nabla) (\overline{V}-i\overline{V}^\perp)\Big), A^{r} e^{\tau A} \phi_+\Big\rangle \Big] \nonumber\\
&&\hskip-.58in - e^{-2i\Omega t} \Big[\Big\langle A^{r} e^{\tau A} (U_- \cdot \nabla U_-), A^{r} e^{\tau A} \overline{\phi}\Big\rangle + \Big\langle A^{r} e^{\tau A} \Big((\nabla\cdot U_-) U_-\Big), A^{r} e^{\tau A} \overline{\phi}\Big\rangle \nonumber \\ 
&&\hskip-.05in  + \Big\langle A^{r} e^{\tau A} \Big((U_-\cdot \nabla) (\overline{V}+i\overline{V}^\perp)\Big), A^{r} e^{\tau A} \phi_-\Big\rangle \Big] \nonumber\\
&&\hskip-.58in -2e^{i\Omega t}\Big[ \Big\langle A^{r} e^{\tau A} (U_+ \cdot \nabla U_+), A^{r} e^{\tau A} \phi_-\Big\rangle -  \Big\langle A^{r} e^{\tau A} \Big((\int_0^z \nabla \cdot U_+(\boldsymbol{x}',s)ds) \partial_z U_+\Big), A^{r} e^{\tau A} \phi_-\Big\rangle \nonumber\\
&&\hskip-.05in
+ \Big\langle A^{r} e^{\tau A} (U_+ \cdot \nabla U_-), A^{r} e^{\tau A} \phi_+\Big\rangle -  \Big\langle A^{r} e^{\tau A} \Big((\int_0^z \nabla \cdot U_+(\boldsymbol{x}',s)ds) \partial_z U_-\Big), A^{r} e^{\tau A} \phi_+\Big\rangle \Big] \nonumber\\
&&\hskip-.58in -2e^{-i\Omega t}\Big[ \Big\langle A^{r} e^{\tau A} (U_- \cdot \nabla U_+), A^{r} e^{\tau A} \phi_-\Big\rangle -  \Big\langle A^{r} e^{\tau A} \Big((\int_0^z \nabla \cdot U_-(\boldsymbol{x}',s)ds) \partial_z U_+\Big), A^{r} e^{\tau A} \phi_-\Big\rangle \nonumber\\
&&\hskip-.05in
+ \Big\langle A^{r} e^{\tau A} (U_- \cdot \nabla U_-), A^{r} e^{\tau A} \phi_+\Big\rangle -  \Big\langle A^{r} e^{\tau A} \Big((\int_0^z \nabla \cdot U_-(\boldsymbol{x}',s)ds) \partial_z U_-\Big), A^{r} e^{\tau A} \phi_+\Big\rangle \Big] \nonumber \\
&&\hskip-.58in
= -\frac{1}{2i\Omega} \partial_t\Big\{ e^{2i\Omega t} \Big[\Big\langle A^{r} e^{\tau A} (U_+ \cdot \nabla U_+), A^{r} e^{\tau A} \overline{\phi}\Big\rangle + \Big\langle A^{r} e^{\tau A} \Big((\nabla\cdot U_+) U_+\Big), A^{r} e^{\tau A} \overline{\phi}\Big\rangle \nonumber \\ 
&&\hskip-.05in  + \Big\langle A^{r} e^{\tau A} \Big((U_+\cdot \nabla) (\overline{V}-i\overline{V}^\perp)\Big), A^{r} e^{\tau A} \phi_+\Big\rangle \Big] \Big\} \nonumber\\
&&\hskip-.48in
+ \frac{1}{2i\Omega} \partial_t\Big\{ e^{-2i\Omega t} \Big[\Big\langle A^{r} e^{\tau A} (U_- \cdot \nabla U_-), A^{r} e^{\tau A} \overline{\phi}\Big\rangle + \Big\langle A^{r} e^{\tau A} \Big((\nabla\cdot U_-) U_-\Big), A^{r} e^{\tau A} \overline{\phi}\Big\rangle \nonumber \\ 
&&\hskip-.05in  + \Big\langle A^{r} e^{\tau A} \Big((U_-\cdot \nabla) (\overline{V}+i\overline{V}^\perp)\Big), A^{r} e^{\tau A} \phi_-\Big\rangle \Big] \Big\} \nonumber\\
&&\hskip-.48in
- \frac{2}{i\Omega} \partial_t\Big\{ e^{i\Omega t}\Big[ \Big\langle A^{r} e^{\tau A} (U_+ \cdot \nabla U_+), A^{r} e^{\tau A} \phi_-\Big\rangle -  \Big\langle A^{r} e^{\tau A} \Big((\int_0^z \nabla \cdot U_+(\boldsymbol{x}',s)ds) \partial_z U_+\Big), A^{r} e^{\tau A} \phi_-\Big\rangle \nonumber\\
&&\hskip-.05in
+ \Big\langle A^{r} e^{\tau A} (U_+ \cdot \nabla U_-), A^{r} e^{\tau A} \phi_+\Big\rangle -  \Big\langle A^{r} e^{\tau A} \Big((\int_0^z \nabla \cdot U_+(\boldsymbol{x}',s)ds) \partial_z U_-\Big), A^{r} e^{\tau A} \phi_+\Big\rangle \Big]  \Big\} \nonumber \\
&&\hskip-.48in
+ \frac{2}{i\Omega} \partial_t\Big\{ e^{-i\Omega t}\Big[ \Big\langle A^{r} e^{\tau A} (U_- \cdot \nabla U_+), A^{r} e^{\tau A} \phi_-\Big\rangle -  \Big\langle A^{r} e^{\tau A} \Big((\int_0^z \nabla \cdot U_-(\boldsymbol{x}',s)ds) \partial_z U_+\Big), A^{r} e^{\tau A} \phi_-\Big\rangle \nonumber\\
&&\hskip-.05in
+ \Big\langle A^{r} e^{\tau A} (U_- \cdot \nabla U_-), A^{r} e^{\tau A} \phi_+\Big\rangle -  \Big\langle A^{r} e^{\tau A} \Big((\int_0^z \nabla \cdot U_-(\boldsymbol{x}',s)ds) \partial_z U_-\Big), A^{r} e^{\tau A} \phi_+\Big\rangle \Big]  \Big\} \nonumber\\
&&\hskip-.48in +R =: \partial_t N + R,
 \end{eqnarray*}
where $R$ corresponds the remaining terms. Using the similar estimates as (\ref{large-term-nonlinear}), thanks to Young's inequality, when $|\Omega|>1$, we have
\begin{eqnarray}
&&\hskip-.58in
|R| \leq  C_r\Big( \|A^{r+2} e^{\tau A} \overline{V}\|^4 + \|A^{r+2} e^{\tau A} U_+\|^4 + \|A^{r+2} e^{\tau A} U_-\|^4 + 1 \Big)\nonumber\\
&&\hskip0.58in
\times\Big( \frac{1}{2} \|A^{r} e^{\tau A} \overline{\phi}\|^2 + \|A^{r} e^{\tau A} \phi\|^2 + \|A^{r} e^{\tau A} \phi\|^2 \Big)\nonumber \\
&&\hskip-.18in
+ \frac{C_r}{|\Omega|}\Big( |\dot{\tau}|^2+ \|A^{r+2} e^{\tau A} \overline{V}\|^4 + \|A^{r+2} e^{\tau A} U_+\|^4 + \|A^{r+2} e^{\tau A} U_-\|^4 + 1 \Big). \label{estimate-R}
\end{eqnarray} 
For $\partial_t N$, since $\overline{\phi}(0) = \phi_+(0) = \phi_-(0) = 0$, using Lemma \ref{lemma-banach-algebra}, since $\overline{V}$ and $U_\pm$ have zero mean value in $\mathbb{T}^3$, by Young's inequality, we have
\begin{eqnarray}\label{N}
&&\hskip-.78in
|\int_0^t \partial_s N(s)ds| = |N(t)| \leq \frac{C_r}{|\Omega|}\Big( \|A^{r+1} e^{\tau A} \overline{V} \|^2 + \|A^{r+1} e^{\tau A} U_+\|^2 + \|A^{r+1} e^{\tau A} U_- \|^2 \Big) \nonumber\\
&&\hskip.95in
\times \Big(\|A^r e^{\tau A} \overline{\phi} \| + \|A^r e^{\tau A} \phi_+ \|+ \|A^r e^{\tau A} \phi_+ \| \Big).
\end{eqnarray}
\subsubsection{Estimates of Type 4 terms}
The difficulties are on the estimate of type 4 nonlinear terms (23 terms). Thanks to Lemma \ref{lemma-non-small}, since $\nabla\cdot \overline{V}=0$, we have
\begin{eqnarray}
&&\hskip-.58in
\Big|\Big\langle A^r e^{\tau A} (\overline{V}\cdot \nabla \overline{\phi}), A^r e^{\tau A} \overline{\phi} \Big\rangle \Big| \leq C_r \|A^{r} e^{\tau A} \overline{V}\|\|A^{r} e^{\tau A} \overline{\phi}\|^2 + C_r \tau \|A^{r+\frac{1}{2}} e^{\tau A} \overline{V}\| \|A^{r+\frac{1}{2}} e^{\tau A} \overline{\phi}\|^2.
\end{eqnarray}
Thanks to Lemma \ref{lemma-difference-type1}, by integration by parts, we have
\begin{eqnarray}
&&\hskip-.58in
\Big|\Big\langle A^r e^{\tau A} (\overline{V}\cdot \nabla \phi_+), A^r e^{\tau A} \phi_- \Big\rangle + \Big\langle A^r e^{\tau A} (\overline{V}\cdot \nabla \phi_-), A^r e^{\tau A} \phi_+ \Big\rangle\Big| \nonumber \\
&&\hskip-.68in
 \leq\Big|\Big\langle A^r e^{\tau A} (\overline{V}\cdot \nabla \phi_+), A^r e^{\tau A} \phi_- \Big\rangle - \Big\langle  \overline{V}\cdot \nabla A^r e^{\tau A} \phi_+, A^r e^{\tau A} \phi_- \Big\rangle\Big| \nonumber \\
 &&\hskip-.58in
 + \Big|\Big\langle A^r e^{\tau A} (\overline{V}\cdot \nabla \phi_-), A^r e^{\tau A} \phi_+ \Big\rangle - \Big\langle  \overline{V}\cdot \nabla A^r e^{\tau A} \phi_-, A^r e^{\tau A} \phi_+ \Big\rangle \Big|\nonumber \\
 &&\hskip-.58in
 + \Big| \Big\langle  \overline{V}\cdot \nabla A^r e^{\tau A} \phi_+, A^r e^{\tau A} \phi_- \Big\rangle + \Big\langle  \overline{V}\cdot \nabla A^r e^{\tau A} \phi_-, A^r e^{\tau A} \phi_+ \Big\rangle\Big|\nonumber \\
 &&\hskip-.68in
 \leq C_r \|A^{r} e^{\tau A} \overline{V}\| (\|A^{r} e^{\tau A} \phi_+\|^2 + \|A^{r} e^{\tau A} \phi_-\|^2)\nonumber \\
  &&\hskip-.48in
  + C_r\tau \|A^{r+\frac{1}{2}} e^{\tau A} \overline{V}\| (\|A^{r+\frac{1}{2}} e^{\tau A} \phi_+\|^2 + \|A^{r+\frac{1}{2}} e^{\tau A} \phi_-\|^2),
\end{eqnarray}
where we have used
$
\Big| \Big\langle  \overline{V}\cdot \nabla A^r e^{\tau A} \phi_+, A^r e^{\tau A} \phi_- \Big\rangle + \Big\langle  \overline{V}\cdot \nabla A^r e^{\tau A} \phi_-, A^r e^{\tau A} \phi_+ \Big\rangle \Big| = 0
$
by integration by parts and $\nabla\cdot \overline{V}=0$. 
Thanks to Lemma \ref{lemma-difference-type1} and Lemma \ref{lemma-difference-type3}, since $r>\frac{5}{2}$, by integration by parts and by the Sobolev inequality, we have
\begin{eqnarray}
&&\hskip-.68in
\Big|e^{i\Omega t}\Big\langle A^r e^{\tau A} (U_+\cdot \nabla \phi_+), A^r e^{\tau A} \phi_- \Big\rangle + e^{i\Omega t} \Big\langle A^r e^{\tau A} (U_+\cdot \nabla \phi_-), A^r e^{\tau A} \phi_+ \Big\rangle \nonumber \\
&&\hskip-.58in
 - e^{i\Omega t}\Big\langle A^r e^{\tau A} \Big((\int_0^z \nabla\cdot U_+(\boldsymbol{x}',s)ds) \partial_z \phi_+\Big), A^r e^{\tau A} \phi_- \Big\rangle  \nonumber \\
&&\hskip-.58in
 - e^{i\Omega t} \Big\langle A^r e^{\tau A} \Big((\int_0^z \nabla\cdot U_+(\boldsymbol{x}',s)ds) \partial_z \phi_-\Big), A^r e^{\tau A} \phi_+ \Big\rangle\Big| \nonumber \\
&&\hskip-.88in
\leq \Big|\Big\langle A^r e^{\tau A} (U_+\cdot \nabla \phi_+), A^r e^{\tau A} \phi_- \Big\rangle - \Big\langle U_+\cdot \nabla A^r e^{\tau A} \phi_+, A^r e^{\tau A} \phi_- \Big\rangle\Big|\nonumber\\
&&\hskip-.68in
+ \Big| \Big\langle A^r e^{\tau A} (U_+\cdot \nabla \phi_-), A^r e^{\tau A} \phi_+ \Big\rangle - \Big\langle U_+\cdot \nabla  A^r e^{\tau A}  \phi_-, A^r e^{\tau A} \phi_+ \Big\rangle\Big|\nonumber\\
&&\hskip-.68in
+ \Big| \Big\langle A^r e^{\tau A} \Big((\int_0^z \nabla\cdot U_+(\boldsymbol{x}',s)ds) \partial_z \phi_+\Big), A^r e^{\tau A} \phi_- \Big\rangle  \nonumber \\
&&\hskip-.38in
- \Big\langle  (\int_0^z \nabla\cdot U_+(\boldsymbol{x}',s)ds)  A^r e^{\tau A} \partial_z \phi_+, A^r e^{\tau A} \phi_- \Big\rangle  \Big| \nonumber \\
&&\hskip-.68in
+ \Big| \Big\langle A^r e^{\tau A} \Big((\int_0^z \nabla\cdot U_+(\boldsymbol{x}',s)ds) \partial_z \phi_-\Big), A^r e^{\tau A} \phi_+ \Big\rangle  \nonumber \\
&&\hskip-.38in
- \Big\langle  (\int_0^z \nabla\cdot U_+(\boldsymbol{x}',s)ds)  A^r e^{\tau A} \partial_z \phi_-, A^r e^{\tau A} \phi_+ \Big\rangle  \Big| \nonumber \\
&&\hskip-.68in
+ \Big|\Big\langle U_+\cdot \nabla A^r e^{\tau A} \phi_+, A^r e^{\tau A} \phi_- \Big\rangle + \Big\langle U_+\cdot \nabla  A^r e^{\tau A}  \phi_-, A^r e^{\tau A} \phi_+ \Big\rangle \nonumber\\
&&\hskip-.48in
- \Big\langle  (\int_0^z \nabla\cdot U_+(\boldsymbol{x}',s)ds)  A^r e^{\tau A} \partial_z \phi_+, A^r e^{\tau A} \phi_- \Big\rangle \nonumber\\
&&\hskip-.48in
-\Big\langle  (\int_0^z \nabla\cdot U_+(\boldsymbol{x}',s)ds)  A^r e^{\tau A} \partial_z \phi_-, A^r e^{\tau A} \phi_+ \Big\rangle\Big| \nonumber\\
&&\hskip-.88in
\leq C_r \|A^{r+1} e^{\tau A} U_+\| (\|A^{r} e^{\tau A} \phi_+\|^2 + \|A^{r} e^{\tau A} \phi_-\|^2)\nonumber \\
&&\hskip-.48in
+ C_r\tau \|A^{r+\frac{3}{2}} e^{\tau A} U_+\| (\|A^{r+\frac{1}{2}} e^{\tau A} \phi_+\|^2 + \|A^{r+\frac{1}{2}} e^{\tau A} \phi_-\|^2),
\end{eqnarray}
where we have used
\begin{eqnarray*}
&&\hskip-.68in
\Big|\Big\langle U_+\cdot \nabla A^r e^{\tau A} \phi_+, A^r e^{\tau A} \phi_- \Big\rangle + \Big\langle U_+\cdot \nabla  A^r e^{\tau A}  \phi_-, A^r e^{\tau A} \phi_+ \Big\rangle \nonumber\\
&&\hskip-.68in
- \Big\langle  (\int_0^z \nabla\cdot U_+(\boldsymbol{x}',s)ds)  A^r e^{\tau A} \partial_z \phi_+, A^r e^{\tau A} \phi_- \Big\rangle \nonumber\\
&&\hskip-.68in
-\Big\langle  (\int_0^z \nabla\cdot U_+(\boldsymbol{x}',s)ds)  A^r e^{\tau A} \partial_z \phi_-, A^r e^{\tau A} \phi_+ \Big\rangle\Big| =0
\end{eqnarray*}
by integration by parts. Similarly, we have
\begin{eqnarray}
&&\hskip-.68in
\Big|e^{-i\Omega t}\Big\langle A^r e^{\tau A} (U_-\cdot \nabla \phi_+), A^r e^{\tau A} \phi_- \Big\rangle + e^{-i\Omega t} \Big\langle A^r e^{\tau A} (U_-\cdot \nabla \phi_-), A^r e^{\tau A} \phi_+ \Big\rangle \nonumber \\
&&\hskip-.58in
- e^{-i\Omega t}\Big\langle A^r e^{\tau A} \Big((\int_0^z \nabla\cdot U_-(\boldsymbol{x}',s)ds) \partial_z \phi_+\Big), A^r e^{\tau A} \phi_- \Big\rangle  \nonumber \\
&&\hskip-.58in
 - e^{-i\Omega t} \Big\langle A^r e^{\tau A} \Big((\int_0^z \nabla\cdot U_-(\boldsymbol{x}',s)ds) \partial_z \phi_-\Big), A^r e^{\tau A} \phi_+ \Big\rangle \Big|
\nonumber \\
&&\hskip-.88in
\leq C_r \|A^{r+1} e^{\tau A} U_-\| (\|A^{r} e^{\tau A} \phi_+\|^2 + \|A^{r} e^{\tau A} \phi_-\|^2)\nonumber \\
&&\hskip-.48in
+ C_r\tau \|A^{r+\frac{3}{2}} e^{\tau A} U_-\| (\|A^{r+\frac{1}{2}} e^{\tau A} \phi_+\|^2 + \|A^{r+\frac{1}{2}} e^{\tau A} \phi_-\|^2).
\end{eqnarray}
Next, since $-iU_+ = U_+^\perp$, we have
\begin{eqnarray*}
&&\hskip-.58in
\Big|\Big\langle U_+ \cdot \nabla A^r e^{\tau A}\phi_+ , A^r e^{\tau A}\overline{\phi} \Big\rangle + \Big\langle(\nabla\cdot A^r e^{\tau A} \phi_+)U_+ , A^r e^{\tau A}\overline{\phi} \Big\rangle \nonumber \\
&&\hskip-.38in
+ \Big\langle U_+  \cdot \nabla A^r e^{\tau A} (\overline{\phi} - i \overline{\phi}^\perp ), A^r e^{\tau A} \phi_+\Big\rangle \Big|\nonumber \\
&&\hskip-.58in
\leq \Big|\Big\langle U_+ \cdot \nabla A^r e^{\tau A}\phi_+ , A^r e^{\tau A}\overline{\phi} \Big\rangle +  \Big\langle U_+  \cdot \nabla A^r e^{\tau A} \overline{\phi} , A^r e^{\tau A} \phi_+\Big\rangle  \Big|\nonumber \\
&&\hskip-.38in
+ \Big| \Big\langle(\nabla\cdot A^r e^{\tau A} \phi_+)U_+ , A^r e^{\tau A}\overline{\phi} \Big\rangle  +  \Big\langle U_+^\perp  \cdot \nabla A^r e^{\tau A} \overline{\phi}^\perp , A^r e^{\tau A} \phi_+\Big\rangle\Big| \nonumber\\
&&\hskip-.58in
\leq \Big|\Big\langle(\nabla \cdot U_+) A^r e^{\tau A}\phi_+ , A^r e^{\tau A}\overline{\phi} \Big\rangle  \Big| + \Big| \Big\langle A^r e^{\tau A} \phi_+ \cdot \nabla U_+, A^r e^{\tau A} \overline{\phi}\Big\rangle \Big| \nonumber \\
&&\hskip-.38in
+ \Big| \Big\langle U_+^\perp  \cdot \nabla A^r e^{\tau A} \overline{\phi}^\perp , A^r e^{\tau A} \phi_+\Big\rangle - \Big\langle A^r e^{\tau A} \phi_+ \cdot \nabla A^r e^{\tau A} \overline{\phi} , U_+\Big\rangle \Big|.
\end{eqnarray*}
Notice that 
\begin{eqnarray*}
&&\hskip-.58in
\Big| \Big\langle U_+^\perp  \cdot \nabla A^r e^{\tau A} \overline{\phi}^\perp , A^r e^{\tau A} \phi_+\Big\rangle - \Big\langle A^r e^{\tau A} \phi_+ \cdot \nabla A^r e^{\tau A} \overline{\phi} , U_+\Big\rangle \Big| \nonumber \\
&&\hskip-.73in 
= \Big| \Big\langle(\nabla\cdot A^r e^{\tau A}\overline{\phi} ) U_+, A^r e^{\tau A} \phi_+\Big\rangle \Big| = 0.
\end{eqnarray*}
Therefore, by the Sobolev inequality and the H\"older inequality, since $r>\frac{5}{2}$, we have
\begin{eqnarray*}
&&\hskip-.58in
\Big|\Big\langle U_+ \cdot \nabla A^r e^{\tau A}\phi_+ , A^r e^{\tau A}\overline{\phi} \Big\rangle + \Big\langle(\nabla\cdot A^r e^{\tau A} \phi_+)U_+ , A^r e^{\tau A}\overline{\phi} \Big\rangle \nonumber \\
&&\hskip-.38in
+ \Big\langle U_+  \cdot \nabla A^r e^{\tau A} (\overline{\phi} - i \overline{\phi}^\perp ), A^r e^{\tau A} \phi_+\Big\rangle \Big|\nonumber \\
&&\hskip-.58in
\leq  C_r \|\nabla U_+\|_{L^\infty}( \frac{1}{2} \|A^r e^{\tau A}\overline{\phi}\|^2 + \|A^r e^{\tau A}\phi_+\|^2) 
\leq C_r \|A^{r} e^{\tau A} U_+\| ( \frac{1}{2} \|A^r e^{\tau A}\overline{\phi}\|^2 + \|A^r e^{\tau A}\phi_+\|^2).
\end{eqnarray*}
Based on this, thanks to Lemma \ref{lemma-difference-type1} and Lemma \ref{lemma-difference-type2}, we have
\begin{eqnarray}
&&\hskip-.58in
\Big|e^{2i\Omega t}\Big\langle A^r e^{\tau A} \Big(U_+ \cdot \nabla \phi_+ + (\nabla\cdot \phi_+) U_+ \Big) , A^r e^{\tau A}\overline{\phi} \Big\rangle  \nonumber \\
&&\hskip-.38in
+ e^{2i\Omega t} \Big\langle A^r e^{\tau A} \Big(U_+ \cdot \nabla (\overline{\phi} - i\overline{\phi}^\perp ) \Big) , A^r e^{\tau A}\phi_+ \Big\rangle \Big| \nonumber\\
&&\hskip-.78in
\leq \Big| \Big\langle A^r e^{\tau A} \Big(U_+ \cdot \nabla \phi_+  \Big), A^r e^{\tau A}\overline{\phi}\Big\rangle - \Big\langle  U_+ \cdot \nabla A^r e^{\tau A} \phi_+ , A^r e^{\tau A}\overline{\phi}\Big\rangle  \Big|\nonumber \\
&&\hskip-.38in 
+ \Big| \Big\langle A^r e^{\tau A} \Big((\nabla \cdot \phi_+) U_+\Big), A^r e^{\tau A}\overline{\phi}\Big\rangle - \Big\langle  (\nabla \cdot A^r e^{\tau A} \phi_+) U_+ , A^r e^{\tau A}\overline{\phi}\Big\rangle  \Big|\nonumber \\
&&\hskip-.38in 
+\Big| \Big\langle A^r e^{\tau A} \Big(U_+ \cdot \nabla \overline{\phi}  \Big), A^r e^{\tau A} \phi_+ \Big\rangle - \Big\langle  U_+ \cdot \nabla A^r e^{\tau A} \overline{\phi} , A^r e^{\tau A}\phi_+ \Big\rangle  \Big|\nonumber \\
&&\hskip-.38in 
+\Big| \Big\langle A^r e^{\tau A} \Big(U_+ \cdot \nabla \overline{\phi}^\perp  \Big), A^r e^{\tau A} \phi_+ \Big\rangle - \Big\langle  U_+ \cdot \nabla A^r e^{\tau A} \overline{\phi}^\perp  , A^r e^{\tau A}\phi_+  \Big\rangle \Big|\nonumber \\
&&\hskip-.38in 
+ \Big|\Big\langle U_+ \cdot \nabla A^r e^{\tau A}\phi_+ , A^r e^{\tau A}\overline{\phi} \Big\rangle + \Big\langle(\nabla\cdot A^r e^{\tau A} \phi_+)U_+ , A^r e^{\tau A}\overline{\phi} \Big\rangle \nonumber \\
&&\hskip-.18in
+ \Big\langle U_+  \cdot \nabla A^r e^{\tau A} (\overline{\phi} - i \overline{\phi}^\perp ), A^r e^{\tau A} \phi_+\Big\rangle \Big|\nonumber \\
&&\hskip-.78in
\leq C_r \|A^{r} e^{\tau A} U_+\| (\frac{1}{2}\|A^r e^{\tau A}\overline{\phi}\|^2 + \|A^r e^{\tau A}\phi_+\|^2) \nonumber \\
&&\hskip-.58in
+ C_r \tau \|A^{r+\frac{1}{2}} e^{\tau A} U_+\| (\|A^{r+\frac{1}{2}}  e^{\tau A}\overline{\phi}\|^2 + \|A^{r+\frac{1}{2}}  e^{\tau A}\phi_+\|^2).
\end{eqnarray}
Similarly, we have
\begin{eqnarray}
&&\hskip-.58in
\Big|e^{-2i\Omega t}\Big\langle A^r e^{\tau A} \Big(U_- \cdot \nabla \phi_- + (\nabla\cdot \phi_-) U_- \Big) , A^r e^{\tau A}\overline{\phi} \Big\rangle  \nonumber \\
&&\hskip-.38in
+ e^{-2i\Omega t} \Big\langle A^r e^{\tau A} \Big(U_- \cdot \nabla (\overline{\phi} + i\overline{\phi}^\perp ) \Big) , A^r e^{\tau A}\phi_- \Big\rangle \Big| \nonumber\\
&&\hskip-.78in
\leq C_r \|A^{r} e^{\tau A} U_-\| (\frac{1}{2} \|A^r e^{\tau A}\overline{\phi}\|^2 + \|A^r e^{\tau A}\phi_-\|^2) \nonumber \\
&&\hskip-.58in
+ C_r \tau \|A^{r+\frac{1}{2}} e^{\tau A} U_-\| (\|A^{r+\frac{1}{2}}  e^{\tau A}\overline{\phi}\|^2 + \|A^{r+\frac{1}{2}}  e^{\tau A}\phi_-\|^2).
\end{eqnarray}

For the rest parts in type 4, there is no cancellation as above. First, by the H\"older inequality, we have
\begin{eqnarray*}
&&\hskip-.58in
    \Big|\Big\langle  U_+\cdot \nabla A^r e^{\tau A} (\overline{\phi}+i\overline{\phi}^\perp)), A^r e^{\tau A} \phi_- \Big\rangle  \Big| \nonumber\\
&&\hskip-.68in
\leq \Big|\Big\langle  A^{\frac{1}{2}} U_+\cdot \nabla A^{r-\frac{1}{2}} e^{\tau A} (\overline{\phi}+i\overline{\phi}^\perp)), A^r e^{\tau A} \phi_- \Big\rangle  \Big| \nonumber\\
&&\hskip-.58in
+ \Big|\Big\langle   U_+\cdot \nabla A^{r-\frac{1}{2}} e^{\tau A} (\overline{\phi}+i\overline{\phi}^\perp)), A^{r+\frac{1}{2}} e^{\tau A} \phi_- \Big\rangle  \Big|\nonumber\\
&&\hskip-.68in
\leq C_r (\|U_+\|_{L^\infty} + \|A^{\frac{1}{2}} U_+\|_{L^\infty})(\|A^{r+\frac{1}{2}} e^{\tau A} \overline{\phi}\|^2 + \|A^{r+\frac{1}{2}} e^{\tau A} \phi_-\|^2).
\end{eqnarray*}
Based on this, using Lemma \ref{lemma-difference-type1} to \ref{lemma-difference-type4}, we have
\begin{eqnarray}\label{difficulty-1}
&&\hskip-.58in
\Big|\Big\langle A^r e^{\tau A} (U_+\cdot \nabla (\overline{\phi}+i\overline{\phi}^\perp)), A^r e^{\tau A} \phi_- \Big\rangle\Big| \nonumber\\
&&\hskip-.68in
\leq \Big|\Big\langle A^r e^{\tau A} (U_+\cdot \nabla (\overline{\phi}+i\overline{\phi}^\perp)), A^r e^{\tau A} \phi_- \Big\rangle - \Big\langle  U_+\cdot \nabla A^r e^{\tau A} (\overline{\phi}+i\overline{\phi}^\perp)), A^r e^{\tau A} \phi_- \Big\rangle \Big| \nonumber\\
&&\hskip-.58in
+ \Big|\Big\langle  U_+\cdot \nabla A^r e^{\tau A} (\overline{\phi}+i\overline{\phi}^\perp)), A^r e^{\tau A} \phi_- \Big\rangle  \Big|\nonumber\\
&&\hskip-.68in
\leq C_r\|A^r e^{\tau A} U_+\|(\frac{1}{2} \|A^r e^{\tau A} \overline{\phi}\|^2 + \|A^r e^{\tau A} \phi_-\|^2) \nonumber\\
&&\hskip-.58in
+ C_r(\tau \|A^{r+\frac{1}{2}} e^{\tau A} U_+\| + \|U_+\|_{L^\infty}+ \|A^{\frac{1}{2}} U_+\|_{L^\infty})(\|A^{r+\frac{1}{2}} e^{\tau A} \overline{\phi}\|^2 + \|A^{r+\frac{1}{2}} e^{\tau A} \phi_-\|^2).
\end{eqnarray}
Similarly,
\begin{eqnarray}\label{difficulty-2}
&&\hskip-.58in
\Big|\Big\langle A^r e^{\tau A} (U_-\cdot \nabla (\overline{\phi}-i\overline{\phi}^\perp)), A^r e^{\tau A} \phi_+ \Big\rangle\Big| \nonumber\\
&&\hskip-.68in
\leq C_r\|A^r e^{\tau A} U_-\|(\|A^r e^{\tau A} \overline{\phi}\|^2 + \|A^r e^{\tau A} \phi_+\|^2) \nonumber\\
&&\hskip-.58in
+ C_r(\tau \|A^{r+\frac{1}{2}} e^{\tau A} U_-\| + \| U_-\|_{L^\infty}+\|A^{\frac{1}{2}} U_-\|_{L^\infty})(\|A^{r+\frac{1}{2}} e^{\tau A} \overline{\phi}\|^2 + \|A^{r+\frac{1}{2}} e^{\tau A} \phi_+\|^2).
\end{eqnarray}

Next, by the H\"older inequality, we have
\begin{eqnarray*}
&&\hskip-.58in
\Big| \Big\langle (\partial_z U_+)  A^r e^{\tau A}  (\int_0^z \nabla \cdot \phi_+(\boldsymbol{x}',s)ds)  , A^r e^{\tau A}\phi_- \Big\rangle \Big| \nonumber \\
&&\hskip-.68in 
\leq \Big| \Big\langle (A^{\frac{1}{2}}\partial_z U_+)  A^{r-\frac{1}{2}} e^{\tau A}  (\int_0^z \nabla \cdot \phi_+(\boldsymbol{x}',s)ds)  , A^r e^{\tau A}\phi_- \Big\rangle \Big| \nonumber \\
&&\hskip-.48in 
+ \Big| \Big\langle (\partial_z U_+)  A^{r-\frac{1}{2}} e^{\tau A}  (\int_0^z \nabla \cdot \phi_+(\boldsymbol{x}',s)ds)  , A^{r+\frac{1}{2}} e^{\tau A}\phi_- \Big\rangle \Big| \nonumber \\
&&\hskip-.68in 
\leq C_r(\|\partial_z U_+\|_{L^\infty}+\|A^{\frac{1}{2}} \partial_z U_+\|_{L^\infty}) (\|A^{r+\frac{1}{2}} e^{\tau A} \phi_+\|^2 + \|A^{r+\frac{1}{2}} e^{\tau A} \phi_-\|^2).
\end{eqnarray*}
Based on this, thanks to Lemma \ref{lemma-difference-type1} to \ref{lemma-difference-type4}, we obtain
\begin{eqnarray}
&&\hskip-.58in
 \Big| \Big\langle A^r e^{\tau A} \Big( (\int_0^z \nabla \cdot \phi_+(\boldsymbol{x}',s)ds) \partial_z U_+ \Big) , A^r e^{\tau A}\phi_- \Big\rangle \Big|\nonumber \\
 &&\hskip-.68in
 \leq \Big| \Big\langle A^r e^{\tau A} \Big( (\int_0^z \nabla \cdot \phi_+(\boldsymbol{x}',s)ds) \partial_z U_+ \Big) , A^r e^{\tau A}\phi_- \Big\rangle \nonumber \\ 
 &&\hskip-.48in
 - \Big\langle (\partial_z U_+)  A^r e^{\tau A}  (\int_0^z \nabla \cdot \phi_+(\boldsymbol{x}',s)ds)  , A^r e^{\tau A}\phi_- \Big\rangle \Big|\nonumber\\
  &&\hskip-.48in
  + \Big| \Big\langle (\partial_z U_+)  A^r e^{\tau A}  (\int_0^z \nabla \cdot \phi_+(\boldsymbol{x}',s)ds)  , A^r e^{\tau A}\phi_- \Big\rangle \Big| \nonumber\\
 &&\hskip-.68in
 \leq C_r\|A^{r+1} e^{\tau A} U_+\|(\|A^r e^{\tau A} \phi_+\|^2 + \|A^r e^{\tau A} \phi_-\|^2) \nonumber\\
&&\hskip-.58in
+ C_r(\tau \|A^{r+\frac{3}{2}} e^{\tau A} U_+\| + \| \partial_z U_+\|_{L^\infty} + \|A^{\frac{1}{2}} \partial_z U_+\|_{L^\infty} )\nonumber\\
&&\hskip-.28in
\times (\|A^{r+\frac{1}{2}} e^{\tau A} \phi_+\|^2 + \|A^{r+\frac{1}{2}} e^{\tau A} \phi_-\|^2).\label{difficulty-3}
\end{eqnarray}
Similarly, we have
\begin{eqnarray}\label{difficulty-4}
&&\hskip-.68in
\Big| \Big\langle A^r e^{\tau A} \Big( (\int_0^z \nabla \cdot \phi_+(\boldsymbol{x}',s)ds) \partial_z U_- \Big) , A^r e^{\tau A}\phi_+ \Big\rangle \Big| \nonumber \\
&&\hskip-.58in
+ \Big| \Big\langle A^r e^{\tau A} \Big( (\int_0^z \nabla \cdot \phi_-(\boldsymbol{x}',s)ds) \partial_z U_+ \Big) , A^r e^{\tau A}\phi_- \Big\rangle \Big| \nonumber \\
&&\hskip-.58in
+ \Big| \Big\langle A^r e^{\tau A} \Big( (\int_0^z \nabla \cdot \phi_-(\boldsymbol{x}',s)ds) \partial_z U_- \Big) , A^r e^{\tau A}\phi_+ \Big\rangle \Big| \nonumber\\
&&\hskip-.78in
\leq C_r(\|A^{r+1} e^{\tau A} U_+\|+ \|A^{r+1} e^{\tau A} U_-\|)(\|A^r e^{\tau A} \phi_+\|^2 + \|A^r e^{\tau A} \phi_-\|^2) \nonumber\\
&&\hskip-.58in
+ C_r\Big(\tau \|A^{r+\frac{3}{2}} e^{\tau A} U_+\| + \tau \|A^{r+\frac{3}{2}} e^{\tau A} U_-\| + \| \partial_z U_+\|_{L^\infty} + \| \partial_z U_-\|_{L^\infty} \nonumber\\
&&\hskip-.28in
+ \|A^{\frac{1}{2}} \partial_z U_+\|_{L^\infty} + \|A^{\frac{1}{2}} \partial_z U_-\|_{L^\infty}\Big)
\Big(\|A^{r+\frac{1}{2}} e^{\tau A} \phi_+\|^2 + \|A^{r+\frac{1}{2}} e^{\tau A} \phi_-\|^2\Big).
\end{eqnarray}
\subsubsection{Finishing of the proof to Theorem \ref{theorem-main}}
Finally, taking summation of (\ref{energy-difference-phibar}) and (\ref{energy-difference-phipm}), and using estimates (\ref{estimate-general-terms})--(\ref{difficulty-4}) for all the nonlinear terms (71 terms), we obtain
\begin{eqnarray}\label{main-estimate-1}
&&\hskip-.58in  \frac{d}{dt} \Big(\frac{1}{2}\|A^r e^{\tau A} \overline{\phi} \|^2 + \|A^r e^{\tau A} \phi_+ \|^2 + \|A^r e^{\tau A} \phi_- \|^2 \Big) \nonumber \\
&&\hskip-.68in
\leq  \Big[\dot{\tau} + C_r\big( \|A^r e^{\tau A} \overline{\phi} \| + \|A^r e^{\tau A} \phi_+ \|+ \|A^r e^{\tau A} \phi_+ \| \big)  \nonumber\\
&&\hskip-.25in
+ C_r\tau \big( \|A^{r+\frac{1}{2}} e^{\tau A} \overline{V} \| + \|A^{r+\frac{3}{2}} e^{\tau A} U_+\| + \|A^{r+\frac{3}{2}} e^{\tau A} U_- \| \big)  \nonumber \\
&&\hskip-.25in
+ C_r\big(\| U_+\|_{L^\infty} + \|  U_-\|_{L^\infty} + \| \partial_z U_+\|_{L^\infty} + \| \partial_z U_-\|_{L^\infty}  \nonumber \\
&&\hskip.05in
+\|A^{\frac{1}{2}} U_+\|_{L^\infty} + \|A^{\frac{1}{2}}  U_-\|_{L^\infty} + \|A^{\frac{1}{2}} \partial_z U_+\|_{L^\infty} + \|A^{\frac{1}{2}} \partial_z U_-\|_{L^\infty} \big)\Big]
\nonumber \\
&&\hskip-.35in
\times \Big( \|A^{r+\frac{1}{2}} e^{\tau A} \overline{\phi} \|^2 + 2\|A^{r+\frac{1}{2}} e^{\tau A} \phi_+ \|^2 + 2\|A^{r+\frac{1}{2}} e^{\tau A} \phi_- \|^2\Big) \nonumber\\
&&\hskip-.58in
+ C_r \Big( \|A^{r+2} e^{\tau A} \overline{V}\|^4 + \|A^{r+2} e^{\tau A} U_+\|^4 + \|A^{r+2} e^{\tau A} U_-\|^4 + 1 \Big) \nonumber \\
&&\hskip-.05in
\times \Big(\frac{1}{2}\|A^r e^{\tau A} \overline{\phi} \|^2 + \|A^r e^{\tau A} \phi_+ \|^2 + \|A^r e^{\tau A} \phi_- \|^2 \Big) \nonumber\\
&&\hskip-.58in + \frac{C_r}{|\Omega|}\Big( |\dot{\tau}|^2 + \|A^{r+2} e^{\tau A} \overline{V}\|^4 + \|A^{r+2} e^{\tau A} U_+\|^4 + \|A^{r+2} e^{\tau A} U_-\|^4 + 1 \Big) + \partial_t N. 
\end{eqnarray}
Observe that eventually we will set 
\begin{eqnarray*}
&&\hskip-.58in
\dot{\tau} + C_r\big( \|A^r e^{\tau A} \overline{\phi} \| + \|A^r e^{\tau A} \phi_+ \|+ \|A^r e^{\tau A} \phi_+ \| \big)  \nonumber\\
&&\hskip-.35in
+ C_r\tau \big( \|A^{r+\frac{1}{2}} e^{\tau A} \overline{V} \| + \|A^{r+\frac{3}{2}} e^{\tau A} U_+\| + \|A^{r+\frac{3}{2}} e^{\tau A} U_- \| \big) \nonumber\\
&&\hskip-.35in
+ C_r\big(\| U_+\|_{L^\infty} + \|  U_-\|_{L^\infty} + \| \partial_z U_+\|_{L^\infty} + \| \partial_z U_-\|_{L^\infty}  \nonumber \\
&&\hskip-.05in
+\|A^{\frac{1}{2}} U_+\|_{L^\infty} + \|A^{\frac{1}{2}}  U_-\|_{L^\infty} + \|A^{\frac{1}{2}} \partial_z U_+\|_{L^\infty} + \|A^{\frac{1}{2}} \partial_z U_-\|_{L^\infty} \big) = 0.
\end{eqnarray*}
Therefore, by the Sobolev inequality, the Poincar\'e inequality, and Young's inequality, since $r>\frac{5}{2}$, $\tau \leq \tau_0$, and $U_\pm$ have zero mean value, we have
\begin{eqnarray*}
&&\hskip-.58in
|\dot{\tau}|^2 \leq C_r\big( \|A^r e^{\tau A} \overline{\phi} \|^2 + \|A^r e^{\tau A} \phi_+ \|^2+ \|A^r e^{\tau A} \phi_+ \|^2 \big)  \nonumber\\
&&\hskip-.15in
+ C_r(\tau^2_0 + 1) \big( \|A^{r+\frac{1}{2}} e^{\tau A} \overline{V} \|^2 + \|A^{r+\frac{3}{2}} e^{\tau A} U_+\|^2 + \|A^{r+\frac{3}{2}} e^{\tau A} U_- \|^2 \big).
\end{eqnarray*}
By Young's inequality, the term $\frac{|\dot{\tau}|^2}{|\Omega|}$ can be combined with other terms, and we can rewrite (\ref{main-estimate-1}) as
\begin{eqnarray}\label{main-estimate-2}
&&\hskip-.58in  \frac{d}{dt} \Big(\frac{1}{2}\|A^r e^{\tau A} \overline{\phi} \|^2 + \|A^r e^{\tau A} \phi_+ \|^2 + \|A^r e^{\tau A} \phi_- \|^2 \Big) \nonumber \\
&&\hskip-.68in
\leq \Big[\dot{\tau} + C_r\big( \|A^r e^{\tau A} \overline{\phi} \| + \|A^r e^{\tau A} \phi_+ \|+ \|A^r e^{\tau A} \phi_+ \| \big)  \nonumber\\
&&\hskip-.35in
+ C_r\tau \big( \|A^{r+\frac{1}{2}} e^{\tau A} \overline{V} \| + \|A^{r+\frac{3}{2}} e^{\tau A} U_+\| + \|A^{r+\frac{3}{2}} e^{\tau A} U_- \| \big) \nonumber \\
&&\hskip-.35in
+ C_r\big(\| U_+\|_{L^\infty} + \|  U_-\|_{L^\infty} + \| \partial_z U_+\|_{L^\infty} + \| \partial_z U_-\|_{L^\infty}  \nonumber \\
&&\hskip-.05in
+\|A^{\frac{1}{2}} U_+\|_{L^\infty} + \|A^{\frac{1}{2}}  U_-\|_{L^\infty} + \|A^{\frac{1}{2}} \partial_z U_+\|_{L^\infty} + \|A^{\frac{1}{2}} \partial_z U_-\|_{L^\infty} \big)\Big]\nonumber\\ 
&&\hskip-.55in
\times \Big[ \|A^{r+\frac{1}{2}} e^{\tau A} \overline{\phi} \|^2 + 2\|A^{r+\frac{1}{2}} e^{\tau A} \phi_+ \|^2 + 2\|A^{r+\frac{1}{2}} e^{\tau A} \phi_- \|^2\Big] \nonumber\\
&&\hskip-.58in
+ C_r \Big( \|A^{r+2} e^{\tau A} \overline{V}\|^4 + \|A^{r+2} e^{\tau A} U_+\|^4 + \|A^{r+2} e^{\tau A} U_-\|^4 + 1 \Big) \nonumber \\
&&\hskip-.05in
\times \Big(\frac{1}{2}\|A^r e^{\tau A} \overline{\phi} \|^2 + \|A^r e^{\tau A} \phi_+ \|^2 + \|A^r e^{\tau A} \phi_- \|^2 \Big) \nonumber\\
&&\hskip-.58in + \frac{C_{r,\tau_0}}{|\Omega|}\Big( \|A^{r+2} e^{\tau A} \overline{V}\|^4 + \|A^{r+2} e^{\tau A} U_+\|^4 + \|A^{r+2} e^{\tau A} U_-\|^4 + 1 \Big) + \partial_t N.
\end{eqnarray}
Denote by
\begin{equation*}
    F := \frac{1}{2}\|A^r e^{\tau A} \overline{\phi} \|^2 + \|A^r e^{\tau A} \phi_+ \|^2 + \|A^r e^{\tau A} \phi_- \|^2 ,
\end{equation*}
\begin{equation*}
    G := \|A^{r+\frac{1}{2}} e^{\tau A} \overline{\phi} \|^2 + 2\|A^{r+\frac{1}{2}} e^{\tau A} \phi_+ \|^2 + 2\|A^{r+\frac{1}{2}} e^{\tau A} \phi_- \|^2, \text{ and }
\end{equation*}
\begin{equation*}
   K(t) := C_{M,\tau_0}^{\exp(C_r t)}, \qquad \widetilde{K}(t) : = e^{K(t)},
\end{equation*}
which are double exponential and triple exponential in time. We will follow the rule on the use of notation as indicated in Remark \ref{remark-kt}. From Proposition \ref{global-limit} and thanks to Lemma \ref{lemma-vtilde-upm}, one has
$\|e^{\tau(t) A} \overline{V}(t)
    \|_{H^{r+3}}  \leq K(t) \leq \widetilde{K}_1(t)$ and $e^{\tau(t) A} U_\pm(t)\|_{H^{r+2}} \leq   \widetilde{K}_1(t),$
provided that $\tau(t)$ satisfies (\ref{tau-limit-system}). Observe that in (\ref{main-estimate-2}), $\|U_\pm\|_{L^\infty}$, $\|A^{\frac{1}{2}} U_\pm\|_{L^\infty}$,  $\|\partial_z U_\pm\|_{L^\infty}$, and $\|A^{\frac{1}{2}}\partial_z U_\pm\|_{L^\infty}$ are the terms force the smallness assumption on Sobolev norm of the baroclinic mode. For $\delta >0$, by Proposition \ref{global-limit} and Lemma \ref{lemma-vtilde-upm}, thanks to the Sobolev inequality, when $\|\widetilde{V}_0\|_{H^{3+\delta}} = \|\widetilde{v}_0\|_{H^{3+\delta}}\leq \frac{1}{|\Omega_0|}$,
we have 
\begin{equation*}
    \|U_\pm\|_{L^\infty} + \|A^{\frac{1}{2}} U_\pm\|_{L^\infty}+ \|\partial_z U_\pm\|_{L^\infty} + \|A^{\frac{1}{2}}\partial_z U_\pm\|_{L^\infty} \leq C\| \widetilde{V}\|_{H^{3+\delta } }\leq \frac{C\widetilde{K}_1(t) }{|\Omega_0|}.
\end{equation*}
Since $|\Omega|\geq |\Omega_0|$, we can rewrite (\ref{main-estimate-2}) as 
\begin{eqnarray}\label{main-estimate-3}
&&\hskip-.58in
\frac{dF}{dt} \leq (\dot{\tau} +C_r F^{\frac{1}{2}} + \tau \widetilde{K}_2 + \frac{\widetilde{K}_2}{|\Omega_0|})G + \widetilde{K}_2 F + \frac{\widetilde{K}_2}{|\Omega_0|} +\partial_t N.
\end{eqnarray}
By setting $ \dot{\tau} +C_r F^{\frac{1}{2}} + \tau \widetilde{K}_2 + \frac{\widetilde{K}_2}{|\Omega_0|} = 0$, it follows that
$
\frac{dF}{dt} \leq \widetilde{K}_2 F + \frac{\widetilde{K}_2}{|\Omega_0|} + \partial_t N.
$
By the Gr\"onwall inequality,
\begin{eqnarray*}\label{main-estimate-3-2}
\frac{d}{dt}(Fe^{-\int_0^t \widetilde{K}_2(s)ds}) \leq \frac{\widetilde{K}_2}{|\Omega_0|} +(\partial_t N) e^{-\int_0^t \widetilde{K}_2(s)ds}.
\end{eqnarray*}
Integrating from $0$ to $t$, noticing that $F(0)=0$, one obtains that
\begin{eqnarray*}\label{main-estimate-3-3}
F(t) e^{-\int_0^t \widetilde{K}_2(s)ds} \leq \frac{1}{|\Omega_0|}\int_0^t \widetilde{K}_2(s)ds  + \int_0^t (\partial_s N(s)) e^{-\int_0^s \widetilde{K}_2(\xi)d\xi} ds.
\end{eqnarray*}
From (\ref{N}), we know $|N(t)| \leq \frac{1}{|\Omega_0|}\widetilde{K}_3(t)F^{\frac{1}{2}}$. Moreover, $\frac{1}{|\Omega_0|}\widetilde{K}_3(t)F^{\frac{1}{2}}$ is increasing in time. By integration by parts in time, thanks to the Cauchy–Schwarz inequality, since $N(0)=0$, we have
\begin{eqnarray*}
&&\hskip-.58in
    \int_0^t (\partial_s N(s)) e^{-\int_0^s \widetilde{K}_2(\xi)d\xi} ds \leq |N(t)| + \int_0^t |N(s)||\partial_s e^{-\int_0^s \widetilde{K}_2(\xi)d\xi}| ds \nonumber\\
&&\hskip-.58in
    \leq \frac{1}{|\Omega_0|} \widetilde{K}_3 F^{\frac{1}{2}} + \frac{t}{|\Omega_0|} \widetilde{K}_3 F^{\frac{1}{2}} \widetilde{K}_2  \leq  \frac{1}{|\Omega_0|}\widetilde{K}_4 + \frac{1}{|\Omega_0|}F.
\end{eqnarray*}
Thus, one gets
$
F(t) \leq \frac{1}{|\Omega_0|} e^{\widetilde{K}_5(t)} + \frac{1}{|\Omega_0|} e^{\widetilde{K}_5(t)}F(t),
$
which is equivalent to 
\begin{eqnarray}\label{main-estimate-3-5}
F(t) \leq \frac{ e^{\widetilde{K}_5(t)} }{|\Omega_0| -  e^{\widetilde{K}_5(t)}  }.
\end{eqnarray}
Plugging this back to $ \dot{\tau} +C_r F^{\frac{1}{2}} + \tau \widetilde{K}_2 + \frac{\widetilde{K}_2}{|\Omega_0|} = 0$, one can require that 
\begin{eqnarray*}\label{main-estimate-tau-1}
\dot{\tau} + \frac{ e^{\widetilde{K}_6(t)} }{\sqrt{|\Omega_0| -  e^{\widetilde{K}_6(t)}}  } + \tau \widetilde{K}_6 + \frac{1}{|\Omega_0|}\widetilde{K}_6 \leq 0.
\end{eqnarray*}
By the Gr\"onwall inequality, one can require
\begin{eqnarray*}\label{main-estimate-tau-2}
\frac{d}{dt}(\tau e^{\int_0^t \widetilde{K}_6(s)ds}) \leq \frac{ -e^{\widetilde{K}_7(t)} }{\sqrt{|\Omega_0| -  e^{\widetilde{K}_6(t)}  }}  - \frac{e^{\widetilde{K}_7(t)}}{|\Omega_0|}.
\end{eqnarray*}
Integrating from $0$ to $t$, for some suitable function $\widetilde{K}_0(t)$, one can require that
\begin{eqnarray}\label{main-estimate-tau-3}
\tau(t) = \Big(\tau_0 - \int_0^t\frac{ e^{\widetilde{K}_0(s)} }{\sqrt{|\Omega_0| -  e^{\widetilde{K}_0(s)}  } } ds- \int_0^t \frac{e^{\widetilde{K}_0(s)}}{|\Omega_0|} ds\Big) e^{-\int_0^t \widetilde{K}_0(s)ds}.
\end{eqnarray}
Notice that $\tau$ in (\ref{main-estimate-tau-3}) also satisfies (\ref{tau-limit-system}) when $\widetilde{K}_0(t)$ is chosen suitably. In order to have $\tau(t) > 0$,  we just need to require that
\begin{eqnarray}\label{main-estimate-tau-4}
\tau_0 \geq \frac{ 3e^{\widetilde{K}_8(t)} }{\sqrt{|\Omega_0| -  e^{\widetilde{K}_8(t)} } } \text{     and     } \tau_0 \geq  \frac{3e^{\widetilde{K}_8(t)}}{|\Omega_0|}
\end{eqnarray}
for some suitable function $\widetilde{K}_8(t) > \widetilde{K}_0(t)$. For some new $\widetilde{K}(t) > \widetilde{K}_8(t)$ and the given $\Omega_0$, let $\mathcal{T}$ satisfy
\begin{equation}\label{main-estimate-tau-5}
    C_{\tau_0} e^{\widetilde{K}(\mathcal{T})} = |\Omega_0|,
\end{equation} 
then the two conditions in (\ref{main-estimate-tau-4}) are satisfied on $t\in[0,\mathcal{T}]$. Thus, $\tau(t)>0$ on $t\in[0,\mathcal{T}]$. From (\ref{main-estimate-tau-5}), we know that $e^{\widetilde{K}(\mathcal{T})}\geq \frac{|\Omega_0|}{2C_{\tau_0}},$ and thus the time  $\mathcal{T}$ satisfies 
\begin{eqnarray}\label{main-time}
\mathcal{T} \gtrsim \ln(\ln(\ln(\ln|\Omega_0|)))\rightarrow \infty,
\end{eqnarray}
as $|\Omega_0|\rightarrow \infty$. 

When $\widetilde{K}(t)$ is chosen suitably, from (\ref{main-estimate-3-5}), we know that 
\begin{equation*}
    \|A^r e^{\tau(t) A} \overline{\phi}(t) \|^2 + \|A^r e^{\tau(t) A} \phi_+(t) \|^2 + \|A^r e^{\tau(t) A} \phi_-(t) \|^2 \leq \frac{ e^{\widetilde{K}(t)} }{|\Omega_0| -  e^{\widetilde{K}(t)}  } < \infty
\end{equation*}
for $t\in[0,\mathcal{T}]$. Since $\overline{\phi}$ and $\phi_\pm$ have zero mean value in $\mathbb{T}^3$, by the Poincar\'e inequality, the $L^2$ norm can be bounded by the higher order norm. Therefore, one has
\begin{equation}\label{convergence-estimate-from-main}
    \| e^{\tau(t) A} \overline{\phi}(t) \|_{H^r}^2 + \|e^{\tau(t) A} \phi_+(t) \|_{H^r}^2 + \| e^{\tau(t) A} \phi_-(t) \|_{H^r}^2  \leq \frac{ 2e^{\widetilde{K}(t)} }{|\Omega_0| -  e^{\widetilde{K}(t)}  } < \infty
\end{equation}
for $t\in[0,\mathcal{T}]$. Since $\tau(t)$ satisfies (\ref{tau-limit-system}), it follows that
\begin{equation*}
    \| e^{\tau(t) A} \overline{V}(t) \|_{H^r}^2 + \|e^{\tau(t) A} U_+(t) \|_{H^r}^2 + \| e^{\tau(t) A} U_-(t) \|_{H^r}^2 < \infty
\end{equation*}
for $t\in[0,\mathcal{T}]$. Since $\overline{v}=\overline{\phi}+ \overline{V}$ and $\widetilde{u}_\pm = \widetilde{\phi}_\pm+\widetilde{U}_\pm$, by triangle inequality, thanks to Lemma \ref{lemma-vtilde-upm}, we have
$
    \| e^{\tau(t) A} \overline{v}(t) \|_{H^r}^2 + \|e^{\tau(t) A} \widetilde{v}(t) \|_{H^r}^2 < \infty
$
for $t\in[0,\mathcal{T}]$. Therefore, we obtain $
    (\overline{v},\widetilde{v})\in L^\infty(0,\mathcal{T}; \mathcal{D}(e^{\tau(t) A}: H^{r}(\mathbb{T}^3)) ).$
This completes the proof of Theorem \ref{theorem-main}.

\subsection{Approximation by the limit resonant system}
As a consequence of the proof of Theorem \ref{theorem-main}, the following theorem describes the approximation of the solution to the original system (\ref{local-system-1})--(\ref{local-system-2}) by the solution to the limit resonant system (\ref{systemVbar-1})--(\ref{systemlimit-initial}) in the space of analytic functions, for large rotation rate $|\Omega|$ and small initial baroclinic mode in Sobolev norm.
\begin{theorem}
Suppose the conditions in Theorem \ref{theorem-main} hold, and let $(\overline{V}, \widetilde{V})$ be the solution to system (\ref{systemVbar-1})--(\ref{systemlimit-initial}) with initial data $(\overline{v}_0, \widetilde{v}_0)$. Denote by $\overline{\phi} = \overline{v}- \overline{V}$ and $\widetilde{\phi} = \widetilde{v}- \widetilde{V}$, then, for $|\Omega|\geq |\Omega_0|$, one has
\begin{eqnarray*}
\|e^{\tau(t) A} \overline{\phi}(t)\|_{H^r} + \| e^{\tau(t) A} \widetilde{\phi}(t) \|_{H^r} \lesssim \frac{e^{\widetilde{K}(t)}}{|\Omega_0| - e^{\widetilde{K}(t)}}, 
\end{eqnarray*}
for $t\in[0,\mathcal{T}]$ with $\mathcal{T}$ given by (\ref{T-longtime-type2}) and $\tau(t)$ given by \eqref{main-tau}.
\end{theorem}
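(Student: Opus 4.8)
The plan is to obtain the theorem as a corollary of the energy estimate established during the proof of Theorem~\ref{theorem-main}, so that no new analytic estimates are required: one only needs to translate back from the rotating variables $u_\pm,U_\pm$ to the baroclinic fields $\widetilde v,\widetilde V$. First I would fix the solution $(\overline v,\widetilde v)$ of (\ref{local-system-1})--(\ref{local-ic}) on $[0,\mathcal{T}]$ furnished by Theorem~\ref{theorem-main}, and the global solution $(\overline V,\widetilde V)$ of the limit resonant system (\ref{systemVbar-1})--(\ref{systemlimit-initial}) with the same initial data $(\overline v_0,\widetilde v_0)$; recall from section~5 that the latter is equivalently encoded by the triple $(\overline V,U_\pm)$ solving (\ref{uplimit})--(\ref{vbarlimit}), with $U_\pm=\tfrac12(\widetilde V\pm i\widetilde V^\perp)$. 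The radius $\tau(t)$ in (\ref{main-tau}) was constructed in the proof of Theorem~\ref{theorem-main} precisely so as to satisfy (\ref{tau-limit-system}); hence $\overline\phi=\overline v-\overline V$ and $\phi_\pm=u_\pm-U_\pm$ all belong to $L^\infty(0,\mathcal{T};\mathcal{D}(e^{\tau(t)A}:H^r(\mathbb{T}^3)))$, solve the perturbed system (\ref{difference-phibar})--(\ref{difference-phim}) with zero data (\ref{difference-ic}), and satisfy the bound (\ref{convergence-estimate-from-main}), equivalently (\ref{main-estimate-3-5}),
\begin{equation*}
\|e^{\tau(t)A}\overline\phi(t)\|_{H^r}^2+\|e^{\tau(t)A}\phi_+(t)\|_{H^r}^2+\|e^{\tau(t)A}\phi_-(t)\|_{H^r}^2\le\frac{2e^{\widetilde K(t)}}{|\Omega_0|-e^{\widetilde K(t)}},\qquad t\in[0,\mathcal{T}].
\end{equation*}

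For the barotropic difference this is already the claimed estimate for $\|e^{\tau(t)A}\overline\phi(t)\|_{H^r}$. For the baroclinic difference I would use that, through the diagonalizing projections $P_\pm$, one has $\widetilde v=e^{i\Omega t}u_++e^{-i\Omega t}u_-$ and the baroclinic field of the limit system is recovered from $U_\pm$ by the analogous relation (Lemma~\ref{lemma-vtilde-upm} and Remark~\ref{remark-equivalence-upm-vtilde}), so that $\widetilde\phi$ is the combination $e^{i\Omega t}\phi_++e^{-i\Omega t}\phi_-$ of the already controlled quantities. Since multiplication by $e^{\pm i\Omega t}$ leaves every $\|e^{\tau(t)A}\,\cdot\,\|_{H^r}$-norm unchanged, the triangle inequality gives $\|e^{\tau(t)A}\widetilde\phi(t)\|_{H^r}\le\|e^{\tau(t)A}\phi_+(t)\|_{H^r}+\|e^{\tau(t)A}\phi_-(t)\|_{H^r}$, which is again controlled by the right-hand side of the displayed inequality. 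Adding the barotropic and baroclinic contributions and absorbing constants into $\widetilde K$ yields the asserted bound on $[0,\mathcal{T}]$; since $e^{\widetilde K(t)}\le e^{\widetilde K(\mathcal{T})}\le|\Omega_0|/C_{\tau_0}$ there, the right-hand side is finite, and for fixed $t$ it tends to $0$ as $|\Omega_0|\to\infty$, which is the ``approximation'' content of the statement.

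The only point requiring genuine care is this last translation between the rotating description $(\overline v,u_\pm)$ and the slow description $(\overline V,U_\pm,\widetilde V)$: one must keep careful track of which objects carry the fast phase $e^{\pm i\Omega t}$ (the true baroclinic mode and its $P_\pm$-components) and which are the slow profiles $\overline V,U_\pm,\widetilde V$, so that $\widetilde\phi$ is written exactly in terms of $\phi_+,\phi_-$ and hence inherits the bound (\ref{convergence-estimate-from-main}) — the Sobolev smallness $\|\widetilde v_0\|_{H^{3+\delta}}\le|\Omega_0|^{-1}$ having already been consumed, in the proof of Theorem~\ref{theorem-main}, in deriving that very estimate. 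Once this bookkeeping is made explicit there are no further estimates to carry out, so the proof reduces to quoting (\ref{convergence-estimate-from-main}) together with the equivalences of section~5.
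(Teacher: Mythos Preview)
Your overall strategy is exactly the paper's: the result is meant to be read off from the bound \eqref{convergence-estimate-from-main} established in the course of proving Theorem~\ref{theorem-main}, and indeed the paper's entire proof is the single sentence ``the proof is an immediate consequence of \eqref{convergence-estimate-from-main}.''

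There is, however, a genuine error in precisely the step you yourself flag as ``requiring genuine care.'' You assert that the baroclinic limit field is recovered from $U_\pm$ by the \emph{analogous} relation to $\widetilde v=e^{i\Omega t}u_++e^{-i\Omega t}u_-$, and hence that $\widetilde\phi=e^{i\Omega t}\phi_++e^{-i\Omega t}\phi_-$. This is not what the paper does: in section~5 the limit baroclinic mode is defined as $\widetilde V:=U_++U_-$, \emph{without} the fast phases (see the paragraph before \eqref{vtildelimit} and Remark~\ref{remark-equivalence-upm-vtilde}, where $U_\pm=\tfrac12(\widetilde V\pm i\widetilde V^\perp)$ with no $e^{\mp i\Omega t}$). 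Consequently
\[
\widetilde\phi=\widetilde v-\widetilde V
= e^{i\Omega t}\phi_++e^{-i\Omega t}\phi_-
+(e^{i\Omega t}-1)U_++(e^{-i\Omega t}-1)U_-,
\]
and the extra contribution equals $(\cos\Omega t-1)\widetilde V-(\sin\Omega t)\widetilde V^\perp$, whose analytic norm is of order $\|e^{\tau A}\widetilde V\|_{H^r}\lesssim \|e^{\tau_0A}\widetilde v_0\|_{H^r}e^{K(t)}$. Under the hypotheses of Theorem~\ref{theorem-main} only the Sobolev norm $\|\widetilde v_0\|_{H^{3+\delta}}$ is small, not the analytic norm, so this term is $O(1)$ and does \emph{not} fit under the right-hand side $e^{\widetilde K(t)}/(|\Omega_0|-e^{\widetilde K(t)})$. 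Your triangle-inequality argument therefore does not close as written. The paper does not spell out this reduction either, but the specific identity you supply for $\widetilde\phi$ is false.
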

\begin{proof}
The proof is an immediate consequence of \eqref{convergence-estimate-from-main}.
\end{proof}

\subsection{Remarks and discussions}
\begin{remark}
 To emphasize the difference between smallness in analytic norm and in Sobolev norm, for $|\Omega|\gg 1$, consider
$
     \widetilde{v}_0 = c_{\boldsymbol{k}} e^{i\boldsymbol{k}\cdot \boldsymbol{x}}$ 
with $k_3 \neq 0$, $|\boldsymbol{k}| = \lc \tau_0^{-1}  \ln|\Omega| \rc$ and $|c_{\boldsymbol{k}}| = (\ln |\Omega|)^{-r-2}|\Omega|^{-1}$. When $0<\delta<1$, since $r>\frac{5}{2}$, we have $\| \widetilde{v}_0\|_{H^{3+\delta}}\leq \|\widetilde{v}_0\|_{H^{r+2}} \sim |\Omega|^{-1}$, $\|e^{\tau_0 A} \widetilde{v}_0\|_{H^{r+2}} \sim 1$. Therefore, one can construct a sequence of initial data
\begin{equation*}
   \{(\widetilde{v}_0)_{\Omega}\}={c_{\boldsymbol{k}(\Omega)}}e^{i\boldsymbol{k}(\Omega)\cdot \boldsymbol{x}}, 
\end{equation*}
 where $|\boldsymbol{k}(\Omega)| = \lc \tau_0^{-1}  \ln|\Omega| \rc$ and $|c_{\boldsymbol{k}(\Omega)}| = (\ln |\Omega|)^{-r-2}|\Omega|^{-1}$. Then as $|\Omega|\rightarrow \infty$, the existence time of solutions  $\mathcal{T}\rightarrow \infty,$ with initial condition $\|e^{\tau_0 A} (\widetilde{v}_0)_\Omega\|_{H^{r+2}} \sim 1$.  This result needs fast rotation, and is very different from Theorem \ref{theorem-longtime}.
\end{remark}
\begin{remark}
In the view of Remark \ref{remark-steadyEuler}, when the solution to the $2D$ Euler equations with initial data $\overline{v}_0$ is uniformly bounded in time, the function $\widetilde{K}(t)$ appears in the proof of Theorem \ref{theorem-main} becomes only exponentially in time. This reduces two logarithms in \eqref{main-time} and one concludes that 
$\mathcal{T} \gtrsim \ln(\ln|\Omega_0|)$. Moreover, when $\overline{v}_0=0$, the function $\widetilde{K}(t)$ is uniformly bounded in time, hence $\mathcal{T} \gtrsim \ln|\Omega_0|$.
\end{remark}
\begin{remark}\label{remark-term-type1}
 In estimate (\ref{difficulty-1}) we have the resonance term \begin{equation}\label{problematic-1}
     (U_+ \cdot \nabla)(\overline{\phi}+i\overline{\phi}^\perp) = (U_+ \cdot \nabla \overline{\phi} - U^\perp_+ \cdot \nabla \overline{\phi}^\perp) = U^\perp_+ (\nabla^\perp\cdot \overline{\phi}),
 \end{equation}
 which involves the vorticity $\nabla^\perp\cdot \overline{\phi}$. Notice that in the limit resonant system (\ref{systemVbar-1})--(\ref{systemVtilde}), the evolution of the barotropic mode $\overline{V}$ is independent of the baroclinic mode $\widetilde{V}$, and therefore we can control the vorticity $\nabla^\perp\cdot \overline{V}$. However, for the original system (\ref{local-system-1})--(\ref{local-system-2})  (or the perturbed system (\ref{difference-phibar})--(\ref{difference-phipm})), the evolution of the barotropic mode $\overline{v}$ (or $\overline{\phi}$) depends also on the baroclinic mode $\widetilde{v}$ (or $\phi_\pm$). Therefore, we are unable to control (\ref{problematic-1}) without the smallness condition on the initial baroclinic mode.
\end{remark}

\begin{remark}\label{remark-term-type2}
In estimate (\ref{difficulty-3}), we have the term $
    e^{i\Omega t}(\int_0^z \nabla\cdot \phi_+ (\boldsymbol{x}',s)ds) \partial_z U_+.
$
Despite the oscillation, we are unable to apply similar methods as in type 3 due to the loss of derivative on the baroclinic mode. For this term, we do not have cancellation as other terms in type 4. Therefore, we are forced to require the smallness condition on the initial baroclinic mode.
\end{remark}

\appendix

\section{Estimates of nonlinear terms}
In this appendix, we list the estimates of nonlinear terms in the space of analytic functions. Lemma \ref{lemma-type1}--\ref{lemma-type3} will be used to prove the local well-posedness.

First, we estimate nonlinear terms of the form $f\cdot \nabla g$.
\begin{lemma}\label{lemma-type1}
For $f, g, h\in \mathcal{D}(e^{\tau A}: H^{r+\frac{1}{2}})$, where $r>2$ and $\tau\geq 0$, one has
\begin{equation*}\label{lemma-type1-inequality}
    \begin{split}
        \Big|\Big\langle A^r e^{\tau A} (f\cdot \nabla g), A^r e^{\tau A} h  \Big\rangle\Big| \leq  C_r\Big[ &(\|A^r e^{\tau A} f\| + |\hat{f}_0| ) \|A^{r+\frac{1}{2}} e^{\tau A} g\| \|A^{r+\frac{1}{2}} e^{\tau A} h\| \\
       & + \|A^{r+\frac{1}{2}} e^{\tau A} f\| \|A^{r} e^{\tau A} g\| \|A^{r} e^{\tau A} h\|\Big].
    \end{split}
\end{equation*}
\end{lemma}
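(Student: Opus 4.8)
The plan is to pass to Fourier series, reduce the trilinear form to a convolution sum of non-negative quantities, and then dispose of the polynomial weight $|\boldsymbol{k}|^{2r}$ by a single frequency dichotomy. Writing $f=\sum_{\boldsymbol{j}}\hat f_{\boldsymbol{j}}e^{2\pi i\boldsymbol{j}\cdot\boldsymbol{x}}$, and similarly for $g,h$, and bounding the horizontal part of $\boldsymbol{l}$ appearing in $\nabla g$ by $|\boldsymbol{l}|$, Parseval's identity gives
\[
\Bigl|\bigl\langle A^r e^{\tau A}(f\cdot\nabla g),\,A^r e^{\tau A}h\bigr\rangle\Bigr|
\le 2\pi\sum_{\boldsymbol{k}}\ \sum_{\boldsymbol{j}+\boldsymbol{l}=\boldsymbol{k}} |\boldsymbol{k}|^{2r}e^{2\tau|\boldsymbol{k}|}\,|\hat f_{\boldsymbol{j}}|\,|\boldsymbol{l}|\,|\hat g_{\boldsymbol{l}}|\,|\hat h_{\boldsymbol{k}}| .
\]
Using $|\boldsymbol{k}|\le|\boldsymbol{j}|+|\boldsymbol{l}|$ in the exponent, $e^{2\tau|\boldsymbol{k}|}\le e^{2\tau|\boldsymbol{j}|}e^{2\tau|\boldsymbol{l}|}$, and setting $F_{\boldsymbol{j}}:=e^{\tau|\boldsymbol{j}|}|\hat f_{\boldsymbol{j}}|$, $G_{\boldsymbol{l}}:=e^{\tau|\boldsymbol{l}|}|\hat g_{\boldsymbol{l}}|$, $H_{\boldsymbol{k}}:=e^{\tau|\boldsymbol{k}|}|\hat h_{\boldsymbol{k}}|$, it suffices to estimate $\sum_{\boldsymbol{k}}\sum_{\boldsymbol{j}+\boldsymbol{l}=\boldsymbol{k}}|\boldsymbol{k}|^{2r}|\boldsymbol{l}|\,F_{\boldsymbol{j}}G_{\boldsymbol{l}}H_{\boldsymbol{k}}$, where only $\boldsymbol{k},\boldsymbol{l}\neq\boldsymbol{0}$ occur. (The case $\tau=0$ is identical with the exponential weights removed.)

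Next I would split the outer sum according to $\mathcal{A}=\{|\boldsymbol{j}|\le|\boldsymbol{k}|/2\}$ and $\mathcal{B}=\{|\boldsymbol{j}|>|\boldsymbol{k}|/2\}$. On $\mathcal{A}$ one has $|\boldsymbol{k}|/2\le|\boldsymbol{l}|\le \tfrac32|\boldsymbol{k}|$, hence the elementary bound $|\boldsymbol{k}|^{2r}|\boldsymbol{l}|\le C_r|\boldsymbol{l}|^{r+1/2}|\boldsymbol{k}|^{r+1/2}$; applying Cauchy--Schwarz in $\boldsymbol{k}$ together with Young's convolution inequality (placing $\{F_{\boldsymbol{j}}\}$ in $\ell^1$) yields the bound $\bigl(\sum_{\boldsymbol{j}}F_{\boldsymbol{j}}\bigr)\|A^{r+1/2}e^{\tau A}g\|\,\|A^{r+1/2}e^{\tau A}h\|$, and $\sum_{\boldsymbol{j}}F_{\boldsymbol{j}}\le |\hat f_0|+\bigl(\sum_{\boldsymbol{j}\ne\boldsymbol{0}}|\boldsymbol{j}|^{-2r}\bigr)^{1/2}\|A^r e^{\tau A}f\|\le |\hat f_0|+C_r\|A^r e^{\tau A}f\|$ since $2r>3$; this is the first term of the claimed estimate. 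On $\mathcal{B}$ one has $|\boldsymbol{k}|<2|\boldsymbol{j}|$ and $|\boldsymbol{l}|<3|\boldsymbol{j}|$, hence $|\boldsymbol{k}|^{2r}|\boldsymbol{l}|\le C_r|\boldsymbol{j}|^{r+1/2}|\boldsymbol{l}|^{1/2}|\boldsymbol{k}|^{r}$; the same Cauchy--Schwarz/Young scheme, now placing $\{|\boldsymbol{l}|^{1/2}G_{\boldsymbol{l}}\}$ in $\ell^1$, produces $\|A^{r+1/2}e^{\tau A}f\|\bigl(\sum_{\boldsymbol{l}\ne\boldsymbol{0}}|\boldsymbol{l}|^{1/2}G_{\boldsymbol{l}}\bigr)\|A^r e^{\tau A}h\|$, which is the second term. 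Note that $\boldsymbol{j}=\boldsymbol{0}$ lies entirely in $\mathcal{A}$, which is precisely why $|\hat f_0|$ enters only in the first term.

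The one genuinely delicate point is the $\ell^1$ summability used on $\mathcal{B}$: by Cauchy--Schwarz, $\sum_{\boldsymbol{l}\ne\boldsymbol{0}}|\boldsymbol{l}|^{1/2}e^{\tau|\boldsymbol{l}|}|\hat g_{\boldsymbol{l}}|\le\bigl(\sum_{\boldsymbol{l}\ne\boldsymbol{0}}|\boldsymbol{l}|^{1-2r}\bigr)^{1/2}\|A^r e^{\tau A}g\|$, and the series $\sum_{\boldsymbol{l}\ne\boldsymbol{0}}|\boldsymbol{l}|^{1-2r}$ over $\mathbb{Z}^3$ converges exactly when $2r-1>3$, i.e.\ $r>2$; this is where the hypothesis is used (and why the main theorems run at $r>5/2$). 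Everything else is routine verification of the two elementary frequency inequalities above and bookkeeping of Young's inequality, the vector-valued case following by summing the argument over components.
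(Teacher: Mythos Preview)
Your argument is correct. One cosmetic slip: the exponential inequality you actually use to arrive at $F_{\boldsymbol{j}}G_{\boldsymbol{l}}H_{\boldsymbol{k}}$ is $e^{\tau|\boldsymbol{k}|}\le e^{\tau|\boldsymbol{j}|}e^{\tau|\boldsymbol{l}|}$, not $e^{2\tau|\boldsymbol{k}|}\le e^{2\tau|\boldsymbol{j}|}e^{2\tau|\boldsymbol{l}|}$ (the latter is true but would lose the weight on $h$); the reduction you state immediately after is the right one. Also, the parenthetical ``and why the main theorems run at $r>5/2$'' overstates things: the extra half-derivative in the main results comes from other estimates, not from this lemma.

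The route, however, is genuinely different from the paper's. The paper does not use a frequency dichotomy; instead it expands algebraically via $|\boldsymbol{l}|^r\le C_r(|\boldsymbol{j}|^r+|\boldsymbol{k}|^r)$ and then $|\boldsymbol{k}|^{1/2}\le|\boldsymbol{j}|^{1/2}+|\boldsymbol{l}|^{1/2}$, producing four terms $A_1,\dots,A_4$, each handled by a direct double Cauchy--Schwarz (no Young's convolution inequality is invoked). Your paraproduct-style split $\{|\boldsymbol{j}|\le|\boldsymbol{k}|/2\}\cup\{|\boldsymbol{j}|>|\boldsymbol{k}|/2\}$ together with Cauchy--Schwarz/Young is more economical (two pieces instead of four) and makes transparent why $|\hat f_0|$ appears only alongside the first term. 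Both approaches land on the same critical summability $\sum_{\boldsymbol{l}\ne\boldsymbol{0}}|\boldsymbol{l}|^{1-2r}<\infty$, which is exactly the condition $r>2$, so neither gains or loses regularity over the other.
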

\begin{proof}
    First, notice that $\Big|\Big\langle A^r e^{\tau A} (f\cdot \nabla g), A^r e^{\tau A} h \Big\rangle\Big| = \Big|\Big\langle f\cdot \nabla g, A^r e^{\tau A} H  \Big\rangle\Big| $, where $H =  A^r e^{\tau A} h$.
    We use Fourier representation of $f, g$ and $H$, in which we can write
    \begin{eqnarray*}
    &&\hskip-.8in
     f(\boldsymbol{x}) = \sum\limits_{\boldsymbol{j}\in  \mathbb{Z}^3} \hat{f}_{\boldsymbol{j}} e^{2\pi i\boldsymbol{j}\cdot \boldsymbol{x}}, \quad
    g(\boldsymbol{x}) = \sum\limits_{\boldsymbol{k}\in \mathbb{Z}^3} \hat{g}_{\boldsymbol{k}} e^{2\pi i\boldsymbol{k}\cdot \boldsymbol{x}}, \quad
    A^r e^{\tau A} H(\boldsymbol{x}) = \sum\limits_{\boldsymbol{l}\in \mathbb{Z}^3} |\boldsymbol{l}|^r e^{\tau |\boldsymbol{l}|}\hat{H}_{\boldsymbol{l}} e^{2\pi i\boldsymbol{l}\cdot \boldsymbol{x}}. 
    \end{eqnarray*}
    Therefore, 
    \begin{eqnarray*}
    \Big|\Big\langle f\cdot \nabla g, A^r e^{\tau A} H  \Big\rangle\Big| \leq  \sum\limits_{\boldsymbol{j}+\boldsymbol{k}+\boldsymbol{l}=0} |\hat{f}_{\boldsymbol{j}}||\boldsymbol{k}||\hat{g}_{\boldsymbol{k}}||\boldsymbol{l}|^r e^{\tau |\boldsymbol{l}|} |\hat{H}_{\boldsymbol{l}}|.
    \end{eqnarray*}
From $|\boldsymbol{l}| = |\boldsymbol{j}+\boldsymbol{k}| \leq |\boldsymbol{j}|+|\boldsymbol{k}|$ we have the following inequalities: 
\begin{equation*}
    |\boldsymbol{l}|^r \leq (|\boldsymbol{j}|+|\boldsymbol{k}|)^r \leq C_r(|\boldsymbol{j}|^r + |\boldsymbol{k}|^r), \;\;\;  e^{\tau |\boldsymbol{l}|} \leq e^{\tau |\boldsymbol{j}|} e^{\tau |\boldsymbol{k}|}.
\end{equation*}
Applying these inequalities, we have
\begin{eqnarray*}
\Big|\Big\langle f\cdot \nabla g, A^r e^{\tau A} H  \Big\rangle\Big| \leq  \sum\limits_{\boldsymbol{j}+\boldsymbol{k}+\boldsymbol{l}=0} C_r|\hat{f}_{\boldsymbol{j}}||\boldsymbol{k}||\hat{g}_{\boldsymbol{k}}|(|\boldsymbol{j}|^r+|\boldsymbol{k}|^r)e^{\tau |\boldsymbol{j}|}e^{\tau |\boldsymbol{k}|}|\boldsymbol{l}|^r e^{\tau |\boldsymbol{l}|}|\hat{h}_{\boldsymbol{l}}|.
\end{eqnarray*}
Since $|\boldsymbol{k}|, |\boldsymbol{j}|, |\boldsymbol{l}|$ are all nonnegative, we have $|\boldsymbol{k}|^{\frac{1}{2}} \leq (|\boldsymbol{j}|+|\boldsymbol{l}|)^{\frac{1}{2}} \leq |\boldsymbol{j}|^{\frac{1}{2}} + |\boldsymbol{l}|^{\frac{1}{2}}$, therefore, 
\begin{eqnarray*}
&&\hskip-.8in 
\Big|\Big\langle f\cdot \nabla g, A^r e^{\tau A} H  \Big\rangle\Big| \leq  \sum\limits_{\boldsymbol{j}+\boldsymbol{k}+\boldsymbol{l}=0} C_r|\hat{f}_{\boldsymbol{j}}||\boldsymbol{k}|^{\frac{1}{2}}(|\boldsymbol{j}|^{\frac{1}{2}} + |\boldsymbol{l}|^{\frac{1}{2}})|\hat{g}_{\boldsymbol{k}}|(|\boldsymbol{j}|^r+|\boldsymbol{k}|^r)e^{\tau |\boldsymbol{j}|}e^{\tau |\boldsymbol{k}|}|\boldsymbol{l}|^r e^{\tau |\boldsymbol{l}|}|\hat{h}_{\boldsymbol{l}}|  \nonumber\\
&&\hskip-.8in 
\leq  \sum\limits_{\boldsymbol{j}+\boldsymbol{k}+\boldsymbol{l}=0} C_r \Big(|\boldsymbol{k}|^{\frac{1}{2}}|\boldsymbol{j}|^{r+\frac{1}{2}} |\boldsymbol{l}|^{r} + |\boldsymbol{k}|^{r+\frac{1}{2}}|\boldsymbol{j}|^{\frac{1}{2}} |\boldsymbol{l}|^{r} + |\boldsymbol{k}|^{\frac{1}{2}}|\boldsymbol{j}|^{r} |\boldsymbol{l}|^{r+\frac{1}{2}} + |\boldsymbol{k}|^{r+\frac{1}{2}}|\boldsymbol{l}|^{r+\frac{1}{2}} \Big) \nonumber\\
&&\hskip-.08in 
\times e^{\tau |\boldsymbol{j}|}e^{\tau |\boldsymbol{k}|} e^{\tau |\boldsymbol{l}|}  |\hat{f}_{\boldsymbol{j}}| |\hat{g}_{\boldsymbol{k}}| |\hat{h}_{\boldsymbol{l}}| =: A_1 + A_2 + A_3 + A_4.
\end{eqnarray*}

Thanks to the Cauchy–Schwarz inequality, since $r>2$, we have
\begin{eqnarray*}
&&\hskip-.8in 
A_1 = \sum\limits_{\boldsymbol{j}+\boldsymbol{k}+\boldsymbol{l}=0} C_r |\boldsymbol{k}|^{\frac{1}{2}}|\boldsymbol{j}|^{r+\frac{1}{2}} |\boldsymbol{l}|^{r} e^{\tau |\boldsymbol{j}|}e^{\tau |\boldsymbol{k}|} e^{\tau |\boldsymbol{l}|}  |\hat{f}_{\boldsymbol{j}}| |\hat{g}_{\boldsymbol{k}}| |\hat{h}_{\boldsymbol{l}}| \nonumber \\
&&\hskip-.58in 
= C_r \sum\limits_{\substack{\boldsymbol{k}\in \mathbb{Z}^3 \\ \boldsymbol{k}\neq 0} } |\boldsymbol{k}|^{\frac{1}{2}} |\hat{g}_{\boldsymbol{k}}| e^{\tau |\boldsymbol{k}|} \sum\limits_{\substack{\boldsymbol{j}\in \mathbb{Z}^3 \\ \boldsymbol{j}\neq 0, -\boldsymbol{k}} } |\boldsymbol{j}|^{r+\frac{1}{2}} e^{\tau |\boldsymbol{j}|}|\hat{f}_{\boldsymbol{j}}|  |\boldsymbol{j}+\boldsymbol{k}|^{r}e^{\tau |\boldsymbol{j}+\boldsymbol{k}|}|\hat{h}_{-\boldsymbol{j}-\boldsymbol{k}}| \nonumber\\
&&\hskip-.58in 
\leq C_r \Big( \sum\limits_{\substack{\boldsymbol{k}\in \mathbb{Z}^3 \\ \boldsymbol{k}\neq 0} } |\boldsymbol{k}|^{1-2r}\Big)^{\frac{1}{2}} \Big( \sum\limits_{\substack{\boldsymbol{k}\in \mathbb{Z}^3 \\ \boldsymbol{k}\neq 0} } |\boldsymbol{k}|^{2r} e^{2\tau |\boldsymbol{k}|} |\hat{g}_{\boldsymbol{k}}|^2\Big)^{\frac{1}{2}}  \nonumber \\
&&\hskip-.38in
\times \sup\limits_{\boldsymbol{k}\in \mathbb{Z}^3}\Big( \sum\limits_{\substack{\boldsymbol{j}\in \mathbb{Z}^3 \\ \boldsymbol{j}\neq 0, -\boldsymbol{k}} } |\boldsymbol{j}|^{2r+1}e^{2\tau |\boldsymbol{j}|} |\hat{f}_{\boldsymbol{j}}|^2\Big)^{\frac{1}{2}} \Big( \sum\limits_{\substack{\boldsymbol{j}\in \mathbb{Z}^3 \\ \boldsymbol{j}\neq 0, -\boldsymbol{k}} } |\boldsymbol{j}+\boldsymbol{k}|^{2r}e^{2\tau |\boldsymbol{j}+\boldsymbol{k}|} |\hat{h}_{-\boldsymbol{j}-\boldsymbol{k}}|^2\Big)^{\frac{1}{2}} \nonumber\\
&&\hskip-.58in \leq C_r \|A^{r+\frac{1}{2}} e^{\tau A} f\| \|A^{r} e^{\tau A} g\| \|A^{r} e^{\tau A} h\|,
\end{eqnarray*}
Similarly, we have
\begin{eqnarray*}
&&\hskip-.8in 
A_2 = \sum\limits_{\boldsymbol{j}+\boldsymbol{k}+\boldsymbol{l}=0} C_r |\boldsymbol{k}|^{r+\frac{1}{2}}|\boldsymbol{j}|^{\frac{1}{2}} |\boldsymbol{l}|^{r} e^{\tau |\boldsymbol{j}|}e^{\tau |\boldsymbol{k}|} e^{\tau |\boldsymbol{l}|}  |\hat{f}_{\boldsymbol{j}}| |\hat{g}_{\boldsymbol{k}}| |\hat{h}_{\boldsymbol{l}}| \leq C_r \|A^{r} e^{\tau A} f\| \|A^{r+\frac{1}{2}} e^{\tau A} g\| \|A^{r} e^{\tau A} h\| \text{  and }
\end{eqnarray*}
\begin{eqnarray*}
&&\hskip-.8in 
A_3 = \sum\limits_{\boldsymbol{j}+\boldsymbol{k}+\boldsymbol{l}=0} C_r |\boldsymbol{k}|^{\frac{1}{2}}|\boldsymbol{j}|^{r} |\boldsymbol{l}|^{r+\frac{1}{2}} e^{\tau |\boldsymbol{j}|}e^{\tau |\boldsymbol{k}|} e^{\tau |\boldsymbol{l}|}  |\hat{f}_{\boldsymbol{j}}| |\hat{g}_{\boldsymbol{k}}| |\hat{h}_{\boldsymbol{l}}| \leq C_r \|A^{r} e^{\tau A} f\| \|A^{r} e^{\tau A} g\| \|A^{r+\frac{1}{2}} e^{\tau A} h\|.
\end{eqnarray*}
For $A_4$, thanks to the Cauchy–Schwarz inequality, since $r>2$, we have
\begin{eqnarray*}
&&\hskip-.8in 
A_4 = \sum\limits_{\boldsymbol{j}+\boldsymbol{k}+\boldsymbol{l}=0} C_r |\boldsymbol{k}|^{r+\frac{1}{2}}|\boldsymbol{l}|^{r+\frac{1}{2}} e^{\tau |\boldsymbol{j}|}e^{\tau |\boldsymbol{k}|} e^{\tau |\boldsymbol{l}|}  |\hat{f}_{\boldsymbol{j}}| |\hat{g}_{\boldsymbol{k}}| |\hat{h}_{\boldsymbol{l}}| \nonumber \\
&&\hskip-.58in 
= C_r \sum\limits_{\boldsymbol{j}\in \mathbb{Z}^3 } e^{\tau |\boldsymbol{j}|}|\hat{f}_{\boldsymbol{j}}|  \sum\limits_{\substack{\boldsymbol{k}\in \mathbb{Z}^3 \\ \boldsymbol{k}\neq 0, -\boldsymbol{j}} } |\boldsymbol{k}|^{r+\frac{1}{2}} |\hat{g}_{\boldsymbol{k}}| e^{\tau |\boldsymbol{k}|}  |\boldsymbol{j}+\boldsymbol{k}|^{r+\frac{1}{2}}e^{\tau |\boldsymbol{j}+\boldsymbol{k}|}|\hat{h}_{-\boldsymbol{j}-\boldsymbol{k}}| \nonumber\\
&&\hskip-.58in 
\leq C_r \Big\{ |\hat{f}_0| + \Big( \sum\limits_{\substack{\boldsymbol{j}\in \mathbb{Z}^3 \\ \boldsymbol{j}\neq 0} } |\boldsymbol{j}|^{-2r}\Big)^{\frac{1}{2}} \Big( \sum\limits_{\substack{\boldsymbol{j}\in \mathbb{Z}^3 \\ \boldsymbol{j}\neq 0} } |\boldsymbol{j}|^{2r} e^{2\tau |\boldsymbol{j}|} |\hat{f}_{\boldsymbol{j}}|^2\Big)^{\frac{1}{2}} \Big\}  \nonumber \\
&&\hskip-.38in
\times \sup\limits_{\boldsymbol{j}\in \mathbb{Z}^3} \Big( \sum\limits_{\substack{\boldsymbol{k}\in \mathbb{Z}^3 \\ \boldsymbol{k}\neq 0, -\boldsymbol{j}} } |\boldsymbol{k}|^{2r+1}e^{2\tau |\boldsymbol{k}|} |\hat{g}_{\boldsymbol{k}}|^2\Big)^{\frac{1}{2}} \Big( \sum\limits_{\substack{\boldsymbol{k}\in \mathbb{Z}^3 \\ \boldsymbol{k}\neq 0, -\boldsymbol{j}} } |\boldsymbol{j}+\boldsymbol{k}|^{2r+1}e^{2\tau |\boldsymbol{j}+\boldsymbol{k}|} |\hat{h}_{-\boldsymbol{j}-\boldsymbol{k}}|^2\Big)^{\frac{1}{2}} \nonumber\\
&&\hskip-.58in \leq C_r (\|A^{r} e^{\tau A} f\| + |\hat{f}_0|) \|A^{r+\frac{1}{2}} e^{\tau A} g\| \|A^{r+\frac{1}{2}} e^{\tau A} h\|.
\end{eqnarray*}

Combine the estimates for $A_1$ to $A_4$, and since $\|A^{r} e^{\tau A} g\| \leq \|A^{r+\frac{1}{2}} e^{\tau A} g\|$, $\|A^{r} e^{\tau A} h\| \leq \|A^{r+\frac{1}{2}} e^{\tau A} h\|$, we achieve the desired inequality.
\end{proof}
Similarly, we estimate $(\nabla\cdot f)g$ in the following:
\begin{lemma}\label{lemma-type2}
For $f, g, h\in \mathcal{D}(e^{\tau A}: H^{r+\frac{1}{2}})$, where $r>2$ and $\tau\geq 0$, one has
\begin{equation*}
    \begin{split}
        \Big|\Big\langle A^r e^{\tau A} \big((\nabla\cdot f)g\big), A^r e^{\tau A} h  \Big\rangle\Big| \leq & C_r\Big[ (\|A^r e^{\tau A} g\| + |\hat{g}_0|) \|A^{r+\frac{1}{2}} e^{\tau A} f\| \|A^{r+\frac{1}{2}} e^{\tau A} h\| \\
        & + \|A^{r+\frac{1}{2}} e^{\tau A} g\| \|A^{r} e^{\tau A} f\| \|A^{r} e^{\tau A} h\|\Big]. 
    \end{split}
\end{equation*}
\end{lemma}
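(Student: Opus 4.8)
The plan is to reduce the estimate to Lemma \ref{lemma-type1} by exploiting the symmetry of the underlying trilinear functional. Writing $H := A^r e^{\tau A} h$ and passing to Fourier series exactly as in the proof of Lemma \ref{lemma-type1}, and using that the Fourier coefficients of $\nabla\cdot f$ are $2\pi i\,\boldsymbol{j}\cdot\hat f_{\boldsymbol{j}}$ with $|\boldsymbol{j}\cdot\hat f_{\boldsymbol{j}}|\le|\boldsymbol{j}|\,|\hat f_{\boldsymbol{j}}|$, one gets
\begin{equation*}
\Big|\Big\langle A^r e^{\tau A}\big((\nabla\cdot f)g\big), A^r e^{\tau A} h\Big\rangle\Big|
= \Big|\Big\langle (\nabla\cdot f)g,\, A^r e^{\tau A} H\Big\rangle\Big|
\le C\!\!\sum_{\boldsymbol{j}+\boldsymbol{k}+\boldsymbol{l}=0} |\boldsymbol{j}|\,|\hat f_{\boldsymbol{j}}|\,|\hat g_{\boldsymbol{k}}|\,|\boldsymbol{l}|^r e^{\tau|\boldsymbol{l}|}|\hat h_{\boldsymbol{l}}|.
\end{equation*}
Now observe that the proof of Lemma \ref{lemma-type1} uses only the scalar majorant $\sum_{\boldsymbol{j}+\boldsymbol{k}+\boldsymbol{l}=0}|\hat f_{\boldsymbol{j}}|\,|\boldsymbol{k}|\,|\hat g_{\boldsymbol{k}}|\,|\boldsymbol{l}|^r e^{\tau|\boldsymbol{l}|}|\hat h_{\boldsymbol{l}}|$; relabeling $\boldsymbol{j}\leftrightarrow\boldsymbol{k}$ this is exactly the sum displayed above, i.e. it is the majorant of Lemma \ref{lemma-type1} with the roles of $f$ and $g$ interchanged. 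Consequently the inequality of Lemma \ref{lemma-type1}, read with $f$ and $g$ swapped, is word-for-word the desired bound, the surplus term $|\hat g_0|$ now arising in place of $|\hat f_0|$ because it is $g$ (not $f$) that enters undifferentiated.

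For completeness I would also record the direct argument, which is a verbatim repetition of the proof of Lemma \ref{lemma-type1} with the half-derivatives reassigned. Using $|\boldsymbol{l}|^r \le C_r(|\boldsymbol{j}|^r + |\boldsymbol{k}|^r)$, $e^{\tau|\boldsymbol{l}|}\le e^{\tau|\boldsymbol{j}|}e^{\tau|\boldsymbol{k}|}$, and — since $\boldsymbol{j}=-(\boldsymbol{k}+\boldsymbol{l})$, hence $|\boldsymbol{j}|\le|\boldsymbol{k}|+|\boldsymbol{l}|$ and therefore $|\boldsymbol{j}|^{1/2}\le|\boldsymbol{k}|^{1/2}+|\boldsymbol{l}|^{1/2}$ — one writes $|\boldsymbol{j}|=|\boldsymbol{j}|^{1/2}|\boldsymbol{j}|^{1/2}$ and splits the sum into four pieces in which the surplus half-derivative $|\boldsymbol{j}|^{1/2}$ is traded either to $g$ or to $h$, so that each of $f,g,h$ carries at most $r+1/2$ derivatives. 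Then Cauchy–Schwarz, applied with the appropriate frequency taken as the outer summation variable, closes each piece: the convergence of $\sum_{\boldsymbol{m}\in\mathbb{Z}^3\setminus\{0\}}|\boldsymbol{m}|^{1-2r}$ — which is where the hypothesis $r>2$ enters, since it requires $2r-1>3$ — handles the pieces in which some factor carries $|\boldsymbol{m}|^{1/2}$ below the weight $|\boldsymbol{m}|^r$, while the convergence of $\sum_{\boldsymbol{m}\in\mathbb{Z}^3\setminus\{0\}}|\boldsymbol{m}|^{-2r}$ handles the single piece in which $g$ carries no weight at all and thereby produces the $|\hat g_0|$ contribution. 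Finally one bounds the subcritical norms by $\|A^{r+1/2}e^{\tau A}\cdot\|$, exactly as at the end of the proof of Lemma \ref{lemma-type1}, to collect the two bracketed terms of the statement.

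There is no genuine obstacle: the content is entirely a bookkeeping matter. The only points that need care are (i) that no factor is ever assigned more than $r+1/2$ derivatives, so that the half-derivative splittings close, and (ii) that the factor which remains completely undifferentiated is $g$ — and not $f$ or $h$ — which is precisely what dictates that the extra term in the estimate is $|\hat g_0|$ rather than $|\hat f_0|$ or $|\hat h_0|$. Once the symmetry with Lemma \ref{lemma-type1} is noticed, the first approach makes even this bookkeeping unnecessary.
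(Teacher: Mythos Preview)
Your proposal is correct and matches the paper's own treatment: the paper simply states that the proof of Lemma~\ref{lemma-type2} is almost the same as that of Lemma~\ref{lemma-type1} and omits it, and your argument makes this precise by observing that the Fourier majorant for $(\nabla\cdot f)g$ coincides with that for $f\cdot\nabla g$ after swapping the roles of $f$ and $g$. The direct argument you also sketch is exactly what ``almost the same proof'' means here, so there is nothing to add.
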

The proof of Lemma \ref{lemma-type2} is almost the same as Lemma \ref{lemma-type1}, so we omit it. 

Finally, we provide an estimate for $(\int_0^z \nabla\cdot f(\boldsymbol{x}',s)ds)\partial_z g$ in the following:
\begin{lemma}\label{lemma-type3}
For $f, g, h\in \mathcal{D}(e^{\tau A}: H^{r+\frac{1}{2}})$, where $r>2$, $\tau\geq 0$, and $\overline{f}=0$, one has
\begin{equation*}
\begin{split}
    \Big|\Big\langle A^r e^{\tau A} \big((\int_0^z \nabla\cdot f(\boldsymbol{x}',s)ds)\partial_z g\big), A^r e^{\tau A} h  \Big\rangle\Big| \leq  C_r\Big( \|A^r e^{\tau A} f\| \|A^{r+\frac{1}{2}} e^{\tau A} g\| \|A^{r+\frac{1}{2}} e^{\tau A} h\| \\
     + \|A^r e^{\tau A} g\| \|A^{r+\frac{1}{2}} e^{\tau A} f\| \|A^{r+\frac{1}{2}} e^{\tau A} h\| + \|A^r e^{\tau A} h\| \|A^{r+\frac{1}{2}} e^{\tau A} f\| \|A^{r+\frac{1}{2}} e^{\tau A} g\|\Big).
\end{split}
\end{equation*}
\end{lemma}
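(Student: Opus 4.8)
The plan is to establish the bound by a Fourier-series computation parallel to those for Lemma \ref{lemma-type1} and Lemma \ref{lemma-type2}, the only genuinely new object being the vertical antiderivative $W[f]:=\int_0^z \nabla\cdot f(\boldsymbol{x}',s)\,ds$. The first step is to record its Fourier coefficients. Since $\overline{f}=0$ one has $\hat f_{\boldsymbol{j}}=0$ whenever $j_3=0$, so
$$W[f] \;=\; \sum_{j_3\neq 0} \frac{\boldsymbol{j}'\cdot\hat f_{\boldsymbol{j}}}{j_3}\,\bigl(e^{2\pi i j_3 z}-1\bigr)\,e^{2\pi i \boldsymbol{j}'\cdot\boldsymbol{x}'},$$
whence $|\widehat{W[f]}_{\boldsymbol{j}}|\leq |\boldsymbol{j}'|\,|\hat f_{\boldsymbol{j}}|/|j_3|\leq |\boldsymbol{j}|\,|\hat f_{\boldsymbol{j}}|$ for $j_3\neq 0$, while the induced $z$-independent mode of $W[f]$ is a column average of $\{\hat f_{(\boldsymbol{j}',j_3)}\}_{j_3}$ whose $\ell^2$ norm is $\lesssim\|f\|$ by one Cauchy--Schwarz in $j_3$ (using $\sum_{j_3\neq 0}j_3^{-2}<\infty$); this last piece is two-dimensional and strictly easier, so I would dispose of it separately.

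For the main part, the key structural fact is $\partial_z W[f]=\nabla\cdot f$, which yields the product-rule identity
$$W[f]\,\partial_z g \;=\; \partial_z\bigl(W[f]\,g\bigr) \;-\; (\nabla\cdot f)\,g .$$
Pairing with $A^{2r}e^{2\tau A}h$ and integrating by parts in $z$ (no boundary terms, by periodicity), the term $(\nabla\cdot f)g$ is handled immediately by Lemma \ref{lemma-type2}, producing the first two summands on the right-hand side of the claim, while the remaining term becomes $-\langle A^r e^{\tau A}(W[f]\,g),\,A^r e^{\tau A}\partial_z h\rangle$, a genuine product $W[f]\,g$ tested against one $z$-derivative of $h$. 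I would now expand in Fourier exactly as in the proof of Lemma \ref{lemma-type1}, with $\boldsymbol{j}+\boldsymbol{k}+\boldsymbol{l}=\boldsymbol{0}$ ($W[f]$ at frequency $\boldsymbol{j}$, $g$ at $\boldsymbol{k}$, $h$ at $\boldsymbol{l}$), insert $|\widehat{W[f]}_{\boldsymbol{j}}|\leq|\boldsymbol{j}'|\,|\hat f_{\boldsymbol{j}}|/|j_3|$ and $|\widehat{\partial_z h}_{\boldsymbol{l}}|=2\pi|l_3|\,|\hat h_{\boldsymbol{l}}|$, use $|\boldsymbol{l}|^{r}\leq C_r(|\boldsymbol{j}|^{r}+|\boldsymbol{k}|^{r})$, $|\boldsymbol{k}|^{1/2}\leq|\boldsymbol{j}|^{1/2}+|\boldsymbol{l}|^{1/2}$ and $e^{\tau|\boldsymbol{l}|}\leq e^{\tau|\boldsymbol{j}|}e^{\tau|\boldsymbol{k}|}$ to redistribute frequencies, and finish each resulting piece by Cauchy--Schwarz together with $\sum_{\boldsymbol{k}\in\mathbb{Z}^3\setminus\{\boldsymbol{0}\}}|\boldsymbol{k}|^{2-2r}<\infty$ (valid since $r>5/2$), matching each piece to one of the three products in the statement.

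The main obstacle is purely the derivative bookkeeping. Unlike in Lemma \ref{lemma-type1}, the expression $W[f]\,\partial_z g$ carries two derivatives, so a mechanical redistribution of the weight $|\boldsymbol{l}|^{2r}$ threatens to leave more than $r+\tfrac12$ derivatives on a single factor and to close only under a stronger condition than $r>5/2$. The remedy is to exploit the factor $1/|j_3|$ in $\widehat{W[f]}_{\boldsymbol{j}}$ via a short case split on the relative sizes of the vertical frequencies $|j_3|,|k_3|,|l_3|$: when $|j_3|\gtrsim|k_3|$ (equivalently $|j_3|\gtrsim|l_3|$) the ratio $|k_3|/|j_3|$ is bounded and the expression behaves like a product with a single horizontal derivative on $f$; in the complementary regime $|k_3|$ and $|l_3|$ are comparable, which pins down which interaction (high--high--low in $g,h$, or high--high--high) is being estimated and lets the surplus half-derivative be shifted onto the genuinely low-frequency factor. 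A secondary technicality is the $z$-independent mode generated by the vertical antiderivative, absorbed by the auxiliary Cauchy--Schwarz in $j_3$ mentioned above; and, as in Lemma \ref{lemma-type2}, the constant Fourier mode of $g$ (absent in all applications, where $g$ is a baroclinic mode) can be carried along harmlessly.
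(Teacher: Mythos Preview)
Your product-rule reduction $W[f]\,\partial_z g=\partial_z(W[f]g)-(\nabla\!\cdot f)g$ followed by integration by parts is a genuinely different route from the paper's. The paper never performs this reduction: it expands $\int_0^z\nabla\!\cdot f\,ds$ directly in Fourier as the sum of an oscillatory piece (mode $\boldsymbol{j}$, coefficient $\tfrac{j_1+j_2}{j_3}\hat f_{\boldsymbol{j}}$) and a $z$-independent piece, multiplies by $\partial_z g$ and $A^{2r}e^{2\tau A}h$ in Fourier, and splits $|\boldsymbol{l}|^r\le C_r(|\boldsymbol{j}|^r+|\boldsymbol{k}|^r)$, $|\boldsymbol{j}|^{1/2}\le |\boldsymbol{k}|^{1/2}+|\boldsymbol{l}|^{1/2}$, etc., to obtain four sums $B_1,\dots,B_4$ (and $\widetilde B_1,\dots,\widetilde B_4$) which are closed by Cauchy--Schwarz. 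The crucial structural difference is that the paper's Cauchy--Schwarz is \emph{anisotropic}: summability is provided by $\sum_{\boldsymbol{k}'\in\mathbb{Z}^2}|(\boldsymbol{k}',\pm1)|^{2-2r}<\infty$ (a two-dimensional sum, finite for $r>2$) together with $\sum_{j_3\neq0}j_3^{-2}<\infty$, the latter coming precisely from the $1/|j_3|$ in the antiderivative.

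This is where your sketch has a gap. You invoke $\sum_{\boldsymbol{k}\in\mathbb{Z}^3\setminus\{0\}}|\boldsymbol{k}|^{2-2r}<\infty$, which needs $r>5/2$, but the lemma is stated for $r>2$. Your subsequent ``case split on $|j_3|$ versus $|k_3|$'' is aimed at a different problem (avoiding more than $r+\tfrac12$ derivatives on one factor), and as written it does not recover the missing half-derivative of summability; the $1/|j_3|$ factor has to be used not merely to bound $|k_3|/|j_3|$ in one regime, but to produce a separate convergent one-dimensional sum in $j_3$, decoupled from the horizontal summability. The paper's proof shows exactly how this is done (see the $B_1$ and $\widetilde B_1$ estimates, where Fubini in $(j_3,k_3)$ and then in $\boldsymbol{k}'$ is applied in sequence). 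Your integration-by-parts route could in principle be made to close under $r>2$, but only if the Fourier analysis of $\langle A^r e^{\tau A}(W[f]g),A^r e^{\tau A}\partial_z h\rangle$ is carried out with the same anisotropic splitting rather than an isotropic $\mathbb{Z}^3$ sum; your sketch does not do this. Likewise, the $z$-independent mode of $W[f]$ is not ``strictly easier'' once analytic weights are in play---its $\ell^2$ control costs a horizontal derivative, and again needs the $\sum_{j_3}j_3^{-2}$ trick to stay at the level of $\|A^{r+1/2}e^{\tau A}f\|$.
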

\begin{proof}
First, $\Big|\Big\langle A^r e^{\tau A} \big((\int_0^z \nabla\cdot f(\boldsymbol{x}',s)ds)\partial_z g\big), A^r e^{\tau A} h  \Big\rangle\Big| = \Big|\Big\langle (\int_0^z \nabla\cdot f(\boldsymbol{x}',s)ds)\partial_z g, A^r e^{\tau A} H  \Big\rangle\Big| $. Since $\overline{f}=0$, one has
$
    f(\boldsymbol{x}) = \sum\limits_{\substack{\boldsymbol{j}\in \mathbb{Z}^3 \\ j_3 \neq 0}} \hat{f}_{\boldsymbol{j}} e^{2\pi( i\boldsymbol{j}' \cdot \boldsymbol{x}' +  ij_3 z)} 
$
where $\boldsymbol{j}'=(j_1,j_2)$. Then we have
\begin{equation*}
  \int_0^z \nabla \cdot f(\boldsymbol{x}',s) ds = \sum\limits_{\substack{\boldsymbol{j}\in \mathbb{Z}^3 \\ j_3 \neq 0, \boldsymbol{j}' \neq 0}} \frac{1}{j_3}\boldsymbol{j}'\cdot\hat{f}_{\boldsymbol{j}} e^{2\pi( i\boldsymbol{j}' \cdot \boldsymbol{x}' +  ij_3 z)} - \sum\limits_{\substack{\boldsymbol{j}\in \mathbb{Z}^3 \\ j_3 \neq 0, \boldsymbol{j}' \neq 0}} \frac{1}{j_3}\boldsymbol{j}'\cdot\hat{f}_{\boldsymbol{j}} e^{2\pi i\boldsymbol{j}'\cdot \boldsymbol{x}'}.  
\end{equation*}
Therefore, 
\begin{eqnarray*}
&&\hskip-.8in
\Big|\Big\langle (\int_0^z \nabla\cdot f(s)ds)\partial_z g, A^r e^{\tau A} H  \Big\rangle\Big| \leq \Big|\Big\langle (\sum\limits_{\substack{\boldsymbol{j}\in \mathbb{Z}^3 \\ j_3 \neq 0, \boldsymbol{j}' \neq 0}} \frac{1}{j_3}\boldsymbol{j}'\cdot\hat{f}_{\boldsymbol{j}} e^{2\pi( i\boldsymbol{j}' \cdot \boldsymbol{x}' +  ij_3 z)})\partial_z g, A^r e^{\tau A} H  \Big\rangle\Big| \nonumber\\
&&\hskip.8in
+ \Big|\Big\langle (\sum\limits_{\substack{\boldsymbol{j}\in \mathbb{Z}^3 \\ j_3 \neq 0, \boldsymbol{j}' \neq 0}} \frac{1}{j_3}\boldsymbol{j}'\cdot\hat{f}_{\boldsymbol{j}} e^{i\boldsymbol{j}'\cdot \boldsymbol{x}'})\partial_z g, A^r e^{\tau A} H  \Big\rangle\Big| =: I_1 + I_2 .
\end{eqnarray*}

Let us estimate $I_2$ first. For $\boldsymbol{l} = (\boldsymbol{l}', l_3) = (-\boldsymbol{j}'-\boldsymbol{k}', -k_3)$, by using the inequalities 
\begin{equation*}
    |\boldsymbol{j}'|^{\frac{1}{2}} \leq C(|\boldsymbol{k}|^{\frac{1}{2}} + |\boldsymbol{l}|^{\frac{1}{2}}), \;\;\; |\boldsymbol{k}|^{\frac{1}{2}} \leq C(|\boldsymbol{j}'|^{\frac{1}{2}} + |\boldsymbol{l}|^{\frac{1}{2}}), \;\;\;
    |\boldsymbol{l}|^r  \leq C_r (|\boldsymbol{j}'|^r+|\boldsymbol{k}|^r),
\end{equation*}
one has
\begin{eqnarray*}
&&\hskip-.8in 
I_2 \leq \sum\limits_{\substack{\boldsymbol{j}'+\boldsymbol{k}'+\boldsymbol{l}'=0\\  k_3+l_3 = 0 \\  j_3, k_3, \boldsymbol{j}' \neq 0}} C_r\frac{1}{|j_3|}|\boldsymbol{j}'||k_3||\hat{f}_{\boldsymbol{j}}||\hat{g}_{\boldsymbol{k}}|(|\boldsymbol{j}'|^r+|\boldsymbol{k}|^r)e^{\tau |\boldsymbol{j}'|}e^{\tau |\boldsymbol{k}|}|\boldsymbol{l}|^r e^{\tau |\boldsymbol{l}|}|\hat{h}_{\boldsymbol{l}}|\nonumber\\
&&\hskip-.68in 
\leq \sum\limits_{\substack{\boldsymbol{j}'+\boldsymbol{k}'+\boldsymbol{l}'=0\\  k_3+l_3 = 0 \\  j_3, k_3, \boldsymbol{j}' \neq 0}} C_r\frac{1}{|j_3|}|\hat{f}_{\boldsymbol{j}}||\hat{g}_{\boldsymbol{k}}|(|\boldsymbol{j}'|^{r+1}|\boldsymbol{k}|+|\boldsymbol{j}'||\boldsymbol{k}|^{r+1})e^{\tau |\boldsymbol{j}|}e^{\tau |\boldsymbol{k}|}|\boldsymbol{l}|^r e^{\tau |\boldsymbol{l}|}|\hat{h}_{\boldsymbol{l}}| \nonumber\\
&&\hskip-.68in 
\leq  \sum\limits_{\substack{\boldsymbol{j}'+\boldsymbol{k}'+\boldsymbol{l}'=0\\  k_3+l_3 = 0 \\  j_3, k_3, \boldsymbol{j}' \neq 0}} C_r \frac{1}{|j_3|}\Big(|\boldsymbol{k}|^{\frac{3}{2}}|\boldsymbol{j}'|^{r+\frac{1}{2}} |\boldsymbol{l}|^{r} + |\boldsymbol{k}||\boldsymbol{j}'|^{r+\frac{1}{2}} |\boldsymbol{l}|^{r+\frac{1}{2}} + |\boldsymbol{j}'|^{\frac{3}{2}}|\boldsymbol{k}|^{r+\frac{1}{2}} |\boldsymbol{l}|^{r}\nonumber\\
&&\hskip.38in 
 + |\boldsymbol{j}'||\boldsymbol{k}|^{r+\frac{1}{2}}|\boldsymbol{l}|^{r+\frac{1}{2}} \Big) e^{\tau |\boldsymbol{j}|}e^{\tau |\boldsymbol{k}|} e^{\tau |\boldsymbol{l}|}  |\hat{f}_{\boldsymbol{j}}| |\hat{g}_{\boldsymbol{k}}| |\hat{h}_{\boldsymbol{l}}| =: B_1 + B_2 + B_3 + B_4.
\end{eqnarray*}

When $k_3 \neq 0$ and $r>2$, we know that $|\boldsymbol{k}|^{1-r} \leq |(\boldsymbol{k}',\pm 1)|^{1-r}$ and $\sum\limits_{\boldsymbol{k}'\in \mathbb{Z}^2} |(\boldsymbol{k}',\pm 1)|^{2-2r} \leq C_r$ is finite. Thanks to the Cauchy–Schwarz inequality, we have
\begin{eqnarray*}
&&\hskip-.8in 
B_1 = \sum\limits_{\substack{\boldsymbol{j}'+\boldsymbol{k}'+\boldsymbol{l}'=0\\  k_3+l_3 = 0 \\  j_3, k_3, \boldsymbol{j}' \neq 0}}  C_r \frac{1}{|j_3|} |\boldsymbol{k}|^{\frac{3}{2}}|\boldsymbol{j}'|^{r+\frac{1}{2}} |\boldsymbol{l}|^{r} e^{\tau |\boldsymbol{j}|}e^{\tau |\boldsymbol{k}|} e^{\tau |\boldsymbol{l}|}  |\hat{f}_{\boldsymbol{j}}| |\hat{g}_{\boldsymbol{k}}| |\hat{h}_{\boldsymbol{l}}| \nonumber \\
&&\hskip-.58in 
= C_r \sum\limits_{\substack{\boldsymbol{k}\in \mathbb{Z}^3 \\ k_3\neq 0} } |\boldsymbol{k}|^{\frac{3}{2}} |\hat{g}_{\boldsymbol{k}}| e^{\tau |\boldsymbol{k}|} \sum\limits_{\substack{\boldsymbol{j}\in \mathbb{Z}^3 \\ j_3, \boldsymbol{j}' \neq 0} } \frac{1}{|j_3|}  |\boldsymbol{j}'|^{r+\frac{1}{2}} e^{\tau |\boldsymbol{j}|}|\hat{f}_{\boldsymbol{j}}|  |(\boldsymbol{j}'+\boldsymbol{k}',k_3)|^{r}e^{\tau |(\boldsymbol{j}'+\boldsymbol{k}',k_3)|}|\hat{h}_{-(\boldsymbol{j}'+\boldsymbol{k}',k_3)}| \nonumber\\
&&\hskip-.58in 
\leq C_r \sum\limits_{\boldsymbol{k}'\in \mathbb{Z}^2} |(\boldsymbol{k}',\pm 1)|^{1-r}  \sum\limits_{k_3\neq 0}  |\boldsymbol{k}|^{r+\frac{1}{2}} |\hat{g}_{\boldsymbol{k}}| e^{\tau |\boldsymbol{k}|} \Big( \sum\limits_{\substack{\boldsymbol{j}\in \mathbb{Z}^3 \\ \boldsymbol{j}\neq 0} } |\boldsymbol{j}|^{2r+1} e^{2\tau |\boldsymbol{j}|} |\hat{f}_{\boldsymbol{j}}|^2\Big)^{\frac{1}{2}}  \nonumber \\
&&\hskip-.38in
\times \Big( \sum\limits_{j_3\neq 0} \frac{1}{|j_3|^2} \sum\limits_{\boldsymbol{j}'\in \mathbb{Z}^2 }  |(\boldsymbol{j}'+\boldsymbol{k}',k_3)|^{2r}e^{2\tau |(\boldsymbol{j}'+\boldsymbol{k}',k_3)|}|\hat{h}_{-(\boldsymbol{j}'+\boldsymbol{k}',k_3)}|^2\Big)^{\frac{1}{2}}  \nonumber\\
&&\hskip-.58in \leq C_r \|A^{r+\frac{1}{2}} e^{\tau A} f\| \sum\limits_{\boldsymbol{k}'\in \mathbb{Z}^2} |(\boldsymbol{k}',\pm 1)|^{1-r} \Big(\sum\limits_{k_3\neq 0}  |\boldsymbol{k}|^{2r+1} |\hat{g}_{\boldsymbol{k}}|^2 e^{2\tau |\boldsymbol{k}|} \Big)^{\frac{1}{2}}  \nonumber \\
&&\hskip-.38in
\times \Big( \sum\limits_{k_3\neq 0} \sum\limits_{\boldsymbol{j}'\in \mathbb{Z}^2 }  |(\boldsymbol{j}'+\boldsymbol{k}',k_3)|^{2r}e^{2\tau |(\boldsymbol{j}'+\boldsymbol{k}',k_3)|}|\hat{h}_{-(\boldsymbol{j}'+\boldsymbol{k}',k_3)}|^2\Big)^{\frac{1}{2}}  \nonumber\\
&&\hskip-.58in \leq C_r \|A^{r+\frac{1}{2}} e^{\tau A} f\| \|A^{r} e^{\tau A} h\| \Big(\sum\limits_{\boldsymbol{k}'\in \mathbb{Z}^2} |(\boldsymbol{k}',\pm 1)|^{2-2r}\Big)^{\frac{1}{2}} \Big( \sum\limits_{\boldsymbol{k}'\in \mathbb{Z}^2} \sum\limits_{k_3\neq 0}    |\boldsymbol{k}|^{2r+1} |\hat{g}_{\boldsymbol{k}}|^2 e^{2\tau |\boldsymbol{k}|} \Big)^{\frac{1}{2}}  \nonumber \\
&&\hskip-.58in
\leq C_r \|A^{r+\frac{1}{2}} e^{\tau A} f\|  \|A^{r+\frac{1}{2}} e^{\tau A} g\| \|A^{r} e^{\tau A} h\|.
\end{eqnarray*}
The estimate for $B_2$ is similar as $B_1$, and we can get $B_2 \leq C_r \|A^{r+\frac{1}{2}} e^{\tau A} f\|  \|A^{r} e^{\tau A} g\| \|A^{r+\frac{1}{2}} e^{\tau A} h\|$. For $B_3$, thanks to the Cauchy–Schwarz inequality, since $r>2$, we have
\begin{eqnarray*}
&&\hskip-.8in 
B_3 = \sum\limits_{\substack{\boldsymbol{j}'+\boldsymbol{k}'+\boldsymbol{l}'=0\\  k_3+l_3 = 0 \\  j_3, k_3, \boldsymbol{j}' \neq 0}}  C_r \frac{1}{|j_3|} |\boldsymbol{j}'|^{\frac{3}{2}}|\boldsymbol{k}|^{r+\frac{1}{2}} |\boldsymbol{l}|^{r} e^{\tau |\boldsymbol{j}|}e^{\tau |\boldsymbol{k}|} e^{\tau |\boldsymbol{l}|}  |\hat{f}_{\boldsymbol{j}}| |\hat{g}_{\boldsymbol{k}}| |\hat{h}_{\boldsymbol{l}}| \nonumber \\
&&\hskip-.58in 
= C_r \sum\limits_{\substack{\boldsymbol{j}\in \mathbb{Z}^3 \\ j_3, \boldsymbol{j}'\neq 0} }  \frac{1}{|j_3|}|\boldsymbol{j}'|^{\frac{3}{2}} |\hat{f}_{\boldsymbol{j}}| e^{\tau |\boldsymbol{j}|} \sum\limits_{\substack{\boldsymbol{k}\in \mathbb{Z}^3 \\ k_3 \neq 0} }  |\boldsymbol{k}|^{r+\frac{1}{2}} e^{\tau |\boldsymbol{k}|}|\hat{g}_{\boldsymbol{k}}|  |(\boldsymbol{j}'+\boldsymbol{k}',k_3)|^{r}e^{\tau |(\boldsymbol{j}'+\boldsymbol{k}',k_3)|}|\hat{h}_{-(\boldsymbol{j}'+\boldsymbol{k}',k_3)}| \nonumber\\
&&\hskip-.58in 
\leq C_r \Big(\sum\limits_{\substack{\boldsymbol{j}\in \mathbb{Z}^3 \\ j_3, \boldsymbol{j}'\neq 0} }  \frac{1}{|j_3|^2}|\boldsymbol{j}'|^{2-2r} \Big)^{\frac{1}{2}} \Big(\sum\limits_{\substack{\boldsymbol{j}\in \mathbb{Z}^3 \\ j_3, \boldsymbol{j}'\neq 0} }   |\boldsymbol{j}|^{2r+1} |\hat{f}_{\boldsymbol{j}}|^2 e^{2\tau |\boldsymbol{j}|} \Big)^{\frac{1}{2}} 
\Big( \sum\limits_{\substack{\boldsymbol{k}\in \mathbb{Z}^3 \\ k_3\neq 0} } |\boldsymbol{k}|^{2r+1} e^{2\tau |\boldsymbol{k}|} |\hat{g}_{\boldsymbol{k}}|^2\Big)^{\frac{1}{2}}  \nonumber \\
&&\hskip-.38in
\times \sup\limits_{\boldsymbol{j}\in \mathbb{Z}^3} \Big( \sum\limits_{\substack{\boldsymbol{k}\in \mathbb{Z}^3 \\ k_3\neq 0}}  |(\boldsymbol{j}'+\boldsymbol{k}',k_3)|^{2r}e^{2\tau |(\boldsymbol{j}'+\boldsymbol{k}',k_3)|}|\hat{h}_{-(\boldsymbol{j}'+\boldsymbol{k}',k_3)}|^2\Big)^{\frac{1}{2}}  \nonumber\\
&&\hskip-.58in
\leq C_r \|A^{r+\frac{1}{2}} e^{\tau A} f\|  \|A^{r+\frac{1}{2}} e^{\tau A} g\| \|A^{r} e^{\tau A} h\|.
\end{eqnarray*}
The estimate for $B_4$ is similar as $B_3$, and we can get $B_4 \leq C_r \|A^{r} e^{\tau A} f\|  \|A^{r+\frac{1}{2}} e^{\tau A} g\| \|A^{r+\frac{1}{2}} e^{\tau A} h\|$. The estimates of $B_1$ to $B_4$ indicate that $I_2$ satisfies the desired inequality. 

Now let us estimate on $I_1$.  For $  \boldsymbol{j}+\boldsymbol{k}+ \boldsymbol{l}=0$, by using the inequalities 
\begin{equation*}
    |\boldsymbol{j}|^{\frac{1}{2}} \leq C(|\boldsymbol{k}|^{\frac{1}{2}} + |\boldsymbol{l}|^{\frac{1}{2}}), \;\;\; |\boldsymbol{k}|^{\frac{1}{2}} \leq C(|\boldsymbol{j}|^{\frac{1}{2}} + |\boldsymbol{l}|^{\frac{1}{2}}), \;\;\;
    |\boldsymbol{l}|^r  \leq C_r (|\boldsymbol{j}|^r+|\boldsymbol{k}|^r),
\end{equation*}
we have
\begin{eqnarray*}
&&\hskip-.8in 
I_1 \leq \sum\limits_{\substack{\boldsymbol{j}+\boldsymbol{k}+\boldsymbol{l}=0\\    j_3, k_3, \boldsymbol{j}' \neq 0}} C_r\frac{1}{|j_3|}|\boldsymbol{j}'||k_3||\hat{f}_{\boldsymbol{j}}||\hat{g}_{\boldsymbol{k}}|(|\boldsymbol{j}|^r+|\boldsymbol{k}|^r)e^{\tau |\boldsymbol{j}|}e^{\tau |\boldsymbol{k}|}|\boldsymbol{l}|^r e^{\tau |\boldsymbol{l}|}|\hat{h}_{\boldsymbol{l}}|\nonumber\\
&&\hskip-.68in 
\leq \sum\limits_{\substack{\boldsymbol{j}+\boldsymbol{k}+\boldsymbol{l}=0\\    j_3, k_3, \boldsymbol{j}' \neq 0}} C_r\frac{1}{|j_3|}|\hat{f}_{\boldsymbol{j}}||\hat{g}_{\boldsymbol{k}}|(|\boldsymbol{j}|^{r+1}|\boldsymbol{k}|+|\boldsymbol{j}||\boldsymbol{k}|^{r+1})e^{\tau |\boldsymbol{j}|}e^{\tau |\boldsymbol{k}|}|\boldsymbol{l}|^r e^{\tau |\boldsymbol{l}|}|\hat{h}_{\boldsymbol{l}}| \nonumber\\
&&\hskip-.68in 
\leq  \sum\limits_{\substack{\boldsymbol{j}+\boldsymbol{k}+\boldsymbol{l}=0\\    j_3, k_3, \boldsymbol{j}' \neq 0}} C_r \frac{1}{|j_3|}\Big(|\boldsymbol{k}|^{\frac{3}{2}}|\boldsymbol{j}|^{r+\frac{1}{2}} |\boldsymbol{l}|^{r} + |\boldsymbol{k}||\boldsymbol{j}|^{r+\frac{1}{2}} |\boldsymbol{l}|^{r+\frac{1}{2}} + |\boldsymbol{j}|^{\frac{3}{2}}|\boldsymbol{k}|^{r+\frac{1}{2}} |\boldsymbol{l}|^{r} \nonumber\\
&&\hskip.48in 
  + |\boldsymbol{j}||\boldsymbol{k}|^{r+\frac{1}{2}}|\boldsymbol{l}|^{r+\frac{1}{2}} \Big) e^{\tau |\boldsymbol{j}|}e^{\tau |\boldsymbol{k}|} e^{\tau |\boldsymbol{l}|}  |\hat{f}_{\boldsymbol{j}}| |\hat{g}_{\boldsymbol{k}}| |\hat{h}_{\boldsymbol{l}}| =: \widetilde{B}_1 + \widetilde{B}_2 + \widetilde{B}_3 + \widetilde{B}_4.
\end{eqnarray*}
Thanks to the Cauchy–Schwarz inequality, since $r>2$, we have
\begin{eqnarray*}
&&\hskip-.8in 
\widetilde{B}_1 = \sum\limits_{\substack{\boldsymbol{j}+\boldsymbol{k}+\boldsymbol{l}=0\\  j_3, k_3, \boldsymbol{j}' \neq 0}}  C_r \frac{1}{|j_3|} |\boldsymbol{k}|^{\frac{3}{2}}|\boldsymbol{j}|^{r+\frac{1}{2}} |\boldsymbol{l}|^{r} e^{\tau |\boldsymbol{j}|}e^{\tau |\boldsymbol{k}|} e^{\tau |\boldsymbol{l}|}  |\hat{f}_{\boldsymbol{j}}| |\hat{g}_{\boldsymbol{k}}| |\hat{h}_{\boldsymbol{l}}| \nonumber \\
&&\hskip-.58in 
= C_r \sum\limits_{\substack{\boldsymbol{k}\in \mathbb{Z}^3 \\ k_3\neq 0} } |\boldsymbol{k}|^{\frac{3}{2}} |\hat{g}_{\boldsymbol{k}}| e^{\tau |\boldsymbol{k}|} \sum\limits_{\substack{\boldsymbol{j}\in \mathbb{Z}^3 \\ j_3, \boldsymbol{j}' \neq 0} } \frac{1}{|j_3|}  |\boldsymbol{j}|^{r+\frac{1}{2}} e^{\tau |\boldsymbol{j}|}|\hat{f}_{\boldsymbol{j}}|  |\boldsymbol{j}+\boldsymbol{k}|^{r}e^{\tau |\boldsymbol{j}+\boldsymbol{k}|}|\hat{h}_{-\boldsymbol{j}-\boldsymbol{k}}| \nonumber\\
&&\hskip-.58in 
\leq C_r \sum\limits_{\boldsymbol{k}'\in \mathbb{Z}^2} |(\boldsymbol{k}',\pm 1)|^{1-r}  \sum\limits_{k_3\neq 0}  |\boldsymbol{k}|^{r+\frac{1}{2}} |\hat{g}_{\boldsymbol{k}}| e^{\tau |\boldsymbol{k}|} \Big( \sum\limits_{\substack{\boldsymbol{j}\in \mathbb{Z}^3 \\ \boldsymbol{j}\neq 0} } |\boldsymbol{j}|^{2r+1} e^{2\tau |\boldsymbol{j}|} |\hat{f}_{\boldsymbol{j}}|^2\Big)^{\frac{1}{2}}  \nonumber \\
&&\hskip-.38in
\times \Big( \sum\limits_{j_3\neq 0} \frac{1}{|j_3|^2} \sum\limits_{\boldsymbol{j}'\in \mathbb{Z}^2 }  |(\boldsymbol{j}'+\boldsymbol{k}',j_3+k_3)|^{2r}e^{2\tau |(\boldsymbol{j}'+\boldsymbol{k}',j_3+k_3)|}|\hat{h}_{-(\boldsymbol{j}'+\boldsymbol{k}',j_3+k_3)}|^2\Big)^{\frac{1}{2}}  \nonumber\\
&&\hskip-.58in  \leq C_r \|A^{r+\frac{1}{2}} e^{\tau A} f\| \sum\limits_{\boldsymbol{k}'\in \mathbb{Z}^2} |(\boldsymbol{k}',\pm 1)|^{1-r} \Big(\sum\limits_{k_3\neq 0}  |\boldsymbol{k}|^{2r+1} |\hat{g}_{\boldsymbol{k}}|^2 e^{2\tau |\boldsymbol{k}|} \Big)^{\frac{1}{2}}  \nonumber \\
&&\hskip-.38in
\times \Big( \sum\limits_{j_3\neq 0} \frac{1}{|j_3|^2} \sum\limits_{k_3\neq 0} \sum\limits_{\boldsymbol{j}'\in \mathbb{Z}^2 }  |(\boldsymbol{j}'+\boldsymbol{k}',j_3+k_3)|^{2r}e^{2\tau |(\boldsymbol{j}'+\boldsymbol{k}',j_3+k_3)|}|\hat{h}_{-(\boldsymbol{j}'+\boldsymbol{k}',j_3+k_3)}|^2\Big)^{\frac{1}{2}} \nonumber \\
&&\hskip-.58in \leq C_r \|A^{r+\frac{1}{2}} e^{\tau A} f\| \|A^{r} e^{\tau A} h\| \Big(\sum\limits_{\boldsymbol{k}'\in \mathbb{Z}^2} |(\boldsymbol{k}',\pm 1)|^{2-2r}\Big)^{\frac{1}{2}} \Big(\sum\limits_{\boldsymbol{k}'\in \mathbb{Z}^2} \sum\limits_{k_3\neq 0}  |\boldsymbol{k}|^{2r+1} |\hat{g}_{\boldsymbol{k}}|^2 e^{2\tau |\boldsymbol{k}|} \Big)^{\frac{1}{2}}  \nonumber \\
&&\hskip-.58in
\leq C_r \|A^{r+\frac{1}{2}} e^{\tau A} f\|  \|A^{r+\frac{1}{2}} e^{\tau A} g\| \|A^{r} e^{\tau A} h\|,
\end{eqnarray*}
where in the second inequality, we use Fubini theorem to exchange the order of $\sum\limits_{j_3\neq 0}$ and $\sum\limits_{k_3\neq 0}$. The estimate for $\widetilde{B}_2$ is similar to $\widetilde{B}_1$, and we can get $\widetilde{B}_2 \leq C_r \|A^{r+\frac{1}{2}} e^{\tau A} f\|  \|A^{r} e^{\tau A} g\| \|A^{r+\frac{1}{2}} e^{\tau A} h\| $. 
For $\widetilde{B}_3$, thanks to the Cauchy–Schwarz inequality, since $r>2$, we have
\begin{eqnarray*}
&&\hskip-.8in 
\widetilde{B}_3 = \sum\limits_{\substack{\boldsymbol{j}+\boldsymbol{k}+\boldsymbol{l}=0\\  j_3, k_3, \boldsymbol{j}' \neq 0}}  C_r \frac{1}{|j_3|} |\boldsymbol{j}|^{\frac{3}{2}}|\boldsymbol{k}|^{r+\frac{1}{2}} |\boldsymbol{l}|^{r} e^{\tau |\boldsymbol{j}|}e^{\tau |\boldsymbol{k}|} e^{\tau |\boldsymbol{l}|}  |\hat{f}_{\boldsymbol{j}}| |\hat{g}_{\boldsymbol{k}}| |\hat{h}_{\boldsymbol{l}}| \nonumber \\
&&\hskip-.58in 
= C_r \sum\limits_{\substack{\boldsymbol{j}\in \mathbb{Z}^3 \\ j_3, \boldsymbol{j}' \neq 0} } \frac{1}{|j_3|}|\boldsymbol{j}|^{\frac{3}{2}} e^{\tau |\boldsymbol{j}|}|\hat{f}_{\boldsymbol{j}}|  \sum\limits_{\substack{\boldsymbol{k}\in \mathbb{Z}^3 \\ k_3\neq 0} } |\boldsymbol{k}|^{r+\frac{1}{2}} |\hat{g}_{\boldsymbol{k}}| e^{\tau |\boldsymbol{k}|}    |\boldsymbol{j}+\boldsymbol{k}|^{r}e^{\tau |\boldsymbol{j}+\boldsymbol{k}|}|\hat{h}_{-\boldsymbol{j}-\boldsymbol{k}}| \nonumber\\
&&\hskip-.58in 
\leq C_r \Big(\sum\limits_{\substack{\boldsymbol{j}\in \mathbb{Z}^3 \\ j_3, \boldsymbol{j}'\neq 0} }  \frac{1}{|j_3|^2}|\boldsymbol{j}'|^{2-2r} \Big)^{\frac{1}{2}} \Big(\sum\limits_{\substack{\boldsymbol{j}\in \mathbb{Z}^3 \\ j_3, \boldsymbol{j}'\neq 0} }   |\boldsymbol{j}|^{2r+1} |\hat{f}_{\boldsymbol{j}}|^2 e^{2\tau |\boldsymbol{j}|} \Big)^{\frac{1}{2}} 
\Big( \sum\limits_{\substack{\boldsymbol{k}\in \mathbb{Z}^3 \\ k_3\neq 0} } |\boldsymbol{k}|^{2r+1} e^{2\tau |\boldsymbol{k}|} |\hat{g}_{\boldsymbol{k}}|^2\Big)^{\frac{1}{2}}  \nonumber \\
&&\hskip-.38in
\times \sup\limits_{\boldsymbol{j}\in \mathbb{Z}^3} \Big( \sum\limits_{\substack{\boldsymbol{k}\in \mathbb{Z}^3 \\ k_3\neq 0}}  |\boldsymbol{j}+\boldsymbol{k}|^{2r}e^{2\tau |\boldsymbol{j}+\boldsymbol{k}|}|\hat{h}_{-\boldsymbol{j}-\boldsymbol{k}}|^2\Big)^{\frac{1}{2}}  
\leq C_r \|A^{r+\frac{1}{2}} e^{\tau A} f\|  \|A^{r+\frac{1}{2}} e^{\tau A} g\| \|A^{r} e^{\tau A} h\|,
\end{eqnarray*}
where in the first inequality we use $|\boldsymbol{j}|^{2-2r}\leq|\boldsymbol{j}'|^{2-2r}$ due to $r>2$. The estimate for $\widetilde{B}_4$ is similar as $\widetilde{B}_3$, and we can get $\widetilde{B}_4 \leq C_r \|A^{r} e^{\tau A} f\|  \|A^{r+\frac{1}{2}} e^{\tau A} g\| \|A^{r+\frac{1}{2}} e^{\tau A} h\|$. The estimates of $\widetilde{B}_1$ to $\widetilde{B}_4$ indicate that $I_1$ satisfies the desired inequality. 
\end{proof}

Lemma \ref{lemma-difference-type1}--\ref{lemma-difference-type4} play an essential role in the proof of Theorem \ref{theorem-main}. First, let us state the following:
\begin{lemma} \label{lemma-difference-type1}
For $f, g, h\in \mathcal{D}(e^{\tau A}: H^{r+\frac{1}{2}})$, where $r>\frac{5}{2}$ and $\tau\geq 0$, one has
\begin{eqnarray*}
&&\hskip-.8in
\Big|\Big\langle A^r e^{\tau A} (f\cdot \nabla g), A^r e^{\tau A} h  \Big\rangle - \Big\langle  f\cdot \nabla A^r e^{\tau A} g, A^r e^{\tau A} h  \Big\rangle\Big| \nonumber\\
&&\hskip-.9in
\leq C_r \|A^r f\| \|A^{r}  g\| \|A^{r}  h\| + C_r \tau \|A^{r+\frac{1}{2}} e^{\tau A} f\| \|A^{r+\frac{1}{2}} e^{\tau A} g\| \|A^{r+\frac{1}{2}} e^{\tau A} h\|.  
\end{eqnarray*}
\end{lemma}
\begin{lemma} \label{lemma-difference-type2}
For $f, g, h\in \mathcal{D}(e^{\tau A}: H^{r+\frac{1}{2}})$, where $r>\frac{5}{2}$ and $\tau\geq 0$, one has
\begin{eqnarray*}
&&\hskip-.8in
\Big|\Big\langle A^r e^{\tau A} \big( (\nabla\cdot f)g\big), A^r e^{\tau A} h  \Big\rangle - \Big\langle  (\nabla\cdot A^r e^{\tau A} f)g, A^r e^{\tau A} h  \Big\rangle\Big| \nonumber \\
&&\hskip-.9in
\leq C_r \|A^r f\| \|A^{r}  g\| \|A^{r}  h\| + C_r \tau \|A^{r+\frac{1}{2}} e^{\tau A} f\| \|A^{r+\frac{1}{2}} e^{\tau A} g\| \|A^{r+\frac{1}{2}} e^{\tau A} h\|.  
\end{eqnarray*}

\end{lemma}
The proof of Lemma \ref{lemma-difference-type1} is similarly to that of Lemma 8 in \cite{LO97} since it involves nonlinear term similar to that appearing in the Euler equations. The proof of Lemma \ref{lemma-difference-type2} is similarly to that of Lemma \ref{lemma-difference-type1}. Therefore, they are omitted. 

The next two lemmas provide the estimates for nonlinear terms which are specific to the structure of the PEs. 
\begin{lemma} \label{lemma-difference-type3}
For $f\in \mathcal{D}(e^{\tau A}: H^{r+\frac{3}{2}})$ and $g, h\in \mathcal{D}(e^{\tau A}: H^{r+\frac{1}{2}})$, where $r>\frac{5}{2}$, $\tau\geq 0$, and $\overline{f} = 0$, one has
\begin{eqnarray*}
&&\hskip-.8in
\Big|\Big\langle A^r e^{\tau A} \Big( (\int_0^z \nabla\cdot f(\boldsymbol{x}',s)ds) \partial_z g  \Big), A^r e^{\tau A} h  \Big\rangle - \Big\langle  (\int_0^z \nabla\cdot f(\boldsymbol{x}',s)ds)A^r e^{\tau A} \partial_z g , A^r e^{\tau A} h  \Big\rangle\Big| \\
&&\hskip-.9in
\leq C_r \|A^{r+1} f\| \|A^{r}  g\| \|A^{r}  h\| + C_r \tau \|A^{r+\frac{3}{2}} e^{\tau A} f\| \|A^{r+\frac{1}{2}} e^{\tau A} g\| \|A^{r+\frac{1}{2}} e^{\tau A} h\|.  
\end{eqnarray*}

\end{lemma}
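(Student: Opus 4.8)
The plan is to recognize the left-hand side as a commutator estimate and to reduce it to the two techniques already in place: the Fourier-series bookkeeping used for Lemma~\ref{lemma-type3}, which handles the operator $f\mapsto\int_0^z\nabla\cdot f(\boldsymbol{x}',s)\,ds$, and the symbol-difference (commutator) idea behind Lemma~\ref{lemma-difference-type1}. Writing $\mathcal{M}_f$ for multiplication by $\int_0^z\nabla\cdot f(\boldsymbol{x}',s)\,ds$, the quantity to bound is $\bigl|\langle[A^re^{\tau A},\mathcal{M}_f]\,\partial_z g,\;A^re^{\tau A}h\rangle\bigr|$. I would expand $f$, $g$, $h$ in Fourier series and expand $\mathcal{M}_f$ using the identity from the proof of Lemma~\ref{lemma-type3}, namely $\int_0^z\nabla\cdot f(\boldsymbol{x}',s)\,ds=\sum_{j_3\neq0,\ \boldsymbol{j}'\neq0}\tfrac{j_1+j_2}{j_3}\hat f_{\boldsymbol{j}}\,e^{2\pi i(\boldsymbol{j}'\cdot\boldsymbol{x}'+j_3 z)}-\sum_{j_3\neq0,\ \boldsymbol{j}'\neq0}\tfrac{j_1+j_2}{j_3}\hat f_{\boldsymbol{j}}\,e^{2\pi i\boldsymbol{j}'\cdot\boldsymbol{x}'}$, which is legitimate precisely because $\overline f=0$ (so $j_3\neq 0$ in each sum, and $|j_3|\ge1$). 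Summing over $\boldsymbol{j}+\boldsymbol{k}+\boldsymbol{l}=0$ (with the vertical constraint $k_3+l_3=0$ in the first piece), the commutator produces in every term the scalar factor $|\boldsymbol{l}|^re^{\tau|\boldsymbol{l}|}-|\boldsymbol{k}|^re^{\tau|\boldsymbol{k}|}$, where $\boldsymbol{k}$ is the frequency of $g$ (hence of $\partial_z g$) and $\boldsymbol{l}$ is the output frequency, so that $|\boldsymbol{l}-\boldsymbol{k}|\le|\boldsymbol{j}|$ (respectively $\le|\boldsymbol{j}'|$).

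The key elementary step is then to bound this factor by splitting $|\boldsymbol{l}|^re^{\tau|\boldsymbol{l}|}-|\boldsymbol{k}|^re^{\tau|\boldsymbol{k}|}=(|\boldsymbol{l}|^r-|\boldsymbol{k}|^r)e^{\tau|\boldsymbol{k}|}+|\boldsymbol{l}|^r(e^{\tau|\boldsymbol{l}|}-e^{\tau|\boldsymbol{k}|})$ and applying the mean value theorem to $x\mapsto x^r$ and to $x\mapsto e^{\tau x}$, which gives a $\tau$-free contribution controlled by $C_r|\boldsymbol{j}|\bigl(|\boldsymbol{l}|^{r-1}+|\boldsymbol{k}|^{r-1}\bigr)e^{\tau|\boldsymbol{k}|}$ and a $\tau$-weighted contribution controlled by $C_r\,\tau\,|\boldsymbol{j}|\,|\boldsymbol{l}|^r\,e^{\tau|\boldsymbol{j}|}e^{\tau|\boldsymbol{k}|}$ (using $e^{\tau\max(|\boldsymbol{l}|,|\boldsymbol{k}|)}\le e^{\tau|\boldsymbol{j}|}e^{\tau|\boldsymbol{k}|}$). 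In the first contribution the $f$-factor carries no exponential weight while in the second it carries $e^{\tau|\boldsymbol{j}|}$, which is exactly what produces the two different norms of $f$ in the statement. Observe that the operator $\mathcal{M}_f$ contributes the weight $|j_1+j_2|/|j_3|\lesssim|\boldsymbol{j}'|$ on $f$ and $\partial_z$ contributes $|k_3|\le|\boldsymbol{k}|$ on $g$, so that in the $\tau$-free contribution the $f$-factor will accumulate roughly $|\boldsymbol{j}|^{r-1}\cdot|\boldsymbol{j}'|\cdot|\boldsymbol{j}|\le|\boldsymbol{j}|^{r+1}$ (after moving one half-power via $|\boldsymbol{l}|^{r-1}\le C_r(|\boldsymbol{j}|^{r-1}+|\boldsymbol{k}|^{r-1})$), accounting for the $\|A^{r+1}f\|$ in the conclusion, and correspondingly $|\boldsymbol{j}|^{r+3/2}$ weighted by $e^{\tau A}$ in the other piece.

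It then remains to distribute the surviving half-powers of $|\boldsymbol{l}|$ and $|\boldsymbol{k}|$ across the three factors exactly as in the proof of Lemma~\ref{lemma-type3}: using $|\boldsymbol{k}|^{1/2}\le|\boldsymbol{j}'|^{1/2}+|\boldsymbol{l}|^{1/2}$ and $|\boldsymbol{l}|^{1/2}\le|\boldsymbol{j}'|^{1/2}+|\boldsymbol{k}|^{1/2}$, each of the two contributions splits into four sums (the analogues of $B_1,\dots,B_4$ and of $\widetilde B_1,\dots,\widetilde B_4$ there, corresponding to the two pieces of $\int_0^z\nabla\cdot f$), on which I would apply Cauchy--Schwarz in the $\boldsymbol{j}$ and $\boldsymbol{k}$ variables and absorb the remaining $|j_3|^{-1}$ weight and the extra integer power of $|\boldsymbol{j}'|$ into the $f$-factor; the tail sums $\sum_{j_3\neq0}|j_3|^{-2}<\infty$ and $\sum_{\boldsymbol{j}'\in\mathbb{Z}^2}|(\boldsymbol{j}',\pm1)|^{2-2r}<\infty$ converge since $r>5/2>2$. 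A power count carried out along these lines keeps the $g$- and $h$-factors at the level $A^r$ (resp.\ $A^{r+1/2}e^{\tau A}$) and deposits all the remaining regularity onto $f$, yielding exactly $\|A^{r+1}f\|\|A^rg\|\|A^rh\|$ plus $\tau\|A^{r+3/2}e^{\tau A}f\|\|A^{r+1/2}e^{\tau A}g\|\|A^{r+1/2}e^{\tau A}h\|$.

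The main obstacle is not any single inequality but the simultaneous bookkeeping in the eight sub-sums: one must track at once (i) the one extra derivative gained from the commutator, (ii) the weight $|\boldsymbol{j}'|/|j_3|$ built into $\mathcal{M}_f$ together with the $\partial_z$ factor, and (iii) which of the three factors receives the summable Sobolev tail, so that the derivative exponents on $g$ and $h$ never exceed $r$ (resp.\ $r+1/2$) and the leftover always lands on $f$ at the claimed order. The $z$-independent correction sum (with vertical frequency $k_3$ rather than $j_3+k_3$) is handled by the very same Cauchy--Schwarz and summability argument, which is precisely where the hypothesis $\overline f=0$ enters. Since apart from this the argument is a routine merge of the computations already performed for Lemma~\ref{lemma-type3} and Lemma~\ref{lemma-difference-type1}, I would present only the symbol-difference step in detail and indicate the redistribution/summation by reference to those proofs.
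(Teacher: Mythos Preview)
Your overall plan matches the paper's, but the symbol-difference step has a gap that prevents you from reaching the stated conclusion. Your splitting $(|\boldsymbol{l}|^r-|\boldsymbol{k}|^r)e^{\tau|\boldsymbol{k}|}+|\boldsymbol{l}|^r(e^{\tau|\boldsymbol{l}|}-e^{\tau|\boldsymbol{k}|})$ leaves the factor $e^{\tau|\boldsymbol{k}|}$ in what you call the ``$\tau$-free'' piece; that piece is then multiplied by $|\hat{H}_{\boldsymbol{l}}|=|\boldsymbol{l}|^r e^{\tau|\boldsymbol{l}|}|\hat{h}_{\boldsymbol{l}}|$, so it still carries exponential weights on both $g$ and $h$. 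After Cauchy--Schwarz and redistribution you would land on $\|A^{r+1}f\|\,\|A^r e^{\tau A}g\|\,\|A^r e^{\tau A}h\|$, not the pure-Sobolev product $\|A^{r+1}f\|\,\|A^r g\|\,\|A^r h\|$ that the lemma asserts. Tracking only whether the $f$-factor carries $e^{\tau|\boldsymbol{j}|}$ is not enough: in the first term of the conclusion \emph{none} of the three factors carries an exponential.

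The paper closes this with two ingredients you omit. First, in place of your mean-value split it uses the sharper inequality \eqref{lemma-inequality}, whose polynomial piece $|\xi-\eta|^{r-1}+\eta^{r-1}$ carries no exponential at all (this comes from first writing $\xi^r e^{\tau\xi}=\xi^r+\xi^r(e^{\tau\xi}-1)$ and only then differencing, using $e^x-1\le xe^x$). Second, it strips the exponential hidden in $|\hat{H}_{\boldsymbol{l}}|$ by the same trick, $|\hat{H}_{\boldsymbol{l}}|\le|\boldsymbol{l}|^r|\hat{h}_{\boldsymbol{l}}|+\tau(|\boldsymbol{j}|+|\boldsymbol{k}|)|\hat{H}_{\boldsymbol{l}}|$ (equation \eqref{Hl} in the proof of Lemma~\ref{lemma-difference-type4}). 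Only after both of these do you obtain a contribution that is genuinely exponential-free on all three factors; every cross term produced by the two splittings picks up a factor of $\tau$ and is absorbed into the second term of the conclusion. Your subsequent redistribution via the Lemma~\ref{lemma-type3} machinery is then correct. (A minor slip: the vertical constraint $k_3+l_3=0$ arises from the $z$-independent correction sum, i.e.\ the second piece of your expansion of $\int_0^z\nabla\cdot f$, not the first.)
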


\begin{lemma} \label{lemma-difference-type4}
For $g\in \mathcal{D}(e^{\tau A}: H^{r+\frac{3}{2}})$ and $f, h\in \mathcal{D}(e^{\tau A}: H^{r+\frac{1}{2}})$, where $r>\frac{5}{2}$, $\tau\geq 0$, and $\overline{f} = 0$, one has
\begin{eqnarray*}
&&\hskip-.8in
\Big|\Big\langle A^r e^{\tau A} \Big( (\int_0^z \nabla\cdot f(\boldsymbol{x}',s)ds) \partial_z g  \Big), A^r e^{\tau A} h  \Big\rangle -  \Big\langle \partial_z g A^r e^{\tau A} (\int_0^z \nabla\cdot f(\boldsymbol{x}',s)ds)  , A^r e^{\tau A} h  \Big\rangle\Big| \\
&&\hskip-.9in
\leq C_r \|A^{r+1} g\| \|A^{r}  f\| \|A^{r}  h\| + C_r \tau \|A^{r+\frac{3}{2}} e^{\tau A} g\| \|A^{r+\frac{1}{2}} e^{\tau A} f\| \|A^{r+\frac{1}{2}} e^{\tau A} h\|.  
\end{eqnarray*}
\end{lemma}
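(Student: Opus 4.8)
\textbf{Proof proposal for Lemma \ref{lemma-difference-type4}.} The plan is to mirror the argument for Lemma \ref{lemma-difference-type3}, the only difference being that the extra derivative produced by the commutator now lands on $g$ rather than on $f$. First I would pass to Fourier series: writing $W(\boldsymbol{x}):=\int_0^z\nabla\cdot f(\boldsymbol{x}',s)\,ds$, the left-hand side is the modulus of
\[
\sum_{\boldsymbol{j}+\boldsymbol{k}+\boldsymbol{l}=0}\Big(|\boldsymbol{l}|^r e^{\tau|\boldsymbol{l}|}-|\boldsymbol{j}|^r e^{\tau|\boldsymbol{j}|}\Big)\,|\boldsymbol{l}|^r e^{\tau|\boldsymbol{l}|}\,\widehat{W}_{\boldsymbol{j}}\,(2\pi i k_3\widehat{g}_{\boldsymbol{k}})\,\widehat{h}_{\boldsymbol{l}},
\]
because in the first inner product $A^r e^{\tau A}$ acts on the frequency $\boldsymbol{l}=-(\boldsymbol{j}+\boldsymbol{k})$ of the product $W\partial_z g$, while in the subtracted term it acts only on the frequency $\boldsymbol{j}$ of $W$, and $\partial_z g$ contributes the factor $2\pi i k_3$. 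Exactly as in the proof of Lemma \ref{lemma-type3}, since $\overline{f}=0$ the coefficient $\widehat{W}_{\boldsymbol{j}}$ is supported on $j_3\neq 0$ (together with a $z=0$ boundary contribution at horizontal frequency $\boldsymbol{j}'$) and satisfies $|\widehat{W}_{\boldsymbol{j}}|\le C\,|\boldsymbol{j}'|\,|j_3|^{-1}|\widehat{f}_{\boldsymbol{j}}|$; the weight $|j_3|^{-1}$ is harmless because $\sum_{j_3\neq 0}|j_3|^{-2}<\infty$, and $|\boldsymbol{j}'|\le|\boldsymbol{j}|$.

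The heart of the matter is the commutator bound for the symbol, which is the same one already used (implicitly) in Lemma \ref{lemma-difference-type1} and Lemma \ref{lemma-difference-type3}: from $|\boldsymbol{l}|\le|\boldsymbol{j}|+|\boldsymbol{k}|$ and $\big|\,|\boldsymbol{l}|-|\boldsymbol{j}|\,\big|\le|\boldsymbol{k}|$, writing $e^{\tau|\boldsymbol{l}|}=e^{\tau|\boldsymbol{j}|}e^{\tau(|\boldsymbol{l}|-|\boldsymbol{j}|)}$ and using $\big|\,|\boldsymbol{l}|^r-|\boldsymbol{j}|^r\,\big|\le C_r|\boldsymbol{k}|(|\boldsymbol{j}|^{r-1}+|\boldsymbol{k}|^{r-1})$ together with $\big|e^{\tau(|\boldsymbol{l}|-|\boldsymbol{j}|)}-1\big|\le\tau|\boldsymbol{k}|e^{\tau|\boldsymbol{k}|}$, one gets
\[
\Big|\,|\boldsymbol{l}|^r e^{\tau|\boldsymbol{l}|}-|\boldsymbol{j}|^r e^{\tau|\boldsymbol{j}|}\,\Big|\le C_r\Big(|\boldsymbol{k}|(|\boldsymbol{j}|^{r-1}+|\boldsymbol{k}|^{r-1})+\tau|\boldsymbol{k}|(|\boldsymbol{j}|^{r}+|\boldsymbol{k}|^{r})\Big)e^{\tau|\boldsymbol{j}|}e^{\tau|\boldsymbol{k}|}.
\]
Thus the commutator gains one power of $|\boldsymbol{k}|$ over the plain product, at the price of either lowering the $r$-exponent by one (the $\tau$-free contribution, in which the surplus exponentials are absorbed into the lowest of the three frequencies just as in the treatment of $A_1,\dots,A_4$ in Lemma \ref{lemma-type1}) or inserting a factor $\tau$. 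Combined with the derivative $|k_3|\le|\boldsymbol{k}|$ coming from $\partial_z g$, the factor $\widehat{g}_{\boldsymbol{k}}$ ends up carrying up to two extra powers of $|\boldsymbol{k}|$ (one from $\partial_z$, one from the commutator); the surplus half-derivatives are distributed so that the resulting lattice sums converge. This is exactly why the regularity demand is asymmetric here: $g\in\mathcal{D}(e^{\tau A}:H^{r+3/2})$, while $f,h$ need only $\mathcal{D}(e^{\tau A}:H^{r+1/2})$, and in the $\tau$-free estimate $g$ is measured in $H^{r+1}$ while $f,h$ are measured in $H^r$.

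The remaining work is routine bookkeeping, identical in spirit to Lemma \ref{lemma-type3} and Lemma \ref{lemma-difference-type3}: split the triple sum via Cauchy--Schwarz, match the three factors against $\|A^{\bullet}f\|$, $\|A^{\bullet}g\|$, $\|A^{\bullet}h\|$ (respectively their $e^{\tau A}$-weighted versions), and invoke $r>5/2$ to guarantee finiteness both of the auxiliary sums $\sum_{\boldsymbol{k}\neq 0}|\boldsymbol{k}|^{-2\sigma}$ and of the two-dimensional sums $\sum_{\boldsymbol{k}'\in\mathbb{Z}^2}|(\boldsymbol{k}',\pm1)|^{2-2r}$ generated by the $z$-integral. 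Collecting all contributions yields the two terms on the right-hand side. The main obstacle I expect is purely combinatorial: one must carry out the frequency case analysis (which of $\boldsymbol{j},\boldsymbol{k},\boldsymbol{l}$ dominates) so that, after exploiting the commutator gain, the $\partial_z$-derivative, and the $|j_3|^{-1}$-weighted expansion of $W$, every surviving sum converges and no factor is pushed past its allotted regularity --- lengthy, but introducing no idea beyond those already deployed for Lemma \ref{lemma-difference-type3}. For that reason the full computation can be omitted, as the authors already do for the companion lemmas.
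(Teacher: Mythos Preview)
Your outline captures the Fourier set-up and the $I_1/I_2$ split correctly, and the distribution of the extra derivative onto $g$ is the right diagnosis. However, there is a genuine gap in how you obtain the first summand on the right-hand side, the one carrying only Sobolev norms $\|A^{r+1}g\|\,\|A^{r}f\|\,\|A^{r}h\|$ with \emph{no} $e^{\tau A}$.

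Your commutator inequality
\[
\big|\,|\boldsymbol{l}|^{r}e^{\tau|\boldsymbol{l}|}-|\boldsymbol{j}|^{r}e^{\tau|\boldsymbol{j}|}\,\big|
\le C_r\Big(|\boldsymbol{k}|\big(|\boldsymbol{j}|^{r-1}+|\boldsymbol{k}|^{r-1}\big)+\tau|\boldsymbol{k}|\big(|\boldsymbol{j}|^{r}+|\boldsymbol{k}|^{r}\big)\Big)e^{\tau|\boldsymbol{j}|}e^{\tau|\boldsymbol{k}|}
\]
is valid, but it is too coarse: the factor $e^{\tau|\boldsymbol{j}|}e^{\tau|\boldsymbol{k}|}$ multiplies \emph{both} brackets, and on top of this the remaining $\hat H_{\boldsymbol{l}}=|\boldsymbol{l}|^{r}e^{\tau|\boldsymbol{l}|}\hat h_{\boldsymbol{l}}$ carries $e^{\tau|\boldsymbol{l}|}$. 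With exponentials on all three frequencies, the Cauchy--Schwarz bookkeeping you describe (as in the $A_1,\dots,A_4$ of Lemma~\ref{lemma-type1}) can only return \emph{analytic} norms; there is no mechanism by which ``the surplus exponentials are absorbed into the lowest frequency'' to produce a bare Sobolev norm. Following your route one proves
$C_r\|A^{r+1}e^{\tau A}g\|\,\|A^{r}e^{\tau A}f\|\,\|A^{r}e^{\tau A}h\|+C_r\tau\|A^{r+3/2}e^{\tau A}g\|\,\|A^{r+1/2}e^{\tau A}f\|\,\|A^{r+1/2}e^{\tau A}h\|$,
which is strictly weaker than the stated lemma.

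The paper closes this gap with two devices you are missing. First, it uses the sharper Levermore--Oliver inequality
\[
|\xi^{r}e^{\tau\xi}-\eta^{r}e^{\tau\eta}|\le C_r|\xi-\eta|\Big(|\xi-\eta|^{r-1}+\eta^{r-1}+\tau\big(|\xi-\eta|^{r}+\eta^{r}\big)e^{\tau|\xi-\eta|}e^{\tau\eta}\Big),
\]
obtained by first peeling off $\xi^{r}-\eta^{r}$ (which carries no exponential) and then estimating $\xi^{r}(e^{\tau\xi}-1)-\eta^{r}(e^{\tau\eta}-1)$ via the mean-value theorem together with $e^{\tau s}-1\le\tau s\,e^{\tau s}$. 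Here the polynomial part is genuinely $\tau$-free. Second, to remove the last exponential coming from $\hat H_{\boldsymbol{l}}$, the paper writes $e^{\tau|\boldsymbol{l}|}\le 1+\tau|\boldsymbol{l}|e^{\tau|\boldsymbol{l}|}$, i.e.\ $|\hat H_{\boldsymbol{l}}|\le |\boldsymbol{l}|^{r}|\hat h_{\boldsymbol{l}}|+\tau(|\boldsymbol{j}|+|\boldsymbol{k}|)|\hat H_{\boldsymbol{l}}|$, which splits the contribution once more into a pure-Sobolev piece and a $\tau$-weighted analytic piece. Only after both steps does the Cauchy--Schwarz accounting produce the two-term bound in the lemma. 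Incorporating these two ingredients, the rest of your plan goes through.
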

Since the proof of Lemma \ref{lemma-difference-type3} is similar to that of Lemma \ref{lemma-difference-type4}, we first focus below on the proof of Lemma \ref{lemma-difference-type4}, and later we sketch the proof of Lemma \ref{lemma-difference-type3} with emphasis on the main differences.

\begin{proof}(proof of Lemma \ref{lemma-difference-type4})
First, denote by $H = A^r e^{\tau A} h$, and let
\begin{eqnarray*}
&&\hskip-.8in
I:=\Big|\Big\langle A^r e^{\tau A} \Big( (\int_0^z \nabla\cdot f(\boldsymbol{x}',s)ds) \partial_z g  \Big), A^r e^{\tau A} h  \Big\rangle -  \Big\langle \partial_z g A^r e^{\tau A} (\int_0^z \nabla\cdot f(\boldsymbol{x}',s)ds)  , A^r e^{\tau A} h  \Big\rangle\Big| \nonumber\\
&&\hskip-.62in
= \Big| \Big\langle (\int_0^z \nabla\cdot f(\boldsymbol{x}',s)ds) \partial_z g, A^r e^{\tau A} H \Big\rangle - \Big\langle \partial_z g A^r e^{\tau A} (\int_0^z \nabla\cdot f(\boldsymbol{x}',s)ds)  , H  \Big\rangle\Big|.
\end{eqnarray*}
Similar to the proof of Lemma \ref{lemma-type3}, using Fourier representation of $f$, since $\overline{f}=0$, we have
\begin{equation*}
  \int_0^z \nabla \cdot f(\boldsymbol{x}',s) ds = \sum\limits_{\substack{\boldsymbol{j}\in \mathbb{Z}^3 \\ j_3 \neq 0}} \frac{1}{j_3}\boldsymbol{j}'\cdot\hat{f}_{\boldsymbol{j}} e^{2\pi( i\boldsymbol{j}' \cdot \boldsymbol{x}' +  ij_3 z)} - \sum\limits_{\substack{\boldsymbol{j}\in \mathbb{Z}^3 \\ j_3 \neq 0}} \frac{1}{j_3}\boldsymbol{j}'\cdot\hat{f}_{\boldsymbol{j}} e^{2\pi i\boldsymbol{j}'\cdot \boldsymbol{x}'},  
\end{equation*}
where $\boldsymbol{j}'=(j_1,j_2)$. Using Fourier representation of $g$ and $H$, we have
\begin{eqnarray*}
&&\hskip-.8in
I \leq C \sum\limits_{\substack{\boldsymbol{j}+\boldsymbol{k}+\boldsymbol{l}=0\\    j_3, k_3, \boldsymbol{j}' \neq 0}} \frac{1}{|j_3|} |\hat{f}_{\boldsymbol{j}}||\hat{g}_{\boldsymbol{k}}||\hat{H}_{\boldsymbol{l}}| |\boldsymbol{j}'||\boldsymbol{k}| \Big| |\boldsymbol{l}|^r e^{\tau|\boldsymbol{l}|} - |\boldsymbol{j}|^r e^{\tau|\boldsymbol{j}|} \Big| \nonumber\\
&&\hskip-.6in
+ C \sum\limits_{\substack{\boldsymbol{j}'+\boldsymbol{k}'+\boldsymbol{l}'=0\\  k_3+l_3 = 0 \\  j_3, k_3, \boldsymbol{j}' \neq 0}}\frac{1}{|j_3|}|\hat{f}_{\boldsymbol{j}}||\hat{g}_{\boldsymbol{k}}||\hat{H}_{\boldsymbol{l}}| |\boldsymbol{j}'||\boldsymbol{k}| \Big| |\boldsymbol{l}|^r e^{\tau|\boldsymbol{l}|} - |(\boldsymbol{j}',0)|^r e^{\tau|(\boldsymbol{j}',0)|} \Big|:= I_1 + I_2.
\end{eqnarray*}

We estimate $I_2$ first. By virtue of the following observation \cite{LO97}: 
For $r\geq 1$ and $\tau \geq 0$, and for all positive $\xi,\eta \in \mathbb{R}$, we have 
\begin{eqnarray}\label{lemma-inequality}
&&\hskip-.8in
|\xi^r e^{\tau \xi} - \eta^r e^{\tau \eta}| \leq C_r|\xi - \eta|\Big( |\xi - \eta|^{r-1} + \eta^{r-1} + \tau(|\xi - \eta|^{r} + \eta^r)e^{\tau|\xi-\eta|}e^{\tau \eta} \Big);
\end{eqnarray}
with $\xi = |\boldsymbol{l}|$, $\eta = |(\boldsymbol{j}',0)| = |\boldsymbol{j}'|$, and $|\xi - \eta| = \Big||\boldsymbol{l}|- |(\boldsymbol{j}',0)| \Big|\leq \Big|-\boldsymbol{l}-(\boldsymbol{j}',0)\Big| = |\boldsymbol{k}|$, inequality (\ref{lemma-inequality}) implies
\begin{eqnarray}\label{I2-type4}
I_2 \leq C_r\sum\limits_{\substack{\boldsymbol{j}'+\boldsymbol{k}'+\boldsymbol{l}'=0\\  k_3+l_3 = 0 \\  j_3, k_3, \boldsymbol{j}' \neq 0}}\frac{1}{|j_3|}|\hat{f}_{\boldsymbol{j}}||\hat{g}_{\boldsymbol{k}}||\hat{H}_{\boldsymbol{l}}| |\boldsymbol{j}'||\boldsymbol{k}|^2 \Big( |\boldsymbol{k}|^{r-1} + |\boldsymbol{j}'|^{r-1} + \tau(|\boldsymbol{k}|^{r} + |\boldsymbol{j}'|^{r})e^{\tau|\boldsymbol{k}|}e^{\tau|\boldsymbol{j}|} \Big).
\end{eqnarray}

By the definition of $H$, and since $e^x \leq 1+xe^x$ for any $x\geq 0$, we have 
\begin{eqnarray*}
|\hat{H}_{\boldsymbol{l}}| = |\boldsymbol{l}|^r e^{\tau |\boldsymbol{l}|} |\hat{h}_{\boldsymbol{l}}| \leq |\boldsymbol{l}|^r(1+\tau |\boldsymbol{l}|e^{\tau |\boldsymbol{l}|}) |\hat{h}_{\boldsymbol{l}}| \leq |\boldsymbol{l}|^r |\hat{h}_{\boldsymbol{l}}| + \tau(|\boldsymbol{j}'|+|\boldsymbol{k}|) |\hat{H}_{\boldsymbol{l}}|.
\end{eqnarray*}
Therefore, one obtains that
\begin{eqnarray*}
&&\hskip-.68in
|\hat{H}_{\boldsymbol{l}}| \Big( |\boldsymbol{k}|^{r-1} + |\boldsymbol{j}'|^{r-1} + \tau(|\boldsymbol{k}|^{r} + |\boldsymbol{j}'|^{r})e^{\tau|\boldsymbol{k}|}e^{\tau|\boldsymbol{j}|} \Big)\nonumber\\
&&\hskip-.8in
\leq \Big(|\boldsymbol{l}|^r |\hat{h}_{\boldsymbol{l}}| + \tau(|\boldsymbol{j}'|+|\boldsymbol{k}|) |\hat{H}_{\boldsymbol{l}}|\Big) \Big( |\boldsymbol{k}|^{r-1} + |\boldsymbol{j}'|^{r-1} \Big) + |\hat{H}_{\boldsymbol{l}}| \Big(\tau(|\boldsymbol{k}|^{r} + |\boldsymbol{j}'|^{r})e^{\tau|\boldsymbol{k}|}e^{\tau|\boldsymbol{j}|} \Big)\nonumber\\
&&\hskip-.8in
\leq |\hat{h}_{\boldsymbol{l}}| |\boldsymbol{l}|^r(|\boldsymbol{k}|^{r-1} + |\boldsymbol{j}'|^{r-1}) + \tau C_r |\hat{H}_{\boldsymbol{l}}|(|\boldsymbol{k}|^{r} + |\boldsymbol{j}'|^{r})e^{\tau|\boldsymbol{k}|}e^{\tau|\boldsymbol{j}|}.\label{Hl}
\end{eqnarray*}
Based on this, one has
\begin{eqnarray*}
&&\hskip-.8in
I_2 \leq C_r\sum\limits_{\substack{\boldsymbol{j}'+\boldsymbol{k}'+\boldsymbol{l}'=0\\  k_3+l_3 = 0 \\  j_3, k_3, \boldsymbol{j}' \neq 0}}\frac{1}{|j_3|}|\hat{f}_{\boldsymbol{j}}||\hat{g}_{\boldsymbol{k}}||\hat{h}_{\boldsymbol{l}}| |\boldsymbol{j}'||\boldsymbol{k}|^2 |\boldsymbol{l}|^r(|\boldsymbol{k}|^{r-1} + |\boldsymbol{j}'|^{r-1}) \nonumber\\
&&\hskip-.6in
+ \tau C_r\sum\limits_{\substack{\boldsymbol{j}'+\boldsymbol{k}'+\boldsymbol{l}'=0\\  k_3+l_3 = 0 \\  j_3, k_3, \boldsymbol{j}' \neq 0}}\frac{1}{|j_3|}|\hat{f}_{\boldsymbol{j}}||\hat{g}_{\boldsymbol{k}}||\hat{H}_{\boldsymbol{l}}| |\boldsymbol{j}'||\boldsymbol{k}|^2 (|\boldsymbol{k}|^{r} + |\boldsymbol{j}'|^{r})e^{\tau|\boldsymbol{k}|}e^{\tau|\boldsymbol{j}|}:= I_{21} + I_{22}.
\end{eqnarray*}
Here
\begin{eqnarray*}
&&\hskip-.8in
I_{21} = C_r\Big(\sum\limits_{\substack{\boldsymbol{j}'+\boldsymbol{k}'+\boldsymbol{l}'=0\\  k_3+l_3 = 0 \\  j_3, k_3, \boldsymbol{j}' \neq 0}}\frac{1}{|j_3|}|\hat{f}_{\boldsymbol{j}}||\hat{g}_{\boldsymbol{k}}||\hat{h}_{\boldsymbol{l}}| |\boldsymbol{j}'||\boldsymbol{k}|^{r+1} |\boldsymbol{l}|^r 
+ \frac{1}{|j_3|}|\hat{f}_{\boldsymbol{j}}||\hat{g}_{\boldsymbol{k}}||\hat{h}_{\boldsymbol{l}}| |\boldsymbol{j}'|^{r}|\boldsymbol{k}|^{2} |\boldsymbol{l}|^r\Big) := I_{211}+ I_{212}.
\end{eqnarray*}
Thanks to the Cauchy–Schwarz inequality, since $r> \frac{5}{2}$, we have
\begin{eqnarray*}
&&\hskip-.8in
I_{211} = C_r \sum\limits_{\substack{\boldsymbol{j}\in \mathbb{Z}^3 \\ j_3, \boldsymbol{j}'\neq 0} }  \frac{1}{|j_3|}|\boldsymbol{j}'| |\hat{f}_{\boldsymbol{j}}| \sum\limits_{\substack{\boldsymbol{k}\in \mathbb{Z}^3 \\ k_3\neq 0} } |\boldsymbol{k}|^{r+1} |(\boldsymbol{j}'+\boldsymbol{k}',k_3)|^r |\hat{g}_{\boldsymbol{k}}||\hat{h}_{-(\boldsymbol{j}'+\boldsymbol{k}',k_3)}|\nonumber\\
&&\hskip-.58in 
\leq C_r \Big(\sum\limits_{\substack{\boldsymbol{j}\in \mathbb{Z}^3 \\ j_3, \boldsymbol{j}'\neq 0} }  \frac{1}{|j_3|^2}|\boldsymbol{j}'|^{2-2r} \Big)^{\frac{1}{2}} \Big(\sum\limits_{\substack{\boldsymbol{j}\in \mathbb{Z}^3 \\ j_3, \boldsymbol{j}'\neq 0} }   |\boldsymbol{j}|^{2r} |\hat{f}_{\boldsymbol{j}}|^2  \Big)^{\frac{1}{2}} 
\Big( \sum\limits_{\substack{\boldsymbol{k}\in \mathbb{Z}^3 \\ k_3\neq 0} } |\boldsymbol{k}|^{2r+2} |\hat{g}_{\boldsymbol{k}}|^2\Big)^{\frac{1}{2}}  \nonumber \\
&&\hskip-.38in
\times \sup\limits_{\boldsymbol{j}\in \mathbb{Z}^3} \Big( \sum\limits_{\substack{\boldsymbol{k}\in \mathbb{Z}^3 \\ k_3\neq 0}}  |(\boldsymbol{j}'+\boldsymbol{k}',k_3)|^{2r}|\hat{h}_{-(\boldsymbol{j}'+\boldsymbol{k}',k_3)}|^2\Big)^{\frac{1}{2}}  
\leq C_r \|A^r f\| \|A^{r+1} g\| \|A^r h \|, \text{ and }
\end{eqnarray*}
\begin{eqnarray*}
&&\hskip-.8in
I_{212} = C_r \sum\limits_{\substack{\boldsymbol{k}\in \mathbb{Z}^3 \\ k_3\neq 0} } |\boldsymbol{k}|^{2} |\hat{g}_{\boldsymbol{k}}| \sum\limits_{\substack{\boldsymbol{j}\in \mathbb{Z}^3 \\ j_3, \boldsymbol{j}' \neq 0} } \frac{1}{|j_3|}  |\boldsymbol{j}'|^{r} |\hat{f}_{\boldsymbol{j}}|  |(\boldsymbol{j}'+\boldsymbol{k}',k_3)|^{r}|\hat{h}_{-(\boldsymbol{j}'+\boldsymbol{k}',k_3)}| \nonumber\\
&&\hskip-.58in 
\leq C_r \sum\limits_{\boldsymbol{k}'\in \mathbb{Z}^2} |(\boldsymbol{k}',\pm 1)|^{1-r}  \sum\limits_{k_3\neq 0}  |\boldsymbol{k}|^{r+1} |\hat{g}_{\boldsymbol{k}}| \Big( \sum\limits_{\substack{\boldsymbol{j}\in \mathbb{Z}^3 \\ \boldsymbol{j} \neq 0} } |\boldsymbol{j}|^{2r}  |\hat{f}_{\boldsymbol{j}}|^2\Big)^{\frac{1}{2}}  \nonumber \\
&&\hskip-.38in
\times \Big( \sum\limits_{j_3\neq 0} \frac{1}{|j_3|^2} \sum\limits_{\boldsymbol{j}'\in \mathbb{Z}^2 }  |(\boldsymbol{j}'+\boldsymbol{k}',k_3)|^{2r}|\hat{h}_{-(\boldsymbol{j}'+\boldsymbol{k}',k_3)}|^2\Big)^{\frac{1}{2}}  \nonumber\\
&&\hskip-.58in \leq C_r \|A^{r} f\| \sum\limits_{\boldsymbol{k}'\in \mathbb{Z}^2} |(\boldsymbol{k}',\pm 1)|^{1-r} \Big(\sum\limits_{k_3\neq 0}  |\boldsymbol{k}|^{2r+2} |\hat{g}_{\boldsymbol{k}}|^2  \Big)^{\frac{1}{2}}  \nonumber \\
&&\hskip-.38in
\times \Big( \sum\limits_{k_3\neq 0} \sum\limits_{\boldsymbol{j}'\in \mathbb{Z}^2 }  |(\boldsymbol{j}'+\boldsymbol{k}',k_3)|^{2r}|\hat{h}_{-(\boldsymbol{j}'+\boldsymbol{k}',k_3)}|^2\Big)^{\frac{1}{2}}  \nonumber\\
&&\hskip-.58in 
\leq C_r  \|A^r f\| \|A^r h \| \Big(\sum\limits_{\boldsymbol{k}'\in \mathbb{Z}^2} |(\boldsymbol{k}',\pm 1)|^{2-2r}\Big)^{\frac{1}{2}} \Big(\sum\limits_{\boldsymbol{k}'\in \mathbb{Z}^2} \sum\limits_{k_3\neq 0}  |\boldsymbol{k}|^{2r+2} |\hat{g}_{\boldsymbol{k}}|^2   \Big)^{\frac{1}{2}}\nonumber\\
&&\hskip-.58in 
\leq C_r \|A^r f\| \|A^{r+1} g\| \|A^r h \|.
\end{eqnarray*}

Next, for $I_{22}$, we have
\begin{eqnarray*}
&&\hskip-.8in
I_{22} = \tau C_r\sum\limits_{\substack{\boldsymbol{j}'+\boldsymbol{k}'+\boldsymbol{l}'=0\\  k_3+l_3 = 0 \\  j_3, k_3, \boldsymbol{j}' \neq 0}}\frac{1}{|j_3|}|\hat{f}_{\boldsymbol{j}}||\hat{g}_{\boldsymbol{k}}||\hat{H}_{\boldsymbol{l}}| |\boldsymbol{j}'||\boldsymbol{k}|^{r+2} e^{\tau|\boldsymbol{k}|}e^{\tau|\boldsymbol{j}|} \nonumber\\
&&\hskip-.58in  
+ \tau C_r\sum\limits_{\substack{\boldsymbol{j}'+\boldsymbol{k}'+\boldsymbol{l}'=0\\  k_3+l_3 = 0 \\  j_3, k_3, \boldsymbol{j}' \neq 0}}\frac{1}{|j_3|}|\hat{f}_{\boldsymbol{j}}||\hat{g}_{\boldsymbol{k}}||\hat{H}_{\boldsymbol{l}}| |\boldsymbol{j}'|^{r+1} |\boldsymbol{k}|^{2} e^{\tau|\boldsymbol{k}|}e^{\tau|\boldsymbol{j}|} := I_{221} + I_{222}.
\end{eqnarray*}
Noticing that $|\boldsymbol{k}|^{\frac{1}{2}}\leq C(|\boldsymbol{j}'|^{\frac{1}{2}} + |\boldsymbol{l}|^{\frac{1}{2}})$ and $|\boldsymbol{j}'|^{\frac{1}{2}} \leq C(|\boldsymbol{k}|^{\frac{1}{2}} + |\boldsymbol{l}|^{\frac{1}{2}})$, thanks to the Cauchy–Schwarz inequality, since $r> \frac{5}{2}$, we have 
\begin{eqnarray*}
&&\hskip-.8in
I_{221} =\tau C_r\sum\limits_{\substack{\boldsymbol{j}'+\boldsymbol{k}'+\boldsymbol{l}'=0\\  k_3+l_3 = 0 \\  j_3, k_3, \boldsymbol{j}' \neq 0}}\frac{1}{|j_3|}|\hat{f}_{\boldsymbol{j}}||\hat{g}_{\boldsymbol{k}}||\hat{H}_{\boldsymbol{l}}| |\boldsymbol{j}'||\boldsymbol{k}|^{r+2} e^{\tau|\boldsymbol{k}|}e^{\tau|\boldsymbol{j}|} \nonumber\\
&&\hskip-.55in
\leq \tau C_r\sum\limits_{\substack{\boldsymbol{j}'+\boldsymbol{k}'+\boldsymbol{l}'=0\\  k_3+l_3 = 0 \\  j_3, k_3, \boldsymbol{j}' \neq 0}}\frac{1}{|j_3|}|\hat{f}_{\boldsymbol{j}}||\hat{g}_{\boldsymbol{k}}||\hat{h}_{\boldsymbol{l}}| |\boldsymbol{j}'| |\boldsymbol{l}|^r |\boldsymbol{k}|^{r+\frac{3}{2}}(|\boldsymbol{j}'|^{\frac{1}{2}}+ |\boldsymbol{l}|^{\frac{1}{2}}) e^{\tau|\boldsymbol{k}|}e^{\tau|\boldsymbol{j}|}e^{\tau|\boldsymbol{l}|}\nonumber\\
&&\hskip-.55in
\leq \tau C_r\sum\limits_{\substack{\boldsymbol{j}'+\boldsymbol{k}'+\boldsymbol{l}'=0\\  k_3+l_3 = 0 \\  j_3, k_3, \boldsymbol{j}' \neq 0}}\frac{1}{|j_3|}|\hat{f}_{\boldsymbol{j}}||\hat{g}_{\boldsymbol{k}}||\hat{h}_{\boldsymbol{l}}| |\boldsymbol{j}'|^{\frac{3}{2}} |\boldsymbol{l}|^{r+\frac{1}{2}} |\boldsymbol{k}|^{r+\frac{3}{2}} e^{\tau|\boldsymbol{k}|}e^{\tau|\boldsymbol{j}|}e^{\tau|\boldsymbol{l}|}\nonumber\\
&&\hskip-.55in 
\leq \tau C_r \Big(\sum\limits_{\substack{\boldsymbol{j}\in \mathbb{Z}^3 \\ j_3, \boldsymbol{j}'\neq 0} }  \frac{1}{|j_3|^2}|\boldsymbol{j}'|^{2-2r} \Big)^{\frac{1}{2}} \Big(\sum\limits_{\substack{\boldsymbol{j}\in \mathbb{Z}^3 \\ j_3, \boldsymbol{j}'\neq 0} }   |\boldsymbol{j}|^{2r+1} e^{2\tau |\boldsymbol{j}|} |\hat{f}_{\boldsymbol{j}}|^2  \Big)^{\frac{1}{2}} 
\Big( \sum\limits_{\substack{\boldsymbol{k}\in \mathbb{Z}^3 \\ k_3\neq 0} } |\boldsymbol{k}|^{2r+3} | e^{2\tau |\boldsymbol{k}|} \hat{g}_{\boldsymbol{k}}|^2\Big)^{\frac{1}{2}}  \nonumber \\
&&\hskip-.18in
\times \sup\limits_{\boldsymbol{j}\in \mathbb{Z}^3} \Big( \sum\limits_{\substack{\boldsymbol{k}\in \mathbb{Z}^3 \\ k_3\neq 0}}  |(\boldsymbol{j}'+\boldsymbol{k}',k_3)|^{2r+1} e^{2\tau |(\boldsymbol{j}'+\boldsymbol{k}',k_3)|} |\hat{h}_{-(\boldsymbol{j}'+\boldsymbol{k}',k_3)}|^2\Big)^{\frac{1}{2}}  \nonumber\\
&&\hskip-.55in
\leq \tau C_r \|A^{r+\frac{1}{2}} e^{\tau A} f\| \|A^{r+\frac{3}{2}} e^{\tau A} g\| \|A^{r+\frac{1}{2}} e^{\tau A} h\|, \text{ and }
\end{eqnarray*}
\begin{eqnarray*}
&&\hskip-.8in
I_{222} =\tau C_r\sum\limits_{\substack{\boldsymbol{j}'+\boldsymbol{k}'+\boldsymbol{l}'=0\\  k_3+l_3 = 0 \\  j_3, k_3, \boldsymbol{j}' \neq 0}}\frac{1}{|j_3|}|\hat{f}_{\boldsymbol{j}}||\hat{g}_{\boldsymbol{k}}||\hat{H}_{\boldsymbol{l}}| |\boldsymbol{j}'|^{r+1}|\boldsymbol{k}|^{2} e^{\tau|\boldsymbol{k}|}e^{\tau|\boldsymbol{j}|} \nonumber\\
&&\hskip-.55in
\leq \tau C_r\sum\limits_{\substack{\boldsymbol{j}'+\boldsymbol{k}'+\boldsymbol{l}'=0\\  k_3+l_3 = 0 \\  j_3, k_3, \boldsymbol{j}' \neq 0}}\frac{1}{|j_3|}|\hat{f}_{\boldsymbol{j}}||\hat{g}_{\boldsymbol{k}}||\hat{h}_{\boldsymbol{l}}| |\boldsymbol{j}'|^{r+\frac{1}{2}}|\boldsymbol{k}|^{2}|{\boldsymbol{l}}|^r(|\boldsymbol{k}|^{\frac{1}{2}}+ |\boldsymbol{l}|^{\frac{1}{2}}) e^{\tau|\boldsymbol{k}|}e^{\tau|\boldsymbol{j}|}e^{\tau|\boldsymbol{l}|}\nonumber\\
&&\hskip-.55in
\leq \tau C_r\sum\limits_{\substack{\boldsymbol{j}'+\boldsymbol{k}'+\boldsymbol{l}'=0\\  k_3+l_3 = 0 \\  j_3, k_3, \boldsymbol{j}' \neq 0}}\frac{1}{|j_3|}|\hat{f}_{\boldsymbol{j}}||\hat{g}_{\boldsymbol{k}}||\hat{h}_{\boldsymbol{l}}| |\boldsymbol{j}'|^{r+\frac{1}{2}}|\boldsymbol{k}|^{\frac{5}{2}}|{\boldsymbol{l}}|^{r+\frac{1}{2}} e^{\tau|\boldsymbol{k}|}e^{\tau|\boldsymbol{j}|}e^{\tau|\boldsymbol{l}|}\nonumber\\
&&\hskip-.55in 
\leq \tau C_r \sum\limits_{\boldsymbol{k}'\in \mathbb{Z}^2} |(\boldsymbol{k}',\pm 1)|^{1-r}  \sum\limits_{k_3\neq 0}  |\boldsymbol{k}|^{r+\frac{3}{2}} e^{\tau |\boldsymbol{k}|} |\hat{g}_{\boldsymbol{k}}| \Big( \sum\limits_{\substack{\boldsymbol{j}\in \mathbb{Z}^3 \\ \boldsymbol{j} \neq 0} } |\boldsymbol{j}|^{2r+1} e^{2\tau |\boldsymbol{j}|} |\hat{f}_{\boldsymbol{j}}|^2\Big)^{\frac{1}{2}}  \nonumber \\
&&\hskip-.18in
\times \Big( \sum\limits_{j_3\neq 0} \frac{1}{|j_3|^2} \sum\limits_{\boldsymbol{j}'\in \mathbb{Z}^2 }  |(\boldsymbol{j}'+\boldsymbol{k}',k_3)|^{2r+1} e^{2\tau |(\boldsymbol{j}'+\boldsymbol{k}',k_3)|}|\hat{h}_{-(\boldsymbol{j}'+\boldsymbol{k}',k_3)}|^2\Big)^{\frac{1}{2}}  \nonumber\\
&&\hskip-.55in 
\leq \tau C_r \|A^{r+\frac{1}{2}} e^{\tau A} f\| \sum\limits_{\boldsymbol{k}'\in \mathbb{Z}^2} |(\boldsymbol{k}',\pm 1)|^{1-r} \Big(\sum\limits_{k_3\neq 0}  |\boldsymbol{k}|^{2r+3} e^{2\tau |\boldsymbol{k}|} |\hat{g}_{\boldsymbol{k}}|^2 \Big)^{\frac{1}{2}} \nonumber\\
&&\hskip-.18in
\times \Big( \sum\limits_{k_3\neq 0} \sum\limits_{\boldsymbol{j}'\in \mathbb{Z}^2 }  |(\boldsymbol{j}'+\boldsymbol{k}',k_3)|^{2r+1} e^{2\tau |(\boldsymbol{j}'+\boldsymbol{k}',k_3)|}|\hat{h}_{-(\boldsymbol{j}'+\boldsymbol{k}',k_3)}|^2\Big)^{\frac{1}{2}}  \nonumber\\
&&\hskip-.55in
\leq \tau C_r \|A^{r+\frac{1}{2}} e^{\tau A} f\| \|A^{r+\frac{1}{2}} e^{\tau A} h\| \Big(\sum\limits_{\boldsymbol{k}'\in \mathbb{Z}^2} |(\boldsymbol{k}',\pm 1)|^{2-2r}\Big)^{\frac{1}{2}} \Big(\sum\limits_{\boldsymbol{k}'\in \mathbb{Z}^2} \sum\limits_{k_3\neq 0}  |\boldsymbol{k}|^{2r+3} e^{2\tau |\boldsymbol{k}|} |\hat{g}_{\boldsymbol{k}}|^2 \Big)^{\frac{1}{2}} \nonumber\\
&&\hskip-.55in
\leq \tau C_r \|A^{r+\frac{1}{2}} e^{\tau A} f\| \|A^{r+\frac{3}{2}} e^{\tau A} g\| \|A^{r+\frac{1}{2}} e^{\tau A} h\|.
\end{eqnarray*}
Therefore, $I_2$ satisfies the desired estimates. 

To estimate $I_1$, we use \eqref{lemma-inequality} with $\xi = |\boldsymbol{l}|$, $\eta = |\boldsymbol{j}|$, and with $|\xi - \eta| = \Big||\boldsymbol{l}|- |\boldsymbol{j}| \Big|\leq |-\boldsymbol{l}-\boldsymbol{j}| = |\boldsymbol{k}|$, to obtain 
\begin{equation}\label{I1-type4}
    I_1 \leq C_r \sum\limits_{\substack{\boldsymbol{j}+\boldsymbol{k}+\boldsymbol{l}=0\\    j_3, k_3, \boldsymbol{j}' \neq 0}} \frac{1}{|j_3|} |\hat{f}_{\boldsymbol{j}}||\hat{g}_{\boldsymbol{k}}||\hat{H}_{\boldsymbol{l}}| |\boldsymbol{j}'||\boldsymbol{k}|^2  \Big( |\boldsymbol{k}|^{r-1} + |\boldsymbol{j}|^{r-1} + \tau(|\boldsymbol{k}|^{r} + |\boldsymbol{j}|^{r})e^{\tau|\boldsymbol{k}|}e^{\tau|\boldsymbol{j}|} \Big).
\end{equation}
Thanks to (\ref{Hl}), one obtains that
\begin{eqnarray*}
&&\hskip-.8in
I_1 \leq C_r \sum\limits_{\substack{\boldsymbol{j}+\boldsymbol{k}+\boldsymbol{l}=0\\    j_3, k_3, \boldsymbol{j}' \neq 0}}\frac{1}{|j_3|}|\hat{f}_{\boldsymbol{j}}||\hat{g}_{\boldsymbol{k}}||\hat{h}_{\boldsymbol{l}}| |\boldsymbol{j}'||\boldsymbol{k}|^2 |\boldsymbol{l}|^r(|\boldsymbol{k}|^{r-1} + |\boldsymbol{j}|^{r-1}) \nonumber\\
&&\hskip-.6in
+ \tau C_r \sum\limits_{\substack{\boldsymbol{j}+\boldsymbol{k}+\boldsymbol{l}=0\\    j_3, k_3, \boldsymbol{j}' \neq 0}}\frac{1}{|j_3|}|\hat{f}_{\boldsymbol{j}}||\hat{g}_{\boldsymbol{k}}||\hat{H}_{\boldsymbol{l}}| |\boldsymbol{j}'||\boldsymbol{k}|^2 (|\boldsymbol{k}|^{r} + |\boldsymbol{j}|^{r})e^{\tau|\boldsymbol{k}|}e^{\tau|\boldsymbol{j}|}:= I_{11} + I_{12}.
\end{eqnarray*}
Here
\begin{eqnarray*}
&&\hskip-.8in
I_{11} \leq C_r\Big(\sum\limits_{\substack{\boldsymbol{j}+\boldsymbol{k}+\boldsymbol{l}=0\\    j_3, k_3, \boldsymbol{j}' \neq 0}}\frac{1}{|j_3|}|\hat{f}_{\boldsymbol{j}}||\hat{g}_{\boldsymbol{k}}||\hat{h}_{\boldsymbol{l}}| |\boldsymbol{j}||\boldsymbol{k}|^{r+1} |\boldsymbol{l}|^r 
+ \frac{1}{|j_3|}|\hat{f}_{\boldsymbol{j}}||\hat{g}_{\boldsymbol{k}}||\hat{h}_{\boldsymbol{l}}| |\boldsymbol{j}|^{r}|\boldsymbol{k}|^{2} |\boldsymbol{l}|^r\Big) := I_{111}+ I_{112}.
\end{eqnarray*}
Thanks to the Cauchy–Schwarz inequality, since $r> \frac{5}{2}$, we have
\begin{eqnarray*}
&&\hskip-.8in
I_{111} = C_r \sum\limits_{\substack{\boldsymbol{j}\in \mathbb{Z}^3 \\ j_3, \boldsymbol{j}'\neq 0} }  \frac{1}{|j_3|}|\boldsymbol{j}| |\hat{f}_{\boldsymbol{j}}| \sum\limits_{\substack{\boldsymbol{k}\in \mathbb{Z}^3 \\ k_3\neq 0} } |\boldsymbol{k}|^{r+1} |\boldsymbol{j}+\boldsymbol{k}|^r |\hat{g}_{\boldsymbol{k}}||\hat{h}_{-(\boldsymbol{j}+\boldsymbol{k})}|\nonumber\\
&&\hskip-.58in 
\leq C_r \Big(\sum\limits_{\substack{\boldsymbol{j}\in \mathbb{Z}^3 \\ j_3, \boldsymbol{j}'\neq 0} }  |\boldsymbol{j}|^{2-2r} \Big)^{\frac{1}{2}} \Big(\sum\limits_{\substack{\boldsymbol{j}\in \mathbb{Z}^3 \\ j_3, \boldsymbol{j}'\neq 0} }   |\boldsymbol{j}|^{2r} |\hat{f}_{\boldsymbol{j}}|^2  \Big)^{\frac{1}{2}} 
\Big( \sum\limits_{\substack{\boldsymbol{k}\in \mathbb{Z}^3 \\ k_3\neq 0} } |\boldsymbol{k}|^{2r+2} |\hat{g}_{\boldsymbol{k}}|^2\Big)^{\frac{1}{2}}  \nonumber \\
&&\hskip-.38in
\times \sup\limits_{\boldsymbol{j}\in \mathbb{Z}^3} \Big( \sum\limits_{\substack{\boldsymbol{k}\in \mathbb{Z}^3 \\ k_3\neq 0}}  |\boldsymbol{j}+\boldsymbol{k}|^{2r}|\hat{h}_{-(\boldsymbol{j}+\boldsymbol{k})}|^2\Big)^{\frac{1}{2}}  
\leq C_r \|A^r f\| \|A^{r+1} g\| \|A^r h \|, \text{ and }
\end{eqnarray*}
\begin{eqnarray*}
&&\hskip-.8in
I_{112} = C_r \sum\limits_{\substack{\boldsymbol{k}\in \mathbb{Z}^3 \\ k_3\neq 0} } |\boldsymbol{k}|^{2} |\hat{g}_{\boldsymbol{k}}| \sum\limits_{\substack{\boldsymbol{j}\in \mathbb{Z}^3 \\ j_3, \boldsymbol{j}' \neq 0} } \frac{1}{|j_3|}  |\boldsymbol{j}|^{r} |\hat{f}_{\boldsymbol{j}}|  |\boldsymbol{j}+\boldsymbol{k}|^{r}|\hat{h}_{-(\boldsymbol{j}+\boldsymbol{k})}| \nonumber\\
&&\hskip-.58in 
\leq C_r \Big(\sum\limits_{\substack{\boldsymbol{k}\in \mathbb{Z}^3 \\ \boldsymbol{k} \neq 0} } |\boldsymbol{k}|^{2-2r} \Big)^{\frac{1}{2}} \Big(\sum\limits_{\substack{\boldsymbol{k}\in \mathbb{Z}^3 \\ \boldsymbol{k} \neq 0} }  |\boldsymbol{k}|^{2r+2} |\hat{g}_{\boldsymbol{k}}|^2  \Big)^{\frac{1}{2}} \Big( \sum\limits_{\substack{\boldsymbol{j}\in \mathbb{Z}^3 \\ \boldsymbol{j} \neq 0} } |\boldsymbol{j}|^{2r}  |\hat{f}_{\boldsymbol{j}}|^2\Big)^{\frac{1}{2}}  \nonumber \\
&&\hskip-.38in
\times \sup\limits_{\boldsymbol{k}\in\mathbb{Z}^3} \Big( \sum\limits_{\boldsymbol{j}\in \mathbb{Z}^3}  |\boldsymbol{j}+\boldsymbol{k}|^{2r}|\hat{h}_{-(\boldsymbol{j}+\boldsymbol{k})}|^2\Big)^{\frac{1}{2}} 
\leq C_r \|A^r f\| \|A^{r+1} g\| \|A^r h \|.
\end{eqnarray*}

Next, for $I_{12}$, we have
\begin{eqnarray*}
&&\hskip-.8in
I_{12} \leq \tau C_r\sum\limits_{\substack{\boldsymbol{j}+\boldsymbol{k}+\boldsymbol{l}=0\\    j_3, k_3, \boldsymbol{j}' \neq 0}}\frac{1}{|j_3|}|\hat{f}_{\boldsymbol{j}}||\hat{g}_{\boldsymbol{k}}||\hat{H}_{\boldsymbol{l}}| |\boldsymbol{j}||\boldsymbol{k}|^{r+2} e^{\tau|\boldsymbol{k}|}e^{\tau|\boldsymbol{j}|} \nonumber\\
&&\hskip-.58in  
+ \tau C_r\sum\limits_{\substack{\boldsymbol{j}+\boldsymbol{k}+\boldsymbol{l}=0\\    j_3, k_3, \boldsymbol{j}' \neq 0}}\frac{1}{|j_3|}|\hat{f}_{\boldsymbol{j}}||\hat{g}_{\boldsymbol{k}}||\hat{H}_{\boldsymbol{l}}| |\boldsymbol{j}|^{r+1} |\boldsymbol{k}|^{2} e^{\tau|\boldsymbol{k}|}e^{\tau|\boldsymbol{j}|} := I_{121} + I_{122}.
\end{eqnarray*}
Since $|\boldsymbol{k}|^{\frac{1}{2}}\leq C(|\boldsymbol{j}|^{\frac{1}{2}} + |\boldsymbol{l}|^{\frac{1}{2}})$ and $|\boldsymbol{j}|^{\frac{1}{2}} \leq C(|\boldsymbol{k}|^{\frac{1}{2}} + |\boldsymbol{l}|^{\frac{1}{2}})$, thanks to the Cauchy–Schwarz inequality, since $r> \frac{5}{2}$, we have 
\begin{eqnarray*}
&&\hskip-.8in
I_{121} =\tau C_r\sum\limits_{\substack{\boldsymbol{j}+\boldsymbol{k}+\boldsymbol{l}=0\\    j_3, k_3, \boldsymbol{j}' \neq 0}}\frac{1}{|j_3|}|\hat{f}_{\boldsymbol{j}}||\hat{g}_{\boldsymbol{k}}||\hat{H}_{\boldsymbol{l}}| |\boldsymbol{j}||\boldsymbol{k}|^{r+2} e^{\tau|\boldsymbol{k}|}e^{\tau|\boldsymbol{j}|} \nonumber\\
&&\hskip-.55in
\leq \tau C_r\sum\limits_{\substack{\boldsymbol{j}+\boldsymbol{k}+\boldsymbol{l}=0\\    j_3, k_3, \boldsymbol{j}', \boldsymbol{l} \neq 0}}\frac{1}{|j_3|}|\hat{f}_{\boldsymbol{j}}||\hat{g}_{\boldsymbol{k}}||\hat{h}_{\boldsymbol{l}}| |\boldsymbol{j}| |\boldsymbol{l}|^r |\boldsymbol{k}|^{r+\frac{3}{2}}(|\boldsymbol{j}|^{\frac{1}{2}}+ |\boldsymbol{l}|^{\frac{1}{2}}) e^{\tau|\boldsymbol{k}|}e^{\tau|\boldsymbol{j}|}e^{\tau|\boldsymbol{l}|}\nonumber\\
&&\hskip-.55in
\leq \tau C_r\sum\limits_{\substack{\boldsymbol{j}+\boldsymbol{k}+\boldsymbol{l}=0\\    j_3, k_3, \boldsymbol{j}' , \boldsymbol{l}\neq 0}}\frac{1}{|j_3|}|\hat{f}_{\boldsymbol{j}}||\hat{g}_{\boldsymbol{k}}||\hat{h}_{\boldsymbol{l}}| |\boldsymbol{j}|^{\frac{3}{2}} |\boldsymbol{l}|^{r+\frac{1}{2}} |\boldsymbol{k}|^{r+\frac{3}{2}} e^{\tau|\boldsymbol{k}|}e^{\tau|\boldsymbol{j}|}e^{\tau|\boldsymbol{l}|}\nonumber\\
&&\hskip-.55in 
\leq \tau C_r \Big(\sum\limits_{\substack{\boldsymbol{j}\in \mathbb{Z}^3 \\ j_3, \boldsymbol{j}'\neq 0} }  |\boldsymbol{j}|^{2-2r} \Big)^{\frac{1}{2}} \Big(\sum\limits_{\substack{\boldsymbol{j}\in \mathbb{Z}^3 \\ j_3, \boldsymbol{j}'\neq 0} }   |\boldsymbol{j}|^{2r+1} e^{2\tau |\boldsymbol{j}|} |\hat{f}_{\boldsymbol{j}}|^2  \Big)^{\frac{1}{2}} 
\Big( \sum\limits_{\substack{\boldsymbol{k}\in \mathbb{Z}^3 \\ k_3\neq 0} } |\boldsymbol{k}|^{2r+3} | e^{2\tau |\boldsymbol{k}|} \hat{g}_{\boldsymbol{k}}|^2\Big)^{\frac{1}{2}}  \nonumber \\
&&\hskip-.18in
\times \sup\limits_{\boldsymbol{j}\in \mathbb{Z}^3} \Big( \sum\limits_{\substack{\boldsymbol{k}\in \mathbb{Z}^3 \\ k_3\neq 0}}  |\boldsymbol{j}+\boldsymbol{k}|^{2r+1} e^{2\tau |\boldsymbol{j}+\boldsymbol{k}|} |\hat{h}_{-(\boldsymbol{j}+\boldsymbol{k})}|^2\Big)^{\frac{1}{2}}  \nonumber\\
&&\hskip-.55in
\leq \tau C_r \|A^{r+\frac{1}{2}} e^{\tau A} f\| \|A^{r+\frac{3}{2}} e^{\tau A} g\| \|A^{r+\frac{1}{2}} e^{\tau A} h\|, \text{ and }
\end{eqnarray*}
\begin{eqnarray*}
&&\hskip-.8in
I_{122} =\tau C_r\sum\limits_{\substack{\boldsymbol{j}+\boldsymbol{k}+\boldsymbol{l}=0\\    j_3, k_3, \boldsymbol{j}' \neq 0}}\frac{1}{|j_3|}|\hat{f}_{\boldsymbol{j}}||\hat{g}_{\boldsymbol{k}}||\hat{H}_{\boldsymbol{l}}| |\boldsymbol{j}|^{r+1}|\boldsymbol{k}|^{2} e^{\tau|\boldsymbol{k}|}e^{\tau|\boldsymbol{j}|} \nonumber\\
&&\hskip-.55in
\leq \tau C_r\sum\limits_{\substack{\boldsymbol{j}+\boldsymbol{k}+\boldsymbol{l}=0\\    j_3, k_3, \boldsymbol{j}', \boldsymbol{l} \neq 0}}\frac{1}{|j_3|}|\hat{f}_{\boldsymbol{j}}||\hat{g}_{\boldsymbol{k}}||\hat{h}_{\boldsymbol{l}}| |\boldsymbol{j}|^{r+\frac{1}{2}}|\boldsymbol{k}|^{2}|{\boldsymbol{l}}|^r(|\boldsymbol{k}|^{\frac{1}{2}}+ |\boldsymbol{l}|^{\frac{1}{2}}) e^{\tau|\boldsymbol{k}|}e^{\tau|\boldsymbol{j}|}e^{\tau|\boldsymbol{l}|}\nonumber\\
&&\hskip-.55in
\leq \tau C_r\sum\limits_{\substack{\boldsymbol{j}+\boldsymbol{k}+\boldsymbol{l}=0\\    j_3, k_3, \boldsymbol{j}', \boldsymbol{l} \neq 0}}\frac{1}{|j_3|}|\hat{f}_{\boldsymbol{j}}||\hat{g}_{\boldsymbol{k}}||\hat{h}_{\boldsymbol{l}}| |\boldsymbol{j}|^{r+\frac{1}{2}}|\boldsymbol{k}|^{\frac{5}{2}}|{\boldsymbol{l}}|^{r+\frac{1}{2}} e^{\tau|\boldsymbol{k}|}e^{\tau|\boldsymbol{j}|}e^{\tau|\boldsymbol{l}|}\nonumber\\
&&\hskip-.55in 
\leq \tau C_r \Big(\sum\limits_{\substack{\boldsymbol{k}\in \mathbb{Z}^3 \\ \boldsymbol{k} \neq 0} } |\boldsymbol{k}|^{2-2r}\Big)^{\frac{1}{2}}  \Big(\sum\limits_{\substack{\boldsymbol{k}\in \mathbb{Z}^3 \\ \boldsymbol{k} \neq 0} } |\boldsymbol{k}|^{2r+3} e^{2\tau |\boldsymbol{k}|} |\hat{g}_{\boldsymbol{k}}|^2\Big)^{\frac{1}{2}}  \Big( \sum\limits_{\substack{\boldsymbol{j}\in \mathbb{Z}^3 \\ \boldsymbol{j} \neq 0} } |\boldsymbol{j}|^{2r+1} e^{2\tau |\boldsymbol{j}|} |\hat{f}_{\boldsymbol{j}}|^2\Big)^{\frac{1}{2}}  \nonumber \\
&&\hskip-.18in
\times \sup\limits_{\boldsymbol{k}\in \mathbb{Z}^3}\Big(\sum\limits_{\substack{\boldsymbol{j}\in \mathbb{Z}^3 \\ \boldsymbol{j} \neq 0} } |\boldsymbol{j}+\boldsymbol{k}|^{2r+1} e^{2\tau |\boldsymbol{j}+\boldsymbol{k}|}|\hat{h}_{-(\boldsymbol{j}+\boldsymbol{k})}|^2\Big)^{\frac{1}{2}}  \nonumber\\
&&\hskip-.55in 
\leq \tau C_r \|A^{r+\frac{1}{2}} e^{\tau A} f\| \|A^{r+\frac{3}{2}} e^{\tau A} g\| \|A^{r+\frac{1}{2}} e^{\tau A} h\|.
\end{eqnarray*}
Therefore, $I_1$ satisfies the desired estimates. The proof is completed.
\end{proof}
Finally, we sketch the proof of Lemma \ref{lemma-difference-type3}.
\begin{proof}(proof of Lemma \ref{lemma-difference-type3})
Similar to the proof of Lemma \ref{lemma-difference-type4}, we have 
\begin{eqnarray*}
&&\hskip-.8in
I:=\Big|\Big\langle A^r e^{\tau A} \Big( (\int_0^z \nabla\cdot f(\boldsymbol{x}',s)ds) \partial_z g  \Big), A^r e^{\tau A} h  \Big\rangle -  \Big\langle  (\int_0^z \nabla\cdot f(\boldsymbol{x}',s)ds)A^r e^{\tau A} \partial_z g , A^r e^{\tau A} h  \Big\rangle\Big| \nonumber\\
&&\hskip-.62in
= \Big| \Big\langle (\int_0^z \nabla\cdot f(\boldsymbol{x}',s)ds) \partial_z g, A^r e^{\tau A} H \Big\rangle - \Big\langle  (\int_0^z \nabla\cdot f(\boldsymbol{x}',s)ds)A^r e^{\tau A} \partial_z g ,  H  \Big\rangle\Big|. \nonumber\\
&&\hskip-.62in
\leq C \sum\limits_{\substack{\boldsymbol{j}+\boldsymbol{k}+\boldsymbol{l}=0\\    j_3, k_3, \boldsymbol{j}' \neq 0}} \frac{1}{|j_3|} |\hat{f}_{\boldsymbol{j}}||\hat{g}_{\boldsymbol{k}}||\hat{H}_{\boldsymbol{l}}| |\boldsymbol{j}'||\boldsymbol{k}| \Big| |\boldsymbol{l}|^r e^{\tau|\boldsymbol{l}|} - |\boldsymbol{k}|^r e^{\tau|\boldsymbol{k}|} \Big| \nonumber\\
&&\hskip-.6in
+ C \sum\limits_{\substack{\boldsymbol{j}'+\boldsymbol{k}'+\boldsymbol{l}'=0\\  k_3+l_3 = 0 \\  j_3, k_3, \boldsymbol{j}' \neq 0}}\frac{1}{|j_3|}|\hat{f}_{\boldsymbol{j}}||\hat{g}_{\boldsymbol{k}}||\hat{H}_{\boldsymbol{l}}| |\boldsymbol{j}'||\boldsymbol{k}| \Big| |\boldsymbol{l}|^r e^{\tau|\boldsymbol{l}|} - |\boldsymbol{k}|^r e^{\tau|\boldsymbol{k}|} \Big|:= I_1 + I_2.
\end{eqnarray*}

For $I_1$, since $\boldsymbol{j}+\boldsymbol{k}+\boldsymbol{l}=0$, we use \eqref{lemma-inequality} with $\xi = |\boldsymbol{l}|$, $\eta = |\boldsymbol{k}|$ and $|\xi - \eta| = \Big||\boldsymbol{l}|- |\boldsymbol{k}| \Big|\leq \Big|-\boldsymbol{l}-\boldsymbol{k}\Big| = |\boldsymbol{j}|$, to conclude
\begin{equation}\label{I1-type3}
    I_1 \leq C_r \sum\limits_{\substack{\boldsymbol{j}+\boldsymbol{k}+\boldsymbol{l}=0\\    j_3, k_3, \boldsymbol{j}' \neq 0}} \frac{1}{|j_3|} |\hat{f}_{\boldsymbol{j}}||\hat{g}_{\boldsymbol{k}}||\hat{H}_{\boldsymbol{l}}| |\boldsymbol{j}'||\boldsymbol{j}||\boldsymbol{k}|  \Big( |\boldsymbol{k}|^{r-1} + |\boldsymbol{j}|^{r-1} + \tau(|\boldsymbol{k}|^{r} + |\boldsymbol{j}|^{r})e^{\tau|\boldsymbol{k}|}e^{\tau|\boldsymbol{j}|} \Big).
\end{equation}

For $I_2$, since $(\boldsymbol{j}',0)+\boldsymbol{k}+\boldsymbol{l}=0$, we use \eqref{lemma-inequality} with $\xi = |\boldsymbol{l}|$, $\eta = |\boldsymbol{k}|$ and $|\xi - \eta| = \Big||\boldsymbol{l}|- |\boldsymbol{k}| \Big|\leq \Big|-\boldsymbol{l}-\boldsymbol{k}\Big| = |\boldsymbol{j}'|$, to obtain
\begin{eqnarray}\label{I2-type3}
I_2 \leq C_r\sum\limits_{\substack{\boldsymbol{j}'+\boldsymbol{k}'+\boldsymbol{l}'=0\\  k_3+l_3 = 0 \\  j_3, k_3, \boldsymbol{j}' \neq 0}}\frac{1}{|j_3|}|\hat{f}_{\boldsymbol{j}}||\hat{g}_{\boldsymbol{k}}||\hat{H}_{\boldsymbol{l}}| |\boldsymbol{j}'|^2|\boldsymbol{k}| \Big( |\boldsymbol{k}|^{r-1} + |\boldsymbol{j}'|^{r-1} + \tau(|\boldsymbol{k}|^{r} + |\boldsymbol{j}'|^{r})e^{\tau|\boldsymbol{k}|}e^{\tau|\boldsymbol{j}|} \Big).
\end{eqnarray}

Observe that the difference between the sums in the right-hand sides of (\ref{I1-type3}) and (\ref{I1-type4}) is manifested in the factors $|\boldsymbol{j}'||\boldsymbol{j}||\boldsymbol{k}|$ and  $|\boldsymbol{j}'||\boldsymbol{k}|^2$, and between (\ref{I2-type3}) and (\ref{I2-type4}) is manifested in the factors $|\boldsymbol{j}'|^2|\boldsymbol{k}|$ and  $|\boldsymbol{j}'||\boldsymbol{k}|^2$. Therefore, one can follow the estimates of $I_1$ in (\ref{I1-type4}) and $I_2$ in (\ref{I2-type4}), and obtain that $I_1$ in (\ref{I1-type3}) and $I_2$ in (\ref{I2-type3}) satisfy the desired bound in Lemma \ref{lemma-difference-type3}.

\end{proof}

\noindent
\section*{Acknowledgments}
The authors would like to thank the anonymous referees for carefully reading the paper and for their useful remarks. Q. Lin would like to thank University of Victoria for the kind and warm hospitality where part of this work was completed, and would like to thank Xin Liu for interesting discussions. The work of E.S. Titi\ was supported in part by the Einstein Stiftung/Foundation - Berlin, through the Einstein Visiting Fellow Program. The work of S. Ibrahim was supported by NSERC grant (371637-2019). The work of T. Ghoul was supported by SITE (Center for Stability, Instability, Turbulence and Experiments).

\end{document}